\documentclass{amsart}

\usepackage[left=1.25in, right=1.25in, top=1.5in, bottom=1.5in]{geometry}
\usepackage{amsmath, amssymb, amsthm, tikz-cd}
\usepackage{mathrsfs}

\usetikzlibrary{fit}
\usepackage{mathtools}
\usepackage{placeins}
\usepackage{longtable}

\usepackage[colorlinks=true, linkcolor=black, citecolor=black, urlcolor=black]{hyperref}
\usepackage{cite}

\newcommand{\DomD}{\mathrm{Dom}_\Delta}
\newcommand{\Dom}{\mathrm{Dom}}
\newcommand{\Dtor}{D^b_c(R)_{\mathrm{tor}}}
\newcommand{\rkFG}{\operatorname{rk}_{\mathsf{FG}}}
\newcommand{\degFG}{\operatorname{deg}_{\mathsf{FG}}}
\newcommand{\muFG}{\mu_{\mathsf{FG}}}
\DeclareMathOperator{\fib}{fib}
\newcommand{\Domss}{\mathrm{Dom}^{ss}_\Delta}
\newcommand{\FilD}{\mathrm{Fil}_\Delta}
\newcommand{\rkD}{\operatorname{rk}_\Delta}
\newcommand{\degD}{\operatorname{deg}_\Delta}
\newcommand{\muD}{\mu_\Delta}
\newcommand{\HN}{\mathrm{HN}}
\newcommand{\fgpair}[2]{%
  \mathrel{\mathop{\rightleftarrows}^{\scriptstyle #1}_{\scriptstyle #2}}}
\DeclareMathOperator{\Coh}{Coh}
\DeclareMathOperator{\Perf}{Perf}

\newtheorem{theorem}{Theorem}[section]
\newtheorem{proposition}[theorem]{Proposition}
\newtheorem{lemma}[theorem]{Lemma}
\newtheorem{corollary}[theorem]{Corollary}

\makeatletter
\let\c@equation\c@theorem

\makeatother

\theoremstyle{definition}
\newtheorem{definition}[theorem]{Definition}
\newtheorem{example}[theorem]{Example}

\theoremstyle{remark}
\newtheorem{remark}[theorem]{Remark}

\begin{document}

\title{Harder--Narasimhan filtration of torsion $F$-gauges}
\author{Yuanning Zhang}
\address{Department of Mathematics, Northwestern University}
\email{yuanningzhang2026@u.northwestern.edu}
\date{}

\begin{abstract}
We establish a Harder--Narasimhan theory for torsion coherent
$F$-gauges over a perfect field $k$ of positive characteristic and
completely classify its stable objects.  Our construction uses
Ekedahl's derived equivalence with coherent complexes over the
Raynaud ring, where the theory refines his half-integral type
filtration on diagonal dominoes.  At each finite rational slope
$d/q$ in lowest terms, there is a unique stable diagonal domino
up to Breuil--Kisin twist, constructed explicitly from the
corresponding Christoffel word.  The semistable category of this
slope is equivalent to a product of $q$ copies of the zero-slope
category.  The classification of stable objects corrects
Ekedahl's classification of weakly simple diagonal dominoes of
type $\tfrac12$.  The theory upgrades to a locally
finite Bridgeland stability condition on
$\Perf_{\mathrm{tor}}(k^{\mathrm{Syn}})$.
\end{abstract}

\maketitle

\tableofcontents

\section{Introduction}
\label{sec:intro}

Let $k$ be a perfect field of characteristic $p$, and let
$W:=W(k)$ be its ring of Witt vectors with Frobenius $\sigma$.  This paper develops a
Harder--Narasimhan theory for the torsion coherent $F$-gauges over $k$, and
relates it to a filtration Ekedahl constructed forty years ago \cite{ekedahl3}.
The strategy of this paper is to take a detour: we first study the
Harder--Narasimhan formalism on another category
$\Delta_{\mathrm{tor}}$, and then return to $F$-gauges by a
torsion-pair tilt in the sense of \cite{HRS96}.

Let $R$ be the Raynaud ring of Illusie--Raynaud \cite{IR83}, the graded ring
generated by $F$, $V$ and the de Rham--Witt differential $d$, so that the de
Rham--Witt complex of a smooth proper $X/k$ is a single object
$R\Gamma(X,W\Omega_X^\bullet)$ of the bounded coherent derived category
$D^b_c(R)$, whose totalization $\mathbf s$ computes crystalline cohomology.
Dominoes are the parts of Hodge--Witt cohomology that fail to be finitely
generated over $W$.  By the Illusie--Raynaud d\'evissage they carry exactly the
nonzero differentials of the slope spectral sequence, while the finitely
generated part survives to $E_\infty$ \cite[\S I.2]{IR83}.

In \cite{ekedahl3}, Ekedahl first proved a derived equivalence
$S\colon D^b_c(R)\xrightarrow{\ \sim\ }D^b_c(F\text{-gauges})$, and the latter
category is $\Perf(k^{\mathrm{Syn}})$ in the modern language of
\cite{BhattFgauges}.  This equivalence does not restrict to an
equivalence of abelian hearts: the Postnikov $t$-structure on
$D^b_c(R)$ has heart the coherent $R$-modules, and the standard
$t$-structure on $\Perf(k^{\mathrm{Syn}})$ has heart
$\Coh(k^{\mathrm{Syn}})$.  Ekedahl defines a third
\emph{diagonal $t$-structure} on $D^b_c(R)$, whose heart $\Delta$
consists of what he calls \emph{diagonal complexes}.  The diagonal
complexes killed by a power of $p$ form a Serre subcategory
$\Delta_{\mathrm{tor}}$, which contains both the finite-length
Dieudonn\'e modules and the dominoes.  He defines the
\emph{diagonal dominoes} $\DomD$ as the subcategory of
$\Delta_{\mathrm{tor}}$ consisting of the objects with no nonzero
finite-length Dieudonn\'e subobject, and shows that this
definition generalizes dominoes.  He then defines a
$\tfrac12\mathbb Z$-indexed filtration $\FilD$ on $\DomD$ that
extends the $\mathbb Z$-indexed type filtration on dominoes.

The starting point of this paper is a conceptual interpretation
of $\FilD$.  Two of its features remain mysterious in Ekedahl's
treatment: (1) the objects of pure integer type form an abelian
category, while the objects of type in $\tfrac12+\mathbb Z$ do
not; (2) Ekedahl classified the weakly simple objects of type
$\tfrac12$ into two families, which he called $N_{m,n}$ and $N_m$
\cite[Theorem~III.2.7]{ekedahl3}, by a case analysis that leaves
the shape of the list unexplained.  His proof suggested
to us that a notion of slope on $\DomD$ accounts for both features,
and this is the case.

For $M\in\Delta_{\mathrm{tor}}$ let $\rkD(M)$ be the sum of Ekedahl's
domino numbers $T^{i,-i}(M)$ and let $\degD(M)=\chi(\mathbf s(M))$,
where $\mathbf s\colon D(R)\to D(W)$ is the totalization functor
and $\chi$ is the Euler characteristic of a complex of $W$-modules
with finite-length cohomology.  The slope is $\muD=\degD/\rkD$,
with the value $+\infty$ at rank zero.
Both invariants are additive.  The rank vanishes exactly on the
finite direct sums of Breuil--Kisin twists of finite-length
Dieudonn\'e modules, so
classical Dieudonn\'e theory occupies the locus $\muD=+\infty$, and the
part of finite rational slope corresponds to the diagonal dominoes.

\medskip
\noindent\textbf{Theorem A} (Harder--Narasimhan filtration on
$\Delta_{\mathrm{tor}}$: Theorems~\ref{thm:extended-hn}
and~\ref{thm:refinement})\textbf{.}
\emph{Every nonzero $M\in\Delta_{\mathrm{tor}}$ admits a unique
Harder--Narasimhan filtration for $\muD$.  The slope $+\infty$ occurs
exactly when the maximal finite-torsion subobject $M_{\mathrm{ft}}$
is nonzero.  On a
diagonal domino, $\FilD^rM=F^{\ge r}_{\HN}M$ and
$\FilD^{r+1/2}M=F^{>r}_{\HN}M$ for every $r\in\mathbb Z$.}

\medskip
For $r\in\mathbb Z$, the objects
of pure type $r$ form the abelian category of semistable objects of
slope $r$, while the objects of pure type $r+\tfrac12$ form the
category of objects whose HN slopes lie in the open interval
$(r,r+1)$.  Ekedahl's classification of weakly simple objects then
corresponds to the stable objects of slopes $1/q$ and $(q-1)/q$ up to
Breuil--Kisin twist (Theorem~D below).  Restricting to a single rational slope gives an abelian category
(Theorem~\ref{thm:fixed-slope}); its structure is described in
Theorem~C below.

To translate this result to the $F$-gauge side, we naturally extend
the HN formalism on $\Delta_{\mathrm{tor}}$ to a Bridgeland
stability condition $\sigma_\Delta=(Z_\Delta,\mathcal P_\Delta)$
on $\Dtor$, where $Z_\Delta=-\degD+i\,\rkD$ is the central charge
and $\mathcal P_\Delta$ is the slicing by its phase, to be recalled
in Section~\ref{subsec:transport}.  We then use Ekedahl's derived
equivalence
$S\colon D^b_c(R)\xrightarrow{\sim}\Perf(k^{\mathrm{Syn}})$
together with his identification of the $F$-gauge heart
$\Coh(k^{\mathrm{Syn}})$ on the left hand side, denoted $\mathcal G$
\cite[Ch.~II]{ekedahl3}.  Thus both the diagonal heart $\Delta$ and
$\mathcal G$ are abelian subcategories of $D^b_c(R)$.  The next
result says that they are related by a tilt at a torsion pair in
the sense of Happel--Reiten--Smal\o{} \cite{HRS96}.

\medskip
\noindent\textbf{Theorem B}
(Theorem~\ref{thm:gauge-heart-slope-zero})\textbf{.}
\emph{Under $S$, the torsion $F$-gauge heart
$\Coh_{\mathrm{tor}}(k^{\mathrm{Syn}})$ is the HRS tilt
$\bigl\langle\Delta_{\mathrm{tor}}^{\HN\le0}[1],
\Delta_{\mathrm{tor}}^{\HN>0}\bigr\rangle=\mathcal P_\Delta((1/2,3/2])$,
the rotation of $\Delta_{\mathrm{tor}}$ through half a phase.  Its stable
objects are the positive-slope $\Delta$-stable objects together with the
shifts by $[1]$ of the nonpositive-slope ones.}

\medskip
The rank, degree, and slope on the $F$-gauge side are intrinsic: a
coherent torsion $F$-gauge $G$ has
\[
 \rkFG(G)=\ell_W(G^{-\infty}),\qquad
 \degFG(G)=\sum_i\chi_W\bigl(\fib(t^{-\infty}\colon
 G^i\to G^{-\infty})\bigr),
\]
and $\muFG:=\degFG/\rkFG$.  Under the
rotation the slope inverts: on the intersection
$\mathcal P_\Delta((1/2,1])$ of the two hearts, we have
$\muFG(S(M))=-1/\muD(M)$
(Proposition~\ref{prop:gauge-euler-invariants}).  In particular the
finite-torsion objects of classical Dieudonn\'e theory, of slope
$+\infty$ on the diagonal side, become the $F$-gauges of
slope~$0$.

\medskip
We then study the stable objects of a given rational slope
$\lambda=d/q$, and find that they are exactly the Breuil--Kisin
twists of a single diagonal domino $U_{d/q}$, which we call the
Christoffel domino of slope $d/q$, whose name is inspired by the
combinatorics of Christoffel words.

\medskip
\noindent\textbf{Theorem C} (Stable objects and fixed-slope categories:
Theorems~\ref{thm:stable-classification},
\ref{thm:integer-slope-category}, and~\ref{thm:rational-slope-category})\textbf{.}
\emph{For $\lambda\in\mathbb Q$, write uniquely
$\lambda=r+d/q$ with $r\in\mathbb Z$, $0<d\le q$, and $\gcd(d,q)=1$.}
\begin{enumerate}
\item[(i)] \emph{Up to Breuil--Kisin twist, the unique stable object of slope $\lambda$ is
$U_{d/q}(r\mathbf1)$, where $U_{1/1}=U_1$.  The stable objects of slope $+\infty$
are the Breuil--Kisin twists of the simple finite-length Dieudonn\'e
modules.}
\item[(ii)] \emph{There are exact $\mathbb Z_p$-linear equivalences
\[
 \Domss(\lambda)
 \simeq\prod_{a=0}^{q-1}\Domss(\lambda)_a
 \simeq\Domss(0)^{\,q},
\]
where $\Domss(\lambda)_a$ consists of objects whose composition
factors are $U_{d/q}(r\mathbf1)\{a+bq\}$, $b\in\mathbb Z$.
The first decomposition is canonical, by the residue of the
twist modulo $q$.}
\end{enumerate}

Here $\Domss(0)$ is equivalent to the category of graded
$W[u,t]/(ut-p)$-modules of finite total $W$-length,
with $\deg u=1$ and $\deg t=-1$
(Theorem~\ref{thm:integer-slope-category}).  Thus the equivalence
type of a finite rational slope category depends only on the
denominator of the slope.

\medskip
These Christoffel dominoes are described explicitly as torsion
$F$-gauges in the proof of Proposition~\ref{prop:chain-presentation}.
The key step of its proof presents the $F$-gauge as a module over a
semilinear string algebra and combines the string--band
classification of Bennett-Tennenhaus and Crawley-Boevey \cite{BTCB}
with the boundary condition, which excludes the bands.

\medskip
\noindent\textbf{Theorem D}
(Theorem~\ref{thm:weak-simplicity})\textbf{.}
\emph{Up to Breuil--Kisin twist, the weakly simple diagonal dominoes
of pure type $\tfrac12$ are exactly the Christoffel dominoes
$U_{1/q}$ and $U_{(q-1)/q}$.}

\medskip
This corrects Ekedahl's classification
\cite[Theorem~III.2.7]{ekedahl3}: the printed list stops the second
family at slope $2/3$, and the step of the proof that discards the
higher slopes asserts an extension forbidden by additivity of the
degree (Remark~\ref{rem:weak-simplicity-gap}).

In the last section, with $k$ algebraically closed, we apply the
theory to the superspecial abelian varieties $E^g$.  The middle
diagonal domino of the threefold is the stable object $U_{1/2}$
(Corollary~\ref{thm:superspecial-threefold}).  In dimension four
the middle diagonal domino is no longer semistable: it is an
extension of $U_{1/3}$ by seventeen copies of $U_1\{-1\}$, with Postnikov
layers $U_1$, $U_1^{\oplus16}\oplus U_{0,2}$ and $U_{-1}$
(Theorem~\ref{thm:superspecial-fourfold}).  The layer $U_{-1}$
gives a nonzero higher differential in the slope spectral sequence
of $E^4$ (Corollary~\ref{cor:no-e2-degeneration}), a special case
of Ekedahl's theorem that the slope spectral sequence of a
supersingular abelian fourfold does not degenerate at $E_2$
\cite{ekedahl1}.

We contrast this picture with the Dieudonn\'e--Manin
classification \cite{Manin63}.  Both theories carry $\mathbb Q$-indexed slopes,
but an isocrystal is the direct sum of its isoclinic pieces.  The
splitting is geometrized by the Fargues--Fontaine curve: the
Harder--Narasimhan filtration of every vector bundle on it splits,
and every semistable bundle is a direct sum of stable ones
\cite{FarguesFontaine18}.  In our category
$\Delta_{\mathrm{tor}}$ the Harder--Narasimhan filtration does not
split in general.  Every stable diagonal domino $U$ is nevertheless
rigid and satisfies $\operatorname{Ext}^2(U,U)=k$
(Proposition~\ref{prop:ext-stable}, \eqref{eq:ext-table}, and
Lemma~\ref{lem:nygaard-shift}), so
the fixed-slope categories behave like sheaves on a surface rather
than on a curve.  The comparison is more than an analogy: the two
categories
$\Perf_{\mathrm{tor}}(k^{\mathrm{Syn}})$ and
$\Perf(k^{\mathrm{Syn}})[1/p]$ sit at the opposite ends of one
localization sequence for $\Perf(k^{\mathrm{Syn}})$
(Remark~\ref{rem:torsion-isocrystal}), and $\Coh(k^{\mathrm{Syn}})$
has global dimension $2$ (Remark~\ref{rem:ext-comparison}).

In \cite{Fargues10}, Fargues constructed a Harder--Narasimhan
filtration for finite flat group schemes over a complete rank-one
valuation ring of mixed characteristic $(0,p)$, with height the
$p$-logarithm of the order and degree the valuation of the module of
invariant differentials, and Levin and Wang-Erickson extended it
to torsion Kisin modules $\mathfrak M$ of bounded height attached
to a finite extension $K/\mathbb Q_p$ \cite{LevinWangErickson20}.  Their rank is the length of the
generic fibre, in parallel with $\rkFG(G)=\ell_W(G^{-\infty})$.
Their degree is the length of the cokernel of the linearized
Frobenius $\varphi^*\mathfrak M\to\mathfrak M$ divided by
$[K:\mathbb Q_p]$, the sum of the elementary divisors of the
Frobenius, and vanishes exactly on the \'etale objects, whereas
$\degFG(G)$ measures how far the lengths of the levels of $G$ drop
below the length of $G^{-\infty}$, and vanishes on every
finite-length Dieudonn\'e module.  So their theory separates the
finite flat group schemes, while ours places all of them at slope
$0$ and separates the diagonal dominoes, which have no counterpart
among Kisin modules.

\medskip
\noindent\textbf{Notation and conventions.}
For $M\in D^b_c(R)$ we write $M(i)$ for the
internal shift, with $M(i)^j=M^{i+j}$, $M[n]$ for the cohomological shift, and
$M\{i\}:=M(i)[-i]$ for the Breuil--Kisin twist.  Dieudonn\'e modules are always
$V$-complete, which for finite length means that $V$ is nilpotent.
Ekedahl's simple functor is $\mathbf s$ and his derived equivalence is $S$.
Restriction and extension of scalars along the Frobenius are
$\sigma_*$ and $\sigma^*$, mutually inverse since $k$ is perfect.
We use triangulated categories throughout.

\medskip
\noindent\textbf{Acknowledgements.} The author thanks his advisor, Ben Antieau,
for his guidance and for many helpful discussions.  This material is based
upon work supported by the National Science Foundation under Grant
No.~DMS-1928930 while the author was in residence at the Simons Laufer
Mathematical Sciences Institute in Berkeley, California, during the Fall
2026 semester, for the program \emph{Motivic Homotopy Theory: Connections
and Applications}.  The author is also supported by the Simons
Collaboration on Perfection in Algebra, Geometry, and Topology.

\medskip
\noindent\textbf{AI disclosure.} The mathematical content of this
paper is due to the author, apart from what follows.  The AI
assistants ChatGPT 5.6 Sol Extra High and Claude Fable 5 found a gap
in the author's first draft
of the proof of Proposition~\ref{prop:chain-presentation}, supplied a
new argument, and pointed out its connection with the
classification of string and band modules over semilinear clannish
algebras \cite{BTCB}.  The author then verified
that argument in detail and rewrote it.  These tools also drafted
expository passages and were used to proofread and edit the
manuscript.  All content they produced has been reviewed, edited, and
checked for accuracy by the author, who is responsible for the paper
as it stands.

\section{Preliminaries}
\label{sec:prelim}

This section provides a brief summary of Ekedahl's formalism of
diagonal complexes and $F$-gauges used below \cite{ekedahl3}.  The
topics of ordinary dominoes, coherent $R$-module d\'evissage, and
Mazur--Ogus varieties are treated at greater length in
\cite{ZhangDominoes}.

\subsection{Diagonal complexes and $F$-gauges}
\label{subsec:prelim-diagonal}

\begin{definition}
\label{def:raynaud-ring-ekedahl}
The Raynaud ring is the noncommutative graded $\mathbb Z_p$-algebra
\[
 R:=W_\sigma\{F,V,d\}/(FV=VF=p,\ d^2=0,\ FdV=d)=R^0\oplus R^1,
\]
graded by $\deg F=\deg V=0$ and $\deg d=1$, where the subscript
$\sigma$ denotes Frobenius semilinearity: $Fa=\sigma(a)F$,
$Va=\sigma^{-1}(a)V$, and $da=ad$ for $a\in W$.  Thus $dF=pFd$, $Vd=pdV$, and
$R^0=W_\sigma[F,V]/(FV-p)$.  The right ideals
$V^nR+dV^nR$ define its standard filtration.  Put
$R_n=R/(V^nR+dV^nR)$, a $(W_n[d]/d^2,R)$-bimodule, and write
$\widehat R:=\varprojlim_n R_n$ for the completed Raynaud ring.  Multiplication on $R$ extends continuously to
$\widehat R$.
\end{definition}

\begin{definition}[$R$-modules, completion, and coherence]
\label{def:raynaud-complex}
\label{def:raynaud-completion}
\label{def:coherent-raynaud-complex}
Write $\mathrm{Mod}(R)$ for graded left $R$-modules and $D(R)$ for its derived category.
The Postnikov $t$-structure on $D(R)$ has heart $\mathrm{Mod}(R)$.
For $M\in D(R)$ set
$\widehat M:=R\!\varprojlim_n(R_n\otimes_R^{\mathbf L}M)$.
It is \emph{$R$-complete} if $M\simeq\widehat M$, and \emph{coherent} if, in
addition, every $R_n\otimes_R^{\mathbf L}M$ has finitely generated $W_n$-cohomology.
Let $D^b_c(R)$ be the bounded coherent subcategory.  Coherent modules form an abelian
extension-closed category and $D^b_c(R)$ is triangulated
\cite[Cor.~2.4.8]{Illusie83}.
\end{definition}

\begin{definition}[The simple functor]
\label{def:simple-functor}
One can represent $M\in D(R)$ as a double complex with Raynaud
differential $d$ and cohomological differential $\partial$.  Ekedahl's
\emph{simple functor} $\mathbf s:D(R)\to D(W)$ totalizes the two
differentials \cite[O.3]{ekedahl3}.  For a smooth proper $X/k$, the
complex $R\Gamma(X,W\Omega_X^\bullet)$ lies in $D^b_c(R)$ and
$\mathbf s\,R\Gamma(X,W\Omega_X^\bullet)\simeq
R\Gamma_{\mathrm{crys}}(X/W)$.
\end{definition}

We use the bidegree notation $M^{i,j}$, with internal degree $i$
and cohomological degree $j$.  The internal shift is
$M(a)^{i,j}=M^{a+i,j}$ and the cohomological shift is
$M[n]^{i,j}=M^{i,j+n}$.  The \emph{Breuil--Kisin twist}
$M\{a\}:=M(a)[-a]$ preserves the total diagonal $i+j$.

\begin{definition}[The diagonal $t$-structure]
\label{def:heart-operators}
\label{def:diagonal-t-structure}
\label{def:diagonal-heart}
For an $R$-module $M$, set
\[
 \begin{aligned}
 Z^i&=\ker(d:M^i\to M^{i+1}),&
 B^i&=\operatorname{im}(d:M^{i-1}\to M^i),\\
 V^{-\infty}Z^i&=\bigcap_{n\ge0}\ker(dV^n),&
 F^\infty B^i&=\bigcup_{n\ge0}\operatorname{im}(F^nd).
 \end{aligned}
\]
Then $B^i\subset F^\infty B^i\subset V^{-\infty}Z^i\subset Z^i$, and $d$ factors
through $M^i/V^{-\infty}Z^i\to F^\infty B^{i+1}$.  For each $i$, the
diagonal truncations
\[
\begin{aligned}
 \widetilde\tau_{\le i}M&:=\bigl[\cdots\xrightarrow{d}M^{i-1}
 \xrightarrow{d}M^i\xrightarrow{d}F^\infty B^{i+1}\to0\bigr],\\
 \widetilde\tau_{\ge i+1}M&:=\bigl[0\to M^{i+1}/F^\infty B^{i+1}
 \xrightarrow{d}M^{i+2}\xrightarrow{d}\cdots\bigr]
\end{aligned}
\]
sit in a natural short exact sequence
$0\to\widetilde\tau_{\le i}M\to M\to\widetilde\tau_{\ge i+1}M\to0$.
Ekedahl's \emph{diagonal $t$-structure} on $D^b_c(R)$ has aisles
\[
 \widetilde D^{\le0}=\{E:\widetilde\tau_{\le-q}H^q(E)\simeq H^q(E)
 \text{ for all }q\},
 \qquad
 \widetilde D^{\ge0}=\{E:\widetilde\tau_{\ge-q}H^q(E)\simeq H^q(E)
 \text{ for all }q\}
\]
\cite[Theorem~I.1.1 and Definition~I.1.2]{ekedahl3}.  Its heart is
denoted by $\Delta$, its objects are called \emph{diagonal
complexes}, and its cohomology functors are
$\widetilde H^i(E):=(\widetilde\tau_{\le i}\widetilde\tau_{\ge i}E)[i]$.
\end{definition}

\begin{definition}[Dominoes]
\label{def:elementary-domino}
\label{rem:elementary-type}
For $j\in\mathbb Z$, with the convention $dV^n=F^{-n}d$ for $n<0$, put
$U_j:=\widehat R/\widehat R(F,dV^{j-1})$.  It is concentrated in internal degrees
$0,1$, with
\[
 (U_j)^0=\prod_{n\ge0}kV^n,\qquad (U_j)^1=\prod_{n\ge j}kdV^n.
\]
Here $V$ is injective and $F=0$ in degree $0$, while $F$ is surjective and
$V=0$ in degree $1$ \cite[p.~108]{IR83}.  Its Euler characteristic is
$\dim_k\ker d-\dim_k\operatorname{coker}d=j$, called the \emph{type}
by Ekedahl and the \emph{degree} in this paper.
\label{def:domino}
An $R$-module is a \emph{domino} if it has a finite filtration with
graded pieces isomorphic to $U_j$'s.
The number of factors is independent of the filtration, equals
$\dim_k(M^0/VM^0)=\dim_k(M^1[F])$, and is called the \emph{dimension}
of the domino.
\end{definition}

\begin{definition}
\label{def:s-torsion-vocab}
For every $M\in\Delta$, the cohomology of $\mathbf s(M)$ is
concentrated in degrees $0$ and $1$
\cite[Proposition~I.1.4(i)]{ekedahl3}.  An object $M\in\Delta$ is
\emph{$\mathbf s$-torsion} if
$H^*(\mathbf s(M))$ is torsion, \emph{$\mathbf s$-$0$-torsion}
(\emph{$\mathbf s$-$1$-torsion}) if moreover $H^1(\mathbf s(M))=0$
(resp.\ $H^0(\mathbf s(M))=0$), and \emph{$\mathbf s$-acyclic} if
$\mathbf s(M)=0$ \cite[Definition~I.3.1]{ekedahl3}.
\end{definition}

\begin{example}
\label{ex:Uj-s-torsion}
The complex $\mathbf s(U_j)$ is $k^j$ in degree $0$ for $j\ge0$, and $k^{-j}$ in
degree $1$ for $j\le0$.  Thus $U_j$ is $\mathbf s$-$0$-torsion for
$j\ge0$, $\mathbf s$-$1$-torsion for $j\le0$, and
$\mathbf s$-acyclic exactly for $j=0$.
\end{example}

\begin{definition}[Nygaard modifications]
\label{def:autoequivalence}
Write $\mathbf1$ for the constant function $1$ and let $\Phi$ be the
group of functions $\varphi\colon\mathbb Z\to\mathbb Z$ of the form
$\varphi=\pi+f$, where $\pi$ is periodic and $f$ has finite support.
This decomposition is unique, since a periodic function of finite
support is zero.  For periodic $\pi$, define $h(\pi)$ by
$h_0(\pi)=0$ and $h_{i-1}(\pi)-h_i(\pi)=\pi(i)$.
Put $s_i(\varphi):=h_i(\pi)-\sum_{j\le i}f(j)$, so that
$s_{i-1}(\varphi)-s_i(\varphi)=\varphi(i)$.

For an $R$-module $M$ and $\varphi\in\Phi$, the \emph{Nygaard modification}
$M(\varphi)$ has $M(\varphi)^i:=\sigma_*^{-s_i(\varphi)}M^i$.
The underlying additive maps of $F$, $V$ and morphisms are unchanged,
and the differential is
$d_\varphi=F^{\varphi(i)}d\colon M(\varphi)^{i-1}\to M(\varphi)^i$,
where $F^{-r}d:=dV^r$ for $r>0$.
The primitive identity makes $d_\varphi$ $W$-linear, and the
Raynaud relations are preserved.
The resulting functors are exact, and additivity of the primitive gives
$(M(\varphi_1))(\varphi_2)=M(\varphi_1+\varphi_2)$.
Thus they are autoequivalences with inverse $(-)(-\varphi)$,
as in \cite[0.2]{ekedahl3}.

Applied termwise, these functors preserve $D_c^b(R)$ by the elementary
d\'evissage: they carry degree shifts of elementary modules to
Frobenius twists of degree shifts of elementary modules
\cite[0.4]{ekedahl3}.
The relation $F^a dV^a=d$ for $a\ge0$ shows that $F^\infty B^i$
is unchanged as an additive subgroup.
Hence the functors commute with the diagonal truncations and
are exact for the diagonal $t$-structure
\cite[Proposition~I.1.6(i)]{ekedahl3}.

Write $\delta_i$ for the indicator of $i$.  The module $U_j$ is
concentrated in internal degrees $0$ and $1$, and
$U_j(r\delta_1)=U_j(r\mathbf1)\simeq U_{j+r}$
\cite[0.2 and Theorem~III.2.3]{ekedahl3}.
\end{definition}

\begin{definition}[Domino numbers]
\label{def:domino-numbers}
For $M\in D^b_c(R)$, write $H^j(M)^i$ for the degree-$i$ part of the
ordinary cohomology, the term $E_1^{i,j}$ of the slope spectral
sequence.  The differential $d:H^j(M)^i\to H^j(M)^{i+1}$ induces a map
$H^j(M)^i/V^{-\infty}Z\to F^\infty B^{i+1}$, which is a domino in
internal degrees $i$ and $i+1$, and $T^{i,j}(M)$ is its dimension
\cite[0.6]{ekedahl3}, called the \emph{domino number}.
\end{definition}

The other side of Ekedahl's equivalence is the category of
$F$-gauges, which we describe in the language of the stacky
approach to prismatic cohomology \cite{BhattFgauges}.

\begin{definition}[$k^{\Delta}$, $k^{\mathcal N}$, and $k^{\mathrm{Syn}}$]
\label{def:f-gauge}
\label{def:stacks}
We define three stacks over $\operatorname{Spf}\mathbb Z_p$ attached
to $\operatorname{Spec}k$ and describe their quasi-coherent sheaves,
following \cite[Sections~3.3, 3.4, 4.1 and~4.2]{BhattFgauges}.
\begin{enumerate}
\item The \emph{prismatization} is $k^{\Delta}:=\operatorname{Spf}W$.
  Its quasi-coherent sheaves are the derived $p$-complete
  $W$-modules.
\item Let $A:=W[u,t]/(ut-p)$ be the Rees algebra of the $p$-adic
  filtration of $W$, graded by $\deg u=1$ and $\deg t=-1$.  The
  \emph{Nygaard filtered prismatization} is
  $k^{\mathcal N}:=\operatorname{Spf}(A)/\mathbb G_m$, and its
  quasi-coherent sheaves are the derived $p$-complete graded
  $A$-modules.  A graded $A$-module is a \emph{gauge}: a graded
  $W$-module $M=(M^i)$ with $W$-linear maps $u\colon M^i\to M^{i+1}$
  and $t\colon M^{i+1}\to M^i$, $ut=tu=p$.  Put
  $M^\infty:=\varinjlim_{u}M^i$ and $M^{-\infty}:=\varinjlim_{t}M^{-i}$,
  with colimit maps $u^\infty\colon M^i\to M^\infty$ and
  $t^{-\infty}\colon M^i\to M^{-\infty}$.  The open substacks
  $\{u\ne0\}$ and $\{t\ne0\}$ are copies of $k^{\Delta}$, and
  the restrictions of $M$ to them are $M^\infty$ and $M^{-\infty}$.
  A gauge is \emph{coherent}
  if it is derived $p$-complete with finite-dimensional cohomology
  modulo $(u,t)$, and has \emph{level} $[m,n]$ if $t$ is invertible
  below $m$ and $u$ above $n$.  For a coherent gauge the level is
  the smallest interval that contains the \emph{Hodge--Tate
  weights}, the degrees in which the derived restriction of $M$ to
  the \emph{Hodge point} $k^{\mathrm{Hodge}}:=\{u=t=0\}=B\mathbb G_{m,k}$
  is nonzero.
\item The \emph{syntomification} $k^{\mathrm{Syn}}$ is the pushout
  \[
  \begin{tikzcd}[column sep=4em, row sep=2em]
   k^{\Delta}\sqcup k^{\Delta}
   \arrow[r, "{(j_{\mathrm{HT}},\,j_{\mathrm{dR}})}"]
   \arrow[d, "\mathrm{can}"']
   & k^{\mathcal N} \arrow[d, "j_{\mathcal N}"] \\
   k^{\Delta} \arrow[r, "j_\Delta"'] & k^{\mathrm{Syn}}
  \end{tikzcd}
  \]
  where $j_{\mathrm{HT}}$ and $j_{\mathrm{dR}}$ are the open
  immersions onto $\{u\ne0\}$ and $\{t\ne0\}$, the second over the
  inverse Frobenius of $W$, and $\mathrm{can}$ is the fold map.  Its
  quasi-coherent sheaves are the derived $p$-complete
  \emph{$F$-gauges}, gauges $M$ with a $W$-linear isomorphism
  $\tau\colon\sigma^*M^\infty\to M^{-\infty}$, so that
  $D_{\mathrm{qc}}(k^{\mathrm{Syn}})$ is the equalizer
  \[
  \begin{tikzcd}[column sep=5em]
   D_{\mathrm{qc}}(k^{\mathrm{Syn}}) \arrow[r, "j_{\mathcal N}^*"]
   & D_{\mathrm{qc}}(k^{\mathcal N})
   \arrow[r, shift left=.7ex, "M\mapsto M^{-\infty}"]
   \arrow[r, shift right=.7ex, "M\mapsto\sigma^*M^\infty"']
   & D_{\mathrm{qc}}(k^{\Delta})
  \end{tikzcd}
  \]
  The stack $k^{\mathrm{Syn}}$ is regular and noetherian, with
  bounded coherent category $\Perf(k^{\mathrm{Syn}})$ and standard
  heart $\Coh(k^{\mathrm{Syn}})$.  Without the requirement that
  $\tau$ be an isomorphism, $M$ is an \emph{$F$-module}
  \cite[Section~1.2]{FontaineJannsen}.  The open point $j_\Delta$
  satisfies $j_\Delta^*M=M^{\infty}$, and $j_{\mathcal N}^*$ forgets
  $\tau$.
\end{enumerate}
\end{definition}

\begin{definition}[Ekedahl's functor $S$]
\label{def:ekedahl-S}
For an $R$-module $M$, with $\sigma_*$ the Frobenius twist, the
adjacent components of $S(M)$ and their transition maps are
\cite[Definition~II.3.1]{ekedahl3}:
\begin{equation}
\label{eq:ekedahl-S-component}
\begin{tikzcd}[column sep=1.8em, row sep=2.5em]
S(M)^{r+1}={}
  & \cdots
      \arrow[r, "\sigma_*d"]
      \arrow[d, shift left=.6ex, "p"]
  & \sigma_*M^{r-1}
      \arrow[r, "\sigma_*d"]
      \arrow[d, shift left=.6ex, "p"]
  & \sigma_*M^r
      \arrow[r, "dV"]
      \arrow[d, shift left=.6ex, "V"]
  & M^{r+1}
      \arrow[r, "d"]
      \arrow[d, shift left=.6ex, "1"]
  & \cdots
      \arrow[d, shift left=.6ex, "1"]
\\
S(M)^r={}
  & \cdots
      \arrow[r, "\sigma_*d"]
      \arrow[u, shift left=.6ex, "1"]
  & \sigma_*M^{r-1}
      \arrow[r, "dV"]
      \arrow[u, shift left=.6ex, "1"]
  & M^r
      \arrow[r, "d"]
      \arrow[u, shift left=.6ex, "F"]
  & M^{r+1}
      \arrow[r, "d"]
      \arrow[u, shift left=.6ex, "p"]
  & \cdots
      \arrow[u, shift left=.6ex, "p"]
\end{tikzcd}
\end{equation}
The downward arrows are $t$, the upward arrows are $u$, and each
vertical pair composes to $p$.  In the limits,
$S(M)^{-\infty}=\mathbf s(M)$ and $S(M)^\infty=\sigma_*\mathbf s(M)$,
and the gluing $\tau$ is the identity of the underlying complex,
through $\sigma^*\sigma_*=\mathrm{id}$.
In the notation of Definition~\ref{def:autoequivalence},
$S(M)^r=\sigma_*\mathbf s\bigl(M(-\delta_r)\bigr)$.  For a complex of
$R$-modules, the construction applies to each term
and $S(M)^r$ is the totalization of the resulting double complex.
\end{definition}

The following theorem is the main result of \cite{ekedahl3} and
the bridge between the two sides of this paper.  Ekedahl states it
for his category of coherent complexes of $F$-gauges, which is
$\Perf(k^{\mathrm{Syn}})$ in the language of
Definition~\ref{def:stacks}.

\begin{theorem}[{Ekedahl \cite[\S\S II.3--5]{ekedahl3}}]
\label{thm:ekedahl-equivalence}
Let $k$ be a perfect field of characteristic $p$.  The functor $S$
of Definition~\ref{def:ekedahl-S} is exact, hence induces a
functor on the derived category, which carries $D^b_c(R)$ into
$\Perf(k^{\mathrm{Syn}})$, and the resulting functor
\[
 S\colon D^b_c(R)\longrightarrow\Perf(k^{\mathrm{Syn}})
\]
is an equivalence of triangulated categories.
\end{theorem}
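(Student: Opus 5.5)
The plan is to reduce all three assertions (exactness, landing in $\Perf(k^{\mathrm{Syn}})$, equivalence) to explicit computations on a small set of generators. We use the d\'evissage of coherent $R$-modules on the left and the generation of $\Perf(k^{\mathrm{Syn}})$ by Breuil--Kisin twists on the right.

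\emph{Exactness.} By \eqref{eq:ekedahl-S-component}, each component $S(M)^r$ is assembled degreewise from the terms $M^i$ and $\sigma_*M^i$. The differentials and the maps $u,t$ are built from $d$, $dV$, $F$, $V$, $p$ and identities, all natural in $M$. Since $\sigma_*$ is exact, a short exact sequence of $R$-modules gives a short exact sequence of double complexes in each level $r$. After totalization this gives a short exact sequence of graded $A$-modules, and the gluing $\tau$ (the identity of $\mathbf s(M)$) is compatible with it. So $S$ is exact on $\mathrm{Mod}(R)$ and passes to $D(R)$ termwise. One must also check that $S$ commutes with the derived completion $R\varprojlim_n R_n\otimes^{\mathbf L}_R(-)$ versus derived $p$-completion. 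I would do this by writing $S(M)^r=\sigma_*\mathbf s(M(-\delta_r))$ and using that $\mathbf s$ and the Nygaard modification commute with these limits.

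\emph{Coherence.} By the elementary d\'evissage \cite[0.4]{ekedahl3}, every object of $D^b_c(R)$ lies in the thick subcategory generated by degree shifts of elementary modules: finitely generated $W$-modules in one internal degree with $d=0$, their $V$-adic and $F$-adic variants, and the dominoes $U_j$. So it suffices to compute $S$ on these explicitly. For example, for $W$ placed in internal degree $i$, the formula gives the gauge with $W$ in every level, with $t=1$ and $u=p$ on one side of $i$ and reversed on the other. This is $\mathcal O\{-i\}$ up to the normalization of twists, visibly a coherent $F$-gauge. A similar computation handles the $k$-versions and the dominoes, the latter giving $F$-gauges of finite length in each level and of bounded level. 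This places $S(D^b_c(R))$ inside $\Perf(k^{\mathrm{Syn}})$.

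\emph{Equivalence.} I would prove full faithfulness on generators, then essential surjectivity by generation. On the left, the objects $W(i)[-?]$ (i.e. $W$ concentrated in internal degree $i$) generate $D^b_c(R)$ as a thick subcategory, since every elementary module is built from them by cones and $p$-adic limits. On the right, the twists $\mathcal O\{n\}$ generate $\Perf(k^{\mathrm{Syn}})$ as a thick subcategory, since $k^{\mathrm{Syn}}$ is regular noetherian with ample family of twists. Full faithfulness then reduces to comparing the maps
\[
 \operatorname{RHom}_R(W_{(i)},W_{(j)})\longrightarrow
 \operatorname{RHom}_{k^{\mathrm{Syn}}}(\mathcal O\{-i\},\mathcal O\{-j\}),
\]
where $W_{(i)}$ denotes $W$ in internal degree $i$. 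The right side is the syntomic cohomology of $k$ with twist $i-j$, computed by the fibre of $\varphi/p^{\,i-j}-1$ on the Nygaard filtration of $W$. The left side is computed from an explicit free resolution of $W_{(i)}$ over $\widehat R$, using generators $F,V,d$ with the relations of Definition~\ref{def:raynaud-ring-ekedahl}. I expect both to be $\mathbb Z_p$ in degree $0$ when $i=j$, a two-term complex $W\xrightarrow{\sigma-p^{n}}W$-type for other twists, and to vanish beyond degree $2$. The check is that $S$ induces the identification.

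The main obstacle is this Hom comparison. One must produce a resolution of $W_{(i)}$ over the noncommutative, semilinear, completed ring $\widehat R$, and match the resulting semilinear complex with the Frobenius-fixed-point description of syntomic cohomology, keeping careful track of the $\sigma_*$ twists and signs. I would first try to use the adjunction $\operatorname{RHom}_{k^{\mathrm{Syn}}}(\mathcal O\{-i\},S(N))\simeq$ the fibre of $(\varphi-1)$ on $S(N)^{-i}\to S(N)^{-\infty}$. I would then compute $S(N)^{-i}=\sigma_*\mathbf s(N(\delta_{-i}))$ directly, and show it agrees with $\operatorname{RHom}_R(W_{(i)},N)$ for all $N$. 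This makes full faithfulness formal once the generating set is fixed.
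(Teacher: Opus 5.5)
\textbf{There is a genuine gap in the equivalence step.} Your exactness and coherence steps are plausible in outline, but the equivalence rests on generation claims that are false.

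\emph{The twists of $\mathcal O$ do not generate.} Over a perfect field the twists $\mathcal O\{n\}$ are mutually orthogonal. By \eqref{eq:gluing-fibre}, $\operatorname{RHom}(\mathcal O,\mathcal O\{n\})$ is the fibre of the map $W\to W$ given by $w\mapsto p^{n}w-\sigma(w)$ for $n>0$, and by $w\mapsto w-p^{-n}\sigma(w)$ for $n<0$ (up to $\sigma$ versus $\sigma^{-1}$). Both maps are bijective, because $\sigma$ is bijective and $W$ is $p$-complete. So the complex $W\xrightarrow{\sigma-p^n}W$ you predict is acyclic, and $\operatorname{RHom}(\mathcal O\{a\},\mathcal O\{b\})=0$ for $a\ne b$.

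The thick closure of the twists is therefore the orthogonal sum of the twists of $\langle\mathcal O\rangle$. Every $P\in\langle\mathcal O\rangle$ has $u\colon P^r\to P^{r+1}$ a quasi-isomorphism for $r\ge0$ and $t\colon P^{r+1}\to P^r$ a quasi-isomorphism for $r<0$. This holds for $\mathcal O$ and is stable under shifts, cones and retracts, so all levels of $P$ are quasi-isomorphic to $P^{-\infty}$. Hence no nonzero object with $G^{\pm\infty}=0$ lies in this thick closure. In particular it misses the skyscraper $\delta=S(U_0(1))$ of Remark~\ref{rem:carmeli-feng} and every finite-length $F$-gauge of Theorem~\ref{thm:integer-slope-category}.

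\emph{The left-hand claim fails too.} Applying the exact functor $S$ shows that $U_0$ is not a finite iterated cone or retract of the $W_{(i)}$. Also, ``$p$-adic limits'' are not a thick-subcategory operation.

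\emph{Adding the twists of $\delta$ would still not suffice.} After inverting $p$ (Remark~\ref{rem:torsion-isocrystal}), these generators only produce isocrystals with integral slopes. But $S(L)=\mathcal N(L,F)$, for $L$ the Dieudonn\'e module of a supersingular elliptic curve, has slope $\tfrac12$ (Lemma~\ref{lem:nygaard-gauge}). So there is no ``ample family of twists'' on $k^{\mathrm{Syn}}$ to appeal to. Compare Remark~\ref{rem:carmeli-feng}, where even generation of $D_{\mathrm{qc}}(k^{\mathrm{Syn}})$ needs $\delta$ in addition to the twists of $\mathcal O$.

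\textbf{What this means for your argument.} Suppose you fully prove that $\operatorname{RHom}_R(W_{(i)},N)\to\operatorname{RHom}(S(W_{(i)}),S(N))$ is an isomorphism for every $N$. That gives full faithfulness only for sources in the thick closure of the $W_{(i)}$, and it gives no essential surjectivity. To close the gap you would have to:
\begin{enumerate}
\item replace $\{W_{(i)}\}$ by a family that really generates, namely the Breuil--Kisin twists of all elementary objects of the d\'evissage (Dieudonn\'e modules finitely generated over $W$, and the dominoes $U_j$);
\item establish the $\operatorname{RHom}$ comparison for all pairs of them, or build a quasi-inverse and check its unit and counit on them;
\item prove the matching d\'evissage for coherent $F$-gauges.
\end{enumerate}
The paper does not reprove this theorem at all; it quotes it from Ekedahl \cite[\S\S II.3--5]{ekedahl3}.
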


\begin{example}
\label{ex:equivalence-examples}
\begin{enumerate}
\item For a Dieudonn\'e module $M$,
\[
 S(M)=
 \cdots\fgpair{p}{1}\sigma_*M
 \fgpair{p}{1}\sigma_*M
 \fgpair{V}{F}M
 \fgpair{1}{p}M
 \fgpair{1}{p}\cdots ,
\]
with $\sigma_*M$ in levels $\ge1$, $M$ in levels $\le0$, and $\tau$
the identity.  Finite-length Dieudonn\'e modules give
the torsion $F$-gauges of level $[0,1]$ \cite[II.2.4(ii)]{ekedahl3}.
\item Let $\{i\}$ denote tensor product with Bhatt's Breuil--Kisin
  line bundle $\mathcal O_{k^{\mathrm{Syn}}}\{i\}$.  The line
  bundle is trivial on the two open copies of $\operatorname{Spf}W$
  and restricts to $\mathcal O(-i)$ on $k^{\mathcal N}$
  \cite[Definition~4.3.4 and Remark~4.3.5]{BhattFgauges}.  So the
  twist shifts the levels, $(G\{i\})^r=G^{r+i}$, and leaves $u$, $t$,
  the two limits and the gluing unchanged.  Then
  $S(M\{i\})\simeq S(M)\{i\}$, and it suffices to check it on
  $M\in\mathrm{Mod}(R)$: levelwise
\[
\begin{aligned}
 S(M\{i\})^r
 &=\sigma_*\mathbf s\bigl(M\{i\}(-\delta_r)\bigr)
 =\sigma_*\mathbf s\bigl((M(-\delta_{r+i}))\{i\}\bigr)\\
 &=\sigma_*\mathbf s\bigl(M(-\delta_{r+i})\bigr)
 =S(M)^{r+i},
\end{aligned}
\]
  since $(-\delta_r)$ modifies the differential entering internal
  degree $r$, which for $M\{i\}$ is the differential entering
  internal degree $r+i$ of $M$, and
  $\mathbf s(N\{a\})=\mathbf s(N)[a][-a]=\mathbf s(N)$.  Both
  sides have limits $\mathbf s(M)$ and $\sigma_*\mathbf s(M)$ with
  the identity gluing.
\item We have
\[
 S(U_1)=
 \cdots\fgpair{0}{\sim}k
 \fgpair{0}{\sim}k
 \rightleftarrows0\rightleftarrows
 k\fgpair{\sim}{0}k
 \fgpair{\sim}{0}\cdots ,
\]
with the zero term in level $1$, and
\[
 S(U_0[1])=
 \cdots\rightleftarrows0\rightleftarrows k\rightleftarrows0
 \rightleftarrows\cdots ,
\]
with the single term $k$ in level $1$, so that $S(U_0\{r\}[1])$ is
concentrated in level $1-r$.
\end{enumerate}
\end{example}

\begin{remark}[Generating the category of $F$-gauges]
\label{rem:carmeli-feng}
In \cite[Remark~4.4.6]{BhattFgauges}, Bhatt asked whether the
derived $\infty$-category $D_{\mathrm{qc}}(k^{\mathrm{Syn}})$ of prismatic
$F$-gauges is compactly generated.  Carmeli--Feng recently answered
this question affirmatively \cite[\S9.1]{CarmeliFeng25}: the
cohomologies of smooth projective $k$-varieties generate
$D_{\mathrm{qc}}(k^{\mathrm{Syn}})$.  The key step of their proof
reduces the generation to the Breuil--Kisin twists of two objects,
the structure sheaf
$\mathcal O_{k^{\mathrm{Syn}}}$ and the skyscraper sheaf $\delta$
of the Hodge point
$k^{\mathrm{Hodge}}\xhookrightarrow{i_{\mathrm{Hodge}}} k^{\mathrm{Syn}}$
(Definition~\ref{def:stacks}).
The skyscraper sheaf
$\delta=(i_{\mathrm{Hodge}})_*\mathcal O_{k^{\mathrm{Hodge}}}$ is
precisely the $F$-gauge with a single term $k$ in level $0$, and in
the notation above, $\delta\simeq S(U_0[1])\{1\}=S(U_0(1))$.
\end{remark}

\subsection{The $F$-gauge $t$-structure and HRS tilting}
\label{subsec:prelim-tilting}

Besides the Postnikov and diagonal $t$-structures, Ekedahl defines
the \emph{$F$-gauge $t$-structure} on $D^b_c(R)$, whose heart is
denoted by $\mathcal G$.  He shows that under the
derived equivalence $S$ it corresponds to the standard $t$-structure
on $\Perf(k^{\mathrm{Syn}})$, and $\mathcal G$ is equivalent to
$\Coh(k^{\mathrm{Syn}})$.  We first recall Ekedahl's definition of
this $t$-structure on the Raynaud side, and interpret its relation
with the diagonal heart via the Happel--Reiten--Smal\o{} (HRS)
tilting formalism \cite{HRS96}.
For $M\in\Delta$, Ekedahl defines two canonical functorial
filtrations $0\subseteq F^1M\subseteq F^2M\subseteq M$ and
$0\subseteq G^1M\subseteq G^2M\subseteq M$, with $G^1=F^1\cap G^2$
and $F^2=F^1+G^2$ \cite[Theorem~I.4.5]{ekedahl3}.  Here we only
recall the definition of $G^2$: let $G^2M$ be the smallest subobject
of $M$ in $\Delta$ such that $M/G^2M$ is $\mathbf s$-$1$-torsion.
The sequence
\begin{equation}
 0\longrightarrow G^2M\longrightarrow M\longrightarrow M/G^2M\longrightarrow0
 \label{eq:G2-torsion-decomposition}
\end{equation}
is functorial and $G^2(G^2M)=G^2M$
\cite[Section~II.1]{ekedahl3}.

Recall the HRS setup \cite{HRS96}.  A \emph{torsion pair}
$(\mathcal T,\mathcal F)$ in an abelian category $\mathcal A$
consists of two full subcategories with
$\operatorname{Hom}(\mathcal T,\mathcal F)=0$, such that every
$A\in\mathcal A$ sits in a short exact sequence
$0\to T\to A\to F\to0$ with $T\in\mathcal T$ and
$F\in\mathcal F$.  When $\mathcal A$ is the heart of a bounded
$t$-structure on a triangulated category $D$, the \emph{tilt}
$\langle\mathcal F[1],\mathcal T\rangle$ is the full subcategory
of the $E\in D$ with $H^0(E)\in\mathcal T$,
$H^{-1}(E)\in\mathcal F$, and $H^i(E)=0$ otherwise.  It is the
heart of a new bounded $t$-structure on $D$.

\begin{proposition}
\label{prop:ekedahl-heart-hrs}
Put $\mathcal T=\{M\in\Delta:G^2M=M\}$ and
$\mathcal F=\{M\in\Delta:G^2M=0\}$.  Then
$(\mathcal T,\mathcal F)$ is a torsion pair in $\Delta$.
Equivalently, $\mathcal T$ consists of the objects whose maximal
$\mathbf s$-$1$-torsion quotient vanishes, and $\mathcal F$ consists
of the $\mathbf s$-$1$-torsion objects.  The $F$-gauge heart is the
tilt $\mathcal G=\langle\mathcal F[1],\mathcal T\rangle$, and $S$
restricts to an exact equivalence
$S:\mathcal G\xrightarrow{\ \sim\ }\Coh(k^{\mathrm{Syn}})$.
\end{proposition}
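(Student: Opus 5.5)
The plan is to verify the torsion-pair axioms directly from the functorial sequence \eqref{eq:G2-torsion-decomposition}, and then identify the tilt with Ekedahl's $F$-gauge heart through his description of $\mathcal G$.

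\textbf{Step 1: decompositions.} For $M\in\Delta$ the sequence $0\to G^2M\to M\to M/G^2M\to0$ supplies the required decomposition. The first term lies in $\mathcal T$ because $G^2(G^2M)=G^2M$. For the last term we need $G^2(M/G^2M)=0$. By definition $M/G^2M$ is $\mathbf s$-$1$-torsion. If $N\in\Delta$ is $\mathbf s$-$1$-torsion, then $G^2N=0$, because $N/0=N$ is $\mathbf s$-$1$-torsion and $G^2N$ is the smallest subobject with that property. Conversely, $G^2N=0$ means exactly that $N$ itself is $\mathbf s$-$1$-torsion. This already gives the stated equivalent descriptions: $\mathcal F$ is the class of $\mathbf s$-$1$-torsion objects, and $\mathcal T$ is the class of objects with no nonzero $\mathbf s$-$1$-torsion quotient. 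For the latter, note that $G^2M=M$ if and only if the maximal $\mathbf s$-$1$-torsion quotient $M/G^2M$ vanishes. The existence of a smallest such subobject, i.e.\ of a maximal quotient, is part of Ekedahl's theorem, and we take it as given.

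\textbf{Step 2: orthogonality.} Let $f\colon T\to F$ with $T\in\mathcal T$ and $F\in\mathcal F$. Then $\operatorname{im}f\subseteq F$ is a quotient of $T$. The key input is that $\mathbf s$-$1$-torsion objects are closed under subobjects in $\Delta$. Since $\mathbf s$ is exact on $\Delta$ and $H^*\mathbf s$ is concentrated in degrees $0,1$ (Definition~\ref{def:s-torsion-vocab}), a short exact sequence $0\to A\to B\to C\to 0$ gives
\[
0\to H^0\mathbf sA\to H^0\mathbf sB\to H^0\mathbf sC\to H^1\mathbf sA\to H^1\mathbf sB\to H^1\mathbf sC\to0.
\]
If $B$ is $\mathbf s$-$1$-torsion, then $H^0\mathbf sB=0$, so $H^0\mathbf sA=0$. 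Also $H^1\mathbf sA$ is an extension of a submodule of the torsion module $H^1\mathbf sB$ by a quotient of $H^0\mathbf sC$. Here is the main obstacle: torsionness of $H^0\mathbf sC$ is not automatic. I would handle it with Ekedahl's finer result (Theorem~I.4.5) that $\mathbf s$-$1$-torsion objects form a Serre-type class closed under subobjects and quotients. Failing that, I would use the filtration identities $G^1=F^1\cap G^2$ and $F^2=F^1+G^2$ to control $H^0\mathbf s$ of quotients. Granting closure under subobjects, $\operatorname{im}f$ is an $\mathbf s$-$1$-torsion quotient of $T$, so it vanishes by Step 1, and hence $f=0$.

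\textbf{Step 3: identification with $\mathcal G$.} It remains to show $\mathcal G=\langle\mathcal F[1],\mathcal T\rangle$. Both sides are hearts of bounded $t$-structures on $D^b_c(R)$. Therefore it suffices to prove one inclusion, say $\langle\mathcal F[1],\mathcal T\rangle\subseteq\mathcal G$, since an inclusion of hearts of bounded $t$-structures is an equality. Because $\mathcal G$ is extension-closed, this reduces to showing $\mathcal T\subseteq\mathcal G$ and $\mathcal F[1]\subseteq\mathcal G$. Both should follow from Ekedahl's Chapter~II description of the $F$-gauge $t$-structure in terms of $G^2$. In particular, an $\mathbf s$-$1$-torsion diagonal object becomes a gauge concentrated in a single degree after the shift $[1]$, which is consistent with $S(U_0[1])$ in Example~\ref{ex:equivalence-examples}. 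Finally, the exact equivalence $S\colon\mathcal G\simeq\Coh(k^{\mathrm{Syn}})$ follows from Theorem~\ref{thm:ekedahl-equivalence} together with Ekedahl's identification of the two $t$-structures.
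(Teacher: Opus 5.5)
Your proposal follows the paper's proof. The decomposition is the $G^2$ sequence, orthogonality comes from the image of $T\to F$ being an $\mathbf s$-$1$-torsion quotient of $T$, and both the identification $\mathcal G=\langle\mathcal F[1],\mathcal T\rangle$ and the equivalence with $\Coh(k^{\mathrm{Syn}})$ are taken from Ekedahl's Chapter~II. The paper cites \S II.1, Theorem~II.5.3 and Proposition~II.5.4 there. Your reduction in Step~3 to the two inclusions $\mathcal T\subseteq\mathcal G$ and $\mathcal F[1]\subseteq\mathcal G$ is sound, but it still rests on the same citation.

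The one thing to correct is Step~2. The obstacle you flag is not a real one, and the fix you propose is false.

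\emph{Why it is not an obstacle.} An object of $\Delta$ is $\mathbf s$-torsion exactly when it is killed by a power of $p$, and $\Delta_{\mathrm{tor}}$ is a Serre subcategory (Definition~\ref{def:torsion-category}, from Ekedahl's Lemma~I.4.2(i)). So a subobject $A$ of an $\mathbf s$-$1$-torsion object $B$ is automatically torsion. Combined with your own observation that $H^0\mathbf s(A)\hookrightarrow H^0\mathbf s(B)=0$, this makes $A$ an $\mathbf s$-$1$-torsion object. This closure under subobjects is exactly what the paper invokes as Ekedahl's Lemma~I.4.2(ii). There is no need to control $H^0\mathbf s$ of the cokernel or to use the identities $G^1=F^1\cap G^2$ and $F^2=F^1+G^2$.

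\emph{Why your proposed fix is false.} $\mathbf s$-$1$-torsion objects are \emph{not} closed under quotients. Take the sequence $0\to U_{-1}\to U_0\to k\to0$ of Remark~\ref{rem:chain-conditions}. The middle term $U_0$ is $\mathbf s$-acyclic (Example~\ref{ex:Uj-s-torsion}), hence $\mathbf s$-$1$-torsion. Its quotient $k$, the Dieudonn\'e module of $\alpha_p$, has $H^0(\mathbf s(k))=k\neq0$. Also, Ekedahl's Theorem~I.4.5 is the existence of the filtrations $F^\bullet$ and $G^\bullet$, not a closure property.

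With the Serre-subcategory argument in place of your suggested fix, your proof is complete at the same level of citation as the paper's.
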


\begin{proof}
For the torsion pair, it suffices to check that
$\operatorname{Hom}(\mathcal T,\mathcal F)=0$ and that every
$M\in\Delta$ has a decomposition.  The image of a map $T\to F$ is
an $\mathbf s$-$1$-torsion quotient of $T$
\cite[Lemma~I.4.2(ii)]{ekedahl3}, hence zero, and
\eqref{eq:G2-torsion-decomposition} is a decomposition, with
$G^2M\in\mathcal T$ by idempotence and $M/G^2M\in\mathcal F$ by
the defining property of $G^2$.  Ekedahl's construction of the
$F$-gauge $t$-structure from $G^2$ then gives
$\mathcal G=\langle\mathcal F[1],\mathcal T\rangle$
\cite[Section~II.1]{ekedahl3}, and his Theorem~II.5.3 and
Proposition~II.5.4 give the asserted equivalence.
\end{proof}

\begin{definition}[Torsion]
\label{def:torsion-category}
An object $M\in\Delta$ is \emph{torsion} if it is
$\mathbf s$-torsion, equivalently $p^nM=0$ for some $n$, and then the
$\mathbf s$-cohomology has finite length.  Write
$\Delta_{\mathrm{tor}}$ for the full subcategory of such objects.  It is a Serre
subcategory of $\Delta$ \cite[Lemma~I.4.2(i)]{ekedahl3}.  Write
$\Dtor\subset D^b_c(R)$ for the full triangulated subcategory of
complexes with $W$-torsion $\mathbf s$-cohomology, equivalently with
every $\widetilde H^i$ in $\Delta_{\mathrm{tor}}$.  On the $F$-gauge
side, write $\Coh_{\mathrm{tor}}(k^{\mathrm{Syn}})$ for the objects
of $\Coh(k^{\mathrm{Syn}})$ killed by a power of $p$, and
$\Perf_{\mathrm{tor}}(k^{\mathrm{Syn}})$ for the objects of
$\Perf(k^{\mathrm{Syn}})$ whose standard cohomology objects lie in
it.
\end{definition}

\begin{corollary}
\label{cor:torsion-equivalence}
The equivalence $S$ restricts to an equivalence
$\Dtor\simeq\Perf_{\mathrm{tor}}(k^{\mathrm{Syn}})$.
\end{corollary}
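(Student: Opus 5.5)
Since $S$ is already an equivalence $D^b_c(R)\simeq\Perf(k^{\mathrm{Syn}})$ by Theorem~\ref{thm:ekedahl-equivalence}, I only need to show that an object $E\in D^b_c(R)$ lies in $\Dtor$ exactly when $S(E)$ lies in $\Perf_{\mathrm{tor}}(k^{\mathrm{Syn}})$. Full faithfulness on the subcategories is then inherited, and essential surjectivity follows from the characterization. Both subcategories are thick triangulated subcategories: $\Dtor$ because $\mathbf s$ is exact and torsion $W$-modules with finite length are closed under extensions and subquotients; $\Perf_{\mathrm{tor}}$ because $\Coh_{\mathrm{tor}}(k^{\mathrm{Syn}})$ is a Serre subcategory of $\Coh(k^{\mathrm{Syn}})$.

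The key observation is that torsion-ness on both sides is detected by a single conservative-enough functor, namely the restriction to the open point. By Definition~\ref{def:ekedahl-S}, $S(E)^{-\infty}=\mathbf s(E)$, that is, $j_\Delta^*$ (up to the Frobenius twist $\sigma_*$, which does not affect torsion) composed with $S$ equals $\mathbf s$. So I will prove the following: an object $G\in\Perf(k^{\mathrm{Syn}})$ lies in $\Perf_{\mathrm{tor}}$ if and only if $G^{-\infty}$, or equivalently $G^\infty$, has $W$-torsion cohomology. Granting this, $E\in\Dtor$ iff $H^*(\mathbf s(E))$ is torsion iff $H^*(S(E)^{-\infty})$ is torsion iff $S(E)\in\Perf_{\mathrm{tor}}$, which is the corollary.

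For the lemma, I use the fact that $G\mapsto G^{-\infty}$ is $t$-exact for the standard $t$-structure, being a filtered colimit along $t$ in the gauge description, so $H^i(G)^{-\infty}=H^i(G^{-\infty})$. I can therefore reduce to a coherent $F$-gauge $G$ in the heart and must show that $p^nG=0$ for some $n$ iff $G^{-\infty}$ is torsion.
\begin{itemize}
\item The forward direction is immediate.
\item For the converse, $G^{-\infty}\cong\sigma^*G^\infty$ via $\tau$, so both limits are finitely generated torsion $W$-modules, killed by some $p^n$. On a coherent gauge of level $[m,n']$, the map $t$ is an isomorphism below $m$ and $u$ is an isomorphism above $n'$. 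Hence $G^i\cong G^{-\infty}$ for $i<m$ and $G^i\cong G^\infty$ for $i>n'$, so these levels are killed by $p^n$.
\item For the finitely many middle levels, each $G^i$ is a finitely generated $W$-module by coherence. The compositions $ut=tu=p$ then show that $p^{N}G^i$ maps injectively into the outer levels modulo bounded torsion. More cleanly, the kernel of $p$-inversion $G\to G[1/p]$ is controlled by the fact that $G[1/p]$ restricted to $k^{\mathcal N}$ away from $p$ is determined by $G^{\pm\infty}[1/p]=0$, since $u,t$ become invertible after inverting $p$.
\item Thus $G[1/p]=0$, and a finitely generated $p$-complete module with $G[1/p]=0$ is killed by a power of $p$.
\end{itemize}

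The main obstacle is this last step: making precise that $G[1/p]=0$ once the limits are torsion, i.e.\ that $u$ and $t$ are isomorphisms after inverting $p$. That they are follows from $ut=tu=p$. Then every $G^i[1/p]\cong G^{-\infty}[1/p]=0$, and finite generation of each $G^i$ over $W$ gives a uniform power of $p$ across the finitely many relevant levels. The remaining levels are handled by the level bounds above.
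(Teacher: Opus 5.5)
Your proposal is correct and follows the paper's proof, which consists of the same two observations: $S(E)^{-\infty}=\mathbf s(E)$, and a coherent $F$-gauge $G$ is killed by a power of $p$ exactly when $G^{-\infty}\simeq\sigma^*G^\infty$ is torsion. The paper takes the second fact as evident, whereas you prove it. You use $t$-exactness of $(-)^{-\infty}$ to reduce to the heart. You then use that $u$ and $t$ become invertible after inverting $p$, which gives $G^i[1/p]=0$, and finite generation of the finitely many middle levels gives a uniform power of $p$.
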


\begin{proof}
A coherent $F$-gauge $M$ is killed by a power of $p$ exactly when
$M^{-\infty}\simeq\sigma^*M^\infty$ is a torsion $W$-module, and
$S(E)^{-\infty}=\mathbf s(E)$ by Definition~\ref{def:ekedahl-S}.
\end{proof}

By Definition~\ref{def:torsion-category}, for
$\mathcal G_{\mathrm{tor}}:=\mathcal G\cap\Dtor$ the equivalence
$S$ restricts to
$\mathcal G_{\mathrm{tor}}\simeq\Coh_{\mathrm{tor}}(k^{\mathrm{Syn}})$.

\begin{remark}[Torsion and isocrystals]
\label{rem:torsion-isocrystal}
For a $\mathbb Z_p$-linear small stable category $\mathcal C$, write
$\mathcal C[1/p]:=\mathcal C\otimes_{\Perf(\mathbb Z_p)}\Perf(\mathbb Q_p)$.
There is a canonical exact sequence in the $\infty$-category of
$\mathbb Z_p$-linear idempotent-complete small stable categories:
\[
 \Perf_{\mathrm{tor}}(k^{\mathrm{Syn}})
 \longrightarrow\Perf(k^{\mathrm{Syn}})
 \longrightarrow\Perf(k^{\mathrm{Syn}})[1/p].
\]
Indeed, since $k^{\mathrm{Syn}}$ is regular and noetherian, a
perfect complex vanishes after inverting $p$ exactly when all of
its standard cohomology sheaves are killed by a power of $p$, so
the first term is the kernel of the second arrow, and the second
arrow realizes the idempotent-completed Verdier quotient.

The last term in the localization sequence is classical.  In a gauge $ut=tu=p$, so inverting
$p$ makes $u$ and $t$ mutually inverse and collapses all the levels
onto the single perfect complex $V:=G^{-\infty}[1/p]$ over
$K:=W[1/p]$.  The gluing survives as a $\sigma$-semilinear
automorphism of $V$, so $\Perf(k^{\mathrm{Syn}})[1/p]$ is the derived
category of $F$-isocrystals over $k$.  For $k$ algebraically closed, the
Dieudonn\'e--Manin classification \cite{Manin63} describes the objects
of this quotient, and the present paper develops a
Harder--Narasimhan theory in its kernel.
\end{remark}

\begin{definition}[Finite torsion and diagonal dominoes]
\label{def:finite-torsion}
An object is \emph{finite torsion} if it is a
finite direct sum of objects $L\{i\}$, where $L$ is a finite-length
Dieudonn\'e module \cite[Theorem~I.1.12]{ekedahl3}.
Write $\Delta_{\mathrm{ft}}$ for this Serre subcategory
\cite[Proposition~I.1.11]{ekedahl3}.  Every $M\in\Delta$ has a
functorial short exact sequence
\begin{equation}
  0\longrightarrow M_{\mathrm{ft}}\longrightarrow M\longrightarrow M/M_{\mathrm{ft}}\longrightarrow0,
  \label{eq:finite-torsion-radical}
\end{equation}
where $M_{\mathrm{ft}}$ is the largest finite-torsion subobject and
$(M/M_{\mathrm{ft}})_{\mathrm{ft}}=0$
\cite[Proposition~I.1.11]{ekedahl3}.\footnote{Ekedahl writes $t^1(M)$
and $t^0(M)$ for $M_{\mathrm{ft}}$ and $M/M_{\mathrm{ft}}$.}
\label{def:diagonal-domino}
A \emph{diagonal domino} is an $M\in\Delta_{\mathrm{tor}}$ with
$M_{\mathrm{ft}}=0$.
Write $\DomD\subset\Delta$ for the full subcategory, with exact
sequences inherited from $\Delta$ \cite[Definition~III.2.1]{ekedahl3}.
The intersection of $\DomD$ with
$\mathrm{Mod}(R)$ is the category of ordinary dominoes
\cite[Proposition~III.3.1]{ekedahl3}.
\end{definition}

\begin{lemma}
\label{lem:torsion-pair}
$(\Delta_{\mathrm{ft}},\DomD)$ is a torsion pair in $\Delta_{\mathrm{tor}}$.
\end{lemma}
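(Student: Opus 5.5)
To show that $(\Delta_{\mathrm{ft}},\DomD)$ is a torsion pair in $\Delta_{\mathrm{tor}}$, I will check the two axioms directly, using the functorial sequence \eqref{eq:finite-torsion-radical}.

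\emph{Both classes lie in $\Delta_{\mathrm{tor}}$.} By definition $\DomD\subset\Delta_{\mathrm{tor}}$. A finite-torsion object is a finite sum of $L\{i\}$ with $L$ a finite-length Dieudonn\'e module. Such an $L$ is killed by a power of $p$, and so is its twist. Hence $\Delta_{\mathrm{ft}}\subset\Delta_{\mathrm{tor}}$, by the equivalent description of torsion in Definition~\ref{def:torsion-category}.

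\emph{Vanishing of Hom.} Let $f\colon T\to D$ with $T\in\Delta_{\mathrm{ft}}$ and $D\in\DomD$. Since $\Delta_{\mathrm{ft}}$ is a Serre subcategory of $\Delta$ (Proposition~I.1.11 of \cite{ekedahl3}), it is closed under quotients. So the image $\operatorname{im}f$, computed in the abelian category $\Delta$, is a finite-torsion subobject of $D$. It is therefore contained in $D_{\mathrm{ft}}=0$, and $f=0$.

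\emph{Decomposition.} For $M\in\Delta_{\mathrm{tor}}$, take the sequence \eqref{eq:finite-torsion-radical}, with $M_{\mathrm{ft}}\in\Delta_{\mathrm{ft}}$. The quotient $M/M_{\mathrm{ft}}$ lies in $\Delta_{\mathrm{tor}}$ because $\Delta_{\mathrm{tor}}$ is a Serre subcategory of $\Delta$, hence closed under quotients. Moreover $(M/M_{\mathrm{ft}})_{\mathrm{ft}}=0$ by the cited property. Together these say exactly that $M/M_{\mathrm{ft}}\in\DomD$.

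The only point requiring any care is that images and quotients are taken in $\Delta$ while the torsion pair lives in $\Delta_{\mathrm{tor}}$. The Serre property of $\Delta_{\mathrm{tor}}$ makes these compatible, and the rest is formal.
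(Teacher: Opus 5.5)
Your proof is correct and follows the paper's argument. Hom vanishes because the image of $T\to D$ is a finite-torsion subobject of a diagonal domino, and the decomposition is the functorial sequence \eqref{eq:finite-torsion-radical}. Your additional checks, that both classes lie in $\Delta_{\mathrm{tor}}$ and that $M/M_{\mathrm{ft}}$ stays in $\Delta_{\mathrm{tor}}$ by the Serre property, are details the paper leaves implicit.
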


\begin{proof}
We need to show that
$\operatorname{Hom}_\Delta(\Delta_{\mathrm{ft}},\DomD)=0$ and that
every object of $\Delta_{\mathrm{tor}}$ has a decomposition.  The image of a map from a finite-torsion object to a diagonal
domino is both a quotient of the source and a subobject of the
target.  It is therefore a finite-torsion subobject of a diagonal
domino and vanishes.  The sequence \eqref{eq:finite-torsion-radical} is a
decomposition, with finite-torsion subobject and diagonal domino
quotient (Definition~\ref{def:diagonal-domino}).
\end{proof}

\begin{proposition}[{\cite[Theorem~III.2.3 and Definition~III.2.4]{ekedahl3}}]
\label{prop:type-filtration}
Every $U\in\DomD$ admits a canonical finite filtration called the
\emph{type filtration} $\FilD^jU$, indexed by half-integers
$j\in\tfrac12\mathbb Z$.  This filtration is uniquely characterized
by three properties:
\begin{enumerate}
\item $\FilD^0U$ is the largest $\mathbf s$-$0$-torsion subobject.
\item $U/\FilD^{1/2}U$ is the largest $\mathbf s$-$1$-torsion
  quotient, so $\FilD^{1/2}U=G^2U$.
\item $(\FilD^jU)(r\mathbf1)=\FilD^{j+r}(U(r\mathbf1))$.
\end{enumerate}
We say $U$ is \emph{of type $j\in\tfrac12\mathbb Z$} if $\FilD^jU=U$
and $\FilD^{j+1/2}U=0$.  For example, when $j\in\mathbb Z$, the
domino $U_j$ is of type $j$.
\end{proposition}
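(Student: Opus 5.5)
The plan is to construct the filtration from the two canonical pieces in (1) and (2) and then spread it to all half-integers using the Nygaard twists in (3). Uniqueness will follow from the same recipe: properties (1)--(3) together determine $\FilD^jU$ for every $j$, once we reduce to $j\in\{0,\tfrac12\}$.

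\textbf{Definition.} For $j=r$ or $j=r+\tfrac12$ with $r\in\mathbb Z$, put
\[
\FilD^{j}U:=\bigl(\FilD^{j-r}(U(-r\mathbf1))\bigr)(r\mathbf1).
\]
Here $\FilD^0$ is the largest $\mathbf s$-$0$-torsion subobject and $\FilD^{1/2}:=G^2$. Both exist: the sum of two $\mathbf s$-$0$-torsion subobjects is again $\mathbf s$-$0$-torsion, since $H^1\mathbf s$ of a quotient of a sum vanishes by the long exact sequence for $\mathbf s$ and Serre closure of $\Delta_{\mathrm{tor}}$. For $G^2$ we use \eqref{eq:G2-torsion-decomposition}. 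The twist $(r\mathbf1)$ is an exact autoequivalence of $\Delta$ preserving $\Delta_{\mathrm{tor}}$ and $\DomD$ (Definition~\ref{def:autoequivalence}), so this is well defined. Property (3) holds by construction, and by (3) uniqueness reduces to (1) and (2).

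\textbf{Monotonicity.} The real content is that this family is decreasing:
\[
\FilD^{r+1}\subseteq\FilD^{r+1/2}\subseteq\FilD^{r}.
\]
After twisting it suffices to show $\FilD^{1/2}U\subseteq\FilD^0U$ and $\FilD^1U\subseteq\FilD^{1/2}U$.

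\emph{First inclusion.} We want $G^2U$ to be $\mathbf s$-$0$-torsion. Equivalently, $U/\FilD^0U$ should be $\mathbf s$-$1$-torsion.
\begin{itemize}
\item To do this I would use Ekedahl's filtrations $F^\bullet,G^\bullet$ of \cite[Theorem~I.4.5]{ekedahl3}, whose graded pieces are $\mathbf s$-$0$-torsion, $\mathbf s$-acyclic or $\mathbf s$-$1$-torsion in a prescribed order.
\item Since $U_{\mathrm{ft}}=0$, the finite-torsion pieces that obstruct the ordering vanish.
\item Concretely, write $Q=U/\FilD^0U$. If $H^0\mathbf s(Q)\ne0$, produce a nonzero $\mathbf s$-$0$-torsion subobject of $Q$, either from $G^1Q$ or from a finite-torsion subobject. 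The first contradicts maximality of $\FilD^0$, the second contradicts $Q\in\DomD$.
\end{itemize}

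\emph{Second inclusion.} This amounts to $U(-\mathbf1)/\FilD^0(U(-\mathbf1))$ mapping to an $\mathbf s$-$1$-torsion object after twisting back. I would compute how $(\mathbf1)$ affects $\mathbf s$ on dominoes: it sends $U_j$ to $U_{j+1}$, raising $\chi$. So an $\mathbf s$-$1$-torsion quotient of $U$ twisted by $(-\mathbf1)$ stays $\mathbf s$-$1$-torsion and has no $\mathbf s$-$0$-torsion part. I expect to do this by dévissage on type, reducing to the case of the $U_j$ (Example~\ref{ex:Uj-s-torsion}).

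\textbf{Finiteness.} Next, $\FilD^jU=U$ for $j\ll0$ and $\FilD^jU=0$ for $j\gg0$. Since $U$ is torsion, it has a finite filtration whose pieces are twists of finitely many $U_j$'s and acyclic pieces. Twisting by $(-r\mathbf1)$ with $r\gg0$ makes every piece $\mathbf s$-$1$-torsion, and for $r\ll0$ every piece $\mathbf s$-$0$-torsion; this again uses $U_j(r\mathbf1)\simeq U_{j+r}$. The example $U_j$ of type $j$ follows from Example~\ref{ex:Uj-s-torsion} together with $U_j=U_0(j\mathbf1)$.

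The main obstacle is the monotonicity step, especially controlling the $\mathbf s$-acyclic pieces under twisting. For this I would rely directly on Ekedahl's proof of \cite[Theorem~III.2.3]{ekedahl3}, citing it for the inclusion if a cleaner argument fails.
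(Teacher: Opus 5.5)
The skeleton of your argument is right. You transport $\FilD^0$ and $G^2$ along $(r\mathbf1)$, which makes (3) and uniqueness automatic, and you check $U_j=U_0(j\mathbf1)$. For the existence of $\FilD^0U$ you also need noetherianity of $\Delta$ (Remark~\ref{rem:chain-conditions}), not only closure under sums. The paper gives no argument of its own here; it simply cites Ekedahl.

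\textbf{The gap: the inclusion $G^2U\subseteq\FilD^0U$.} This is the only non-formal step, and what you wrote does not prove it; the sketch is circular. Take $Q=U/\FilD^0U$ with $H^0\mathbf s(Q)\neq0$.
\begin{enumerate}
\item The subobject that is guaranteed to be nonzero is $G^2Q$, because $Q/G^2Q$ is $\mathbf s$-$1$-torsion and $Q$ is not. It is not $G^1Q\subseteq G^2Q$.
\item To finish you need $G^2Q$ to be $\mathbf s$-$0$-torsion. That is exactly your target statement with $Q$ in place of $U$.
\item The finite-torsion alternative is empty. $Q$ never has a nonzero finite-torsion subobject, since its preimage would be an $\mathbf s$-$0$-torsion subobject strictly larger than $\FilD^0U$. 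The hypothesis $U_{\mathrm{ft}}=0$ plays no role in this inclusion.
\end{enumerate}
The missing input is that every torsion object of $\mathcal T$ is $\mathbf s$-$0$-torsion. One way to get it is from Proposition~\ref{prop:ekedahl-heart-hrs}. There $G^2U\in\mathcal T\subseteq\mathcal G$, so $S(G^2U)$ lies in $\Coh(k^{\mathrm{Syn}})$. Its restriction to the open point is $S(G^2U)^{-\infty}=\mathbf s(G^2U)$ (Definition~\ref{def:ekedahl-S}). This is a filtered colimit of the levels, hence a module in degree $0$. So $H^1\mathbf s(G^2U)=0$, and maximality of $\FilD^0U$ gives the inclusion. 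Without this, or an explicit citation of Ekedahl's Theorem~III.2.3 for exactly this point, monotonicity is unproved.

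\textbf{The second inclusion.} Your claim that $(U/G^2U)(-\mathbf1)$ ``stays $\mathbf s$-$1$-torsion'' does not follow from $\chi$ dropping, and you need less than that.
\begin{enumerate}
\item Put $Q:=U/G^2U$. It is a diagonal domino, since a finite-torsion subobject of an $\mathbf s$-$1$-torsion object would be $\mathbf s$-acyclic of positive degree.
\item Suppose $N\subseteq Q(-\mathbf1)$ is nonzero and $\mathbf s$-$0$-torsion. Then $\degD(N)\ge0$ and $\rkD(N)>0$, so Lemma~\ref{lem:nygaard-shift} gives $\degD(N(\mathbf1))=\degD(N)+\rkD(N)>0$.
\item But $N(\mathbf1)\subseteq Q$ is $\mathbf s$-$1$-torsion, so $\degD(N(\mathbf1))\le0$, a contradiction.
\item Hence the image of $\FilD^0(U(-\mathbf1))$ in $Q(-\mathbf1)$, being $\mathbf s$-$0$-torsion, vanishes. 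This is exactly $\FilD^1U\subseteq G^2U$.
\end{enumerate}

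\textbf{Finiteness.} You justify the filtration by twists of $U_j$ ``since $U$ is torsion''. That is not the reason; the filtration comes from the Postnikov layers of Lemma~\ref{lem:postnikov} refined by domino filtrations. In any case the same degree bound gives finiteness without it:
\begin{enumerate}
\item $\FilD^rU=0$ once $r>\ell_WH^0\mathbf s(U)$.
\item Using the first inclusion, $U(s\mathbf1)/\FilD^0$ is $\mathbf s$-$1$-torsion. It follows that $\FilD^{-s}U=U$ once $s>\ell_WH^1\mathbf s(U)$.
\end{enumerate}
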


The type filtration is functorial for inclusions: $N\subseteq U$ in
$\DomD$ gives $\FilD^jN\subseteq\FilD^jU$ for every $j$.  For $j=0$
the subobject $\FilD^0N$ is $\mathbf s$-$0$-torsion, hence contained
in the largest one of $U$.  For $j=\tfrac12$ the quotient
$N/(N\cap\FilD^{1/2}U)$ embeds into $U/\FilD^{1/2}U$ and is
$\mathbf s$-$1$-torsion, since the amplitude of $\mathbf s$ makes
$H^0(\mathbf s(-))$ vanish on subobjects of $\mathbf s$-$1$-torsion
objects, so $\FilD^{1/2}N\subseteq N\cap\FilD^{1/2}U$ by minimality.
Property (iii) transports the two cases to every $j$.

\begin{remark}
\label{rem:chain-conditions}
The heart $\Delta$ is noetherian \cite[Proposition~I.1.5]{ekedahl3},
hence so is the Serre subcategory $\Delta_{\mathrm{tor}}$.  Artinian
fails already in $\Delta_{\mathrm{tor}}$.  There is a short exact
sequence $0\to U_{j-1}\to U_j\to k\to0$ in $\Delta_{\mathrm{tor}}$,
where $k$ denotes the Dieudonn\'e module of $\alpha_p$, with $F=V=0$.
This gives an infinite strictly descending chain
$U_j\supset U_{j-1}\supset U_{j-2}\supset\cdots$, whose successive
quotients are finite torsion.  Descending chains whose quotients lie
in $\DomD$ are eventually constant
\cite[Proposition~I.1.11(iv)]{ekedahl3}.
\end{remark}

$\DomD$ is closed under taking subobjects in $\Delta$, since a
finite-torsion subobject of a subobject of $M$ is one of $M$.  By
Remark~\ref{rem:chain-conditions} it is not closed under quotients,
and the following notion repairs a quotient.

\begin{definition}[Saturation]
\label{def:saturated}
For $N\subseteq M$ in $\DomD$, the \emph{saturation}
$N^{\mathrm{sat}}$ of $N$ in $M$ is the smallest subobject of $M$
containing $N$ with $M/N^{\mathrm{sat}}\in\DomD$.  The subobject
$N$ is \emph{saturated} if $N=N^{\mathrm{sat}}$.  A \emph{saturated
sequence} is a short exact
sequence in $\Delta$ whose three terms lie in $\DomD$.
\end{definition}

\begin{proposition}
\label{prop:torsion-basics}
$N^{\mathrm{sat}}$ is the preimage of $(M/N)_{\mathrm{ft}}$ under
$M\twoheadrightarrow M/N$.
\end{proposition}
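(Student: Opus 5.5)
Let $\pi\colon M\twoheadrightarrow M/N$ be the quotient map and put $P:=\pi^{-1}\bigl((M/N)_{\mathrm{ft}}\bigr)$. I will show directly that $P$ satisfies the defining property of the saturation. Everything happens in the abelian category $\Delta_{\mathrm{tor}}$, which is a Serre subcategory of $\Delta$ (Definition~\ref{def:torsion-category}). So kernels, images and quotients of objects of $\DomD\subset\Delta_{\mathrm{tor}}$ stay in $\Delta_{\mathrm{tor}}$, and the functorial sequence \eqref{eq:finite-torsion-radical} applies to $M/N$.

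\emph{$P$ is admissible.} By construction $N\subseteq P\subseteq M$, and $P/N\cong(M/N)_{\mathrm{ft}}$. The third isomorphism theorem in $\Delta$ gives
\[
 M/P\cong (M/N)\big/(M/N)_{\mathrm{ft}}.
\]
By \eqref{eq:finite-torsion-radical}, this quotient has vanishing finite-torsion part. It lies in $\Delta_{\mathrm{tor}}$ as a quotient of $M$, so $M/P\in\DomD$. Also $P\in\DomD$, because $\DomD$ is closed under subobjects.

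\emph{$P$ is minimal.} Let $N\subseteq Q\subseteq M$ with $M/Q\in\DomD$. The surjection $M/N\twoheadrightarrow M/Q$ restricts to a map $(M/N)_{\mathrm{ft}}\to M/Q$. This map is zero by the Hom-vanishing in the torsion pair $(\Delta_{\mathrm{ft}},\DomD)$ of Lemma~\ref{lem:torsion-pair}. Hence $(M/N)_{\mathrm{ft}}\subseteq Q/N$, and taking preimages under $\pi$ gives $P\subseteq Q$.

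Together these show that the smallest subobject in Definition~\ref{def:saturated} exists and equals $P$. The only care needed is in the lattice bookkeeping, namely identifying $\pi^{-1}$ of a subobject of $Q/N$ with a subobject of $Q$. That is standard in any abelian category, so I do not expect a real obstacle.
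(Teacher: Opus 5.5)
Your proof is correct and follows the paper's argument. You identify $M/P$ with $(M/N)/(M/N)_{\mathrm{ft}}$, which is a diagonal domino. You then obtain minimality from the vanishing of the map $(M/N)_{\mathrm{ft}}\to M/Q$ for every $Q\supseteq N$ with $M/Q\in\DomD$, which is the paper's proof written out in more detail.
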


\begin{proof}
The quotient of $M$ by the preimage is
$(M/N)/(M/N)_{\mathrm{ft}}$, a diagonal domino, and the image of
$(M/N)_{\mathrm{ft}}$ in $M/N'$ vanishes for every $N'\supseteq N$
with $M/N'\in\DomD$, which gives the minimality.
\end{proof}

\section{The Harder--Narasimhan filtration}
\label{sec:hn}

\subsection{HN formalism on \texorpdfstring{$\Delta_{\mathrm{tor}}$}{Delta-tor}}
\label{subsec:hn-construction}

The construction runs parallel to the classical Harder--Narasimhan theory for
coherent sheaves on a curve \cite{HarderNarasimhan1975}.  The finite-length
Dieudonn\'e modules $\Delta_{\mathrm{ft}}$ play the role of the torsion sheaves,
at slope $+\infty$.  The diagonal dominoes $\DomD$ play the role of the
torsion-free sheaves.  On both sides the descending chain condition fails.  The
chain $U_j\supset U_{j-1}\supset\cdots$ of
Remark~\ref{rem:chain-conditions} is the analogue of
$\mathcal O\supset\mathcal O(-x)\supset\cdots$, with quotients of length one in
each case.  Hence, as in the classical case, we work with the
saturated subobjects of Definition~\ref{def:saturated}.

The argument is then the classical one, and what it uses about
$\Delta_{\mathrm{tor}}$ is the following.
\begin{enumerate}
\item The rank and degree defined below are additive
  (Lemma~\ref{lem:additivity}).
\item The rank vanishes exactly on $\Delta_{\mathrm{ft}}$, where the degree is
  positive (Proposition~\ref{prop:torsion-structure}(i)).
\item The pair $(\Delta_{\mathrm{ft}},\DomD)$ is a torsion pair
  (Lemma~\ref{lem:torsion-pair}).
\item The category $\Delta$ is noetherian
  (Remark~\ref{rem:chain-conditions}).
\item The amplitude of $\mathbf s$ bounds $\degD$ above on the subobjects of a
  fixed object (Definition~\ref{def:s-torsion-vocab}).
\end{enumerate}

\begin{definition}[Rank, degree, and slope]
\label{def:rank-degree-slope}
For $M\in\Delta_{\mathrm{tor}}$ put
$\rkD^i(M):=T^{i-1,1-i}(M)$, the domino number of
Definition~\ref{def:domino-numbers} attached to the differential entering
internal degree $i$.\footnote{Ekedahl's index is the source of the
differential, so $\rkD^i$ is his $T^{i-1,-(i-1)}$.  Indexing by the target
instead matches the Nygaard modifications of
Definition~\ref{def:autoequivalence} and the gauge levels of
Definition~\ref{def:ekedahl-S}.}  The \emph{rank} and the
\emph{degree} of $M$ are
\[
  \rkD(M):=\sum_i\rkD^i(M),\qquad
  \degD(M):=\chi_W(\mathbf s(M))=\ell_WH^0(\mathbf s(M))-\ell_WH^1(\mathbf s(M)),
\]
where $\chi_W=\sum_j(-1)^j\ell_WH^j$ and the two cohomology groups have finite
length.  Both are invariant under Breuil--Kisin twist, since the twist
permutes the $\rkD^i$ and $\mathbf s(M\{i\})\simeq\mathbf s(M)$.  The
same formula defines $\degD(E)$ for any $E\in D^b_c(R)$ whose
$\mathbf s$-realization has finite-length cohomology.  The \emph{slope} is
\[
  \muD(M):=
  \begin{cases}
    \degD(M)/\rkD(M),&\rkD(M)>0,\\
    +\infty,&\rkD(M)=0.
  \end{cases}
\]
\end{definition}

On ordinary dominoes the degree is the sum of the types and the rank is the
classical domino dimension.  Such a $U$ is the two-term complex
$U^0\xrightarrow{d}U^1$, so
$\degD(U)=\ell_W\ker d-\ell_W\operatorname{coker}d$, which on $U_j$ is its type
$j$ (Definition~\ref{def:elementary-domino}).  For a filtration with quotients
$U_{j_1},\dots,U_{j_n}$, additivity (Lemma~\ref{lem:additivity} below) gives
$\degD(U)=\sum_\ell j_\ell$, while
$\rkD(U)=n=\dim_k(U^0/VU^0)=\dim_k(U^1[F])$
\cite[Proposition~I.2.18]{IR83}.

\begin{lemma}[Additivity]
\label{lem:additivity}
Let $0\to A\to B\to C\to0$ be a short exact sequence in $\Delta_{\mathrm{tor}}$.
Then
\[
  \degD(B)=\degD(A)+\degD(C),
  \qquad
  \rkD(B)=\rkD(A)+\rkD(C).
\]
\end{lemma}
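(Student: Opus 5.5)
Degree additivity is formal; rank additivity is the real content. I would treat the two separately.

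\emph{Degree.} A short exact sequence $0\to A\to B\to C\to 0$ in the heart $\Delta$ is a distinguished triangle $A\to B\to C\to A[1]$ in $D^b_c(R)$. The functor $\mathbf s\colon D(R)\to D(W)$ is triangulated, so $\mathbf s(A)\to\mathbf s(B)\to\mathbf s(C)\to$ is a distinguished triangle in $D(W)$. By Definition~\ref{def:s-torsion-vocab} all three complexes have cohomology only in degrees $0,1$. By Definition~\ref{def:torsion-category} these cohomology groups have finite $W$-length. The long exact cohomology sequence is therefore a finite exact sequence of finite-length $W$-modules, and the alternating sum of lengths vanishes. This gives $\degD(B)=\degD(A)+\degD(C)$.

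\emph{Rank.} The plan is to show that for each fixed $i$ the function $M\mapsto\rkD^i(M)=T^{i-1,1-i}(M)$ is additive on $\Delta_{\mathrm{tor}}$; summing over $i$ then gives the result. I would reduce to a single $i$ using the Nygaard modifications of Definition~\ref{def:autoequivalence}. These are exact for the diagonal $t$-structure, preserve $\Delta_{\mathrm{tor}}$, and change the domino number only by shifting indices, via Breuil--Kisin twist. So additivity of one $\rkD^i$ gives additivity of all of them.

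For the fixed index, I would use that $T^{i,j}$ is the dimension of the domino part of the slope-spectral-sequence term $E_1^{i,j}=H^j(M)^i$. This domino part is, after the modification, a domino $H^j(M)^i/V^{-\infty}Z\to F^\infty B^{i+1}$. The first key step is Ekedahl's description of the Postnikov cohomology of a diagonal complex: for $M\in\Delta$, the only possible domino in cohomological degree $1-i$ sits on the diagonal $(i-1,1-i)\to(i,1-i)$. The domino numbers of $M$ are therefore computed by applying the functor ``domino dimension'' to a diagonal truncation of $M$ viewed through a modification. I would then identify $\rkD^i(M)$ with a length-type invariant computed from $\mathbf s$. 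The candidate is the $k$-dimension of $H^1$ of $\mathbf s\bigl(M(\mathbf 1_{\ge i})\bigr)$ modulo $H^1$ of $\mathbf s(M)$, or a similar difference of Euler characteristics $\chi_W\mathbf s(M(\varphi))-\chi_W\mathbf s(M)$ for a suitable finitely supported $\varphi$. Concretely, $\mathbf s(U_j)$ has Euler characteristic $j$ and $U_j(\delta_1)=U_{j+1}$, so one elementary Nygaard modification at internal degree $i$ raises $\degD$ by exactly the number of domino factors sitting across degree $i$. This suggests the identity
\[
\rkD^i(M)=\degD\bigl(M(\delta_i)\bigr)-\degD(M).
\]
Granting this identity, rank additivity follows at once. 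The map $M\mapsto M(\delta_i)$ is exact on $\Delta$ and preserves torsion, and $\degD$ is additive by the first paragraph.

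The main obstacle is proving the displayed identity for an arbitrary $M\in\Delta_{\mathrm{tor}}$, not just for dominoes. I would proceed in three steps.
\begin{enumerate}
\item Use the torsion pair of Lemma~\ref{lem:torsion-pair} and additivity of $\degD$ to reduce to the two cases $M\in\Delta_{\mathrm{ft}}$ and $M\in\DomD$.
\item For $L\{a\}$ with $L$ a finite-length Dieudonn\'e module, both sides of the identity vanish. The left side is zero since there are no dominoes. For the right side, the modification changes only the Frobenius-twist structure of a level-$[0,1]$ object, and $\mathbf s$ is unchanged up to $\sigma$-twist, which preserves length.
\item For $M\in\DomD$, compute $\mathbf s$ through the Postnikov d\'evissage of $M$ into its cohomology objects $H^j(M)$. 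The finitely generated (Dieudonn\'e) parts of $H^j(M)$ contribute equally to both sides of the identity. Each domino part with $n$ factors contributes exactly $n$, by the $U_j$ computation together with the filtration of a domino by $U_j$'s.
\end{enumerate}
The delicate point is step 3: checking that the modification commutes with extracting the domino part. Here I would use the observation after Definition~\ref{def:autoequivalence} that $F^\infty B$ is unchanged as an additive subgroup, and likewise $V^{-\infty}Z$.
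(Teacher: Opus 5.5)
Your degree argument is the same as the paper's. For the rank, your route differs from the paper's. The paper takes the additivity of each $\rkD^i$ directly from Ekedahl's Lemma~III.2.6.1(i) and sums over $i$. You instead write $\rkD^i(M)=\degD(M(\delta_i))-\degD(M)$, so that rank additivity follows from degree additivity and exactness of the modification. That identity is the case $\varphi=\delta_i$ of Lemma~\ref{lem:nygaard-shift}. The paper proves that lemma only afterwards, and its passage from $\DomD$ to $\Delta_{\mathrm{tor}}$ runs through \eqref{eq:finite-torsion-radical}, hence through the present lemma. Your Step~1 has the same problem. Deducing the identity for $M$ from $M_{\mathrm{ft}}$ and $M/M_{\mathrm{ft}}$ requires $\rkD^i(M)=\rkD^i(M_{\mathrm{ft}})+\rkD^i(M/M_{\mathrm{ft}})$, which is an instance of the statement you are proving. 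As written, the argument is circular.

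The repair is to drop Step~1 and run Step~3 on an arbitrary $M\in\Delta_{\mathrm{tor}}$. The definition of the diagonal $t$-structure puts $N:=H^{1-n}(M)$ in internal degrees $n-1,n$, with $N^n=F^\infty B^n$ and $F^\infty B^{n-1}=0$. Moreover $N$ is coherent and killed by a power of $p$. Hence $N$ is an extension of the domino $N^{n-1}/V^{-\infty}Z^{n-1}\to N^n$, of dimension $\rkD^n(M)$, by the finite-length Dieudonn\'e module $V^{-\infty}Z^{n-1}$ in the single degree $n-1$.

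Now apply the modification $(\delta_n)$. It leaves these subgroups unchanged. It only Frobenius-twists the Dieudonn\'e piece and the layers whose differential does not enter degree $n$, so their contributions to $\chi_W\circ\mathbf s$ do not change. It raises the type of the domino by its dimension, using the filtration by $U_j$'s and $U_j(\delta_1)\simeq U_{j+1}$. Internal plus cohomological degree is constant on the diagonal, so each layer contributes to $\chi_W\circ\mathbf s$ with sign $+1$. Additivity along the Postnikov tower then gives the identity without any appeal to rank additivity.

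Your reduction to a single index is garbled: Nygaard modifications preserve each $\rkD^i$, and it is the Breuil--Kisin twist that permutes them. That reduction is unnecessary anyway.

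Once repaired, your proof establishes Lemmas~\ref{lem:additivity} and~\ref{lem:nygaard-shift} together, in the reverse logical order. It also exhibits $\rkD^i$ as the change in $\degD$ under the elementary modification. The cost is the Illusie--Raynaud d\'evissage of each layer and the type-shift computation for dominoes, where the paper spends a single citation.
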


\begin{proof}
Applying $\mathbf s(-)$ gives a distinguished triangle, whose cohomology has
finite length by coherence and torsion, so the Euler characteristic $\degD$ is
additive along the resulting long exact sequence.

Each $\rkD^i$ is additive by \cite[Lemma~III.2.6.1(i)]{ekedahl3}, and summing
over the finitely many nonzero ones gives the rank.
\end{proof}

The slopes themselves are not additive, but the following weight functional is.
It will be used in the proof of Proposition~\ref{prop:mds}.

\begin{definition}[The weight functional]
\label{def:weight}
For $\lambda=d/q\in\mathbb Q$ with $\gcd(d,q)=1$ and $q>0$, and for
$A\in\Delta_{\mathrm{tor}}$ put
\[
 \theta_\lambda(A):=q\degD(A)-d\rkD(A).
\]
It is additive by Lemma~\ref{lem:additivity}, and
for $A\ne0$ it is positive, zero, or negative according as $\muD(A)$ is greater
than, equal to, or less than $\lambda$.  Geometrically it is $q$ times the
signed vertical distance from $(\rkD(A),\degD(A))$ to the line of slope
$\lambda$ through the origin.
\end{definition}

\begin{definition}[Semistable and stable objects]
\label{def:semistable}
A nonzero $M\in\Delta_{\mathrm{tor}}$ is \emph{semistable} if every nonzero
subobject $Y\subset M$ satisfies $\muD(Y)\le\muD(M)$, and \emph{stable} if the
inequality is strict for every proper nonzero subobject.
\end{definition}

Recall the sequence \eqref{eq:finite-torsion-radical} and the saturated
subobjects of Definition~\ref{def:saturated}.

\begin{proposition}
\label{prop:torsion-structure}
\begin{enumerate}
\item For $M\in\Delta_{\mathrm{tor}}$, one has $\rkD(M)=0$ exactly when $M$ is
  finite torsion, and then $\degD(M)>0$ unless $M=0$.
\item If $N\subset M$ are diagonal dominoes and $M/N$ is finite torsion, then
  $\rkD(N)=\rkD(M)$ and $\degD(N)\le\degD(M)$, with equality exactly when
  $N=M$.  The same holds for $\muD(N)\le\muD(M)$ and for
  $\theta_\lambda(N)\le\theta_\lambda(M)$ at every $\lambda\in\mathbb Q$.
\end{enumerate}
\end{proposition}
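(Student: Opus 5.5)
For (i), the plan is to compute both invariants on the two halves of the torsion pair of Lemma~\ref{lem:torsion-pair} and then use additivity (Lemma~\ref{lem:additivity}).

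\emph{Rank on finite torsion.} A finite-torsion object is a finite direct sum of $L\{i\}$ with $L$ a finite-length Dieudonn\'e module. Such an $L$ is an $R$-module concentrated in internal degree $0$. So $d=0$ on it, every domino number vanishes, and $\rkD(L)=0$. The Breuil--Kisin twist only permutes the $\rkD^i$, hence $\rkD(L\{i\})=0$.

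\emph{Degree on finite torsion.} We have $\mathbf s(L\{i\})\simeq\mathbf s(L)=L$ placed in degree $0$. Hence $\degD(L\{i\})=\ell_W(L)>0$ whenever $L\ne0$.

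\emph{Rank on diagonal dominoes.} For the converse, take $M\in\Delta_{\mathrm{tor}}$ and use the sequence \eqref{eq:finite-torsion-radical}. By additivity, $\rkD(M)=\rkD(M/M_{\mathrm{ft}})$. It therefore suffices to show that a nonzero diagonal domino $U$ has $\rkD(U)>0$. My approach is Ekedahl's type filtration (Proposition~\ref{prop:type-filtration}): its graded pieces are nonzero diagonal dominoes of pure type. After a Nygaard twist $(r\mathbf1)$, which permutes the domino numbers, each piece is of type $0$ or $\tfrac12$. For these I would invoke Ekedahl's structure results in \cite[\S III.2]{ekedahl3}, which show that pure-type diagonal dominoes are built from objects with nonzero domino numbers. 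An alternative route is to note that a domino number vanishing in every degree forces the cohomology sheaves $H^j(M)$ to have $d$ with finitely generated source and target. Then $M$ would lie in the finite-torsion part by \cite[Theorem~I.1.12]{ekedahl3}. This step, nonvanishing of the rank on nonzero diagonal dominoes, is the main obstacle, and I would rely on citing Ekedahl here. Given it, rank zero forces $M=M_{\mathrm{ft}}$, and the degree is positive unless $M=0$.

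For (ii), the quotient $Q=M/N$ is finite torsion. Additivity together with (i) gives $\rkD(N)=\rkD(M)-\rkD(Q)=\rkD(M)$. It also gives $\degD(N)=\degD(M)-\degD(Q)\le\degD(M)$, with equality iff $\degD(Q)=0$, i.e.\ iff $Q=0$, i.e.\ $N=M$. Since the ranks agree, $N$ and $M$ are either both zero or both of positive rank, so the two cases are:
\begin{itemize}
\item If the rank is positive, dividing by the common rank gives $\muD(N)\le\muD(M)$ with equality iff $N=M$.
\item If the rank is zero, both objects are finite torsion inside $\DomD$, hence zero, and the claim is trivial.
\end{itemize}
Finally, $\theta_\lambda(M)-\theta_\lambda(N)=\theta_\lambda(Q)=q\degD(Q)\ge0$, with equality iff $Q=0$.
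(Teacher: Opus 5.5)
Your computation on finite-torsion objects and your proof of (ii) match the paper's. The difference is at the one nontrivial step, $\rkD(U)>0$ for a nonzero diagonal domino $U$.

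\textbf{The paper's route.} It does not use the type filtration here. It cites Ekedahl's Proposition~III.3.1: after an internal shift, the Postnikov cohomology objects of $M/M_{\mathrm{ft}}$ are ordinary dominoes, and $\rkD^n$ is the dimension of the one in degree $n$. So $\rkD=0$ kills every layer.

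\textbf{Your first route.} As stated it is not an argument. You do not identify a result of Ekedahl that gives positivity on pure type $\tfrac12$. Within this paper, the analysis of the pure-type pieces (Theorem~\ref{thm:refinement} and Section~\ref{sec:stable}) is built on the present proposition, so it cannot be used here. Also, Nygaard modifications preserve each $\rkD^i$; it is the Breuil--Kisin twist that permutes them.

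\textbf{Your alternative route.} This one works and is genuinely different, once made precise:
\begin{enumerate}
\item The diagonal condition puts $H^{1-n}(M)$ in internal degrees $n-1,n$. So $\rkD(M)$ is the sum of all domino numbers of all $H^q(M)$.
\item If it vanishes, the Illusie--Raynaud d\'evissage makes each $H^q(M)$ of finite type over $W$, hence of finite length since $M$ is torsion.
\item Separatedness of coherent modules for the standard filtration then makes $V$ nilpotent on each level, so $d=F^ndV^n=0$.
\item The diagonal condition $H^q(M)^{1-q}=F^\infty B^{1-q}=0$ now concentrates $H^q(M)$ in internal degree $-q$. That is, $H^q(M)[-q]=L_q\{q\}$ with $L_q$ a finite-length Dieudonn\'e module.
\item The Postnikov truncations of $M$ lie in $\Delta$. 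So $M$ is an iterated extension in $\Delta$ of finite-torsion objects, and it lies in the Serre subcategory $\Delta_{\mathrm{ft}}$.
\end{enumerate}

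\textbf{Comparison.} Your argument gives $\rkD(M)=0\Rightarrow M\in\Delta_{\mathrm{ft}}$ directly for every $M\in\Delta_{\mathrm{tor}}$, without passing to $M/M_{\mathrm{ft}}$. It needs only the Illusie--Raynaud finite-type criterion, not a structure result for diagonal dominoes. The paper's citation buys more: the layers of a diagonal domino are ordinary dominoes whose dimensions are the $\rkD^n$, a fact it reuses in Lemma~\ref{lem:postnikov}.
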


\begin{proof}
For (i), finite-length objects have zero domino numbers, so finite torsion
gives $\rkD(M)=0$.  Conversely additivity in
\eqref{eq:finite-torsion-radical} gives $\rkD(M)=\rkD(M/M_{\mathrm{ft}})$.  After an internal shift, the Postnikov
cohomology objects of $M/M_{\mathrm{ft}}$ are ordinary dominoes
\cite[Proposition~III.3.1]{ekedahl3}, and $\rkD^n$ is the dimension of the one
in degree $n$.  So $\rkD(M/M_{\mathrm{ft}})=0$ forces them all to vanish and $M/M_{\mathrm{ft}}=0$.
For the degree, write $M\simeq\bigoplus_iL_i\{i\}$ with the $L_i$
finite-length Dieudonn\'e modules
(Definition~\ref{def:finite-torsion}).  Then
$\mathbf s(L_i\{i\})\simeq L_i$ in degree zero, so
$\degD(M)=\sum_i\ell_W(L_i)$.  This is positive unless every $L_i$ vanishes,
that is unless $M=0$.

For (ii), put $T=M/N$.  Part (i) and additivity give $\rkD(N)=\rkD(M)$ and
$\degD(M)-\degD(N)=\degD(T)$, which is positive unless $T=0$.  If the common
rank vanishes then (i) makes $N$ and $M$ finite torsion diagonal dominoes and
hence zero.  Otherwise the common rank is positive, and dividing the degrees by
it gives $\muD(N)\le\muD(M)$, an equality precisely when $T=0$.  Since the ranks are equal,
$\theta_\lambda(M)-\theta_\lambda(N)$ is a positive multiple of
$\degD(M)-\degD(N)$ for every $\lambda$, which gives the last inequality.
\end{proof}

By part (i) the slope-$+\infty$ part of any HN filtration lies in $M_{\mathrm{ft}}$, so
the finite-slope part lives on $M/M_{\mathrm{ft}}\in\DomD$.  The next proposition reduces
semistability to saturated subobjects.

\begin{proposition}
\label{prop:stability-enlargement}
Finite-torsion objects are semistable of slope $+\infty$, and the stable ones
are the simple objects of $\Delta_{\mathrm{ft}}$.  A positive-rank object
$M$ is semistable if and only if $M_{\mathrm{ft}}=0$ and
$\muD(N)\le\muD(M)$ for every nonzero saturated $N\subset M$, and stable if and
only if in addition the inequality is strict for $N\ne M$.
\end{proposition}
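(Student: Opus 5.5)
The plan is to treat the three assertions in turn, using only Proposition~\ref{prop:torsion-structure}, the torsion pair of Lemma~\ref{lem:torsion-pair}, and Proposition~\ref{prop:torsion-basics}.

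\emph{Finite-torsion objects.} Let $M\in\Delta_{\mathrm{ft}}$ be nonzero. By Proposition~\ref{prop:torsion-structure}(i) we have $\muD(M)=+\infty$, so the defining inequality $\muD(Y)\le+\infty$ holds for every subobject and $M$ is semistable. For stability, any nonzero subobject $Y$ of $M$ lies in the Serre subcategory $\Delta_{\mathrm{ft}}$ and so also has slope $+\infty$. The strict inequality therefore fails as soon as there is a proper nonzero subobject. Hence $M$ is stable exactly when it has no proper nonzero subobject in $\Delta_{\mathrm{tor}}$. Since $\Delta_{\mathrm{ft}}$ is Serre, this is the same as $M$ being simple in $\Delta_{\mathrm{ft}}$.

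\emph{Positive rank, forward direction.} Let $M$ be semistable with $\rkD(M)>0$, so $\muD(M)<\infty$. If $M_{\mathrm{ft}}\ne0$, Proposition~\ref{prop:torsion-structure}(i) gives it slope $+\infty>\muD(M)$, which is a contradiction. So $M_{\mathrm{ft}}=0$. The condition on saturated subobjects is then a special case of semistability, and the same holds for stability.

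\emph{Positive rank, converse.} Suppose $M\in\DomD$ and $\muD(N)\le\muD(M)$ for every nonzero saturated $N$. Take any nonzero $Y\subset M$. Then $Y\in\DomD$, since $\DomD$ is closed under subobjects, and so $\rkD(Y)>0$ by Proposition~\ref{prop:torsion-structure}(i). Let $Y^{\mathrm{sat}}$ be the saturation of $Y$ in $M$. By Proposition~\ref{prop:torsion-basics}, $Y^{\mathrm{sat}}/Y\cong(M/Y)_{\mathrm{ft}}$ is finite torsion, and $Y^{\mathrm{sat}}\in\DomD$. Proposition~\ref{prop:torsion-structure}(ii) then gives
\[
\muD(Y)\le\muD(Y^{\mathrm{sat}})\le\muD(M),
\]
with the first inequality an equality only if $Y=Y^{\mathrm{sat}}$. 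This proves semistability.

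\emph{Stability in the converse.} Assume also strict inequality for saturated $N\ne M$, and let $Y\ne M$ be nonzero. If $Y^{\mathrm{sat}}\ne M$, the second inequality above is strict. If $Y^{\mathrm{sat}}=M$, then $Y\ne Y^{\mathrm{sat}}$, so the first inequality is strict. In either case $\muD(Y)<\muD(M)$.

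The only point needing care is this last case, $Y^{\mathrm{sat}}=M$. It is handled by the equality clause of Proposition~\ref{prop:torsion-structure}(ii). Everything else is routine bookkeeping with the Serre and subobject-closure properties.
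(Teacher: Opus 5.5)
Your proof is correct and follows the paper's argument essentially step for step. In both, $M_{\mathrm{ft}}$ destabilizes a positive-rank object, and the inequality $\muD(Y)\le\muD(Y^{\mathrm{sat}})$ from Proposition~\ref{prop:torsion-structure}(ii) reduces semistability to saturated subobjects. The stable case $Y^{\mathrm{sat}}=M$ with $Y\ne M$ is handled by the nonzero finite-torsion quotient $M/Y$, where you cite the equality clause of Proposition~\ref{prop:torsion-structure}(ii) and the paper cites additivity, which amounts to the same thing.
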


\begin{proof}
The finite-torsion assertions follow from
Proposition~\ref{prop:torsion-structure}(i), and a positive-rank object with
$M_{\mathrm{ft}}\ne0$ is destabilized by its slope-$+\infty$ subobject $M_{\mathrm{ft}}$.

For $M\in\DomD$ and $N\subset M$, the quotient $N^{\mathrm{sat}}/N$ is finite
torsion, so Proposition~\ref{prop:torsion-structure}(ii) gives
$\muD(N)\le\muD(N^{\mathrm{sat}})$, and semistability is equivalent to the
inequalities for the saturated subobjects.
In the stable case the inequality is strict when $N^{\mathrm{sat}}\subsetneq M$,
while $N^{\mathrm{sat}}=M$ with $N\ne M$ makes $M/N$ nonzero finite torsion,
so that $\muD(N)<\muD(M)$ by additivity.
\end{proof}

The two standard ingredients follow: Hom-vanishing between semistable objects of
decreasing slope gives uniqueness, and the maximal destabilizing subobject gives
existence.

\begin{lemma}
\label{lem:hom-vanishing}
Let $A,B\in\DomD$ be semistable with $\muD(A)>\muD(B)$. Then
$\operatorname{Hom}_\Delta(A,B)=0$.
\end{lemma}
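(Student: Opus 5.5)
Take a nonzero morphism $f\colon A\to B$ and derive a contradiction by comparing slopes along its image; this is the classical Harder--Narasimhan argument, with the torsion pair $(\Delta_{\mathrm{ft}},\DomD)$ standing in for the torsion/torsion-free splitting on a curve. Put $I:=\operatorname{im}f$ and $K:=\ker f$, computed in the abelian category $\Delta$. Since $\Delta_{\mathrm{tor}}$ is a Serre subcategory, $I$ and $K$ lie in $\Delta_{\mathrm{tor}}$, and there is a short exact sequence $0\to K\to A\to I\to0$. Moreover $I\subset B$ is a nonzero subobject of a diagonal domino, and $\DomD$ is closed under subobjects, so $I\in\DomD$. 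Proposition~\ref{prop:torsion-structure}(i) then gives $\rkD(I)>0$, so $\muD(I)$ is finite.

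The first key step is the bound $\muD(I)\ge\muD(A)$. If $K=0$ then $I\simeq A$ and there is nothing to prove. Otherwise $K\subset A$ is a nonzero subobject, and semistability of $A$ gives $\muD(K)\le\muD(A)$. Here $K\in\DomD$ as a subobject of $A$, so $\rkD(K)>0$. I would phrase the slope comparison through the weight functional of Definition~\ref{def:weight}, with $\lambda=\muD(A)$. Note that $\muD(A)$ is finite and rational: $A$ is a nonzero diagonal domino, so it has positive rank. Then $\theta_\lambda(A)=0$ and $\theta_\lambda(K)\le0$, so additivity (Lemma~\ref{lem:additivity}) gives $\theta_\lambda(I)=\theta_\lambda(A)-\theta_\lambda(K)\ge0$, that is, $\muD(I)\ge\muD(A)$. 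This formulation handles the rank bookkeeping uniformly and avoids dividing by ranks case by case.

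The second step is the bound $\muD(I)\le\muD(B)$, which is immediate from the semistability of $B$ since $I$ is a nonzero subobject of $B$. Chaining the two bounds gives $\muD(A)\le\muD(I)\le\muD(B)$, contradicting $\muD(A)>\muD(B)$. Hence $f=0$.

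The only delicate point is making sure no slope is $+\infty$ and no rank vanishes where we divide or compare. That is exactly why we must use that $A$, $B$, and hence $K$ and $I$, lie in $\DomD$ and have $M_{\mathrm{ft}}=0$, together with Proposition~\ref{prop:torsion-structure}(i). Once this is secured, the argument is the standard seesaw, and no saturation is needed, because semistability in Definition~\ref{def:semistable} is tested against all subobjects.
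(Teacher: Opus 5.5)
Your proof is correct and is essentially the paper's argument: the paper uses the same weight $\theta_{\muD(A)}$ along $0\to K\to A\to I\to0$ to get $\theta(I)\ge0$. The only difference is in the upper bound. The paper goes through the saturation, $\theta(I)\le\theta(I^{\mathrm{sat}})<0$, using Proposition~\ref{prop:torsion-structure}(ii), whereas you apply semistability of $B$ directly to $I$; this is legitimate because Definition~\ref{def:semistable} tests every nonzero subobject, so the saturation step is not needed.
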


\begin{proof}
Let $f\colon A\to B$ be nonzero, with kernel $K$ and image $I\ne0$ computed in
$\Delta$, and write $\theta:=\theta_{\muD(A)}$, so that $\theta(A)=0$.
Semistability of $A$ gives $\theta(K)\le0$, so additivity along
$0\to K\to A\to I\to0$ gives $\theta(I)\ge0$.  On the other hand
$\theta(I)\le\theta(I^{\mathrm{sat}})$ by
Proposition~\ref{prop:torsion-structure}(ii), while semistability of $B$ gives
$\muD(I^{\mathrm{sat}})\le\muD(B)<\muD(A)$ and hence
$\theta(I^{\mathrm{sat}})<0$.  This contradicts $\theta(I)\ge0$.
\end{proof}

\begin{proposition}
\label{prop:mds}
Every nonzero $M\in\DomD$ has a largest saturated subobject $M^{+}$ of maximal
slope.  It is semistable, contains every saturated subobject of the same slope,
and every nonzero saturated subobject of $M/M^{+}$ has strictly smaller
slope.
\end{proposition}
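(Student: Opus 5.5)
The plan follows the classical existence argument for the maximal destabilizing subobject, using ingredients (i)--(v) listed at the start of Section~\ref{sec:hn}.

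\emph{Step 1: boundedness of slopes.} Fix nonzero $M\in\DomD$ with $\rkD(M)=n>0$; rank zero is impossible by Proposition~\ref{prop:torsion-structure}(i). Any nonzero saturated $N\subset M$ is a diagonal domino, so $1\le\rkD(N)\le n$ by additivity. For the degree, $\degD(N)\le\ell_WH^0(\mathbf s(N))$. The triangle $\mathbf s(N)\to\mathbf s(M)\to\mathbf s(M/N)$, together with the amplitude $[0,1]$ of $\mathbf s$ on $\Delta$ (ingredient (v)), gives $H^0(\mathbf s(N))\hookrightarrow H^0(\mathbf s(M))$. Hence $\degD(N)\le\ell_WH^0(\mathbf s(M))$. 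Since ranks lie in $\{1,\dots,n\}$ and degrees are integers bounded above, the set of slopes $\muD(N)$ is a bounded-above subset of $\tfrac1{n!}\mathbb Z$. It therefore attains a maximum $\mu_{\max}$.

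\emph{Step 2: the largest subobject of maximal slope.} Among saturated $N$ with $\muD(N)=\mu_{\max}$, use noetherianity (ingredient (iv)) to choose a maximal one, $M^+$. To see that it contains every saturated $N'$ of slope $\mu_{\max}$, consider $N'':=(M^++N')^{\mathrm{sat}}$. Put $\theta=\theta_{\mu_{\max}}$. There is a short exact sequence $0\to M^+\cap N'\to M^+\oplus N'\to M^++N'\to0$. Maximality of $\mu_{\max}$ together with Proposition~\ref{prop:torsion-structure}(ii) applied to saturations gives $\theta(M^+\cap N')\le0$ (or the intersection is zero). Additivity then gives $\theta(M^++N')\ge0$, and saturating gives $\theta(N'')\ge0$, so $\muD(N'')=\mu_{\max}$. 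Maximality of $M^+$ forces $N''=M^+$, so $N'\subseteq M^+$. This intersection/sum step is the delicate point: one must check that $M^+\cap N'$ is a diagonal domino, which holds because $\DomD$ is closed under subobjects, and that its slope is bounded by $\mu_{\max}$, which follows via its saturation.

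\emph{Step 3: semistability and the quotient.} $M^+$ is saturated, so $M^+_{\mathrm{ft}}=0$. Every saturated subobject of $M^+$ has saturation in $M$ of slope at most $\mu_{\max}$, so by Proposition~\ref{prop:torsion-structure}(ii) and Proposition~\ref{prop:stability-enlargement}, $M^+$ is semistable. The quotient $M/M^+$ lies in $\DomD$ by saturation. A nonzero saturated $Q\subset M/M^+$ has preimage $P$ saturated in $M$ with $M^+\subsetneq P$. If $\muD(Q)\ge\mu_{\max}$, additivity of $\theta$ gives $\theta(P)=\theta(M^+)+\theta(Q)\ge0$, so $\muD(P)=\mu_{\max}$ by maximality. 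This contradicts the fact that $M^+$ is the largest such subobject. Hence $\muD(Q)<\mu_{\max}$.
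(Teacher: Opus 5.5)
Your proposal is correct and follows the paper's proof essentially step for step. Boundedness comes from the amplitude bound on $H^0(\mathbf s(N))$. The weight $\theta_{\mu_{\max}}$ together with the saturation inequality shows that the saturated join $(M^{+}+N')^{\mathrm{sat}}$ again has slope $\mu_{\max}$. Noetherianity picks a maximal $M^{+}$, and the same preimage argument handles the quotient. The differences are cosmetic: you get attainment of the maximum from the slopes lying in $\tfrac1{n!}\mathbb Z$ rather than from the $\rkD(M)^{-2}$ separation, and you run the join argument directly against $M^{+}$ instead of first proving closure under joins for an arbitrary pair.
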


\begin{proof}
There are four things to prove: that the realized slopes attain a maximum, that
the saturated subobjects of maximal slope have a largest member $M^{+}$, that
$M^{+}$ is semistable, and that every nonzero saturated subobject of the
quotient has strictly smaller slope.

For every nonzero saturated $N\subset M$ one has $\rkD(N)>0$ by
Proposition~\ref{prop:torsion-structure}(i) and $\rkD(N)\le\rkD(M)$ by
additivity, since the rank is a sum of dimensions and hence nonnegative.
In the long exact sequence of $\mathbf s(N)\to\mathbf s(M)\to\mathbf s(M/N)$,
the term before $H^0(\mathbf s(N))$ vanishes by the amplitude in
Definition~\ref{def:s-torsion-vocab}, so
$H^0(\mathbf s(N))\to H^0(\mathbf s(M))$ is injective and
$\degD(N)\le\ell_WH^0(\mathbf s(M))$.  The realized slopes are therefore bounded above, with denominators
in $\{1,\dots,\rkD(M)\}$, and two distinct such rationals differ by
at least $\rkD(M)^{-2}$.  So the realized slopes have a maximum
$\mu_{\max}$.

Write $\theta:=\theta_{\mu_{\max}}$ for the weight of
Definition~\ref{def:weight}.  It is nonpositive on every subobject $A\subseteq M$:
$\theta(A)\le\theta(A^{\mathrm{sat}})$ by
Proposition~\ref{prop:torsion-structure}(ii), and
$\theta(A^{\mathrm{sat}})\le0$ by maximality of $\mu_{\max}$.  Moreover $\theta(A)=0$ exactly when $A$ is zero or of slope
$\mu_{\max}$.

Let $N_1,N_2$ be saturated of slope $\mu_{\max}$ and put
$N_1\vee N_2:=(N_1+N_2)^{\mathrm{sat}}$.  Additivity along
$0\to N_1\cap N_2\to N_1\oplus N_2\to N_1+N_2\to0$, followed by
$\theta(N_1+N_2)\le\theta(N_1\vee N_2)$, gives
\[
 \theta(N_1\vee N_2)+\theta(N_1\cap N_2)\ \ge\ \theta(N_1)+\theta(N_2)=0 .
\]
Both terms on the left are nonpositive, so both vanish and $N_1\vee N_2$ is
saturated of slope $\mu_{\max}$.

Since $\Delta$ is noetherian (Remark~\ref{rem:chain-conditions}), the saturated
subobjects of slope $\mu_{\max}$ have a member $M^{+}$ maximal under inclusion.
For any other one $N$, the join $N\vee M^{+}$ again has slope $\mu_{\max}$ and
contains $M^{+}$, so maximality gives $N\vee M^{+}=M^{+}$ and $N\subset M^{+}$.
Semistability of $M^{+}$ follows at once: a nonzero $N'\subset M^{+}$ is a
subobject of $M$, so $\theta(N')\le0$, that is $\muD(N')\le\muD(M^{+})$.

Finally let $Q\subset M/M^{+}$ be nonzero saturated, with inverse image $P$.
Additivity gives $\theta(P)=\theta(M^{+})+\theta(Q)=\theta(Q)$.  If
$\muD(Q)\ge\mu_{\max}$ then $\theta(Q)\ge0$, hence $\theta(P)=0$ and
$\muD(P)=\mu_{\max}$.  As $M/P\cong(M/M^{+})/Q$ is a diagonal domino, $P$ is
saturated, so $P\subset M^{+}$ and $Q=0$.
\end{proof}

\begin{theorem}[Harder--Narasimhan filtration]
\label{thm:hn-existence}
\label{thm:extended-hn}
Every nonzero $M\in\Delta_{\mathrm{tor}}$ has a unique filtration
\[
  0=M_0\subset M_1\subset\cdots\subset M_s=M
\]
whose quotients are semistable of strictly decreasing slope in
$\mathbb Q\cup\{+\infty\}$.  The value $+\infty$ occurs exactly when
$M_{\mathrm{ft}}\ne0$, and then $M_1=M_{\mathrm{ft}}$.  The remaining steps are the preimages
under $\pi:M\twoheadrightarrow M/M_{\mathrm{ft}}$ of those for $M/M_{\mathrm{ft}}$.
\end{theorem}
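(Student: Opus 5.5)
Existence comes first, built by iterating the maximal destabilizing subobject of Proposition~\ref{prop:mds} on the diagonal-domino part and prepending $M_{\mathrm{ft}}$; uniqueness then follows from Hom-vanishing (Lemma~\ref{lem:hom-vanishing}) together with Proposition~\ref{prop:torsion-structure}.

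For existence, first take $M\in\DomD$ nonzero. Put $M_1:=M^{+}$. By Proposition~\ref{prop:mds}, $M_1$ is semistable and saturated, so $M/M_1\in\DomD$. If $M/M_1\neq0$, apply Proposition~\ref{prop:mds} again and let $M_2$ be the preimage of $(M/M_1)^{+}$; continue in this way. Each $M_i$ is saturated, so every $M_{i+1}/M_i$ is a semistable diagonal domino. The ranks of the quotients are positive by Proposition~\ref{prop:torsion-structure}(i), and they sum to $\rkD(M)$, so the process stops after at most $\rkD(M)$ steps.

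For strict decrease of slopes, note that $M_{i+1}/M_i$ is a nonzero saturated subobject of $M/M_i$. Hence the last clause of Proposition~\ref{prop:mds}, applied to $M/M_{i-1}$, gives $\muD(M_{i+1}/M_i)<\muD(M_i/M_{i-1})$. For a general $M\in\Delta_{\mathrm{tor}}$, prepend $M_{\mathrm{ft}}$ when it is nonzero. It is semistable of slope $+\infty$ by Proposition~\ref{prop:stability-enlargement}. Then pull back the filtration of $M/M_{\mathrm{ft}}\in\DomD$ along $\pi$; all its slopes are finite, so the sequence of slopes is strictly decreasing.

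For uniqueness, let $0=M'_0\subset\cdots\subset M'_t=M$ be any HN filtration, and argue in three steps.

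First, $+\infty$ can occur only as the first slope, since the slopes decrease strictly. The finite-slope quotients have positive rank, and a semistable positive-rank object has no finite-torsion subobject (Proposition~\ref{prop:stability-enlargement}). So those quotients are diagonal dominoes, and since $\DomD$ is extension-closed (the torsion-pair structure, Lemma~\ref{lem:torsion-pair}), $M/M'_1$ lies in $\DomD$ when slope $+\infty$ occurs, and otherwise $M$ itself does. The consequences are:
\begin{itemize}
\item if slope $+\infty$ occurs, $M'_1$ is finite torsion and $M/M'_1\in\DomD$, which forces $M'_1=M_{\mathrm{ft}}$ by the torsion pair;
\item if it does not occur, $M\in\DomD$ and $M_{\mathrm{ft}}=0$.
\end{itemize}
This proves the claim about $+\infty$ and reduces uniqueness to $M\in\DomD$ with all slopes finite.

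Second, in that case I will show $M'_1=M^{+}$, the main step. Each $M/M'_i$ is an iterated extension of diagonal dominoes, so $M'_1$ is saturated. For any nonzero saturated $N\subset M$, look at the image of $N$ in the quotients $M'_{i+1}/M'_i$. Lemma~\ref{lem:hom-vanishing}, strengthened to allow a non-semistable source via the weight $\theta$, shows that no subobject of slope $\ge\muD(M'_1)$ maps nonzero to a semistable object of smaller slope. Concretely, let $\theta:=\theta_{\muD(M'_1)}$; it is additive and $\le0$ on every subobject of each $M'_{i+1}/M'_i$ (strictly negative when nonzero, for $i\ge1$). Summing over the induced filtration of $N$ gives $\theta(N)\le0$, with equality only if $N\subset M'_1$. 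So $\muD(M'_1)$ is the maximal slope and $M'_1$ contains every saturated subobject of that slope, which gives $M'_1=M^{+}$.

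Third, induct on the length via $M/M'_1$, whose filtration is again HN.

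I expect this second step, identifying $M'_1$ with $M^{+}$ while keeping the saturation bookkeeping straight, to be the main obstacle; the weight argument above is what I would try first.
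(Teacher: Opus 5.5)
Your proposal is correct and follows the paper's proof. Existence comes from iterating Proposition~\ref{prop:mds} and prepending $M_{\mathrm{ft}}$, and uniqueness from the torsion pair for the slope-$+\infty$ step and from identifying the first finite-slope step with $M^{+}$. The one difference is in that identification: you sum $\theta_{\muD(M'_1)}$ over the induced filtration $N\cap M'_i$ of an arbitrary subobject $N$, whereas the paper applies Lemma~\ref{lem:hom-vanishing} to $M^{+}\to M_a/M_{a-1}$ for the least $a$ with $M^{+}\subset M_a$, and both arguments rest on the same weight inequality.
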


\begin{proof}
Take first a diagonal domino $M$, where all slopes are finite.  Existence
follows by iterating Proposition~\ref{prop:mds}.  The rank drops at each step because
$\rkD(M^{+})>0$ by Proposition~\ref{prop:torsion-structure}(i), so the iteration
stops after at most $\rkD(M)$ steps.  The maximal slope decreases at each step,
so the quotients have strictly decreasing slope.

For uniqueness, let $0=M_0\subset\cdots\subset M_s=M$ be any filtration with
semistable quotients of strictly decreasing slope.  Subobjects of $M$ are again
diagonal dominoes, so $M_1$ is nonzero of positive rank, and the slopes
decreasing strictly, every quotient has finite slope and positive rank.  By
Proposition~\ref{prop:stability-enlargement} the quotients lie in $\DomD$,
hence so does $M/M_1$ by Lemma~\ref{lem:torsion-pair}.  Thus $M_1$ is saturated
and $\muD(M_1)\le\mu_{\max}$.  Let $a$ be least with $M^{+}\subset M_a$.  If
$a>1$ the induced map $M^{+}\to M_a/M_{a-1}$ is nonzero, while
$\muD(M^{+})=\mu_{\max}\ge\muD(M_1)>\muD(M_a/M_{a-1})$, contradicting
Lemma~\ref{lem:hom-vanishing}.  So $M^{+}\subset M_1$, and semistability of
$M_1$ gives $\mu_{\max}\le\muD(M_1)$, hence $\muD(M_1)=\mu_{\max}$.
Proposition~\ref{prop:mds} then gives $M_1\subset M^{+}$, so $M_1=M^{+}$, and
induction applies to $M/M^{+}$.

For general $M$, consider any filtration as in the statement.  Strict decrease
allows at most one slope-$+\infty$ step, the first, so write $M'$ for it, with
$M'=0$ when none occurs.  It has rank zero, hence is finite torsion, so $M'\subseteq M_{\mathrm{ft}}$.  Conversely the factors above $M'$ are semistable of finite slope, hence
lie in $\DomD$, and $M/M'\in\DomD$ as before.  The torsion pair makes the
image of $M_{\mathrm{ft}}$ in $M/M'$ vanish, so $M_{\mathrm{ft}}\subseteq M'$ and $M'=M_{\mathrm{ft}}$.  The remaining steps are the preimages of a filtration of $M/M_{\mathrm{ft}}$ as
in the first part, with first factor $M_{\mathrm{ft}}$ semistable of slope $+\infty$, so existence and
uniqueness for $M$ follow from the domino case.
\end{proof}

\subsection{Comparison with Ekedahl's type filtration}
\label{subsec:type-fixed-slope}

Ekedahl's type filtration (Proposition~\ref{prop:type-filtration})
has steps indexed by integers and steps indexed by half-integers,
and the graded pieces at the two types of index behave
differently.  At an
integer index the graded piece is pure of type $r$, and such objects
are simple to describe: up to Breuil--Kisin twist they are built from the single
object $U_r$ (Proposition~\ref{thm:integer-simples}).  At a half-integer index the
piece lies strictly between two consecutive types, and here Ekedahl's analysis
is combinatorial, with a classification only under his weak simplicity
\cite[Theorem~III.2.7]{ekedahl3}.  The Harder--Narasimhan filtration explains
the difference.  Theorem~\ref{thm:refinement}(iii) gives
$\FilD^rM=F^{\ge r}_{\HN}M$ and $\FilD^{r+1/2}M=F^{>r}_{\HN}M$, so the piece at
$r$ is semistable with $\muD=r$, while the piece at $r+\tfrac12$ has all its HN
slopes in $(r,r+1)$.  Every rational number in that interval is the slope of a
stable object (Section~\ref{sec:stable}).  The present section makes this
precise, and Section~\ref{sec:stable} studies
$\Domss(\lambda)$ (Theorem~\ref{thm:fixed-slope}) for each fixed
$\lambda$.

The input is Theorem~\ref{thm:refinement}(i)--(ii), which computes
the slopes of the graded pieces at integer and at half-integer
indices.  Both computations rest on the behaviour of the degree under
Nygaard modification, which normalizes an arbitrary integer type to $0$.  The
degree is computed through the Postnikov pieces, so we begin there.

\begin{lemma}[Postnikov degree decomposition]
\label{lem:postnikov}
Let $M\in\DomD$.  For $n\in\mathbb Z$, write $\Dom^n(M)$ for the cohomology
object of $M$ whose differential enters internal degree $n$, shifted into
internal degrees $0,1$.  Explicitly $\Dom^n(M)=H^{1-n}(M)(n-1)$, and we
call it the $n$-th \emph{Postnikov layer} of $M$.  It is an
ordinary domino with $\rkD(\Dom^n(M))=\rkD^n(M)$, so that
\[
 \rkD(M)=\sum_n\rkD(\Dom^n(M)),\qquad
 \degD(M)=\sum_n\degD(\Dom^n(M)).
\]
\end{lemma}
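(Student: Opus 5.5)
The plan is to filter $M$ by its Postnikov (ordinary cohomology) truncations and use additivity of $\rkD$ and $\degD$ along that filtration. Since $M\in\Delta$ is a bounded complex with coherent cohomology, the Postnikov truncations $\tau_{\le j}M$ give a finite tower with graded pieces $H^j(M)[-j]$. By \cite[Proposition~III.3.1]{ekedahl3} (already used in Proposition~\ref{prop:torsion-structure}), each $H^j(M)$ of a diagonal domino is, after internal shift, an ordinary domino concentrated in two adjacent internal degrees. The diagonal condition pins these degrees to $-j$ and $1-j$, that is, the differential enters internal degree $n=1-j$.

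Hence $\Dom^n(M):=H^{1-n}(M)(n-1)$ sits in internal degrees $0,1$ and is an ordinary domino. Its rank equals $T^{n-1,1-n}(M)$ by Definition~\ref{def:domino-numbers}: the domino number is the dimension of the domino extracted from $d\colon H^{1-n}(M)^{n-1}\to H^{1-n}(M)^n$. Since $H^{1-n}(M)$ is already a domino, $V^{-\infty}Z$ vanishes in the source degree and $F^\infty B$ is everything in the target, so this domino is $H^{1-n}(M)$ itself. This gives $\rkD(\Dom^n(M))=\rkD^n(M)$. Summing over $n$ yields the rank formula directly from Definition~\ref{def:rank-degree-slope}, with no additivity needed.

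For the degree, apply the exact functor $\mathbf s$ to the Postnikov tower. This gives a finite tower in $D(W)$ whose graded pieces are $\mathbf s(H^j(M)[-j])=\mathbf s(H^j(M))[-j]$. Each has finite-length cohomology because $H^j(M)$ is torsion, being a domino. The Euler characteristic $\chi_W$ is additive on triangles of such complexes, so
\[
 \chi_W(\mathbf s(M))=\sum_j\chi_W\bigl(\mathbf s(H^j(M)[-j])\bigr).
\]
It remains to compare $\chi_W(\mathbf s(H^j(M)[-j]))$ with $\degD(\Dom^{1-j}(M))$. We have $H^j(M)[-j]=\Dom^{1-j}(M)(j)[-j]=\Dom^{1-j}(M)\{j\}$, a Breuil--Kisin twist. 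Since $\mathbf s(N\{a\})\simeq\mathbf s(N)$, as in Example~\ref{ex:equivalence-examples}(ii), the two Euler characteristics agree, and reindexing $n=1-j$ gives the degree formula.

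The main obstacle I expect is the identification step: checking that each Postnikov cohomology of a diagonal domino is genuinely an ordinary domino in internal degrees $(n-1,n)$ for the appropriate $j$, and that Ekedahl's domino number $T^{n-1,1-n}$ equals its full dimension rather than that of a proper subquotient. I would first try to read this off \cite[Proposition~III.3.1]{ekedahl3} together with the diagonal truncation description in Definition~\ref{def:diagonal-t-structure}. The fallback is to check it on the generators $U_j\{a\}$ and extend by additivity of both sides, using \cite[Lemma~III.2.6.1(i)]{ekedahl3} for the $\rkD^n$.
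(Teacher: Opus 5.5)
Your proposal is correct and is essentially the paper's proof. The shared steps are: Ekedahl's Proposition~III.3.1 with the diagonal condition places $H^{1-n}(M)$ in internal degrees $(n-1,n)$; the rank formula is read off from the definition of the domino numbers; and the degree formula is additivity of $\chi_W\circ\mathbf s$ along the Postnikov tower, combined with $H^{1-n}(M)[n-1]=\Dom^n(M)\{1-n\}$ and $\mathbf s(N\{a\})\simeq\mathbf s(N)$. You also show that the extracted domino is all of $H^{1-n}(M)$, which is true but not needed, since $\rkD(\Dom^n(M))$ and $T^{n-1,1-n}(M)$ are computed from the same differential up to internal shift.
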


\begin{proof}
Both formulas are formal.  The diagonal condition puts $H^{1-n}(M)$ in internal
degrees $(n-1,n)$, so its differential enters degree $n$ and $\Dom^n(M)$ is an
ordinary domino \cite[Proposition~III.3.1]{ekedahl3} with
$\Dom^n(M)^0=H^{1-n}(M)^{n-1}$.  The rank formula follows, since
$T^{n-1,1-n}$ depends only on that module
(Definition~\ref{def:domino-numbers}).  The Postnikov piece carrying
$H^{1-n}(M)$ is $H^{1-n}(M)[n-1]$, and
$\Dom^n(M)=\bigl(H^{1-n}(M)[n-1]\bigr)\{n-1\}$.  Since $\mathbf s$ totalizes,
it is insensitive to the diagonal-preserving twist, $\mathbf s(N\{i\})\simeq\mathbf s(N)$,
so the piece and $\Dom^n(M)$ have the same $\mathbf s$-realization.  The degree formula is then additivity of
$\chi_W\circ\mathbf s$ along the Postnikov tower.
\end{proof}

\begin{lemma}[Rank and degree under Nygaard modification]
\label{lem:nygaard-shift}
Let $M\in\Delta_{\mathrm{tor}}$ and $\varphi\in\Phi$.  Then
\[
  \rkD^i(M(\varphi))=\rkD^i(M),\qquad
  \degD(M(\varphi))=\degD(M)+\sum_i\varphi(i)\rkD^i(M).
\]
In particular the modification $(r\mathbf1)$ shifts the slope,
$\muD(M(r\mathbf1))=\muD(M)+r$.
\end{lemma}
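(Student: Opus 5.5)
We reduce both formulas to ordinary dominoes through the Postnikov layers of Lemma~\ref{lem:postnikov}. First, $M(\varphi)$ is defined termwise, and the modification functor is exact for the Postnikov $t$-structure because it is an exact autoequivalence of $\mathrm{Mod}(R)$. It is also exact for the diagonal one (Definition~\ref{def:autoequivalence}). Hence $H^j(M(\varphi))=H^j(M)(\varphi)$, and $M(\varphi)$ is again a diagonal torsion complex. Here $M(\varphi)_{\mathrm{ft}}=M_{\mathrm{ft}}(\varphi)$, and we check below that finite-torsion objects behave correctly. The domino attached to the differential entering degree $i$ of $H^{1-i}(M)(\varphi)$ has underlying groups equal to those of $H^{1-i}(M)$ up to Frobenius twist. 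Its differential is $F^{\varphi(i)}d$ in place of $d$. The quotient $H^{1-i}/V^{-\infty}Z$ and the subgroup $F^\infty B$ are unchanged as additive subgroups, as noted in Definition~\ref{def:autoequivalence}. The dimension $\dim_k(U^0/VU^0)$ is also unchanged, since $V$ is unchanged and $\sigma_*$ preserves $k$-dimensions. This gives $\rkD^i(M(\varphi))=\rkD^i(M)$.

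For the degree, the first step is to reduce to $M\in\DomD$. The sequence \eqref{eq:finite-torsion-radical} is carried to the corresponding sequence for $M(\varphi)$, and both invariants are additive (Lemma~\ref{lem:additivity}). So it suffices to treat a finite-torsion object and a diagonal domino separately. For $L\{i\}$ with $L$ a finite-length Dieudonn\'e module, $\rkD^j=0$ for all $j$. The modification only changes the differential, which vanishes on $L\{i\}$, so $L\{i\}(\varphi)$ is a Frobenius twist of $L\{i\}$. Its $\mathbf s$-realization has the same length, and the formula holds with zero correction.

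For $M\in\DomD$, Lemma~\ref{lem:postnikov} gives $\degD(M(\varphi))=\sum_n\degD(\Dom^n(M(\varphi)))$. Under the reindexing $\Dom^n(M)=H^{1-n}(M)(n-1)$, the layer $\Dom^n(M(\varphi))$ equals $\Dom^n(M)$ modified by $\varphi(n)\delta_1$, up to Frobenius twist. This uses only that the differential entering degree $n$ is multiplied by $F^{\varphi(n)}$. We are therefore reduced to one ordinary domino $U$ in degrees $0,1$ and the claim
\[
\degD(U(r\delta_1))=\degD(U)+r\,\rkD(U).
\]
Both sides are additive in $U$: $(r\delta_1)$ is exact and preserves dominoes. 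The claim can then be checked on a filtration with quotients $U_{j}$ (Definition~\ref{def:domino}). There it is the identity $U_j(r\delta_1)\simeq U_{j+r}$ from Definition~\ref{def:autoequivalence}, combined with $\degD(U_j)=j$ and $\rkD(U_j)=1$. Summing over $n$ gives $\degD(M(\varphi))=\degD(M)+\sum_i\varphi(i)\rkD^i(M)$. For $\varphi=r\mathbf1$, dividing by the unchanged rank gives $\muD(M(r\mathbf1))=\muD(M)+r$ when the rank is positive. When the rank is zero both slopes are $+\infty$.

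The main obstacle is the bookkeeping in the identification of $\Dom^n(M(\varphi))$ with $\Dom^n(M)(\varphi(n)\delta_1)$. The Postnikov layer is formed after the shift $(n-1)$, so one must confirm that the value of $\varphi$ that lands on the single differential of the layer is $\varphi(n)$ and not $\varphi(n-1)$. One must also confirm that the Frobenius twists $\sigma_*^{-s_i}$ do not affect $\chi_W\circ\mathbf s$. The second point holds because $\sigma_*$ preserves $W$-lengths. For the first, I would check the indexing directly against Definition~\ref{def:autoequivalence}, where $d_\varphi\colon M(\varphi)^{i-1}\to M(\varphi)^i$ is $F^{\varphi(i)}d$. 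I would then compare with the convention $\rkD^i=T^{i-1,1-i}$, which indexes by the target.
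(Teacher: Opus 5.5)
Your proof is correct and follows the same route as the paper. You reduce through the Postnikov layers of Lemma~\ref{lem:postnikov} to a single ordinary domino, note that Frobenius twists do not change $W$-lengths, and pass between $\DomD$ and $\Delta_{\mathrm{tor}}$ using \eqref{eq:finite-torsion-radical}. The only difference is that you reprove, rather than cite, Ekedahl's rank invariance \cite[Lemma~III.2.6.1(ii)]{ekedahl3} and his single-domino degree shift \cite[Proposition~III.3.3]{ekedahl3}, the latter via the d\'evissage of a domino into $U_j$'s and $U_j(r\delta_1)\simeq U_{j+r}$. Definition~\ref{def:autoequivalence} only records the invariance of $F^\infty B$, so the invariance of $V^{-\infty}Z$ needs the analogous one-line check from $F^a dV^n=dV^{n-a}$.
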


\begin{proof}
The rank is unchanged because domino numbers depend only on the underlying
$R^0$-module and are invariant under Frobenius twists
\cite[Lemma~III.2.6.1(ii)]{ekedahl3}.  For the degree, Frobenius
twists preserve $W$-length, so $\degD$ does not see the twists
$\sigma_*^{-s_i(\varphi)}$ of Definition~\ref{def:autoequivalence}, and only the
differentials $F^{\varphi(i)}d$ matter.  Modifications act degreewise,
hence commute with the Postnikov truncations, so
Lemma~\ref{lem:postnikov} reduces the claim to a single piece $\Dom^n(M)$.
There the modification replaces the differential by $F^{\varphi(n)}d$, which
raises the degree by $\varphi(n)\rkD^n(M)$
\cite[Proposition~III.3.3]{ekedahl3}.  Summing over $n$ gives the formula for
$M\in\DomD$, and \eqref{eq:finite-torsion-radical} extends it to
$\Delta_{\mathrm{tor}}$, since the finite-torsion part has zero rank and
unchanged Euler characteristic.
\end{proof}

The modifications $(r\mathbf1)$ preserve the diagonal heart.  Beyond
the properties of Proposition~\ref{prop:type-filtration}, the middle
quotient $\FilD^0M/\FilD^{1/2}M$ is $\mathbf s$-acyclic
\cite[Theorem~III.2.3]{ekedahl3}.

For $a\in\mathbb Q\cup\{+\infty\}$ write $F^{\ge a}_{\HN}M$ and $F^{>a}_{\HN}M$
for the largest steps of the Harder--Narasimhan filtration of
Theorem~\ref{thm:extended-hn} whose graded slopes are all $\ge a$, respectively
all $>a$.

\begin{theorem}[Refinement of Ekedahl's type filtration]
\label{thm:refinement}
\label{prop:adjacent-type-slopes}
\label{prop:integer-type}
Let $M\in\DomD$.
\begin{enumerate}
\item If $M$ is pure of integer type $r$, it is semistable of slope $r$.
\item If $M$ is pure of type $r+\tfrac12$, every HN slope of $M$ lies strictly
  in $(r,r+1)$.
\item For every $r\in\mathbb Z$,
  \[
    \FilD^r M=F^{\ge r}_{\HN}M,\qquad
    \FilD^{r+1/2}M=F^{>r}_{\HN}M.
  \]
\end{enumerate}
\end{theorem}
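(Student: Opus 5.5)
The plan is to reduce everything to the case $r=0$ by the Nygaard modification $(r\mathbf1)$. By Proposition~\ref{prop:type-filtration}(iii) it carries $\FilD^j$ to $\FilD^{j+r}$. By Lemma~\ref{lem:nygaard-shift} it shifts every slope by $r$ and preserves ranks. Being an exact autoequivalence of $\Delta$ that preserves $\DomD$, it carries HN filtrations to HN filtrations, by the uniqueness in Theorem~\ref{thm:extended-hn}. So it suffices to treat type $0$ and type $\tfrac12$ and the indices $j=0,\tfrac12$, with the other indices following by transport.

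For (i) with $r=0$, let $M$ be pure of type $0$. Then $\FilD^0M=M$ is $\mathbf s$-$0$-torsion, so $H^1(\mathbf s(M))=0$. Also $\FilD^{1/2}M=G^2M=0$, so $M$ is $\mathbf s$-$1$-torsion and $H^0(\mathbf s(M))=0$. Hence $\mathbf s(M)=0$ and $\degD(M)=0$. For a saturated $N\subset M$, the quotient $M/N$ is a quotient of an $\mathbf s$-$1$-torsion object, hence $\mathbf s$-$1$-torsion. (Quotients inherit this because $H^1$ is right exact by the amplitude, and $H^0$ vanishes on the quotient once it vanishes on $M$ and $H^1(\mathbf s(N))=0$.) So we need $H^1(\mathbf s(N))=0$, i.e.\ $N$ is $\mathbf s$-$0$-torsion. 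This holds because $N\subset\FilD^0M$ and subobjects of $\mathbf s$-$0$-torsion objects are $\mathbf s$-$0$-torsion by the amplitude (the map on $H^1$ is injective when $H^0$ of the cokernel is... ). More cleanly, $\degD(N)=\ell H^0(\mathbf s(N))-\ell H^1(\mathbf s(N))$, and $H^0(\mathbf s(N))\hookrightarrow H^0(\mathbf s(M))=0$ by the injectivity used in Proposition~\ref{prop:mds}. So $\degD(N)\le0=\degD(M)$, and with Proposition~\ref{prop:stability-enlargement} this gives semistability of slope $0$.

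For (ii) with $r=0$, let $M$ be pure of type $\tfrac12$, i.e.\ $\FilD^{1/2}M=M$ and $\FilD^1M=0$. The upper bound uses $M(-\mathbf1)$, which is of type $-\tfrac12$; in particular $\FilD^0(M(-\mathbf1))=0$, so it has no nonzero $\mathbf s$-$0$-torsion subobject. Take the first HN step $N=M^+$. If $\muD(N)\ge1$, then $N(-\mathbf1)$ is semistable of slope $\ge0$. I would show that a semistable diagonal domino of slope $\ge0$ contains a nonzero $\mathbf s$-$0$-torsion subobject, namely its $\FilD^0$, which is the hard point.

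The lower bound is dual. The last HN quotient $Q$ of $M$ must have slope $>0$ because $G^2M=M$: a quotient of slope $\le0$ would produce, after the analogous argument, a nonzero $\mathbf s$-$1$-torsion quotient.

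\textbf{Main obstacle.} The key claim is that a nonzero semistable $N\in\DomD$ with $\muD(N)\ge0$ has $\FilD^0N\ne0$. Equivalently, $\FilD^0N=0$ forces every quotient piece of the type filtration to lie in types $\le-\tfrac12$, and so forces negative slope. The natural route is induction on the rank using the type filtration itself. Its pieces are pure of types $j\le-\tfrac12$. Integer types have slope $j<0$ by (i). A half-integer type $j=s+\tfrac12$ can be bounded by $\degD<(s+1)\rkD$ using the injectivity $H^0(\mathbf s(-))$ together with the shift formula of Lemma~\ref{lem:nygaard-shift}. Additivity then gives $\degD(N)<0$.

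To avoid circularity I would prove the crude bounds first: $\muD<r+1$ for every subobject of an object of type $\le r+\tfrac12$, and $\muD>r$ for every quotient of an object of type $\ge r+\tfrac12$. These come from $\mathbf s$-torsion considerations on $M(-(r+1)\mathbf1)$ and $M(-r\mathbf1)$ respectively. (i) and (ii) follow from these bounds.

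For (iii), the type filtration has successive pieces pure of types decreasing in $\tfrac12\mathbb Z$, whose HN slopes lie in the disjoint ordered intervals $\{r\}$ and $(r,r+1)$ by (i) and (ii). Refining each piece by its HN filtration yields a filtration of $M$ with semistable quotients of decreasing slopes. Where adjacent slopes coincide, the pieces lie in one interval and can be merged; merging is valid because the piece at an integer index is semistable. By uniqueness in Theorem~\ref{thm:extended-hn} this refinement is the HN filtration, and reading off where slopes cross $r$ gives both equalities.
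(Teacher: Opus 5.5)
Your proposal is correct and is essentially the paper's proof: reduction by $(r\mathbf1)$; in (i), the bound $\degD(N)=-\ell_WH^1(\mathbf s(N))\le0$ for saturated $N$ in the $\mathbf s$-acyclic $M$; in (ii), the two bounds from $N(-\mathbf1)$ being $\mathbf s$-$1$-torsion but not $\mathbf s$-acyclic, and from a saturated quotient being $\mathbf s$-$0$-torsion but not $\mathbf s$-acyclic; and in (iii), the disjoint slope windows together with HN uniqueness. Your ``main obstacle'' is immediate and needs no induction on rank: $\FilD^0N=0$ gives $G^2N=\FilD^{1/2}N=0$, so $N$ is $\mathbf s$-$1$-torsion and, having no nonzero $\mathbf s$-$0$-torsion subobject, is not $\mathbf s$-acyclic. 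Also drop the abandoned claim in (i) that subobjects of $\mathbf s$-$0$-torsion objects are $\mathbf s$-$0$-torsion, which fails for $U_{-1}\subset U_1$.
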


\begin{proof}
The modification $(-r\mathbf1)$ carries pure type $a$ to pure type $a-r$ and
lowers every finite slope by $r$ (Lemma~\ref{lem:nygaard-shift}), so in (i) and
(ii) we may assume $r=0$.

For (i), $\FilD^0M=M$ and $\FilD^{1/2}M=0$, so $M$ is the
$\mathbf s$-acyclic quotient $\FilD^0M/\FilD^{1/2}M$ recalled above and
$\degD(M)=0$.  If
$N\subset M$ is saturated, the amplitude in
Definition~\ref{def:s-torsion-vocab} makes
$H^0(\mathbf s(N))\to H^0(\mathbf s(M))=0$ injective, which gives
$\degD(N)=-\ell_WH^1(\mathbf s(N))\le0$.  Thus $M$ is semistable by
Proposition~\ref{prop:stability-enlargement}.

For (ii), let $N\subset M$ be nonzero.  Functoriality gives $\FilD^1N\subset\FilD^1M=0$, so
$\FilD^0(N(-\mathbf1))=0$, hence $\FilD^{1/2}(N(-\mathbf1))=0$, and
$N(-\mathbf1)$ is $\mathbf s$-$1$-torsion.  Its
degree is negative unless it is $\mathbf s$-acyclic, which would make it pure
of type $0$, contrary to $\FilD^0(N(-\mathbf1))=0$.  So
$\degD(N)-\rkD(N)=\degD(N(-\mathbf1))<0$.  Dually, a nonzero saturated quotient $Q$ of
$M$ has $\FilD^{1/2}Q=Q$, since an $\mathbf s$-$1$-torsion quotient
of $Q$ is one of $M$, hence $\FilD^0Q=Q$ and is $\mathbf s$-$0$-torsion and not
$\mathbf s$-acyclic, so $\degD(Q)>0$.  The first HN step of $M$ is a subobject
and the last HN factor a saturated quotient
(Proposition~\ref{prop:stability-enlargement}), so these two bounds place every
HN slope in $(0,1)$.

For (iii), the graded piece at type $a$ is pure of type $a$, so by (i) and (ii)
its slopes lie in $\{a\}$ for $a\in\mathbb Z$ and in
$(a-\tfrac12,a+\tfrac12)$ otherwise.  These windows are disjoint and increase
with $a$, so refining each piece by its HN filtration gives a filtration of $M$
with semistable quotients of strictly decreasing slope.
Theorem~\ref{thm:hn-existence} identifies it with the HN filtration, and the
two formulas are its steps at $a=r$ and $a=r+\tfrac12$.
\end{proof}

For $\lambda\in\mathbb Q\cup\{+\infty\}$ let
$\Delta_{\mathrm{tor}}^{ss}(\lambda)$ be the full subcategory of
$\Delta_{\mathrm{tor}}$ on the zero object and the semistable objects of slope
$\lambda$.  At finite $\lambda$ its objects are diagonal dominoes
(Proposition~\ref{prop:stability-enlargement}) and we write $\Domss(\lambda)$,
and at $\lambda=+\infty$ it is $\Delta_{\mathrm{ft}}$.

\begin{theorem}[Fixed-slope categories]
\label{thm:fixed-slope}
\label{def:fixed-slope-category}
\label{prop:strictness}
For every $\lambda\in\mathbb Q\cup\{+\infty\}$ the subcategory
$\Delta_{\mathrm{tor}}^{ss}(\lambda)$ is an abelian category of
finite length whose kernels and cokernels are formed in
$\Delta_{\mathrm{tor}}$.  It is closed under extensions in
$\Delta_{\mathrm{tor}}$, and its simple objects are exactly its
stable objects.
\end{theorem}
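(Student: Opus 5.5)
The plan is to treat the case $\lambda=+\infty$ separately and then run the standard slope argument with the weight functional $\theta_\lambda$ for finite $\lambda$.

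\emph{The case $\lambda=+\infty$.} Here $\Delta_{\mathrm{tor}}^{ss}(+\infty)=\Delta_{\mathrm{ft}}$ by Proposition~\ref{prop:stability-enlargement}, and this is a Serre subcategory of $\Delta_{\mathrm{tor}}$ (Definition~\ref{def:finite-torsion}). So it is abelian, extension closed, and its kernels and cokernels are those of $\Delta_{\mathrm{tor}}$. It has finite length because $\degD$ is a positive additive integer invariant on nonzero objects (Proposition~\ref{prop:torsion-structure}(i)), which bounds the length of any chain. Proposition~\ref{prop:stability-enlargement} already identifies its stable objects with its simple objects.

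\emph{Kernels and cokernels for finite $\lambda$.} Fix $\lambda$ finite, write $\theta=\theta_\lambda$, and let $f\colon A\to B$ be a map in $\Domss(\lambda)$. Form $K$, $I$ and $C$ in $\Delta_{\mathrm{tor}}$. Repeating the proof of Lemma~\ref{lem:hom-vanishing}:
\begin{itemize}
\item semistability of $A$ gives $\theta(K)\le0$, hence $\theta(I)\ge0$;
\item $\theta(I)\le\theta(I^{\mathrm{sat}})\le0$ by Proposition~\ref{prop:torsion-structure}(ii) and semistability of $B$.
\end{itemize}
So $\theta(I)=\theta(I^{\mathrm{sat}})=\theta(K)=0$. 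Proposition~\ref{prop:torsion-structure}(ii) turns the equality $\theta(I)=\theta(I^{\mathrm{sat}})$ into $I=I^{\mathrm{sat}}$, so $C=B/I$ is a diagonal domino. Next, $\theta(C)=\theta(B)-\theta(I)=0$. The kernel $K$ is a subobject of $A$ with $\theta(K)=0$, so it is zero or of slope $\lambda$; it is semistable because its subobjects are subobjects of $A$.

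\emph{Semistability of the cokernel (main obstacle).} For $C$, a nonzero saturated $Q\subset C$ has preimage $P\subset B$ with $\theta(P)=\theta(Q)$, so $\theta(Q)\le0$. Then Proposition~\ref{prop:stability-enlargement} (with $C_{\mathrm{ft}}=0$) gives semistability, provided $\rkD(C)>0$. If $\rkD(C)=0$, then $C$ is a finite-torsion diagonal domino, hence $C=0$. This is the step I expect to need the most care: the saturation equality is what keeps the cokernel inside $\DomD$, and it is exactly the failure mode of Remark~\ref{rem:chain-conditions}. Since $\Delta_{\mathrm{tor}}$ is abelian and these kernels and cokernels lie in the subcategory, the image equals the coimage there, so $\Domss(\lambda)$ is abelian with kernels and cokernels formed in $\Delta_{\mathrm{tor}}$.

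\emph{Extensions, finite length, simples.} Let $0\to A\to E\to B\to0$ with $A,B\in\Domss(\lambda)$.
\begin{itemize}
\item $E$ is a diagonal domino, because the torsion-free class $\DomD$ of the torsion pair in Lemma~\ref{lem:torsion-pair} is extension closed.
\item For $N\subset E$, additivity gives $\theta(N)=\theta(N\cap A)+\theta(\text{image of }N\text{ in }B)\le0$, using semistability of $A$ and $B$ for subobjects. So $E$ is semistable of slope $\lambda$.
\end{itemize}
For finite length, every nonzero object of $\Domss(\lambda)$ has positive rank, and rank is additive on short exact sequences in the subcategory. Hence the length is at most $\rkD$. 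For simples:
\begin{itemize}
\item a stable $M$ has no proper nonzero subobject of slope $\lambda$, and subobjects in the subcategory have slope $\lambda$, so $M$ is simple;
\item conversely, if $M$ is simple and $N\subsetneq M$ is nonzero with $\muD(N)=\lambda$, then $N$ is semistable as a subobject of $M$, hence lies in $\Domss(\lambda)$, contradicting simplicity. So the inequality is strict and $M$ is stable.
\end{itemize}
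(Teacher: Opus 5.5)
Your proof is correct and follows the paper's argument essentially step for step. You treat $+\infty$ through the Serre subcategory $\Delta_{\mathrm{ft}}$, use the chain $0\le\theta(I)\le\theta(I^{\mathrm{sat}})\le0$ with the equality case of Proposition~\ref{prop:torsion-structure}(ii) to force $I=I^{\mathrm{sat}}$, handle the cokernel by the preimage argument and extensions by additivity of $\theta$, and bound chain lengths by the rank; your semistability checks for kernels and extensions go directly through arbitrary subobjects rather than saturated ones, which is a harmless and slightly cleaner simplification.
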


\begin{proof}
The inputs are additivity (Lemma~\ref{lem:additivity}), the saturation
inequality of Proposition~\ref{prop:torsion-structure}(ii) with its equality
case, the vanishing of the rank only on $\Delta_{\mathrm{ft}}$
(Proposition~\ref{prop:torsion-structure}(i)), and the torsion pair
(Lemma~\ref{lem:torsion-pair}).  The rest is formal.

At $\lambda=+\infty$ the category is the Serre subcategory
$\Delta_{\mathrm{ft}}$ of Definition~\ref{def:finite-torsion},
whose objects are finite direct sums of Breuil--Kisin twists of
finite-length Dieudonn\'e modules.  It is therefore
abelian, closed under extensions and of finite length, and its
simple objects are the stable ones
(Proposition~\ref{prop:stability-enlargement}).

Let $\lambda$ now be finite, let $f:E\to F$ be a morphism in $\Domss(\lambda)$
with image $I$, and write $\theta:=\theta_\lambda$.  Semistability gives
$\theta(\ker f)\le0$ and $\theta(I^{\mathrm{sat}})\le0$, additivity gives
$\theta(I)\ge0$, and the saturation inequality gives
$\theta(I)\le\theta(I^{\mathrm{sat}})$.  All three weights vanish,
and the equality case gives $I=I^{\mathrm{sat}}$.  So $\operatorname{coker}f$
lies in $\DomD$, and it and $\ker f$ have slope $\lambda$.  Both are
semistable, since a saturated subobject of $\ker f$ is saturated in $E$ by
$E/\ker f\simeq I$, and one of $\operatorname{coker}f$ has saturated preimage
in $F$ of the same weight.

An extension $0\to A\to E\to B\to0$ of objects of $\Domss(\lambda)$ has middle
term a diagonal domino of slope $\lambda$, and every $N\subset E$ satisfies
$\theta(N)=\theta(N\cap A)+\theta(I)\le\theta(I^{\mathrm{sat}})\le0$ for $I$
its image in $B$.

A proper inclusion in $\Domss(\lambda)$ has nonzero cokernel, hence of positive
rank, so the rank bounds the length of a chain.  The subobjects of $E$ in $\Domss(\lambda)$ are
its saturated subobjects of slope $\lambda$, so simplicity and stability both
say that $E$ has no nonzero proper one
(Proposition~\ref{prop:stability-enlargement}).
\end{proof}

\subsection{The stability condition and \texorpdfstring{$F$}{F}-gauges}
\label{subsec:transport}

We first upgrade the HN formalism on $\Delta_{\mathrm{tor}}$ to a Bridgeland
stability condition on $\Dtor$ (Theorem~\ref{thm:bridgeland-rotation}).  The $F$-gauge heart is then the tilt
of the diagonal heart at the torsion pair
$(\Delta_{\mathrm{tor}}^{\HN>0},\Delta_{\mathrm{tor}}^{\HN\le0})$, and in the
language of phases the tilt raises the phase window by $\tfrac12$
(Theorem~\ref{thm:gauge-heart-slope-zero}).  Finally, rank and degree are
computed from the $F$-gauge itself
(Proposition~\ref{prop:gauge-euler-invariants}).

We recall Bridgeland's definitions
\cite[Definitions~3.3, 5.1, and~5.7]{Bridgeland07}.  Let $\mathscr D$ be a
triangulated category.  A \emph{slicing} $\mathcal P$ of $\mathscr D$ assigns
to each $\phi\in\mathbb R$ a full additive subcategory
$\mathcal P(\phi)\subset\mathscr D$, compatibly with shift,
$\mathcal P(\phi+1)=\mathcal P(\phi)[1]$, and with
$\operatorname{Hom}(\mathcal P(\phi_1),\mathcal P(\phi_2))=0$ for
$\phi_1>\phi_2$.  It is further required that every nonzero object of $\mathscr D$ be a finite
iterated extension of objects of the $\mathcal P(\phi)$, with strictly
decreasing phases.  This extension is the HN filtration of the object.  A \emph{stability condition} $\sigma=(Z,\mathcal P)$ on $\mathscr D$ is a
slicing together with a group homomorphism $Z:K_0(\mathscr D)\to\mathbb C$
such that every nonzero $E\in\mathcal P(\phi)$ satisfies
$Z(E)\in\mathbb R_{>0}e^{i\pi\phi}$.  The map $Z$ is the \emph{central
charge}, and the nonzero objects of $\mathcal P(\phi)$ are the
\emph{semistable} objects of \emph{phase} $\phi$.  The condition is \emph{locally
finite} if the quasi-abelian categories
$\mathcal P((\phi-\varepsilon,\phi+\varepsilon))$ have finite length for some
uniform $\varepsilon>0$, and it has the \emph{support property} with respect
to a homomorphism $v:K_0(\mathscr D)\to\mathbb Z^2$ through which $Z$ factors
if $\|v(E)\|\le C\,|Z(E)|$ for a fixed norm, a fixed constant $C$, and every
semistable $E$.

Here the data are the following.  The rank and the degree of
Definition~\ref{def:rank-degree-slope} are additive
(Lemma~\ref{lem:additivity}), hence define homomorphisms on
$K_0(\Delta_{\mathrm{tor}})$, and the bounded diagonal $t$-structure
identifies $K_0(\Delta_{\mathrm{tor}})$ with $K_0(\Dtor)$, where the class of
a complex $E$ is $\sum_i(-1)^i[\widetilde H^i(E)]$.  Put
\[
 Z_\Delta:=-\degD+i\,\rkD:
 K_0(\Dtor)\longrightarrow\mathbb Z[i]\subset\mathbb C.
\]
For
$0<\phi\le1$ let $\mathcal P_\Delta(\phi)$ consist of zero and the objects
that are semistable in the sense of Definition~\ref{def:semistable} with
$Z_\Delta(E)\in\mathbb R_{>0}e^{i\pi\phi}$, set
$\mathcal P_\Delta(\phi+n)=\mathcal P_\Delta(\phi)[n]$ for $n\in\mathbb Z$, and
write $\mathcal P_\Delta(I)$ for the extension closure of the
$\mathcal P_\Delta(\phi)$ with $\phi$ in an interval $I$.  Finally let
$\Delta_{\mathrm{tor}}^{\HN>0}$ and $\Delta_{\mathrm{tor}}^{\HN\le0}$ be the
full subcategories of objects whose HN slopes are all $>0$, respectively all
$\le0$, the zero object lying in both.

\begin{theorem}[The stability condition]
\label{thm:bridgeland-rotation}
The pair $\sigma_\Delta=(Z_\Delta,\mathcal P_\Delta)$ is a Bridgeland
stability condition on $\Dtor$ with heart
$\Delta_{\mathrm{tor}}=\mathcal P_\Delta((0,1])$.  On the heart, phase and
slope determine each other: a positive-rank semistable object $E$ has
$\muD(E)=-\cot(\pi\phi_\Delta(E))$, and the semistables of phase $1$ are the
finite-torsion objects.  In particular
\[
 \Delta_{\mathrm{tor}}^{\HN>0}
   =\mathcal P_\Delta((1/2,1]),\qquad
 \Delta_{\mathrm{tor}}^{\HN\le0}
   =\mathcal P_\Delta((0,1/2]).
\]
Moreover $\sigma_\Delta$ is locally finite and has the support property with
respect to $(\degD,\rkD)$.
\end{theorem}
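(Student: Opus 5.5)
We will use Bridgeland's correspondence between stability conditions and hearts with stability functions having the HN property \cite[Proposition~5.3]{Bridgeland07}. By that result it suffices to produce a bounded heart on $\Dtor$, a stability function on it, and the HN property; the slicing it yields is then automatically the one we wrote down.

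\textbf{The heart and the stability function.} The heart is $\Delta_{\mathrm{tor}}$. It is the heart of a bounded $t$-structure on $\Dtor$ because the diagonal $t$-structure is bounded on $D^b_c(R)$, and by Definition~\ref{def:torsion-category} an object of $D^b_c(R)$ lies in $\Dtor$ exactly when all its $\widetilde H^i$ lie in $\Delta_{\mathrm{tor}}$. For nonzero $E\in\Delta_{\mathrm{tor}}$ we check that $Z_\Delta(E)=-\degD(E)+i\,\rkD(E)$ lies in $\mathbb R_{>0}e^{i\pi(0,1]}$. If $\rkD(E)>0$ the imaginary part is positive. If $\rkD(E)=0$, then Proposition~\ref{prop:torsion-structure}(i) gives $\degD(E)>0$, so $Z_\Delta(E)$ is a negative real number, of phase $1$; these are exactly the finite-torsion objects. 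For positive rank, $\cot(\pi\phi)=-\degD/\rkD$, which is the stated relation $\muD=-\cot(\pi\phi)$. This map is strictly increasing in $\phi\in(0,1)$ and sends phase $1$ to slope $+\infty$. Bridgeland-semistability for $Z_\Delta$ is therefore the same as semistability for $\muD$ in Definition~\ref{def:semistable}, and Theorem~\ref{thm:extended-hn} supplies the HN property.

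\textbf{Consequences for the slicing.} Bridgeland's construction then yields the slicing $\mathcal P_\Delta$ as defined, with $\mathcal P_\Delta((0,1])=\Delta_{\mathrm{tor}}$. Slope $>0$ corresponds to phase in $(1/2,1]$, and slope $\le 0$ corresponds to phase in $(0,1/2]$. Hence an object has all HN slopes $>0$ exactly when all its HN phases lie in $(1/2,1]$, which gives the two displayed identities by uniqueness of HN filtrations.

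\textbf{Support property.} Both $\degD$ and $\rkD$ are integers. For semistable $E$ in the heart,
\[
|Z_\Delta(E)|^2=\degD(E)^2+\rkD(E)^2=\|v(E)\|^2
\]
for the Euclidean norm, and shifts preserve this. So $C=1$ works.

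\textbf{Local finiteness.} This is the main obstacle. We take $\varepsilon$ small, say $\varepsilon=1/4$ or smaller, and show that each $\mathcal P_\Delta((\phi-\varepsilon,\phi+\varepsilon))$ has finite length. The plan has three steps.

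\begin{enumerate}
\item After a shift we may assume the window lies in $(0,2)$. It then meets at most two consecutive hearts, the only delicate points being near phase $1$ and near integer phases.
\item Inside such a window, a strict subobject changes $\rkD$ or $\degD$ by a bounded, nonzero integral amount. We would use $\operatorname{Im}(e^{-i\pi\psi}Z_\Delta)$ for a suitable direction $\psi$ as a positive integral-valued length function. Discreteness of $Z_\Delta$ in $\mathbb Z[i]$ makes strictly decreasing chains terminate. The hard point is showing this length is positive on every nonzero object of the window; that is where $\varepsilon<1/2$ is used.
\item The subtle case is a window around phase $1$. There the finite-torsion objects (phase $1$) mix with dominoes of large slope and with shifts of those of very negative slope. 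Rank is additive, nonnegative on the heart and nonpositive on its shift, which gives the bound on chains. On the rank-zero locus, the finite-length property of $\Delta_{\mathrm{ft}}$ from Theorem~\ref{thm:fixed-slope} finishes the argument.
\end{enumerate}

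Alternatively, local finiteness follows from the support property together with discreteness of the image of $Z_\Delta$, by Kontsevich--Soibelman / Bridgeland's standard remark that a discrete central charge forces local finiteness. We would invoke this if the direct argument becomes unwieldy.
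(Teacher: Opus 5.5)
Your proof is essentially the paper's. Both arguments apply Bridgeland's Proposition~5.3 to the heart $\Delta_{\mathrm{tor}}$, use the monotone relation $\muD=-\cot(\pi\phi)$ to identify the two notions of semistability, take the HN property from Theorem~\ref{thm:extended-hn}, and get the support property with $C=1$. For local finiteness, both use a lattice-integral linear functional that is positive on the cone of charges of a phase window of length $<1$.

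Your Step~2 already handles every window, including the one around phase $1$. There take $\psi=\tfrac12$, which gives the functional $\degD=-\operatorname{Re}Z_\Delta$; it is positive on all nonzero objects of $\mathcal P_\Delta((1-\varepsilon,1+\varepsilon))$. The positivity you call the hard point is immediate. Every object of $\mathcal P_\Delta(I)$ is an iterated extension of semistables whose charges lie in the open cone $\mathbb R_{>0}e^{i\pi I}$, and that cone is closed under addition.

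Drop Step~3, because its rank argument is wrong as stated. On that window $\rkD$ is $\ge0$ on the heart part and $<0$ on the shifted part, so it is neither positive nor a bound on strict chains. Also, rank-zero objects there need not be finite torsion, so the appeal to $\Delta_{\mathrm{ft}}$ does not finish anything.
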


\begin{proof}
The diagonal $t$-structure restricts to a bounded $t$-structure on $\Dtor$
with heart $\Delta_{\mathrm{tor}}$.  By
\cite[Proposition~5.3]{Bridgeland07}, the first sentence then amounts to two
properties of $Z_\Delta$ on this heart: nonzero objects have charge in the
upper half-plane or on the negative real axis, and HN filtrations exist.
For the first, $\operatorname{Im}Z_\Delta(E)=\rkD(E)\ge0$, and at rank zero
$Z_\Delta(E)=-\degD(E)<0$ by Proposition~\ref{prop:torsion-structure}(i).
Comparing real and imaginary parts gives $\muD=-\cot(\pi\phi_\Delta)$ at
positive rank and phase $1$ at rank zero.  The phase is therefore an
increasing function of the slope, so the two notions of semistability agree,
the filtration of Theorem~\ref{thm:extended-hn} is the required HN
filtration, and the two windows follow.

For local finiteness, let $I$ be a phase interval of length $<1$.  The
charges of the nonzero objects of $\mathcal P_\Delta(I)$ lie in the lattice
$\mathbb Z[i]$ and in a convex cone of angle less than $\pi$, so some
linear functional on $\mathbb R^2$, integral on the lattice and positive on
the cone, takes values in $\mathbb Z_{\ge1}$ on them.  Charges add along the
strict short exact sequences of the quasi-abelian category
$\mathcal P_\Delta(I)$ \cite[\S4]{Bridgeland07}, so the functional strictly
increases along strict inclusions, and its value at $E$ bounds the length of
every strict chain in $E$.  Integrality of rank and degree is the one input
here.  The support property is formal: $|Z_\Delta(E)|$ is the Euclidean norm
of $(\degD E,\rkD E)$, so it holds with constant $1$.
\end{proof}

Proposition~\ref{prop:ekedahl-heart-hrs} presents Ekedahl's heart
$\mathcal G$ as the HRS tilt of $\Delta$ at the torsion pair defined by his
$G^2$, with no reference to a slope.  The next theorem identifies the
restriction of this torsion pair to $\Delta_{\mathrm{tor}}$ with
$(\Delta_{\mathrm{tor}}^{\HN>0},\Delta_{\mathrm{tor}}^{\HN\le0})$.  In phase
language the cut is at phase $\tfrac12$, and the tilt rotates the window
$(0,1]$ to $(1/2,3/2]$.

\begin{theorem}[Phase rotation and torsion $F$-gauges]
\label{thm:gauge-heart-slope-zero}
\label{thm:phase-rotation}
\label{cor:standard-gauge-hn}
The heart $\mathcal G_{\mathrm{tor}}$ admits the two descriptions
\[
 \mathcal G_{\mathrm{tor}}
 =\bigl\langle
   \Delta_{\mathrm{tor}}^{\HN\le0}[1],
   \Delta_{\mathrm{tor}}^{\HN>0}
  \bigr\rangle
 =\mathcal P_\Delta((1/2,3/2]).
\]
The $F$-gauge heart $\Coh_{\mathrm{tor}}(k^{\mathrm{Syn}})$ therefore carries
the rotated stability condition, with central charge
$Z_{\mathsf{FG}}=-iZ_\Delta=\rkD+i\degD$.  For $M\in\Delta_{\mathrm{tor}}$,
the $F$-gauge cohomology of $S(M)$ is $S(F^{>0}_{\HN}M)$ in degree $0$,
$S\bigl((M/F^{>0}_{\HN}M)[1]\bigr)$ in degree $1$, and zero elsewhere.  In
particular $S(M)$ lies in the $F$-gauge heart precisely when every HN slope
of $M$ is positive.  The stable torsion $F$-gauges are $S(M)$ for
$\Delta$-stable $M$ of positive slope and $S(M[1])$ for those of nonpositive
slope.
\end{theorem}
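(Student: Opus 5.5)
The central task is to identify the restriction of Ekedahl's torsion pair $(\mathcal T,\mathcal F)$ of Proposition~\ref{prop:ekedahl-heart-hrs} to $\Delta_{\mathrm{tor}}$ with $(\Delta_{\mathrm{tor}}^{\HN>0},\Delta_{\mathrm{tor}}^{\HN\le0})$. Everything else then follows formally. The tilt description of $\mathcal G_{\mathrm{tor}}$ comes from Proposition~\ref{prop:ekedahl-heart-hrs} restricted to the Serre subcategory. The equality with $\mathcal P_\Delta((1/2,3/2])$ comes from the phase windows of Theorem~\ref{thm:bridgeland-rotation}, since $\mathcal P_\Delta((1/2,3/2])$ is the extension closure of $\mathcal P_\Delta((1/2,1])$ and $\mathcal P_\Delta((0,1/2])[1]$. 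The standard Bridgeland fact that a tilt of this form carries the rotated stability condition with charge $-iZ_\Delta$ gives the central charge. For $M\in\Delta_{\mathrm{tor}}$, the $\mathcal G$-cohomology of $M$ is read off from the torsion-pair sequence $0\to F^{>0}_{\HN}M\to M\to M/F^{>0}_{\HN}M\to0$, which places $S(F^{>0}_{\HN}M)$ in degree $0$ and $S((M/F^{>0}_{\HN}M)[1])$ in degree $1$. The stable objects of a rotated slicing are the shifted stable objects of the original one, so the last claim also follows.

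Since both sides are torsion pairs in $\Delta_{\mathrm{tor}}$, it suffices to prove the two inclusions $\Delta_{\mathrm{tor}}^{\HN\le0}\subseteq\mathcal F$ and $\Delta_{\mathrm{tor}}^{\HN>0}\subseteq\mathcal T$. Granting these, for any $M$ the HN sequence of $M$ is then a $(\mathcal T,\mathcal F)$-decomposition, and uniqueness of such decompositions gives $G^2M=F^{>0}_{\HN}M$. I need to check that $\mathcal T,\mathcal F$ restrict to $\Delta_{\mathrm{tor}}$. This holds because $\Delta_{\mathrm{tor}}$ is Serre, so $G^2M$ and $M/G^2M$ stay torsion.

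For $\mathcal F$, take $M$ with all HN slopes $\le0$. Then $M_{\mathrm{ft}}=0$, so $M\in\DomD$. By Theorem~\ref{thm:refinement}(iii), $\FilD^{1/2}M=F^{>0}_{\HN}M=0$, and Proposition~\ref{prop:type-filtration}(ii) gives $G^2M=\FilD^{1/2}M=0$, so $M\in\mathcal F$. For $\mathcal T$, take $M$ with all HN slopes $>0$. If $M$ is a diagonal domino, the same two results give $G^2M=\FilD^{1/2}M=F^{>0}_{\HN}M=M$. In general I use the sequence $0\to M_{\mathrm{ft}}\to M\to M/M_{\mathrm{ft}}\to0$. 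The torsion class $\mathcal T$ is closed under extensions, and $M/M_{\mathrm{ft}}$ lies in $\mathcal T$ by the domino case, so it remains to show $\Delta_{\mathrm{ft}}\subseteq\mathcal T$. Equivalently, a finite-torsion object has no nonzero $\mathbf s$-$1$-torsion quotient. A quotient of a finite-torsion object is finite torsion, hence a sum of $L\{i\}$ with $\mathbf s(L\{i\})\simeq L$ in degree $0$, as in the proof of Proposition~\ref{prop:torsion-structure}(i). Such an object has $H^0(\mathbf s)=0$ only if it is zero.

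The main obstacle is the domino case. It relies on $\FilD^{1/2}=G^2$ being valid for every diagonal domino, which Proposition~\ref{prop:type-filtration}(ii) asserts, and on Theorem~\ref{thm:refinement}(iii) at $r=0$. With those results granted, the step is immediate. The only remaining care is the Bridgeland bookkeeping: matching the HRS tilt with the slice interval and checking that $S$ intertwines the $\mathcal G$-cohomology with the standard $F$-gauge cohomology. For the latter I would invoke Proposition~\ref{prop:ekedahl-heart-hrs} and Corollary~\ref{cor:torsion-equivalence}.
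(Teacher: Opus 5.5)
Your proposal is correct and follows essentially the paper's route. In both, the key step is $G^2M=F^{>0}_{\HN}M$, obtained on diagonal dominoes from $G^2=\FilD^{1/2}$ and Theorem~\ref{thm:refinement}(iii), with the finite-torsion part handled separately and everything else being HRS/Bridgeland bookkeeping. The differences are only in packaging. You conclude through the two class inclusions and uniqueness of torsion-pair decompositions, where the paper uses a preimage argument along $M\twoheadrightarrow M/M_{\mathrm{ft}}$. You also prove $\Delta_{\mathrm{ft}}\subseteq\mathcal T$ directly from $\mathbf s(L\{i\})\simeq L$, where the paper cites Ekedahl's Lemma~I.4.2(iii).
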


\begin{proof}
The key step identifies Ekedahl's $G^2$ with the positive-slope HN
truncation: $G^2M=F^{>0}_{\HN}M$ for every $M\in\Delta_{\mathrm{tor}}$.  For a
diagonal domino $U$, we have $G^2U=\FilD^{1/2}U$
(Proposition~\ref{prop:type-filtration}), and Theorem~\ref{thm:refinement}(iii)
gives $G^2U=F^{>0}_{\HN}U$.  For finite-torsion $T$, both sides are $T$ itself: $T$ has vanishing
$\mathbf s$-$1$-torsion quotient \cite[Lemma~I.4.2(iii)]{ekedahl3},
so $G^2T=T$, and $F^{>0}_{\HN}T=T$ at slope $+\infty$.  For general $M$,
both $G^2M$ and $F^{>0}_{\HN}M$ are preimages along
$\pi:M\twoheadrightarrow M/M_{\mathrm{ft}}$: for $F^{>0}_{\HN}M$ this is the description in
Theorem~\ref{thm:extended-hn}, and for $G^2M$ it holds because $M_{\mathrm{ft}}$ lies in
$\mathcal T$, so that
$\pi^{-1}(G^2(M/M_{\mathrm{ft}}))$ lies in $\mathcal T$ with quotient in $\mathcal F$ and is
therefore $G^2M$.  The domino case applied to $M/M_{\mathrm{ft}}$ concludes.

The rest is bookkeeping.  Intersecting with $\Delta_{\mathrm{tor}}$ turns the
torsion pair of Proposition~\ref{prop:ekedahl-heart-hrs} into
$(\Delta_{\mathrm{tor}}^{\HN>0},\Delta_{\mathrm{tor}}^{\HN\le0})$ and its HRS
description into the first equality.  The windows of
Theorem~\ref{thm:bridgeland-rotation} turn the first equality into the second,
with the right endpoint closed because slope zero lands at phase $\tfrac32$.
Multiplying $Z_\Delta$ by $-i$ returns the phases to $(0,1]$, and transport
along $S$ gives the rotated stability condition on the $F$-gauge heart.

Finally, the truncation sequence
$0\to F^{>0}_{\HN}M\to M\to M/F^{>0}_{\HN}M\to0$, followed by $F$-gauge
cohomology
and the $t$-exactness of $S$, gives the two cohomology formulas.  Since $S$ is
an equivalence, $H^1_{\mathsf{FG}}(S(M))$ vanishes exactly when
$M=F^{>0}_{\HN}M$, that is, when every HN slope of $M$ is positive.

The rotation preserves each slice and hence its simple objects, so the
$Z_{\mathsf{FG}}$-stables of phase in $(0,1]$ are the $\sigma_\Delta$-stables
of phase in $(1/2,3/2]$: the objects $S(M)$ for stable $M$ of slope $>0$, and
$S(M[1])$ for stable $M$ of slope $\le0$.
\end{proof}

We can finally translate the Harder--Narasimhan formalism on
$\Delta_{\mathrm{tor}}$ to the $F$-gauge heart
$\Coh_{\mathrm{tor}}(k^{\mathrm{Syn}})$: its rank and degree are computed
from the $F$-gauge itself.  For a coherent torsion $F$-gauge $G$
put
\[
 \rkFG(G):=\chi_W(G^{-\infty}),\qquad
 \degFG(G):=\sum_i\chi_W\bigl(\fib(t^{-\infty}:G^i\to G^{-\infty})\bigr),
\]
a finite sum, and $\muFG:=\degFG/\rkFG$.  Here $G^{-\infty}$ is a
finite-length module, so $\rkFG(G)=\ell_W(G^{-\infty})$.

\begin{proposition}
\label{prop:gauge-euler-invariants}
The slope $\muFG$ defines on $\Coh_{\mathrm{tor}}(k^{\mathrm{Syn}})$
the Harder--Narasimhan formalism obtained from
$\Delta_{\mathrm{tor}}$ by the phase rotation of
Theorem~\ref{thm:phase-rotation}: the invariants are additive,
$Z_{\mathsf{FG}}=-\degFG+i\,\rkFG$ on the $F$-gauge heart, and
$\muFG(S(M))=-1/\muD(M)$ for $M$ of positive HN slopes and
$\muFG(S(M[1]))=-1/\muD(M)$ for $M$ of nonpositive HN slopes, with
$-1/(+\infty)=0$ and
$-1/0=+\infty$.
\end{proposition}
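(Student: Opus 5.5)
The plan is to compare the two central charges $Z_{\mathsf{FG}}\circ S$ and $-iZ_\Delta=\rkD+i\degD$ as homomorphisms on $K_0(\Dtor)$.

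\textbf{Additivity.} Both $\rkFG$ and $\degFG$ are defined levelwise through $G^{-\infty}$ and the fibres of $t^{-\infty}$. Each is a finite sum of $\chi_W$ of exact functors of $G$, because filtered colimits along $t$ are exact. They are therefore additive along triangles in $\Perf_{\mathrm{tor}}(k^{\mathrm{Syn}})$ and descend to $K_0$. It therefore suffices to check two identities on the generators $[M]$, $M\in\Delta_{\mathrm{tor}}$:
\[
\rkFG(S(M))=\degD(M),\qquad \degFG(S(M))=-\rkD(M).
\]
Once these hold, $Z_{\mathsf{FG}}(S(E))=\degD(E)-i\rkD(E)=-i\bigl(-\degD(E)+i\rkD(E)\bigr)$ after the sign conventions are matched with Theorem~\ref{thm:phase-rotation}.

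\textbf{The rank identity.} This is immediate from Definition~\ref{def:ekedahl-S}. We have $S(M)^{-\infty}=\mathbf s(M)$, so
\[
\rkFG(S(M))=\chi_W(\mathbf s(M))=\degD(M).
\]

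\textbf{The degree identity.} I would use the formula $S(M)^r=\sigma_*\mathbf s(M(-\delta_r))$. The map $t^{-\infty}$ then becomes the comparison $\mathbf s(M(-\delta_r))\to\mathbf s(M)$, up to a Frobenius twist that does not affect lengths. Its fibre has Euler characteristic
\[
\degD(M(-\delta_r))-\degD(M)=-\rkD^r(M)
\]
by Lemma~\ref{lem:nygaard-shift} with $\varphi=-\delta_r$. Summing over $r$ gives $-\rkD(M)$, and only finitely many terms are nonzero. This step needs three things:
\begin{enumerate}
\item the identification of $t^{-\infty}$ with the natural map induced by $V$ and $p$ in the diagram of Definition~\ref{def:ekedahl-S};
\item the fact that the fibre's $\chi_W$ is the difference of the two $\chi_W$, which holds since both complexes have finite-length cohomology because $M$ is torsion;
\item the extension from $M\in\mathrm{Mod}(R)$ to complexes, by totalization as in the definition.
\end{enumerate}
Lemma~\ref{lem:nygaard-shift} is stated on $\Delta_{\mathrm{tor}}$, but its proof through Postnikov layers applies to objects of $\Delta_{\mathrm{tor}}$ directly, which is all we need.

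\textbf{Slopes.} Theorem~\ref{thm:phase-rotation} says the heart is $\mathcal P_\Delta((1/2,3/2])$. For $M$ with all HN slopes positive, $S(M)$ has
\[
\muFG=\frac{-\rkD(M)}{\degD(M)}=-\frac1{\muD(M)}.
\]
The sign is consistent there: $\degD>0$ on that window, and when $\rkD=0$ this reads $-1/(+\infty)=0$. For $M$ with nonpositive slopes, the shift $[1]$ negates both invariants, so the ratio is unchanged and again equals $-1/\muD(M)$. When $\degD(M)=0$ we get $\rkFG=0$, hence $\muFG=+\infty$, matching $-1/0=+\infty$.

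The HN formalism defined by $\muFG$ is then exactly the rotated one. On this heart $\rkFG\ge0$, with $\rkFG=0$ forcing $\degFG<0$. Phase in $(0,1]$ is monotone in $\muFG$ via $Z_{\mathsf{FG}}$, so semistability for $\muFG$ coincides with membership in the slices transported along $S$.

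\textbf{Main obstacle.} The hard step is the degree identity, specifically pinning down $t^{-\infty}$ and the sign of the fibre. The $V$ versus $p$ arrows shift where the $F$ appears in the differential, and one must check that this corresponds to the modification $-\delta_r$ rather than $+\delta_r$. If the sign comes out reversed, I would instead verify the identity directly on the generators. The cases $U_1$ and $U_0[1]$ of Example~\ref{ex:equivalence-examples}(iii) fix the normalization:
\begin{itemize}
\item for $S(U_1)$, one level vanishes and contributes fibre $-k$, giving $\degFG=-1=-\rkD(U_1)$;
\item for $S(U_0[1])$, the only contribution is $k$ at a single level with $G^{-\infty}=0$, giving $\degFG=+1=-\rkD(U_0[1])$.
\end{itemize}
Having fixed the normalization this way, I would then argue via additivity and Nygaard modifications.
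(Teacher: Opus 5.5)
Your proposal is essentially the paper's proof: $\rkFG(S(M))=\chi_W(\mathbf s(M))=\degD(M)$, and $S(M)^r=\sigma_*\mathbf s(M(-\delta_r))$ together with Lemma~\ref{lem:nygaard-shift} gives $\degFG(S(M))=\sum_r\bigl(\degD(M(-\delta_r))-\degD(M)\bigr)=-\rkD(M)$, so $Z_{\mathsf{FG}}\circ S=-iZ_\Delta$ and the slope formulas follow. Three corrections are needed: the intermediate expression should be $Z_{\mathsf{FG}}(S(E))=\rkD(E)+i\,\degD(E)$, not $\degD(E)-i\,\rkD(E)$; and $\rkFG=0$ forces $\degFG>0$, not $\degFG<0$, as your own $S(U_0[1])$ computation shows and as is needed to put rank-zero gauges at phase $1$; finally, your ``main obstacle'' is moot, because $\chi_W(\fib(t^{-\infty}))=\chi_W(G^r)-\chi_W(G^{-\infty})$ for any map, so only the sign $-\delta_r$ recorded in Definition~\ref{def:ekedahl-S} matters.
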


\begin{proof}
The formulas are Euler characteristics of terms exact in $G$, so
both invariants are additive.  Write $G=S(E)$ with $E\in\Dtor$, and
extend $\degD$ and $\rkD$ additively along triangles.  Ekedahl's
construction gives $G^{-\infty}=\mathbf s(E)$ and
$G^i=\sigma_*\mathbf s(E(-\delta_i))$
\cite[Definition~II.3.1]{ekedahl3}.
Since Frobenius twists preserve $W$-length, $\rkFG(G)=\degD(E)$ and
$\chi_W\bigl(\fib(t^{-\infty})\bigr)=\degD(E(-\delta_i))-\degD(E)=-\rkD^i(E)$
(Lemma~\ref{lem:nygaard-shift}).  Almost all $\rkD^i(E)$ vanish, so
the sum is finite, $\degFG(G)=-\rkD(E)$, and
$-\degFG(G)+i\,\rkFG(G)=\rkD(E)+i\,\degD(E)=-iZ_\Delta(E)=Z_{\mathsf{FG}}(G)$.
The HN formalism on a heart is determined by the central charge, so
$\muFG$ induces the rotated one.
\end{proof}

Both slopes are used below, and \emph{HN slope} means $\muD$ unless we
name $\muFG$ explicitly.  In particular, in Section~\ref{sec:stable} we
study diagonal dominoes with all HN slopes in $(0,1)$, and their images
under $S$ lie in the $F$-gauge heart with no shift
(Theorem~\ref{thm:phase-rotation}).  So there an object of HN slope
$d/q$ with $0<d<q$ has $\muFG=-q/d$.

\section{Stable objects and the Christoffel classification}
\label{sec:stable}

This section classifies the stable objects of $\Delta_{\mathrm{tor}}$ and,
through the dictionary of Theorem~\ref{thm:phase-rotation}, the
stable torsion $F$-gauges.  By
Theorem~\ref{thm:fixed-slope} the former are the simple objects of the
categories $\Delta_{\mathrm{tor}}^{ss}(\lambda)$, diagonal dominoes when
$\lambda$ is finite and simple finite-torsion objects when
$\lambda=+\infty$.

The slopes $\lambda=+\infty$ and $\lambda\in\mathbb Z$ come first, and a
modification then reduces the remaining slopes to $(0,1)\cap\mathbb Q$.  At
the integers the classification reduces to Ekedahl.

\begin{proposition}
\label{thm:integer-simples}
For every $r\in\mathbb Z$, the simple objects of $\Domss(r)$ are exactly
the Breuil--Kisin twists $U_r\{m\}$, $m\in\mathbb Z$.
\end{proposition}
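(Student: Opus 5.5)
First reduce to $r=0$: the Nygaard modification $(r\mathbf 1)$ is an exact autoequivalence of $\Delta$. It preserves ranks and shifts slopes by $r$ (Lemma~\ref{lem:nygaard-shift}), so it carries $\Domss(0)$ onto $\Domss(r)$ and simple objects to simple objects. It commutes with Breuil--Kisin twists, since both act degreewise and $\{m\}$ only reindexes. It also sends $U_0$ to $U_r$ (Definition~\ref{def:autoequivalence}). So it suffices to show that the simple objects of $\Domss(0)$ are the $U_0\{m\}$.

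Next identify $\Domss(0)$ with the pure type $0$ objects. Theorem~\ref{thm:refinement}(i) shows that pure type $0$ objects are semistable of slope $0$. Conversely, if $M\in\Domss(0)$, then by Theorem~\ref{thm:refinement}(iii) we have $\FilD^0M=F^{\ge0}_{\HN}M=M$ and $\FilD^{1/2}M=F^{>0}_{\HN}M=0$, so $M$ is pure of type $0$. In particular $M$ is $\mathbf s$-acyclic, being the middle quotient $\FilD^0M/\FilD^{1/2}M$. Each $U_0\{m\}$ has rank $1$ and degree $0$, so it lies in $\Domss(0)$. A proper nonzero subobject in $\Domss(0)$ would have cokernel of positive rank (Theorem~\ref{thm:fixed-slope}), so its own rank would be zero, which is impossible. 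Hence $U_0\{m\}$ is simple.

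It remains to show that a simple $M\in\Domss(0)$ is a twist of $U_0$. I would use the Postnikov layers of Lemma~\ref{lem:postnikov}. Let $n$ be the smallest index with $\Dom^n(M)\ne0$. The corresponding Postnikov truncation gives a subobject or quotient of $M$ in $\Delta$ of the form $\Dom^n(M)\{1-n\}$ (up to the sign convention of the twist). The aim is a nonzero morphism between $U_0\{1-n\}$ and $M$. Then Theorem~\ref{thm:fixed-slope} (kernels and cokernels stay in $\Domss(0)$), together with simplicity of both objects, makes it an isomorphism.

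To get the morphism, I would show that the extreme Postnikov layer of an $\mathbf s$-acyclic pure type $0$ object is itself an ordinary domino of pure type $0$, i.e.\ an iterated extension of copies of $U_0$. For this I would invoke Ekedahl's description of ordinary dominoes of type $0$ \cite[Proposition~III.3.3]{ekedahl3}: such a domino is $\mathbf s$-acyclic, which means $d$ is bijective modulo $V$-completion. This shows that $U_0$ is a quotient of it (or a sub, dually) in $\DomD$.

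The main obstacle is controlling that extreme layer. A priori the layer is only a domino with slopes mixing positive and negative types, and purity of $M$ does not immediately force purity of each layer. I would first try the following argument. The inclusion of the bottom Postnikov piece is a subobject $N$ of $M$. Semistability gives $\degD(N)\le0$, and the quotient argument gives the reverse inequality on the top piece. Combining these with the type filtration's functoriality should pin the layer to type $0$ and produce a copy of $U_0\{1-n\}$ inside $M$. If this fails, I would fall back on Ekedahl's Theorem~III.2.3, which classifies pure integer type objects directly.
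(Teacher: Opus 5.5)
Your reduction to $r=0$, your identification of $\Domss(0)$ with the $\mathbf s$-acyclic (pure type $0$) diagonal dominoes, and your proof that each $U_0\{m\}$ is simple are fine and agree with the paper. The gap is in the remaining step, and the intermediate claim you aim for there is false: the extreme Postnikov layer of an object of $\Domss(0)$ need not have type $0$. Take a nonsplit extension $0\to U_0\{m+1\}\to E\to U_0\{m\}\to0$. It exists by \eqref{eq:ext-table} and lies in $\Domss(0)$ by Theorem~\ref{thm:fixed-slope}. Its class is the nonzero $R$-linear map $\partial\colon U_0(m)\to U_0(m+1)$, $1\mapsto d$, which kills $VU_0^0$ because $p$ kills $U_0$. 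Hence $H^m(E)=\ker\partial=[VU_0^0\to U_0^1](m)\cong U_{-1}(m)$ and $H^{m+1}(E)=\operatorname{coker}\partial\cong U_1(m+1)$. So the Postnikov sequence of $E$ in $\Delta$ is $0\to U_{-1}\{m\}\to E\to U_1\{m+1\}\to0$, with layers $\Dom^{1-m}(E)=U_{-1}$ and $\Dom^{-m}(E)=U_1$. Neither extreme layer has type $0$, so in general the morphism to or from a twist of $U_0$ cannot be extracted from the extreme layer.

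Your proposed repair cannot work either. It uses $\degD\le0$ on the bottom Postnikov piece and $\degD\ge0$ on the top one, and this $E$ satisfies both: the degrees are $-1$ and $1$. So no argument built only from semistability and functoriality of the type filtration can force the extreme layer to have type $0$. What is missing is an input that genuinely uses simplicity, such as a proof that a simple object has a single Postnikov layer, and that is essentially the proposition itself.

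Your fallback does not supply this input as stated. The paper cites Ekedahl's Theorem~III.2.3 only for the type filtration. It closes this step instead with \cite[Lemma~II.1.2.1]{ekedahl3}, which says that the simple $\mathbf s$-acyclic diagonal dominoes are exactly the $U_0\{m\}$. Before citing it, the paper observes that $\Domss(0)$ and Ekedahl's category have the same subobjects, since both form kernels and cokernels in $\Delta_{\mathrm{tor}}$.

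If you want a self-contained argument, drop the Postnikov layers and pass to $F$-gauges, as in the proof of Theorem~\ref{thm:integer-slope-category}, which does not use the present statement. There, $M\mapsto S(M[1])$ is an exact equivalence from $\Domss(0)$ onto the $F$-gauges with $G^{\pm\infty}=0$, that is, graded $W[u,t]/(ut-p)$-modules of finite total length. For a nonzero such module $N$, looking at the lowest and highest nonzero degrees shows $uN\ne N$ and $tN\ne N$. So a simple $N$ has $u=t=p=0$ and is $k$ in a single level. Finally, $S(U_0\{m\}[1])=k_{1-m}$ by Example~\ref{ex:equivalence-examples}(iii).
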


\begin{proof}
The modification $M\mapsto M(-r\mathbf1)$ reduces to $r=0$.  There the
objects are the $\mathbf s$-acyclic diagonal dominoes by
Theorem~\ref{thm:refinement}(i),(iii).  Both $\Domss(0)$ and Ekedahl's
category of such dominoes form kernels and cokernels in
$\Delta_{\mathrm{tor}}$, so they have the same subobjects and the same
simple objects (Theorem~\ref{thm:fixed-slope}).  By
\cite[Lemma~II.1.2.1]{ekedahl3} the simple objects are exactly the twists
$U_0\{m\}$.
\end{proof}

At slope $+\infty$ classical Dieudonn\'e theory takes over.

\begin{lemma}
\label{lem:simple-finite-dieudonne}
The simple objects of $\Delta_{\mathrm{tor}}^{ss}(+\infty)$ are the
Breuil--Kisin twists of the simple finite-length Dieudonn\'e
modules.\footnote{The Dieudonn\'e module of $\mu_p$ has bijective
$V$ and is not $V$-complete. Its $F$-gauge is $S(T)\{-1\}$, where
$T=(k,F=\sigma,V=0)$ is the Dieudonn\'e module of $\mathbb Z/p$.}
If $k$ is algebraically closed, the simple finite-length
Dieudonn\'e modules are those of $\mathbb Z/p$ and $\alpha_p$.
\end{lemma}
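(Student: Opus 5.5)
The plan is to reduce everything to the structure of $\Delta_{\mathrm{ft}}$ recorded in Definition~\ref{def:finite-torsion}. By Theorem~\ref{thm:fixed-slope}, $\Delta_{\mathrm{tor}}^{ss}(+\infty)=\Delta_{\mathrm{ft}}$ is a Serre subcategory of $\Delta_{\mathrm{tor}}$. Its objects are finite direct sums $\bigoplus_i L_i\{i\}$ with $L_i$ finite-length Dieudonn\'e modules. A simple object $X$ is indecomposable, so $X\simeq L\{i\}$ for a single $i$ and a single finite-length Dieudonn\'e module $L$. Since $\{-i\}$ is an exact autoequivalence of $\Delta$ preserving $\Delta_{\mathrm{ft}}$, $L\{i\}$ is simple if and only if $L$ is. It therefore suffices to show that, for $L$ finite-length Dieudonn\'e, $L$ is simple in $\Delta_{\mathrm{ft}}$ exactly when it is simple as a Dieudonn\'e module.

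The key step is to compare subobjects. First, the finite-length Dieudonn\'e modules, viewed as $R$-modules concentrated in internal degree $0$ with $d=0$, form a full exact abelian subcategory of $\Delta$. Then a subobject $L'\subset L$ in $\Delta$ lies in $\Delta_{\mathrm{ft}}$ because the subcategory is Serre. So $L'=\bigoplus_j L'_j\{j\}$, and I need to see that only $j=0$ can occur. I would argue via internal degrees: by Ekedahl's description \cite[Theorem~I.1.12]{ekedahl3}, $L'_j\{j\}$ is the complex $L'_j$ placed in internal degree $j$ and cohomological degree $-j$. A nonzero map $L'_j\{j\}\to L$ with $j\neq0$ is a morphism between objects concentrated in different bidegrees. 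Such a morphism is an element of $\operatorname{Ext}^{-j}_R$ between modules in distinct internal degrees, and it vanishes because $R$-linear maps preserve the grading and $L$ has no component in internal degree $j$. Hence every subobject of $L$ in $\Delta$ is an ordinary sub-Dieudonn\'e module, and the two notions of simplicity coincide.

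I expect this Hom-vanishing to be the main obstacle. To handle it I would first use the Postnikov $t$-structure: $L'_j\{j\}$ has ordinary cohomology only in degree $-j$, with internal grading $j$. For $j>0$ the Hom into a module sitting in degree $0$ is an Ext in negative degree, hence zero. For $j<0$ I would check directly that $\operatorname{Ext}^{-j}_R$ from a module in internal degree $j$ to one in internal degree $0$ vanishes, using a graded free resolution in which $d$ raises internal degree by exactly one. If this fails, an alternative is to compare via the functor $\mathbf s$, whose realization of $L\{i\}$ is $L$, together with Ekedahl's functorial uniqueness of the decomposition in \cite[Proposition~I.1.11]{ekedahl3}.

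For the second sentence, take $k$ algebraically closed and a simple finite-length $V$-nilpotent Dieudonn\'e module $L$. Then $pL=0$, so $L$ is a finite-dimensional $k$-space with $\sigma$-semilinear $F$ and $\sigma^{-1}$-semilinear $V$ satisfying $FV=VF=0$. Nilpotence of $V$ makes $\ker V$ a nonzero submodule, so $V=0$ by simplicity. If $F$ is nilpotent, then $\ker F\neq0$ gives $L=k$ with $F=V=0$, which is $\alpha_p$. Otherwise $F$ is bijective, and Lang's theorem over algebraically closed $k$ gives a fixed vector $Fx=x$. Hence $L=k$ with $F=\sigma$, which is $\mathbb Z/p$.
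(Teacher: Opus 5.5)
Your reduction is sound except at one step. A simple object of $\Delta_{\mathrm{ft}}$ is indecomposable, hence a single $L\{i\}$. The twist is an exact autoequivalence. The finite-length Dieudonn\'e modules form an exact abelian subcategory of $\Delta$, so once every subobject of $L$ is known to be untwisted, the two notions of simplicity agree. Your argument for the second sentence is also correct; it unpacks the classical classification the paper cites. The gap is the vanishing $\operatorname{Hom}_\Delta(L'\{j\},L)=0$ for $j\neq0$. The whole first assertion rests on it (the paper takes it from Ekedahl's Theorem~I.1.12(i)), and your justification does not prove it.

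With the paper's conventions, $L'\{j\}=L'(j)[-j]$ sits in internal degree $-j$ and cohomological degree $j$ (you have both signs reversed), so $\operatorname{Hom}(L'\{j\},L)=\operatorname{Ext}^j_R(L'(j),L)$. With either convention, the nontrivial case is a positive $\operatorname{Ext}^m_R$ from a module in internal degree $-m$ to one in internal degree $0$. The fact that graded $R$-linear maps preserve degree only controls $\operatorname{Ext}^0$.

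Your fallbacks do not help either. Since $d$ has degree one, the $n$-th term of a graded free resolution of a module in internal degree $-m$ is generated in degrees $\le -m+n$. This kills $\operatorname{Ext}^n$ only for $n<m$, missing exactly the case $n=m$ you need. And $\mathbf s$ cannot see twists, since $\mathbf s(L\{i\})\simeq L$.

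The vanishing really uses $V$-nilpotence. For $m=1$, an extension of $L'(1)$ by $L$ is an $R$-module structure on $L'\oplus L$ in internal degrees $-1,0$. That is a map $d\colon L'\to L$ with $d=FdV$, $dF=pFd$ and $Vd=pdV$. The grading permits such a $d$; only $d=F^ndV^n=0$ kills it. The footnote's $\mu_p$ module $M=(k,F=0,V=\sigma^{-1})$ shows this input is indispensable. Your grading argument applies to $M$ and $T$ verbatim. Yet $d=\mathrm{id}_k$ satisfies all the Raynaud relations and gives a nonsplit extension, so $\operatorname{Hom}_{D(R)}(M\{1\},T)\neq0$.

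To close the gap, either cite Ekedahl as the paper does, or compute on the $F$-gauge side. For $j\ge1$, a morphism $S(L')\{j\}\to S(L)$ is determined by its component $\phi\colon L'\to L$ at $-\infty$. Compatibility with $u$, $t$ and the identity gluings forces $\phi=p^{j-1}F\phi V$. Iterating gives $\phi=p^{n(j-1)}F^n\phi V^n$, which is $0$ for $n\gg0$ since $V$ is nilpotent on $L'$.
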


\begin{proof}
The category $\Delta_{\mathrm{tor}}^{ss}(+\infty)$ is
$\Delta_{\mathrm{ft}}$, so the first assertion is Ekedahl's decomposition
\cite[Theorem~I.1.12(i)]{ekedahl3}.  The second assertion is the
classical classification \cite{Manin63}: the Dieudonn\'e modules of
$\mathbb Z/p$ and $\alpha_p$ are $(k,F=\sigma,V=0)$ and $(k,F=V=0)$.
\end{proof}

The modification $X\mapsto X(-r\mathbf1)$ shifts every slope by $-r$
(Lemma~\ref{lem:nygaard-shift}), so we are left with
$\lambda\in(0,1)\cap\mathbb Q$.  Fix $\lambda=d/q$ with $0<d<q$ and
$\gcd(d,q)=1$.

\begin{lemma}
\label{lem:annihilated-by-p}
Every stable diagonal domino is annihilated by $p$ and is indecomposable.
\end{lemma}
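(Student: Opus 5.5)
Both assertions follow from Theorem~\ref{thm:fixed-slope}, which makes a stable diagonal domino $U$ of slope $\lambda$ a simple object of the abelian category $\Domss(\lambda)$. Simple objects of an abelian category have endomorphism ring a division ring, and every nonzero endomorphism of $U$ is an automorphism. Indecomposability is then immediate. A decomposition $U\simeq A\oplus B$ with $A,B\ne0$ would make $A$ a nonzero proper subobject of the same slope, because direct summands of a semistable object are semistable of the same slope by additivity of $\theta_\lambda$. Equivalently, the projector onto $A$ would be a nonzero non-invertible endomorphism. Either way this contradicts simplicity.

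For annihilation by $p$, I would use the endomorphism $p\cdot\mathrm{id}_U$, which is a morphism in $\Domss(\lambda)$ since the categories are $\mathbb Z_p$-linear. Its kernel and image are computed in $\Delta_{\mathrm{tor}}$ and are again objects of $\Domss(\lambda)$. By simplicity, $p$ is either zero or an automorphism of $U$. Since $U\in\Delta_{\mathrm{tor}}$, Definition~\ref{def:torsion-category} gives $p^nU=0$ for some $n$. An automorphism cannot be nilpotent on a nonzero object, so $p\cdot\mathrm{id}_U=0$.

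The only point to check is that multiplication by $p$ on an object of $\Delta$ agrees with the endomorphism $p\cdot\mathrm{id}$ in the additive category. This holds because $\Delta$ is a $\mathbb Z_p$-linear subcategory of $D^b_c(R)$, so the hom-groups are $\mathbb Z_p$-modules and composition is bilinear. The torsion condition ``$p^nM=0$'' should be read as the vanishing of $p^n\cdot\mathrm{id}_M$.

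I expect this identification to be the main obstacle, mild as it is. If it fails, I would instead apply $\mathbf s$: the object $p^{n-1}U$ is a subobject of $U$ with the same slope, obtained as an image in $\Domss(\lambda)$, so it is $0$ or $U$. If it equals $U$, iterating gives $U=p^nU=0$.
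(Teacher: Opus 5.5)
Your argument is correct and is essentially the paper's. Both use Theorem~\ref{thm:fixed-slope} to make the stable object simple in $\Domss(\lambda)$, conclude that the nilpotent endomorphism $p$ has nonzero kernel and therefore kills everything (you phrase this through Schur's lemma), and observe that idempotents must be $0$ or $1$. The identification you flag as the main obstacle is harmless: $p$ is central in $R$, so multiplication by $p$ is just $p\cdot\mathrm{id}$, and the fallback through $\mathbf s$ is not needed.
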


\begin{proof}
A stable object $M$ is simple in the abelian category $\Domss(\lambda)$
(Theorem~\ref{thm:fixed-slope}).  A power of $p$ annihilates $M$
(Definition~\ref{def:torsion-category}), so multiplication by $p$ has a
nonzero kernel.  The kernel is a subobject, hence all of $M$, and $pM=0$.
An idempotent endomorphism of $M$ has kernel $0$ or $M$ and is therefore
$1$ or $0$, so $M$ is indecomposable.
\end{proof}

\subsection{Ekedahl's interval presentation}

The $F$-gauges $M(I,J)$
defined below are the images $S(U[1])$ of the $\mathbf s$-acyclic
diagonal dominoes $U$, semistable of slope zero, with the shift
dictated by Theorem~\ref{thm:phase-rotation}. The modified interval
objects $M(I,J;\epsilon)$ satisfying the source-sink condition
have every HN slope in $(0,1)$
(Lemmas~\ref{lem:normalization} and~\ref{lem:chain-endo}) and provide
the combinatorial model for this section. We show in
Theorem~\ref{thm:stable-classification} that the stable objects of
slope $\lambda$ are exactly the Breuil--Kisin twists of a single
$M(I,J;\epsilon)$.

\begin{definition}[Interval objects and modified interval objects]
\label{def:modified-interval}
Let $I=[m,n]$ be a finite interval of levels and $J\subseteq[m,n-1]$ a set
of edges.  Ekedahl's \emph{interval object} $M(I,J)$ is the $F$-gauge
with $M(I,J)^i=k$ for $i\in I$ and $0$ otherwise, in which
$u:M^i\to M^{i+1}$ is the identity for $i\in J$, $t:M^{i+1}\to M^i$ is the
identity for $i\in[m,n-1]\setminus J$, and every other component is zero
\cite[Definition~III.1.2]{ekedahl3}.  As there, $M(I,J)$ also denotes
the $\mathbf s$-acyclic diagonal domino $U$ with $S(U[1])$ the above
$F$-gauge (Theorem~\ref{thm:phase-rotation}).  The ambient category
determines the reading.  Drawing each identity as an
arrow gives one oriented edge per pair of consecutive levels, hence an
oriented graph on the level set $I$.  A vertex is a
\emph{source} if no arrow ends at it, and a \emph{sink} if no arrow starts
at it.  For example
\[
  M([1,3],\{2\})\ =\ \bigl(\,k\xleftarrow{\ t\ }k\xrightarrow{\ u\ }k\,\bigr)
  \ =\ \bigl(\,1\leftarrow2\rightarrow3\,\bigr),
\]
with the middle vertex a source and the two endpoints sinks.  For
$\epsilon:I\to\{0,1\}$, extended by zero outside $I$, the
\emph{modified interval object} is $M(I,J;\epsilon):=M(I,J)(\epsilon)$, with $\epsilon_j:=\epsilon(j)$, so
that $\epsilon=\sum_{\epsilon_j=1}\delta_j$ in the notation of
Definition~\ref{def:autoequivalence}.  The
\emph{source-sink condition} asks $\epsilon_i=1$ at every source and
$\epsilon_i=0$ at every sink.  In the oriented graph, a star
$i^*$ marks a vertex with $\epsilon_i=1$.\footnote{Ekedahl marks a
modified vertex with a vertical bar; we use a star.}  For example
$M([1,3],\{2\};\delta_2)=(1\leftarrow2^*\rightarrow3)$ satisfies the
source-sink condition.
\end{definition}

Ekedahl's move brings every presentation of a modified interval
object to a normal form, and the proof of
Proposition~\ref{prop:chain-presentation} below draws its subobjects and
numerical invariants from that form.

\begin{lemma}[Ekedahl's move]
\label{lem:normalization}
The move
\[
\begin{tikzcd}[column sep=large]
 i & (i+1)^{*} \arrow[l, "t"']
\end{tikzcd}
\qquad\longleftrightarrow\qquad
\begin{tikzcd}[column sep=large]
 i^{*} \arrow[r, "u"] & (i+1)
\end{tikzcd}
\]
is an isomorphism of the underlying diagonal dominoes. Under these moves, every
$M(I,J;\epsilon)$ satisfying the source-sink condition has a unique presentation in which all
arrows point left.
\end{lemma}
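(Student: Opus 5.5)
The plan splits into two parts: the local move, and the global normal form.

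\emph{The move is an isomorphism.} I would reduce the move to an identity between Nygaard modifications of interval objects, using the additivity $(M(\varphi_1))(\varphi_2)=M(\varphi_1+\varphi_2)$ of Definition~\ref{def:autoequivalence}. Let $J$ be an edge set with $i\notin J$, and let $\epsilon_0$ vanish at $i$ and $i+1$. Then the left picture is $M(I,J)(\epsilon_0+\delta_{i+1})$ and the right picture is $M(I,J\cup\{i\})(\epsilon_0+\delta_i)$. It therefore suffices to prove the key claim
\[
 M(I,J\cup\{i\})\simeq M(I,J)(\delta_{i+1}-\delta_i)
\]
as diagonal dominoes. By additivity it is enough to treat a single edge, so everything happens in the two Postnikov layers around internal degrees $i,i+1$. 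On the Raynaud side, reversing the orientation of edge $i$ from $t$ to $u$ should change the relevant component of the domino differential by a factor of $F$ (equivalently $V$). By Definition~\ref{def:autoequivalence}, this is exactly the effect of raising $\varphi$ by one at $i+1$ and lowering it by one at $i$.

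I would verify the key claim with Ekedahl's formula $S(U)^r=\sigma_*\mathbf s(U(-\delta_r))$ (Definition~\ref{def:ekedahl-S}). Compute the $F$-gauge $S(U[1])$ of both sides level by level. Only levels $i$ and $i+1$ should differ, and there the $t$ and $u$ maps get exchanged. This identification is the main obstacle: the Frobenius twists $\sigma_*^{-s_j}$ and the sign conventions in $F^{-r}d=dV^r$ have to be tracked carefully. If the direct computation is messy, I would instead invoke Ekedahl's own treatment in \cite[\S III.1]{ekedahl3}, where modified interval objects are introduced.

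\emph{Existence of the left-pointing form.} I would first check that the move preserves the source-sink condition. Take a move replacing $a^*\to a+1$ by $a\leftarrow(a+1)^*$. Inside a run of right arrows, vertex $a$ becomes a sink with $\epsilon_a=0$, and $a+1$ becomes a source with $\epsilon_{a+1}=1$. Vertices that were already sources or sinks keep their status and their value of $\epsilon$.

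Now induct on the number of right arrows. Take a maximal run $i\to i+1\to\cdots\to j$ of right arrows. Its start $i$ is a source, so $\epsilon_i=1$, and its end $j$ is a sink, so $\epsilon_j=0$. Hence some consecutive pair $a,a+1$ in the run has $\epsilon_a=1$ and $\epsilon_{a+1}=0$. Apply the move there. The number of right arrows drops by one and the source-sink condition still holds, so the induction proceeds.

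\emph{Uniqueness of the left-pointing form.} I would use a numerical invariant of presentations. For each level $j$ set
\[
 P_j:=\sum_{l\le j}\epsilon_l-\#\{\text{$u$-edges }e\le j\}.
\]
Under a move at edge $i$, only $P_i$ could change. On the left side of the move, $P_i=s$, where $s$ is the partial sum of $\epsilon$ up to $i$ minus the $u$-edges below $i$. On the right side, $\epsilon_i=1$ adds one and the new $u$-edge at $i$ subtracts one, so $P_i=s$ again. For $j\ge i+1$ the two effects cancel as well. Hence $P$ is invariant under moves.

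In a presentation with all arrows pointing left there are no $u$-edges, so $P_j=\sum_{l\le j}\epsilon_l$, and $P$ determines $\epsilon$. The edge set is also determined, namely $J=\emptyset$. Therefore two left-pointing presentations connected by moves coincide.
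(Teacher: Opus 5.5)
\textbf{There is a genuine gap, in the uniqueness step: your quantity $P_j$ is not invariant under the move.} Take the move at edge $i$, from $i\leftarrow(i+1)^*$ to $i^*\to i+1$.
\begin{itemize}
\item For $j=i$ your cancellation is right: the new star at $i$ and the new $u$-edge at $i$ offset each other.
\item For $j\ge i+1$ there is only one effect, not two. The partial sum $\sum_{l\le j}\epsilon_l$ is unchanged, because the star merely moves from $i+1$ to $i$, both inside $[m,j]$. Meanwhile the count of $u$-edges $e\le j$ rises by one. So $P_j$ drops by $1$ for every $j\ge i+1$.
\end{itemize}
A concrete check: $(1\leftarrow2\leftarrow3^*)$ and $(1\leftarrow2^*\to3)$ differ by a single move at edge $2$, yet $P_3=1$ for the first and $P_3=1-1=0$ for the second. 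Without invariance, the conclusion that two leftward presentations carry the same word does not follow.

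The fix is to make the correction local to one edge, as the paper does. Put $I_j:=\sum_{l\le j}\epsilon_l-\rho_j$, where $\rho_j=1$ exactly when the edge $(j,j+1)$ points right.
\begin{itemize}
\item At $j=i$, the new star and the new right arrow cancel.
\item For $j\ne i$, neither the $\epsilon$-partial sum through $j$ nor the orientation of edge $(j,j+1)$ changes.
\end{itemize}
In a leftward presentation $\rho\equiv0$, so $\epsilon_j=I_j-I_{j-1}$ recovers the word, and the rest of your argument goes through.

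The remaining parts are fine and follow the paper.
\begin{itemize}
\item Your existence argument is the paper's. A maximal run of right arrows starts at a source with $\epsilon=1$ and ends at a sink with $\epsilon=0$, so it contains some $a^*\to a+1$, and the move removes one right arrow.
\item Your source-sink check is slightly misstated: ``sources and sinks keep their status'' is false at $a$ and $a+1$. It is harmless, though. After the move, $a$ has an incoming arrow and $\epsilon_a=0$, and $a+1$ has an outgoing arrow and $\epsilon_{a+1}=1$, so the condition holds at both automatically.
\item For the first assertion, the paper also just cites Ekedahl, specifically \cite[Proposition~III.2.6]{ekedahl3} rather than \S III.1. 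Your direct computation is therefore not needed. Its heuristic is also off as stated: the modification $(\delta_{i+1}-\delta_i)$ alters both differentials entering degrees $i$ and $i+1$, not a single component.
\end{itemize}
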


\begin{proof}
The first assertion is \cite[Proposition~III.2.6]{ekedahl3}.

Suppose an arrow points right, and choose vertices $a<b$ with
$a\rightarrow a+1\rightarrow\cdots\rightarrow b$ and with the edges
$(a-1,a)$ and $(b,b+1)$, where present, pointing left.  Every arrow at
$a$ leaves it and every arrow at $b$ ends there, so $a$ is a source with
$\epsilon(a)=1$ and $b$ is a sink with $\epsilon(b)=0$.  The first
$j\in[a,b-1]$ with $\epsilon_j=1$ and $\epsilon_{j+1}=0$ yields an edge
$j^*\to j+1$, where the displayed move applies.  The move decreases the
number of right arrows by one and preserves the source-sink condition,
checked at the two vertices it touches.  The moves therefore terminate
in finitely many steps, with all arrows then pointing left.

For uniqueness, set $I_j:=\sum_{i=m}^{j}\epsilon_i-\rho_j$, where
$\rho_j=1$ if the edge $(j,j+1)$ points right and $\rho_j=0$ otherwise.
For each $j$, the number $I_j$ is invariant under the moves.
With all arrows pointing left, $\rho\equiv0$ and
$I_j=\sum_{i=m}^{j}\epsilon_i$, so every letter is recovered as
$\epsilon_j=I_j-I_{j-1}$, with $I_{m-1}=0$.
Two leftward presentations related by the moves therefore carry the same
word.
\end{proof}

When all arrows point left, that is $J=\varnothing$, normalize $I$ to
begin at $1$ by a Breuil--Kisin twist, write $M(\epsilon)$ for
$M(I,\varnothing;\epsilon)$, and record it by its \emph{word}
$\epsilon_1\epsilon_2\cdots$.  The source-sink condition says the first
letter is $0$ and the last is $1$.  For
example, the move normalizes the modified example of
Definition~\ref{def:modified-interval}:
\[
 (1\leftarrow2^{*}\rightarrow3)\ \rightsquigarrow\ (1\leftarrow2\leftarrow3^{*})\ =\ 001 .
\]

\begin{remark}[Leftward and rightward normal forms]
\label{rem:two-normal-forms}
Running the move in the opposite direction normalizes $M(\epsilon)$
instead to a unique presentation with all arrows pointing right.  If
the leftward normal form has the word
$\epsilon=0\,\epsilon_2\cdots\epsilon_{m-1}\,1$ on $[1,m]$, then its
rightward normal form has the word
$\epsilon'=1\,\epsilon_2\cdots\epsilon_{m-1}\,0$.  Indeed, the invariant $I_j$ of the proof above equals
$\epsilon_1+\cdots+\epsilon_j$ in the leftward presentation and
$\epsilon'_1+\cdots+\epsilon'_j-1$ in the rightward one, for every
edge $j$.  Taking differences gives $\epsilon'_j=\epsilon_j$ for
$1<j<m$ and $\epsilon'_1=\epsilon_1+1=1$, and the degree
$\sum_i\epsilon_i$ is the same in both presentations
(Lemma~\ref{lem:dictionary}), so $\epsilon'_m=\epsilon_m-1=0$.  By
default, we will use the leftward normal form.
\end{remark}

\begin{lemma}
\label{lem:chain-endo}
A modified interval object $M(\epsilon)$ satisfying the source-sink
condition is indecomposable, annihilated by $p$, and has every HN slope
in $(0,1)$.
\end{lemma}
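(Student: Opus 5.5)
The plan is to prove the three assertions separately, working mostly through Ekedahl's equivalence $S$ and the dictionary of Theorem~\ref{thm:phase-rotation}. By Lemma~\ref{lem:normalization} we may take $M(\epsilon)=M(I,\varnothing)(\epsilon)$ with $I=[1,m]$, $\epsilon_1=0$ and $\epsilon_m=1$. Here $U:=M(I,\varnothing)$ is the $\mathbf s$-acyclic diagonal domino with $S(U[1])$ the interval $F$-gauge: one copy of $k$ in each level of $I$, every $t$ the identity, and every $u$ zero.

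\emph{Annihilation by $p$ and indecomposability.} On $S(U[1])$ we have $p=ut=0$, since each $u$ vanishes. As $S$ is an equivalence, $p=0$ on $U$. The Nygaard modification $(\epsilon)$ leaves the underlying additive maps unchanged and is an exact autoequivalence (Definition~\ref{def:autoequivalence}), so $pM(\epsilon)=0$ as well. For indecomposability we compute $\operatorname{End}(U)=\operatorname{End}(S(U[1]))$. An endomorphism of this gauge is a family of scalars, one per level, and they must agree along every identity arrow $t$. Since all levels of $I$ are connected by identities, the endomorphism ring embeds in $k$. It is therefore a domain, hence has no nontrivial idempotents. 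The modification $(\epsilon)$ is an autoequivalence, so $\operatorname{End}(M(\epsilon))\cong\operatorname{End}(U)$ and $M(\epsilon)$ is indecomposable.

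\emph{HN slopes in $(0,1)$.} This is equivalent to showing that $M(\epsilon)$ is pure of type $\tfrac12$ (Theorem~\ref{thm:refinement}(iii)). I would split it into two conditions.
\begin{enumerate}
\item All HN slopes of $M(\epsilon)$ are $>0$. By Theorem~\ref{thm:phase-rotation} this holds iff $S(M(\epsilon))$ lies in the $F$-gauge heart.
\item All HN slopes of $M(\epsilon)(-\mathbf1)$ are $<0$. By Lemma~\ref{lem:nygaard-shift} this is equivalent to all slopes of $M(\epsilon)$ being $<1$. It amounts to $G^2\bigl(M(\epsilon)(-\mathbf1)\bigr)=0$, together with the exclusion of a slope-$0$ step, i.e.\ of a nonzero $\mathbf s$-acyclic piece in the HN filtration.
\end{enumerate}
Both conditions are made computable by the level formula $S(N)^r=\sigma_*\mathbf s\bigl(N(-\delta_r)\bigr)$. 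For $N=U(\epsilon)$ this gives $S(U(\epsilon))^r=\sigma_*\mathbf s\bigl(U(\epsilon-\delta_r)\bigr)$.

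Each $U(\epsilon-\delta_r)$ is again a modified interval object. Its $\mathbf s$-realization can be read off from $S(U[1])$ as in Ekedahl's description \cite[\S III.1--2]{ekedahl3}: a star at a vertex turns the corresponding identity $t$ into a $u$, i.e.\ it moves one unit of degree. I would proceed as follows.
\begin{enumerate}
\item Check levelwise that $H^1_{\mathsf{FG}}$ vanishes, which gives condition (i).
\item For condition (ii), the source-sink condition ($\epsilon=1$ exactly where it is needed at sources, $\epsilon=0$ at sinks) should force the relevant $\mathbf s$-realizations to be concentrated in a single degree.
\end{enumerate}

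As a cross-check, the numerical invariants should confirm the answer. Lemma~\ref{lem:nygaard-shift} gives $\degD(M(\epsilon))=\sum_i\epsilon_i\rkD^i(U)$ with $\rkD(U)=m-1$. With $\epsilon_1=0$, $\epsilon_m=1$, the degree lies strictly between $0$ and $\rkD$, as required.

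\emph{Main obstacle.} The hard step is (ii): excluding slope exactly $1$ or exactly $0$ pieces, not just establishing the weak inequalities. If the levelwise computation becomes unwieldy, I would instead argue with subobjects directly, following the proof of Theorem~\ref{thm:refinement}(ii). The plan there is to show that every nonzero subobject $N$ satisfies $\degD(N)<\rkD(N)$, and that every nonzero saturated quotient has $\degD>0$. The subobjects and quotients would be taken from the sub-intervals of the leftward normal form, using that the first letter of the word is $0$ and the last is $1$.
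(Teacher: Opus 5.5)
The first two assertions are fine; the third, that every HN slope lies in $(0,1)$, is not proved and is the real content of the lemma. Your arguments for annihilation by $p$ and for indecomposability are correct and coincide with the paper's. The element $p$ kills $M(I,\varnothing)$, the modification $(\epsilon)$ is a $\mathbb Z_p$-linear autoequivalence, and under $S$ an endomorphism is one scalar per level, forced to be constant along the identity arrows.

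There is also a slip in your cross-check: $\rkD(M(I,\varnothing))=m$, the number of vertices, by Lemma~\ref{lem:dictionary}, not $m-1$. With $m-1$ your inequality $\degD<\rkD$ would already fail for the word $0111$.

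You correctly reduce the slope assertion to $M(\epsilon)$ being pure of type $\tfrac12$, but then nothing is proved. Steps (i) and (ii) are not carried out, and (ii) is only what the source-sink condition ``should force''. The objects $U(\epsilon-\delta_r)$ are not modified interval objects in the sense of Definition~\ref{def:modified-interval}, since $\epsilon-\delta_r$ takes the value $-1$ wherever $\epsilon_r=0$. No computation of their $\mathbf s$-realizations is available, and the slogan that a star ``turns $t$ into $u$'' is not a rule you can compute with.

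Moreover, condition (ii), including the exclusion of slope exactly $1$, is the statement $\FilD^0\bigl(M(\epsilon)(-\mathbf1)\bigr)=0$. That is, $M(\epsilon)(-\mathbf1)$ must have no nonzero $\mathbf s$-$0$-torsion subobject. This is a statement about subobjects and cannot be read off from $\mathbf s\bigl(M(\epsilon)(-\mathbf1)\bigr)$ being concentrated in one degree: $U_0\oplus U_{-1}$ and $U_{-1}$ have the same $\mathbf s$-realization.

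Your fallback does not close the gap either. The HN steps of $M(\epsilon)$ are arbitrary saturated subobjects, and the prefixes of the leftward normal form are not all of them. Already $M(0011)$ contains the saturated subobject $U_{1/2}\{-2\}$ on the suffix $[3,4]$, which appears as a subobject only after a move. The proof of Theorem~\ref{thm:christoffel-stability} needs exactly such subobjects $P'_n$. So checking prefixes and suffixes of the leftward form does not control the first HN step or the last HN quotient.

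The missing input is Ekedahl's criterion \cite[Ch.~III, \S2, the discussion preceding Proposition~2.6]{ekedahl3}: a modified interval object satisfying the source-sink condition is of pure type $\tfrac12$. The paper cites it and concludes by Theorem~\ref{thm:refinement}(ii). Given your own reduction to type $\tfrac12$, this is the one step you were missing.

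If you want a self-contained argument instead, you may not verify the boundary condition of Lemma~\ref{lem:boundary-condition} on the gauges of Example~\ref{ex:sorted-path}. Their derivation passes through Proposition~\ref{prop:chain-presentation}, which uses the present lemma. Cutting after a position $n$ with $\epsilon_n=1$ and $\epsilon_{n+1}=0$ does help. Lemma~\ref{lem:prefix-cut} then presents $M(\epsilon)$ as an extension of the modified interval objects of two shorter source-sink words, and objects with all HN slopes in $(0,1)$ are closed under extensions. But this only reduces you to the words $0^a1^b$, which still need a genuine argument.
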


\begin{proof}
The interval object $M(I,\varnothing)$ is annihilated by $p$, and
modifications preserve both this and the endomorphism ring.  Under
Ekedahl's equivalence $S$, an endomorphism is a scalar at each
one-dimensional vertex, and every identity edge equates adjacent scalars
along the connected chain, so
$\operatorname{End}_\Delta(M(\epsilon))=k$
\cite[Propositions~III.1.1 and~III.1.3]{ekedahl3} and $M(\epsilon)$ is
indecomposable.  The slopes lie in
$(0,1)$ by Ekedahl's criterion \cite[Ch.~III, \S2, the discussion
preceding Proposition~2.6]{ekedahl3} and Theorem~\ref{thm:refinement}(ii).
\end{proof}

\begin{lemma}[Rank and degree]
\label{lem:dictionary}
For a modified interval object $M(I,J;\epsilon)$, the rank
$\rkD(M(I,J;\epsilon))$ is the number of vertices of $I$, and
$\degD(M(I,J;\epsilon))=\sum_i\epsilon_i$.
\end{lemma}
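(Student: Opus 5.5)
The plan is to reduce both formulas to the unmodified interval object $M(I,J)$ and then apply the Nygaard shift formula. By Definition~\ref{def:modified-interval}, $M(I,J;\epsilon)=M(I,J)(\epsilon)$ with $\epsilon=\sum_{\epsilon_j=1}\delta_j$. Lemma~\ref{lem:nygaard-shift} then gives
\[
 \rkD^i(M(I,J;\epsilon))=\rkD^i(M(I,J)),\qquad
 \degD(M(I,J;\epsilon))=\degD(M(I,J))+\sum_i\epsilon_i\,\rkD^i(M(I,J)).
\]
So it suffices to prove two facts about $M(I,J)$: that $\degD(M(I,J))=0$, and that $\rkD^i(M(I,J))=1$ for $i\in I$ and $0$ otherwise. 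Given these, the rank is $|I|$ and the degree is $\sum_{i\in I}\epsilon_i$, with $\epsilon$ extended by zero outside $I$.

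The vanishing of the degree follows directly from the definitions. As a diagonal domino, $M(I,J)$ is $\mathbf s$-acyclic (Definition~\ref{def:modified-interval}), so $\degD=\chi_W(\mathbf s(M(I,J)))=0$. One can also read this off the $F$-gauge side: the gauge $S(U[1])$ has $G^{-\infty}=0$, because for $i$ below $m$ every term is zero, so $\mathbf s(U)=0$.

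For the ranks I would pass to the $F$-gauge side and use the computation in the proof of Proposition~\ref{prop:gauge-euler-invariants}. Write $G=S(E)$ with $E=U[1]$. That proof gives
\[
 \chi_W\bigl(\fib(t^{-\infty}\colon G^i\to G^{-\infty})\bigr)=-\rkD^i(E)=\rkD^i(U),
\]
where the sign flips because of the shift $[1]$. Here $G^{-\infty}=0$ and $G^i=k$ for $i\in I$, and zero otherwise. Hence $\rkD^i(U)=\ell_W(G^i)$, which is $1$ for $i\in I$ and $0$ otherwise.

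The one step that needs care is this bookkeeping of signs and indices. Two things must be checked. First, that $\rkD^i$ extends additively to shifted objects with $\rkD^i(E[1])=-\rkD^i(E)$; this holds since everything is taken in $K_0$. Second, that the level $i$ of the gauge corresponds to the index $\rkD^i$ with the target-indexing convention. The convention $G^i=\sigma_*\mathbf s(E(-\delta_i))$ in Definition~\ref{def:ekedahl-S} settles the second point. As a sanity check, take $U_0$: here $S(U_0[1])$ is $k$ in level $1$, and $U_0$ has $\rkD^1=1$, which matches.
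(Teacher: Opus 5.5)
Your proof is correct, and its skeleton is the paper's: reduce to the unmodified $M(I,J)$, then apply Lemma~\ref{lem:nygaard-shift}. The difference lies in how you obtain the invariants of $M(I,J)$ itself.

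The paper simply cites Ekedahl's Lemma~III.2.6.1 for ``rank equal to the number of vertices and degree zero.'' Your degree argument, via $\mathbf s$-acyclicity, is the same content made explicit. For the rank you do something genuinely different. You apply the level formula $G^i=\sigma_*\mathbf s(E(-\delta_i))$ of Definition~\ref{def:ekedahl-S} to $E=U[1]$ and combine it with Lemma~\ref{lem:nygaard-shift}. This is exactly the computation in the proof of Proposition~\ref{prop:gauge-euler-invariants}, which precedes this lemma and does not depend on it, so there is no circularity. It reads off the domino numbers from the levels of the interval gauge, giving $\rkD^i(U)=\ell_W(G^i)$.

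This route buys two things. First, it is self-contained within the paper's framework. Second, it yields the vertex-by-vertex statement $\rkD^i(M(I,J))=\mathbf 1_I(i)$. That refined statement is what the degree formula actually needs, since Lemma~\ref{lem:nygaard-shift} adds $\sum_i\epsilon_i\rkD^i$; the total rank alone would not suffice. The paper's phrase ``each modification raises degree by one'' uses it tacitly. The paper's version is shorter but rests entirely on Ekedahl's computation for interval objects.

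Your sign bookkeeping for the shift is right, and you can even avoid extending $\rkD^i$ to $\Dtor$. Since $(U[1])(-\delta_i)=U(-\delta_i)[1]$, you get
\[
\chi_W(G^i)=-\degD(U(-\delta_i))=\rkD^i(U)-\degD(U)=\rkD^i(U),
\]
which stays inside $\Delta_{\mathrm{tor}}$.
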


\begin{proof}
The underlying $\mathbf s$-acyclic interval object has rank equal to the
number of vertices and degree zero
\cite[Lemma~III.2.6.1]{ekedahl3}.  Modifications preserve rank, while each
modification raises degree by one (Lemma~\ref{lem:nygaard-shift}).
\end{proof}

\begin{lemma}
\label{lem:prefix-cut}
Let $M(\epsilon)$ be a modified interval object satisfying the source-sink
condition.  Every proper nonempty prefix of $I$ spans a saturated
subobject, with complementary quotient the modified interval object
on the complementary suffix.
\end{lemma}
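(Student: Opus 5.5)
I will work on the $F$-gauge side, through the equivalence $S$ and the shift dictated by Theorem~\ref{thm:phase-rotation}. Since every HN slope of $M(\epsilon)$ lies in $(0,1)$ (Lemma~\ref{lem:chain-endo}), the object $S(M(\epsilon))$ lies in the heart $\mathcal G_{\mathrm{tor}}$. I will also use the leftward normal form of Lemma~\ref{lem:normalization}, in which all arrows are $t$-maps $M^{i+1}\to M^i$ before modification.

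Fix a proper prefix $P=[1,j]\subset I=[1,m]$ with suffix $Q=[j+1,m]$. First I will check that in the underlying interval object $M(I,\varnothing)$ the levels in $P$ form a sub-$F$-gauge. The only transition map crossing the cut is $t\colon M^{j+1}\to M^j$, which maps into $P$, and $u$ vanishes everywhere. This gives a short exact sequence of $F$-gauges
\[
0\to M(P,\varnothing)\to M(I,\varnothing)\to M(Q,\varnothing)\to 0,
\]
with quotient supported on $Q$. The gluing is compatible, because the limits $M^{\pm\infty}$ vanish for $\mathbf s$-acyclic objects. On the Raynaud side this is a short exact sequence of diagonal dominoes of slope $0$, read through $U\mapsto S(U[1])$. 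Since modifications are exact autoequivalences commuting with the diagonal truncations (Definition~\ref{def:autoequivalence}), I will apply $(\epsilon)$ to obtain a short exact sequence in $\Delta_{\mathrm{tor}}$:
\[
0\to M(P,\varnothing;\epsilon|_P)\to M(\epsilon)\to M(Q,\varnothing;\epsilon|_Q)\to0.
\]
For this I must verify that $\epsilon$ restricts correctly. The modification by $\delta_i$ changes the differential entering level $i$, and with the target-indexing convention this is internal to $P$ or to $Q$, except possibly at the edge $(j,j+1)$. I will check this bookkeeping explicitly, including the Frobenius twists $\sigma_*^{-s_i}$, which do not affect exactness.

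Next I will show that the quotient is a diagonal domino, which is exactly the saturation statement. The quotient is a modified interval object on $Q$, but it need not satisfy the source-sink condition: its first letter $\epsilon_{j+1}$ may be $1$. The main obstacle is therefore to show that $M(Q,\varnothing;\epsilon|_Q)_{\mathrm{ft}}=0$.

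My first approach is to use the criterion that a finite-torsion subobject of slope $+\infty$ in the $F$-gauge picture is a nonzero $F$-gauge of level $[0,1]$ up to twist, of slope $\muFG=0$. A modified interval object whose vertices are all one-dimensional, connected along a chain, with zero generic fibre, should admit no such subobject. Concretely, $\rkFG$ of a finite-torsion object is positive, whereas $\rkD$ and $\degD$ computed via Lemma~\ref{lem:dictionary} must be compatible. A cleaner route is to argue that $M(Q,\varnothing;\epsilon|_Q)$ is itself one of Ekedahl's modified interval objects, and these are diagonal dominoes by \cite[Ch.~III]{ekedahl3}: modifications of $\mathbf s$-acyclic dominoes lie in $\DomD$, since $\DomD$ is stable under $(\varphi)$. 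To justify that stability, I would note that $(\varphi)$ carries finite-torsion objects to finite-torsion objects, being an autoequivalence of $\Delta$ that preserves rank zero (Lemma~\ref{lem:nygaard-shift} together with Proposition~\ref{prop:torsion-structure}(i)). Hence it preserves $M_{\mathrm{ft}}=0$.

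Finally, saturation follows from Proposition~\ref{prop:torsion-basics}, since $M/N\in\DomD$ means $(M/N)_{\mathrm{ft}}=0$, so $N^{\mathrm{sat}}=N$.
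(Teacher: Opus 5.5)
Your proposal is correct and follows essentially the paper's proof. In the leftward presentation no arrow leaves a prefix, so the prefix spans a subobject with the suffix as quotient; that quotient is a modification of an $\mathbf s$-acyclic interval object, hence lies in $\DomD$, which is exactly saturation. The one difference is that the paper cites \cite[Lemma~III.2.2(iii)]{ekedahl3} for the fact that such modifications stay diagonal dominoes, whereas your ``cleaner route'' derives the stability of $\DomD$ under $(\varphi)$ from rank invariance (Lemma~\ref{lem:nygaard-shift}) and the equivalence of rank zero with finite torsion (Proposition~\ref{prop:torsion-structure}(i)), applied to both $\varphi$ and $-\varphi$; this is valid, and the vaguer ``first approach'' is not needed and can be dropped.
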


\begin{proof}
A set of vertices with no arrow leaving it spans a subobject and a
complementary quotient.  Their
underlying interval objects are $\mathbf s$-acyclic, so their modifications remain
diagonal dominoes
\cite[Lemma~III.2.2(iii)]{ekedahl3}.  The inclusion is therefore saturated.
\end{proof}

Ekedahl gives a characterization of diagonal complexes of type
$\tfrac12$ via an explicit criterion on their associated $F$-gauges.\footnote{The
printed statement of \cite[Corollary~III.2.5.1]{ekedahl3} omits the necessary
conditions $t_ix=0$ and $u_{i-1}y=0$.
The category is labelled type $-\tfrac12$ there.
With the type convention of Proposition~\ref{prop:type-filtration},
it has type $\tfrac12$, corresponding to HN slopes in $(0,1)$.}

\begin{lemma}[{cf.\ \cite[Corollary~III.2.5.1]{ekedahl3}}]
\label{lem:boundary-condition}
The equivalence $S$ identifies the diagonal complexes of type $\tfrac12$,
that is with every HN slope in $(0,1)$, with the torsion coherent
$F$-gauges satisfying the \emph{boundary condition}
\[
 t_ix=0,\qquad u_{i-1}y=0,\qquad \tau(u^\infty x)=t^{-\infty}y
 \qquad\Longrightarrow\qquad x=y=0
\]
for every $i\in\mathbb Z$ and all $x\in G^{i+1}$ and $y\in G^{i-1}$.
\end{lemma}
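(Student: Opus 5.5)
The plan is to split "every HN slope in $(0,1)$" into a lower bound and an upper bound, and to translate each through the dictionary of Section~\ref{subsec:transport}. By Theorem~\ref{thm:phase-rotation}, $S(M)$ lies in the $F$-gauge heart exactly when every HN slope of $M$ is $>0$. In that case $M_{\mathrm{ft}}$ is allowed, at slope $+\infty$. So the type-$\tfrac12$ objects correspond to those torsion coherent $F$-gauges $G=S(M)$ for which $M$ has no nonzero subobject of slope in $[1,+\infty]$, equivalently $F^{\ge1}_{\HN}M=0$. It therefore suffices to prove the following: for $G=S(M)$ in the heart, the boundary condition fails if and only if $F^{\ge1}_{\HN}M\neq0$.

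\emph{Step 1: reading a boundary pair as a morphism.} Given $x\in G^{i+1}$ and $y\in G^{i-1}$ with $t_ix=0$, $u_{i-1}y=0$ and $\tau(u^\infty x)=t^{-\infty}y$, I will describe the sub-$F$-gauge they generate. It is spanned by the $u$-powers of $x$ in levels $\ge i+1$ and the $t$-powers of $y$ in levels $\le i-1$, is zero in level $i$, and its two ends are glued by $\tau$. After a Breuil--Kisin twist this is exactly the shape of $S(U_1)$ in Example~\ref{ex:equivalence-examples}(iii), with the zero term in level $1$. If $x$ and $y$ can be taken killed by $p$, a nonzero pair therefore amounts to a nonzero morphism $S(U_1)\{m\}\to G$ with $m$ determined by $i$. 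In the degenerate case the glued vector at infinity vanishes. Then $x$ or $y$ lies in a finite-length part: it is killed by $u^\infty$, respectively by $t^{-\infty}$. Such an element generates a finite-torsion subobject, i.e.\ an $S$ of a Breuil--Kisin twist of a finite Dieudonn\'e module. For $p$-power torsion I first replace $x,y$ by $p^a x,p^a y$ with $a$ maximal such that the pair stays nonzero; the relations are $W$-linear, so the new pair still satisfies the boundary relations.

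\emph{Step 2: boundary pair implies $F^{\ge1}_{\HN}M\neq0$.} By Step~1, a nonzero pair gives a nonzero map to $M$ from $U_1\{m\}$ or from a finite-torsion object. Both sources are semistable of slope $\ge1$: $U_1$ is stable of slope $1$ by Proposition~\ref{thm:integer-simples}. Lemma~\ref{lem:hom-vanishing}, together with the torsion pair of Lemma~\ref{lem:torsion-pair} in the finite case, then forces a nonzero HN factor of $M$ of slope $\ge1$.

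\emph{Step 3: $F^{\ge1}_{\HN}M\neq0$ implies a boundary pair.} I expect this converse to be the main obstacle. If $M_{\mathrm{ft}}\ne0$, a simple finite-torsion subobject lies in a single pair of adjacent levels, and I will read off an $x$ or $y$ with zero image at infinity directly. Otherwise let $P=F^{\ge1}_{\HN}M$, so that $P$ has slopes $\ge1$. Apply the modification $(-\mathbf1)$: by Lemma~\ref{lem:nygaard-shift} and Theorem~\ref{thm:refinement}(iii), $P(-\mathbf1)$ then has $\FilD^0\neq0$, hence a nonzero $\mathbf s$-$0$-torsion subobject. I will take a vector in $H^0(\mathbf s)$ of it, annihilated by $p$, as the common image at infinity. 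The aim is to show that its preimages at the extreme levels can be chosen in the kernels of $t_i$ and $u_{i-1}$ at one level $i$. This uses that the modification $(-\delta_i)$ at the relevant level is what produces the gap in $S(P)^i=\sigma_*\mathbf s(P(-\delta_i))$, via Definition~\ref{def:ekedahl-S}. If this bookkeeping resists, the fallback is to follow Ekedahl's proof of \cite[Corollary~III.2.5.1]{ekedahl3} and check that the two omitted conditions $t_ix=0$ and $u_{i-1}y=0$ are exactly what his argument uses.
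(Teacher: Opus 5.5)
Your Steps~1 and~2 do prove the direction ``type $\tfrac12$ $\Rightarrow$ boundary condition''. As in the paper, a nonzero boundary pair at level $i$ is the same as a nonzero morphism $S(U_1\{1-i\})\to G$, and your Hom-vanishing argument excludes these when all HN slopes are $<1$. Two details of Step~1 are wrong, though harmless.
\begin{itemize}
\item No rescaling by $p^a$ is needed: $t_ix=0$ gives $px=u_it_ix=0$, and $u_{i-1}y=0$ gives $py=t_{i-1}u_{i-1}y=0$. So every pair, degenerate or not, is already a morphism from $S(U_1\{1-i\})$.
\item An element killed by $t$ and by $u^\infty$ does not generate $S$ of a finite Dieudonn\'e module, since those have nonzero limits $S(L)^{\pm\infty}=L$. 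It generates a finite-length $F$-gauge, i.e.\ $S(U[1])$ with $U\in\Domss(0)$ (Theorem~\ref{thm:integer-slope-category}).
\end{itemize}
Inside $G=S(M)$ with $M\in\Delta_{\mathrm{tor}}^{\HN>0}$ the degenerate case never occurs, because $\operatorname{Hom}(U[1],M)=0$. For the same reason, your plan in Step~3 to extract from $M_{\mathrm{ft}}$ a pair with zero image at infinity fails as stated. A pair does exist there, but its glued vector is nonzero.

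The genuine gap is the converse, and it sits in two places.

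First, the reduction ``it therefore suffices'' only covers $G=S(M)$ with $M\in\Delta_{\mathrm{tor}}$. A torsion coherent $F$-gauge is $S(E)$ with $E\in\mathcal G_{\mathrm{tor}}=\langle\Delta_{\mathrm{tor}}^{\HN\le0}[1],\Delta_{\mathrm{tor}}^{\HN>0}\rangle$ (Theorem~\ref{thm:phase-rotation}), and $\widetilde H^{-1}(E)$ may be nonzero. You must show that every such $G$ violates the boundary condition. For a skyscraper $S(U_0\{m\}[1])$ this is immediate. But already for $G=S(N[1])$ with $N\neq0$ of nonpositive HN slopes, it means $\operatorname{Ext}^1(U_1\{m\},N)\neq0$ for some $m$, and nothing in your plan produces such a class.

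Second, even for $G=S(M)$ the substance of Step~3 is missing. You need that a diagonal domino with an HN slope $\ge1$ receives a nonzero map from some $U_1\{m\}$. Equivalently, after the modification $(-\mathbf1)$, a nonzero $\mathbf s$-$0$-torsion subobject must force a nonzero map from some $U_0\{m\}$. Choosing a vector of $H^0(\mathbf s)$ and hoping its lifts fall into $\ker t_i$ and $\ker u_{i-1}$ at a common level $i$ restates this claim rather than proving it.

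The paper does not prove this by hand either. It imports Ekedahl's Hom criterion from the proof of \cite[Proposition~III.2.5]{ekedahl3}, which characterizes type $\tfrac12$ by $\operatorname{Hom}(S(U_1\{1-i\}),G)=0$ for all $i$. It then only translates such morphisms into boundary pairs. So your ``fallback'' is the indispensable input, and it must also cover the $\widetilde H^{-1}$ part.
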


\begin{proof}
By the Hom criterion in
\cite[proof of Proposition~III.2.5]{ekedahl3}, the category
in question is characterized by
$\operatorname{Hom}(S(U_1\{1-i\}),G)=0$ for every $i$.
By Example~\ref{ex:equivalence-examples}(iii), such a morphism
is determined by the images $x,y$ of the two tail generators
in levels $i+1,i-1$.
The arrows adjacent to the zero level $i$ give
$t_ix=0$ and $u_{i-1}y=0$, which imply $px=py=0$,
and compatibility with the gluing gives the third condition.
Conversely, any such pair extends uniquely to a morphism.
\end{proof}

The next proposition gives a family of examples of $F$-gauges
satisfying the lemma above, and in
Section~\ref{subsec:stable-classification} we will show that, up to
Breuil--Kisin twists, these are precisely the stable $F$-gauges of
slope $1/q$.

\begin{proposition}
\label{prop:one-star-gauge}
For $q\ge1$ write $M_q:=M([1,q],\varnothing;\delta_q)$.  Then
\[
 S(M_q)=
 \cdots\fgpair{0}{\sim}k
 \fgpair{0}{\sim}k
 \rightleftarrows
 \underbrace{0\rightleftarrows\cdots\rightleftarrows0}_{q\ {\rm levels}}
 \rightleftarrows k
 \fgpair{\sim}{0}k
 \fgpair{\sim}{0}\cdots ,
\]
with the zero levels at $[1,q]$ and $\tau$ the Frobenius
$\sigma\colon k\to k$.
\end{proposition}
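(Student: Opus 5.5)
The plan is to compute $S(M_q)$ level by level from Ekedahl's formula $S(N)^r=\sigma_*\mathbf s\bigl(N(-\delta_r)\bigr)$ of Definition~\ref{def:ekedahl-S}. The lengths of the levels come from Lemma~\ref{lem:nygaard-shift}, and the transition maps come from the observation that a modification at a level carrying no rank changes nothing.

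First, the limit. $M_q$ has rank $q$ and degree $1$ (Lemma~\ref{lem:dictionary}), so its slope is $1/q>0$. By Lemma~\ref{lem:chain-endo} and Theorem~\ref{thm:phase-rotation}, $S(M_q)$ lies in the $F$-gauge heart with no shift, so every level is a $W$-module in cohomological degree $0$. By Lemma~\ref{lem:chain-endo}, $M_q$ is killed by $p$, hence every level is a $k$-vector space. Then $S(M_q)^{-\infty}=\mathbf s(M_q)$ is concentrated in degree $0$ with length $\degD(M_q)=1$, so it equals $k$.

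Next, the lengths of the levels. I will check that the rank is distributed one per vertex, namely $\rkD^r(M_q)=1$ for $r\in[1,q]$ and $0$ otherwise. This holds for the underlying interval object by \cite[Lemma~III.2.6.1]{ekedahl3}, since each vertex contributes one elementary domino to the Postnikov layer entering it, and the modification preserves each $\rkD^r$ by Lemma~\ref{lem:nygaard-shift}. Lemma~\ref{lem:nygaard-shift} then gives
\[
\ell_W\bigl(S(M_q)^r\bigr)=\degD\bigl(M_q(-\delta_r)\bigr)=1-\rkD^r(M_q).
\]
This is $0$ for $r\in[1,q]$ and $1$ otherwise. Here I use that $S(M_q)^r$ is a module in degree $0$, so its length equals its Euler characteristic, and that $\sigma_*$ preserves length. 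This yields the $q$ zero levels at $[1,q]$ and copies of $k$ elsewhere.

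Then the maps. For $r\notin[1,q]$ the Postnikov layer $\Dom^r(M_q)$ vanishes, so the differential entering internal degree $r$ is zero. The modification $(-\delta_r)$ then only Frobenius-twists the modules, and in diagram \eqref{eq:ekedahl-S-component} the component of $t$ leaving level $r$ for $r\le0$ (respectively of $u$ for $r\ge q+1$) is an isomorphism of the totalizations, since every entry that differs is attached to a zero differential. So $t$ is an isomorphism for all $r\le0$ and $u$ is an isomorphism for all $r\ge q+1$. Since $ut=tu=p=0$, the opposite arrows are zero. All arrows into or out of the zero block vanish trivially. Finally, by Definition~\ref{def:ekedahl-S} the gluing $\tau$ is the identity of the underlying complex through $\sigma^*\sigma_*=\mathrm{id}$. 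Since $S(M_q)^\infty=\sigma_*\mathbf s(M_q)=\sigma_*k$, identifying both ends with $k$ turns $\tau$ into $\sigma$. As a consistency check, the result visibly satisfies the boundary condition of Lemma~\ref{lem:boundary-condition}.

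The main obstacle is the step $\rkD^r(M_q)=1$ exactly on $[1,q]$, that is, that Ekedahl's interval object has one-dimensional Postnikov layers at each of its vertex levels and none elsewhere. I would verify this by unwinding $M([1,q],\varnothing)$ as $S^{-1}$ of the chain $k\xleftarrow{t}\cdots\xleftarrow{t}k$, using Ekedahl's explicit description in \cite[Ch.~III, \S1]{ekedahl3}. If that proves awkward, an alternative is to induct on $q$ using the prefix cut of Lemma~\ref{lem:prefix-cut} together with additivity of each $\rkD^r$.
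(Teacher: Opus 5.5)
Your level lengths are right, and computing each $\ell_W(S(M_q)^r)=\degD(M_q(-\delta_r))=1-\rkD^r(M_q)$ directly is a genuinely different route from the paper. The paper instead inducts on $q$ through $0\to U_0\{1-q\}\to M_q\to M_{q-1}\to0$. It obtains $S(M_q)$ as the kernel of the levelwise surjection $S(M_{q-1})\to S(U_0\{1-q\}[1])$, so all arrows and the gluing are inherited from $S(M_{q-1})$ and ultimately from $S(U_1)$.

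The step you flag as the main obstacle is in fact easy. The identity $\chi_W(\fib(t^{-\infty}\colon G^i\to G^{-\infty}))=-\rkD^i(E)$ from the proof of Proposition~\ref{prop:gauge-euler-invariants} applies to the unmodified interval gauge $G=M([1,q],\varnothing)=S(E)$ with $E=U[1]$. Since $G^{-\infty}=0$ and $G^i=k$ exactly for $i\in[1,q]$, this gives $\rkD^i(U)=\mathbf 1_{[1,q]}(i)$.

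\textbf{The genuine gap is your treatment of the arrows.} In \eqref{eq:ekedahl-S-component}, with $r-1$ in place of $r$, the map $t\colon S(M)^r\to S(M)^{r-1}$ is the identity only on the columns of internal degree $\ge r$. On column $r-1$ it is $V$, and on every column $\le r-2$ it is $p$. So whether $t$ is an isomorphism depends on the whole part of $M_q$ in internal degrees $\le r-1$, not on the single differential entering degree $r$.

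Moreover, $\Dom^r(M_q)=0$ only says $H^{1-r}(M_q)=0$. It does not make that differential vanish on a representative. Even if it did, it would not turn $V$ and $p$ on the lower columns into quasi-isomorphisms. At best it describes $S(M_q)^r$ up to abstract isomorphism, which your length count already gives. Since both levels are $k$, the whole content of the claim is $t\ne0$, and your argument does not show it. The same objection applies to $u$ for $r\ge q+1$.

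There are two ways to repair this.
\begin{enumerate}
\item Use all the layers at once. Since $\Dom^n(M_q)=0$ for every $n\notin[1,q]$, every $H^j(M_q)$ lies in internal degrees $[0,q]$. Because $R$ is nonnegatively graded, truncation by internal degree is exact on complexes of graded $R$-modules, so $M_q$ is represented by a complex concentrated in internal degrees $[0,q]$. Then for $r\le0$ all columns $\le r-1$ vanish and $t$ is literally the identity of $\mathbf s(M_q)$. For $r\ge q+1$ all columns $\ge r$ vanish and $u$ is the identity of $\sigma_*\mathbf s(M_q)$.
\item For $q\ge2$, promote your consistency check to an argument. If $r\le0$ and $x\in G^r$ satisfies $tx=0$, then $u^\infty x$ factors through $G^1=0$. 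So the pair $(x,0)$ satisfies the hypotheses of Lemma~\ref{lem:boundary-condition}, and $x=0$. Dually, if $y\in G^r$ with $r\ge q+1$ and $uy=0$, then $t^{-\infty}y$ factors through $G^q=0$, so $y=0$.
\end{enumerate}

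One small point: for $q=1$, Lemma~\ref{lem:chain-endo} does not apply, since the one-vertex word fails the source-sink condition. That case is $M_1=U_1$, which is Example~\ref{ex:equivalence-examples}(iii).

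With either repair your direct route is complete. It gets all the levels in one stroke from rank and degree bookkeeping, whereas the paper's induction needs no separate argument for the arrows.
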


\begin{proof}
The case $q=1$ is
$M_1=U_0(\delta_1)=U_1$, displayed in Example~\ref{ex:equivalence-examples}(iii).  For $q\ge2$
we use induction.  The move at the edge $(q-1,q)$ (Lemma~\ref{lem:normalization})
presents $M_q$ as $1\leftarrow\cdots\leftarrow(q-1)^{*}\rightarrow q$.
The last vertex $q$ by itself represents a saturated subobject isomorphic
to $U_0\{1-q\}$, with quotient $M_{q-1}$:
\[
 0\longrightarrow U_0\{1-q\}\longrightarrow M_q\longrightarrow
 M_{q-1}\longrightarrow 0 .
\]
The
$F$-gauge cohomology sequence of Theorem~\ref{thm:gauge-heart-slope-zero}
reads
\[
 0\longrightarrow S(M_q)\longrightarrow S(M_{q-1})\longrightarrow
 S\bigl(U_0\{1-q\}[1]\bigr)\longrightarrow 0 ,
\]
levelwise exact, and the last term is the single $k$ in level $q$,
by Example~\ref{ex:equivalence-examples}(iii).  By induction $S(M_{q-1})$ is the
display with $q-1$ zero levels, and the kernel vanishes in level $q$ and
agrees with $S(M_{q-1})$ in every other level, giving the display with
$q$ zero levels.
\end{proof}

Ekedahl claims the following proposition, without proof, in the course
of proving his classification of the weakly simple objects
\cite[proof of Theorem~III.2.7]{ekedahl3}.  It is the converse of
Lemma~\ref{lem:chain-endo}.  We give the following constructive proof,
from which Example~\ref{ex:sorted-path} reads off an explicit algorithm
carrying a word $\epsilon$ to the $F$-gauge $S(M(\epsilon))$.

\begin{proposition}
\label{prop:chain-presentation}
Every indecomposable diagonal domino $U$ annihilated by $p$ with every HN
slope in $(0,1)$ is isomorphic to a Breuil--Kisin twist of a modified
interval object $M(\epsilon)$ satisfying the source-sink condition.
\end{proposition}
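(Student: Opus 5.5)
Pass to the $F$-gauge side via $S$ and classify there. Since every HN slope of $U$ lies in $(0,1)$, Theorem~\ref{thm:phase-rotation} puts $G:=S(U)$ in $\Coh_{\mathrm{tor}}(k^{\mathrm{Syn}})$, killed by $p$, indecomposable, and satisfying the boundary condition of Lemma~\ref{lem:boundary-condition}. As $pG=0$, each $G^i$ is a finite-dimensional $k$-vector space with $ut=tu=0$. For $i\ll0$ the map $t$ is an isomorphism onto $G^{-\infty}$, and for $i\gg0$ the map $u$ is an isomorphism onto $G^{\infty}$; the gluing is a $\sigma$-semilinear isomorphism $\tau\colon\sigma^*G^\infty\to G^{-\infty}$. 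This is a representation of an infinite $A_\infty$-type quiver with relations $ut=tu=0$ and a semilinear identification of the two ends. It is therefore a module over a semilinear string (clannish) algebra: the quiver is the finite window of levels where $G$ is not stable, closed into a cycle by the Frobenius-semilinear arrow $\tau$.

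\textbf{Step 1.} Cut to a finite window $[a,b]$ containing all levels where $t$ or $u$ fails to be an isomorphism, and encode $G$ as a representation of the cyclic quiver: vertices $a,\dots,b$, arrows $u,t$ between consecutive vertices with zero relations $ut=tu=0$, and one semilinear arrow from $b$ to $a$ given by $t^{-\infty}{}^{-1}\tau u^\infty$ on the ends. \textbf{Step 2.} Apply the string--band classification of Bennett-Tennenhaus and Crawley-Boevey \cite{BTCB} to conclude that the indecomposable $G$ is a string module or a band module. \textbf{Step 3.} Use the boundary condition to rule out bands and most strings. A band winds around the cycle through the semilinear arrow. On a band one can locate a level $i$ where an element $x\in G^{i+1}$ with $t_ix=0$ and an element $y\in G^{i-1}$ with $u_{i-1}y=0$ are matched by $\tau$, contradicting Lemma~\ref{lem:boundary-condition}. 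Likewise, a string whose ends do not pass through the gluing would give a direct summand violating the boundary condition. Equally, such a string has $G^{-\infty}=0$, hence rank considerations would force a summand of slope $\le0$ or finite-torsion part, which is excluded. The surviving strings run exactly once through $\tau$: infinite tails $k\xleftarrow{\sim}k\cdots$ on the left and $\cdots k\xrightarrow{\sim}k$ on the right, joined through the finite window by a word in $u,t$ with zero relations.

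\textbf{Step 4.} Identify such a string with $S$ of a modified interval object. Comparing with Proposition~\ref{prop:one-star-gauge} and with the kernel sequences of the proof there, a string is determined by a sequence of letters. Each letter records whether the string passes a given level with a $t$-step or a $u$-step, or is zero there. Build $M(I,J;\epsilon)$ by reading off, level by level, which maps are identities (this gives $J$ and the orientation) and where the semilinear twist occurs relative to the window (this gives $\epsilon$). Then verify $S(M(I,J;\epsilon))\cong G$ by induction on the window length, peeling off extremal vertices with the exact sequences of Lemma~\ref{lem:prefix-cut} as in Proposition~\ref{prop:one-star-gauge}. The source-sink condition should be exactly the translation of the boundary condition at the ends of the interval. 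Finally, Lemma~\ref{lem:normalization} brings the presentation to the leftward form $M(\epsilon)$, and a Breuil--Kisin twist normalizes $I$ to start at $1$.

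The main obstacle is Step 3: transporting the Frobenius-semilinear gluing into the setting of \cite{BTCB} correctly, and showing the boundary condition kills every band. Bands come with a semilinear automorphism parameter over $k$, with no perfectness-induced triviality. I would first try to show directly that in a band the $\tau$-arrow lies between a $t$-kernel and a $u$-kernel vector, which is exactly the forbidden configuration.
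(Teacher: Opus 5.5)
Your Steps 1--2 follow the paper's route: twist $G=S(U)$ to level $[0,N]$, view it as a module over a semilinear string algebra whose single Frobenius-semilinear arrow closes the window, and apply \cite{BTCB}. Step 3, however, reaches a false conclusion. The surviving strings do not pass through $\tau$ exactly once. Since $\tau$ is bijective, every basis line at level $N$ is the tail of a $\tau$-letter, so the number of $\tau$-letters is $\dim_kG^{-\infty}=\degD(U)$. Already $U_{2/3}=M(011)$, which satisfies every hypothesis, is a string with two $\tau$-letters (see the table in Appendix~\ref{app:table}). Your list would therefore only produce degree-one objects, the twists of $U_{1/q}$.

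The missing idea is the structural constraint that the boundary condition imposes. Cut the word at its $\tau$-letters into segments; because of the zero relations, these never turn back. Suppose a basis vector sits at a level in $(0,N)$ that is an interior sink, a free end whose edge points into it, or an isolated vertex. It is killed by both $t$ and $u$, so it violates the boundary condition with $y=0$. Hence each segment has a single source $v_P$.

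Across every matching $P\to P'$ one then needs $v_P>v_{P'}$. Otherwise take a vector at level $v_P+1$ of $P$, which is killed by $t$. Lift its $\tau$-image along the $t$-edges of $P'$ down to level $v_P-1$, where it is killed by $u$. This pair is forbidden by the boundary condition. This strict monotonicity is the actual reason bands are excluded: a strict decrease cannot close up around a period. Your sentence that ``one can locate a level $i$'' asserts the conclusion without this reason. The same monotonicity, together with the improved end gaps $b\ge v_1+2$ and $v_n\ge c+2$ obtained by the same argument at the free ends, parametrizes the surviving strings as $b>v_1>\cdots>v_n>c$.

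Step 4 also needs a genuine argument. With several $\tau$-letters, the rows of $G$ are coupled by the components $e_a\mapsto e_{a-1}$ at the sources (Example~\ref{ex:sorted-path}), so $\epsilon$ cannot be read off level by level. The correct word lives on $[c+1,b-1]$, with letters $1$ exactly at $v_n,\dots,v_1,b-1$. Peeling one vertex at a time, as in Proposition~\ref{prop:one-star-gauge}, determines the result there only because the level being cut is one-dimensional. In general that level has dimension $\degD$, and you would have to pin down which map to the one-vertex piece occurs.

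The paper instead inducts on $n$:
\begin{enumerate}
\item $G$ and $S(M(\epsilon))$ are both extensions of a one-star gauge $G''$ by $G'\simeq S(M(\epsilon'))$; for $S(M(\epsilon))$ this comes from the saturated sequence of Lemma~\ref{lem:prefix-cut}.
\item Both extensions are nonsplit, because both middles are indecomposable.
\item Lemma~\ref{lem:extension-uniqueness} gives $\operatorname{Ext}^1(G'',G')\cong k$ with $\operatorname{Aut}(G')$ transitive on the nonzero classes, which makes the two middles isomorphic.
\end{enumerate}
Some such uniqueness statement, or an explicit computation of $S(M(\epsilon))$ as a string module, is needed to close your Step 4.
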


\begin{proof}
Put $G=S(U)$: every HN slope of $U$ lies in $(0,1)$, so by
Lemma~\ref{lem:boundary-condition} $G$ is a torsion coherent
$F$-gauge satisfying the boundary condition, and it is annihilated
by $p$ and indecomposable.  After a Breuil--Kisin twist we may
assume that $G$ has level $[0,N]$.
Lemma~\ref{lem:regluing-normal-form} below presents $G$ as the
chain of interval gauges $Q_0,\dots,Q_{n+1}$ with one-dimensional
levels, joined by identity matchings, with sources satisfying
\[
\begin{cases}
 b\ge v_1+2,\quad v_1>v_2>\cdots>v_n,\quad v_n\ge c+2, & n\ge1,\\[1mm]
 b\ge c+3, & n=0.
\end{cases}
\]

We prove, by induction on $n$, that every $F$-gauge $G$ of this
shape is isomorphic to $S(M(\epsilon))$ for the word $\epsilon$
defined below.  Such a $G$ is indecomposable.  Its summands are
pairwise nonisomorphic, since their sources strictly decrease and the
two ends have distinct shapes, so modulo the nilpotent ideal of the
morphisms between distinct summands an endomorphism of $G$ is a
scalar $\lambda_j$ on each summand, and commuting with the identity
matchings forces $\lambda_{j+1}=\sigma(\lambda_j)$.  An idempotent
endomorphism therefore has every $\lambda_j$ equal to $0$ or to $1$
and constant along the path, and is $0$ or $1$.

Let $\epsilon$ be the word on $[c+1,b-1]$ with letters $1$ exactly at
$v_n,\dots,v_1$ and at $b-1$, and let $M(\epsilon)$ be the modified
interval object with this word.  The sorting makes the first letter
$0$ and the last letter $1$, so $M(\epsilon)$ satisfies the
source-sink condition.  We prove $G\simeq S(M(\epsilon))$, and the
equivalence $S$ then gives $U\simeq M(\epsilon)$.

For $n=0$, the two subgauges and the identity matching form the gauge
of Proposition~\ref{prop:one-star-gauge} with its zero levels placed
on $[c+1,b-1]$,
which is $S(M(\epsilon))$.

For $n\ge1$, we consider the sub-$F$-gauge $G'\subseteq G$ whose
shape is the open path
$(v_1+1\rightarrow\cdots\rightarrow N),Q_2,\dots,Q_{n+1}$.  The
inductive hypothesis gives $G'\simeq S(M(\epsilon'))$, with $\epsilon'$
the prefix of $\epsilon$ on $[c+1,v_1]$.  The quotient $G'':=G/G'$ is
the reglued pair
$(b\rightarrow\cdots\rightarrow N),(0\leftarrow\cdots\leftarrow v_1)$,
the gauge of Proposition~\ref{prop:one-star-gauge} placed on
$[v_1+1,b-1]$, so
$G''\simeq S(M(\epsilon''))$ with $\epsilon''$ the one-star suffix of
$\epsilon$ on $[v_1+1,b-1]$, a Breuil--Kisin twist of $U_1$ when
$b=v_1+2$.

Both $G$ and $S(M(\epsilon))$ are extensions of $G''$ by $G'$.
For $G$ this is the sequence $0\to G'\to G\to G''\to0$.  For
$S(M(\epsilon))$ it is the image under $S$ of the saturated exact
sequence $0\to M(\epsilon')\to M(\epsilon)\to M(\epsilon'')\to0$
of Lemma~\ref{lem:prefix-cut}.  Every HN slope of its three terms
is positive, for $M(\epsilon')$ and $M(\epsilon)$ by
Lemma~\ref{lem:chain-endo} and for $M(\epsilon'')$ because it
satisfies the source-sink condition or is a twist of $U_1$ of slope
$1$, so $S$ carries it to a short exact sequence of $F$-gauges
(Theorem~\ref{thm:gauge-heart-slope-zero}).  Both
middles are killed by $p$ and indecomposable, the middle $G$ by the
argument above and $S(M(\epsilon))$ by Lemma~\ref{lem:chain-endo}.  In the coherent $F$-gauges annihilated by $p$ the extensions of
$G''$ by $G'$ form the group $(k,+)$, with the automorphisms of
$G'$ transitive on the nonzero classes, a computation we defer to
Lemma~\ref{lem:extension-uniqueness} below.  The two classes are
nonzero, since the middles are indecomposable, so a pushout along
an automorphism of $G'$ identifies the middles, and
$G\simeq S(M(\epsilon))$, completing the induction.
\end{proof}

\begin{example}
\label{ex:sorted-path}
We illustrate how to convert a word $\epsilon$ satisfying the
source-sink condition into the $F$-gauge $G=S(M(\epsilon))$ as in
the proof above.

For the word $\epsilon=01011$ on $[1,5]$, the proof sorts
$G=S(M(01011))$ into the path $Q_0,\dots,Q_3$ with $b=6$, sources
$v_1=4$ and $v_2=2$, and $c=0$:
\[
\begin{tikzcd}[column sep=1.5em, row sep=1.2em]
 & & \scriptstyle0 & \scriptstyle1 & \scriptstyle2 & \scriptstyle3
 & \scriptstyle4 & \scriptstyle5 & \scriptstyle6 & \\[-1.7em]
 e_1 & \overset{Q_3}{\cdots} & k \arrow[l, "t"'] & 0 & 0
 & k \arrow[r, "u"] & k \arrow[r] & k \arrow[r] & k \arrow[r]
 & \overset{Q_2}{\cdots} \\
 e_2 & \overset{Q_2}{\cdots} & k \arrow[l] & k \arrow[l]
 & k \arrow[l, "t"'] \arrow[ur, "\sim"'] & 0 & 0 & k \arrow[r]
 & k \arrow[r] & \overset{Q_1}{\cdots} \\
 e_3 & \overset{Q_1}{\cdots} & k \arrow[l] & k \arrow[l] & k \arrow[l]
 & k \arrow[l] & k \arrow[l, "t"'] \arrow[ur, "\sim"'] & 0
 & k \arrow[r] & \overset{Q_0}{\cdots}
\end{tikzcd}
\]
The rows are indexed by the $k$-basis vectors $e_1,e_2,e_3$ at level
$\infty$, and the columns are indexed by the levels.  The slanted arrows sit at the sources
$v_1=4$ and $v_2=2$, and the matchings $\tau(e_\gamma)=e_\gamma$ join
$Q_0\to Q_1\to Q_2\to Q_3$.  In the induction step in the proof above, $G'$ is spanned by $e_1$
and $e_2$, which is $S(M(0101))$, and
$G''$ is the row $e_3$, with the single zero level $5$.

If we start with a general $\epsilon$ satisfying the source-sink
condition, written on $[1,q]$ with its letters $1$ at
$c_1<\cdots<c_d=q$, the same reading gives $G$ at once.  Put
$c_0:=0$ and $B_a:=[c_{a-1}+1,c_a]$.  The chain of
Lemma~\ref{lem:regluing-normal-form} has $c=0$,
$b=q+1$, and sources $v_j=c_{d-j}$.  The limits are
$G^{\pm\infty}=k^d$ with basis $e_1,\dots,e_d$ and
$\tau(1\otimes e_a)=e_a$, the level $G^i$ is spanned by the $e_a$
with $i\notin B_a$, and on the row $e_a$ the map $t$ is the identity
at the levels $i\le c_{a-1}$ and the map $u$ at the levels
$i\ge c_a+1$.  For $a\ge2$ the map $u$ carries in addition $e_a$ to
$e_{a-1}$ at the level $c_{a-1}$, the source $v_{d-a+1}$.  Exactly
one basis vector is therefore missing at each level of $[1,q]$,
which gives $\rkFG(G)=d$ and $\degFG(G)=-q$
(Lemma~\ref{lem:dictionary} and
Proposition~\ref{prop:gauge-euler-invariants}).  The word
$\epsilon=01011$ above has blocks $[1,2]$, $[3,4]$, and $[5,5]$.
\end{example}

We now state and prove the two lemmas used in the proof of
Proposition~\ref{prop:chain-presentation}.

\begin{lemma}
\label{lem:regluing-normal-form}
Let $G$ be an indecomposable $F$-gauge of level $[0,N]$, annihilated
by $p$ and satisfying the boundary condition of
Lemma~\ref{lem:boundary-condition}.  Then there are integers
$b>v_1>\cdots>v_n>c$ in $[0,N]$, with $b\ge v_1+2$ and $v_n\ge c+2$,
and with $b\ge c+3$ when $n=0$, such that $G$ is the regluing of the
chain of interval gauges
\[
 Q_0=(b\to\cdots\to N),\qquad
 Q_j=(0\leftarrow\cdots\leftarrow v_j\to\cdots\to N)\quad(1\le j\le n),
\]
and $Q_{n+1}=(0\leftarrow\cdots\leftarrow c)$, all with
one-dimensional levels, by the identity matchings from the
level-$N$ line of each $Q_j$ to the level-$0$ line of $Q_{j+1}$.
\end{lemma}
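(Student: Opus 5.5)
The plan is to forget the gluing at first and classify the underlying gauge $G|_{k^{\mathcal N}}$, a graded $k[u,t]/(ut)$-module of level $[0,N]$, and then reinstate $\tau$.

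\emph{Step one: the gauge without gluing.}  Since $pG=0$ and $ut=tu=p$, the underlying gauge is a finite-dimensional graded module over $k[u,t]/(ut)$ with $u$ of degree $1$ and $t$ of degree $-1$.  Because $G$ has level $[0,N]$, the map $t$ is an isomorphism below $0$ and $u$ is an isomorphism above $N$.  So all the information sits in the finite window $[0,N]$, together with the two tails $G^{-\infty}\cong G^0$ and $G^\infty\cong G^N$.  Graded modules over this gentle (string) algebra, in which $u$ and $t$ are linear, decompose by the classical string classification into interval (string) modules with one-dimensional levels.  Bands cannot occur in the graded setting, since a band would require a cycle in degrees.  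Each string is a zigzag of $u$- and $t$-arrows on an interval of levels.  The ones reaching level $N$ with $u$ contribute to $G^\infty$, and the ones reaching level $0$ with $t$ contribute to $G^{-\infty}$.

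\emph{Step two: use the boundary condition to restrict the strings.}  I would first apply the boundary condition of Lemma~\ref{lem:boundary-condition} with the gluing term absent.  It kills every string that has a vertex $x$ with $t_ix=0$ and $x$ not propagating to $G^\infty$, and symmetrically on the other side.  Any string with a zigzag interior turn, or one not touching an end, produces such an $x$ or $y$ that is annihilated in the limit, so $\tau(u^\infty x)=0=t^{-\infty}y$ holds trivially.  Running through the cases should leave exactly three shapes:
\begin{itemize}
\item the left-and-right strings $0\leftarrow\cdots\leftarrow v\to\cdots\to N$, each with a single source $v$;
\item strings $b\to\cdots\to N$ touching only the top;
\item strings $0\leftarrow\cdots\leftarrow c$ touching only the bottom.
\end{itemize}
Here the condition $b\ge v+2$ etc.\ comes from excluding $x,y$ at distance $2$ across a missing level.

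\emph{Step three: reinstate the gluing, which is the hard step.}  Now $\tau\colon \sigma^*G^\infty\to G^{-\infty}$ is a $\sigma$-semilinear isomorphism between the top lines and the bottom lines.  The problem becomes a semilinear clannish/string problem: by \cite{BTCB}, the indecomposables are strings or bands for the glued quiver, in which the top end of each string is identified with the bottom end of another.  Indecomposability forces a single connected chain.  The boundary condition, now in full strength with $\tau(u^\infty x)=t^{-\infty}y$, does three things:
\begin{itemize}
\item it excludes bands, because a closed cycle would provide a pair $(x,y)$ matched through $\tau$;
\item it forces the chain to start with a $Q_0$-type string and end with a $Q_{n+1}$-type one;
\item it forces the sources to strictly decrease along the chain, $v_j>v_{j+1}$, with gaps at least $2$ at the ends.
\end{itemize}
The source condition is obtained by checking the pair $x\in Q_j$ at level $v_{j+1}+1$, $y\in Q_{j+1}$ otherwise violating the condition.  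The identity matchings come from rescaling the basis, using that $k$ is perfect so $\sigma$ is bijective.

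I expect the main obstacle to be the precise transfer of the Bennett-Tennenhaus--Crawley-Boevey classification to this semilinear setting, in particular verifying that the admissible words have exactly the alternating form above.  I would handle this by writing down the clan explicitly, and then excluding bands and forbidden string ends one by one via the boundary condition.
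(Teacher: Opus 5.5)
Your plan is correct and is essentially the paper's argument. The paper also encodes $G$ as a module over a semilinear string algebra with a $\sigma$-twisted arrow $\widetilde\tau\colon N\to0$ and applies Bennett-Tennenhaus--Crawley-Boevey; it then uses the boundary condition (with $y=0$ or $x=0$) to force single-source segments of the three shapes, shows that sources strictly decrease across each matching (which rules out bands by going around the loop), and gets the end gaps from the distance-two pair $x$, $y$. Your separate classification of the unglued gauge in step one is a harmless detour, since the segments of a word are exactly those strings. Note, however, that the inequalities $b\ge v_1+2$ and $v_n\ge c+2$ need the gluing, so they belong in step three rather than step two.
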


\begin{proof}
We fix the level $[0,N]$ throughout the proof and translate $G$
into a module over a semilinear string algebra $\Lambda_N$.  We
then apply the classification of
Bennett-Tennenhaus and Crawley-Boevey \cite{BTCB}.  We recall their
definitions and their theorem in the generality needed here, with
$k$ in place of their division ring.

Let $Q$ be a finite quiver with an automorphism $\sigma_a$ of $k$
attached to each arrow $a$.  The \emph{semilinear path algebra}
$k_\sigma Q$ has left $k$-basis the paths of $Q$, including the
trivial paths $e_i$ at the vertices.  Its product is the
concatenation of paths and its commutation rule is
$a\lambda=\sigma_a(\lambda)a$ for $\lambda\in k$
\cite[Section~2.1]{BTCB}.  A finite-dimensional left module is a
\emph{semilinear representation} of $Q$: a finite-dimensional
vector space $V_i$ at each vertex and a $\sigma_a$-semilinear map
$V_a\colon V_i\to V_j$ for each arrow $a\colon i\to j$.  Morphisms
are $k$-linear maps at the vertices commuting with the $V_a$.  Let
$Z$ be a set of paths of length at least two.  The quotient
$k_\sigma Q/(Z)$ is a \emph{semilinear string algebra} if at most
two arrows start and at most two arrows end at each vertex and if
each arrow $a$ admits at most one arrow $b$ with $ba\notin Z$ and
at most one arrow $c$ with $ac\notin Z$ \cite[Introduction and
Section~2.3]{BTCB}.  These are the semilinear clannish algebras of
loc.\ cit.\ without special loops.  For them the three conditions
on special loops in the Main Theorem of loc.\ cit.\ are empty.

Let $\Lambda_N$ be the semilinear string algebra of the quiver with
vertices $0,\dots,N$, arrows $u_i\colon i\to i+1$ and
$t_i\colon i+1\to i$ for $0\le i<N$, one arrow
$\widetilde\tau\colon N\to0$, and $Z=\{u_it_i,\,t_iu_i\}$.  It is
one: at each vertex at most two arrows start and at most two end,
and a path of length two avoids $Z$ exactly when it is
$u_{i+1}u_i$, $t_it_{i+1}$, $\widetilde\tau u_{N-1}$ or
$u_0\widetilde\tau$.  The automorphisms are dictated by the
$F$-gauges.  The maps $u$ and $t$ of a gauge are $W$-linear, so
$\sigma_a$ is the identity for $u_i$ and $t_i$.  The gluing
$\tau\colon\sigma^*M^N\to M^0$ is a $\sigma$-semilinear map
$M^N\to M^0$, so $\sigma_a$ is the Frobenius $\sigma$ for
$\widetilde\tau$.  Explicitly, as a $k$-ring,
\[
 \Lambda_N=k_\sigma\langle e_0,\dots,e_N,\,u,\,t,\,\widetilde\tau\rangle
 \big/\bigl(ut=tu=0,\ \ \widetilde\tau=e_0\widetilde\tau e_N,\ \
 \widetilde\tau\lambda=\sigma(\lambda)\widetilde\tau\bigr).
\]
Here the $e_i$ are the trivial paths, orthogonal idempotents with
sum $1$.  The elements $u=\sum_{0\le i<N}e_{i+1}ue_i$ and
$t=\sum_{0\le i<N}e_ite_{i+1}$ satisfy $ue_N=te_0=0$, and $e_i$,
$u$, $t$ commute with $k$.  A module $M$ decomposes as
$\bigoplus_ie_iM$ with $e_iM$ the level $M^i$, and $u_i=ue_i$,
$t_i=te_{i+1}$.  The $k$-basis of $\Lambda_N$ consists of the
paths $e_i$, $u^{j-i}e_i$, $t^{i-j}e_i$ and
$u^{j}\widetilde\tau u^{N}\cdots\widetilde\tau u^{N-i}e_i$, so
$\Lambda_N$ is infinite-dimensional.  A $\Lambda_N$-module is thus
a graded module $M=\bigoplus_{i=0}^NM^i$ over $k[u,t]/(ut)$ with
$\deg u=1$ and $\deg t=-1$, that is a quasi-coherent sheaf on the
mod $p$ fibre $[\operatorname{Spec}k[u,t]/(ut)/\mathbb G_m]$ of
the Nygaard filtered prismatization $k^{\mathcal N}$
(Definition~\ref{def:stacks}), together with a $\sigma$-semilinear
map $M^N\to M^0$ supplying the $F$-module gluing after adjoining
the constant left tail $M^0$ with $t=1,u=0$ and the constant right
tail $M^N$ with $u=1,t=0$.  So a
finite-dimensional $\Lambda_N$-module is the same as an $F$-module
(Definition~\ref{def:f-gauge}) of level $[0,N]$ annihilated by
$p$.  The vertex spaces are the levels $0,\dots,N$ and
$\widetilde\tau$ acts by $(t^{-\infty})^{-1}\tau\,\sigma^*(u^\infty)$.
The tails carry no further data, since one of $u$, $t$ is
invertible there and the other zero.  Morphisms match on both
sides and the $F$-gauges are the modules on which $\widetilde\tau$
acts bijectively.  So $G$ is an indecomposable finite-dimensional
$\Lambda_N$-module.

The finite-dimensional indecomposable modules over a semilinear
string algebra are classified by words.  A \emph{letter} is an
arrow $a$ or a formal inverse $a^{-1}$, whose head and tail are
those of $a$ interchanged.  A \emph{word} is a finite sequence
$w=w_1\cdots w_n$ of letters with $n\ge0$ such that the tail of
$w_i$ is the head of $w_{i+1}$, consecutive letters are never
mutually inverse, and no two consecutive letters form a path in
$Z$ or the inverse of one \cite[Section~2.4]{BTCB}.\footnote{Loc.\
cit.\ encodes the last two conditions by signs on the letters and
by relation-admissibility, and the further condition of
end-admissibility is empty without special loops.}  A
\emph{string} is a word up to inversion.  A \emph{band} is a
$\mathbb Z$-indexed periodic sequence of letters subject to the
same conditions, up to shift and inversion.  The \emph{string
module} $M(w)$ of a word of length $n$ has $k$-basis
$b_0,\dots,b_n$ with $b_i$ at the common vertex of $w_i$ and
$w_{i+1}$.  An arrow $a$ carries $b_i$ to $b_{i-1}$ if $w_i=a$, to
$b_{i+1}$ if $w_{i+1}=a^{-1}$, and to $0$ otherwise
\cite[Definitions~2.16 and~2.17, Lemmas~2.19 and~2.21]{BTCB}.  For
a band $w$ of period $n$ the same recipe on a basis
$(b_i)_{i\in\mathbb Z}$ defines $M(w)$.  It is a free right module
of rank $n$ over the skew Laurent ring $R_w=k_{\sigma^m}[x,x^{-1}]$,
the Laurent polynomials in $x$ with $x\lambda=\sigma^m(\lambda)x$,
where $x$ acts by $b_ix=b_{i-n}$ and $k$ acts on $b_i$ through the
twist by the $\widetilde\tau$-letters between the base point and
the position $i$ \cite[Section~2.10]{BTCB}.  Here $m$ is the number
of $\widetilde\tau$-letters in a period, since the scalars are
twisted by $\sigma$ at each passage through $\widetilde\tau$ and
by nothing else.  The \emph{band modules} are the
$M(w)\otimes_{R_w}V$ for finite-dimensional $R_w$-modules $V$, that
is, vector spaces with a bijective $\sigma^m$-semilinear
endomorphism \cite[Section~2.10]{BTCB}.  The Main Theorem of
\cite{BTCB} states that the string modules $M(w)$ and the band
modules $M(w)\otimes_{R_w}V$ with $V$ indecomposable form a
complete list of the finite-dimensional indecomposable modules,
without repetition as $w$ runs over representatives of the strings
and bands.  So $G$ is a string or band module on a word $w$.

A word of $\Lambda_N$ never turns back in the levels, since a
letter returning to the previous level is the inverse of the
letter before it or spells a relation in $Z$ with it.  So $w$ is a
sequence of \emph{segments}, walks in the levels $[0,N]$ with each
edge carrying $u$ or $t$, joined by $\widetilde\tau$-letters.
Thus $G$ is the corresponding sequence of interval gauges joined by
matchings (Figure~\ref{fig:string-band}).

\begin{figure}[htbp]
\centering
\begin{tikzcd}[column sep=1.05em, row sep=1.6em, cells={nodes={inner sep=1.4pt}}, baseline=(current bounding box.north)]
 & \scriptstyle0 & \scriptstyle1 & \scriptstyle2 & \scriptstyle3
 & \scriptstyle4 \\[-1.4em]
 Q_0 & & & \bullet \arrow[r, "u"] & \bullet & \bullet \arrow[l, "t"'] \\
 Q_1 & \bullet & \bullet \arrow[l] \arrow[r] & \bullet \arrow[r]
 & \bullet & \bullet \arrow[l] \\
 Q_2 & \bullet & \bullet \arrow[l] \arrow[r] & \bullet
 & \bullet \arrow[l] &
 \arrow[from=2-6, to=3-2, "\widetilde\tau"']
 \arrow[from=3-6, to=4-2, "\widetilde\tau"']
\end{tikzcd}
\qquad\qquad
\begin{tikzcd}[column sep=1.05em, row sep=1.6em, cells={nodes={inner sep=1.4pt}}, baseline=(current bounding box.north)]
 & \scriptstyle0 & \scriptstyle1 & \scriptstyle2 & \scriptstyle3
 & \scriptstyle4 \\[-1.4em]
 P_1 & \bullet & \bullet \arrow[l] \arrow[r] & \bullet \arrow[r]
 & \bullet & \bullet \arrow[l] \\
 P_2 & \bullet \arrow[r] & \bullet \arrow[r] & \bullet
 & \bullet \arrow[l] & \bullet \arrow[l]
 \arrow[from=2-6, to=3-2]
 \arrow[from=3-6, to=2-2, densely dashed]
\end{tikzcd}
\caption{A string (left) and a band (right) of $\Lambda_N$ on the
levels $[0,4]$.  Dots are copies of $k$, left arrows are $t$, right
arrows are $u$, diagonal arrows are $\widetilde\tau$-letters, and
rows are segments.  The string has free ends at level $2$ of $Q_0$
and level $3$ of $Q_2$.  The dashed matching closes the period of
the band and carries its parameter $x$.}
\label{fig:string-band}
\end{figure}
\FloatBarrier

The boundary condition of Lemma~\ref{lem:boundary-condition}
constrains the segments.  Let $x$ be a basis vector at a level
$j\in(0,N)$ with $t_{j-1}x=0$ and $u_jx=0$.  Then $x$ and $y=0$
satisfy the hypotheses of the boundary condition, so $x=0$.  Such
an $x$ sits at every interior sink of a segment, at every free end
whose edge points into it, and at every vertex without edges, so
none of these occurs in $(0,N)$.  Bijectivity of $\widetilde\tau$ matches every basis line at level
$N$ with one at level $0$.  So every segment has a single
source, and its shape is $(0\leftarrow\cdots\leftarrow c)$ with the
level-$0$ end matched, $(0\leftarrow\cdots\leftarrow
s\to\cdots\to N)$ with both ends matched, or $(b\to\cdots\to N)$
with the level-$N$ end matched.
We allow singleton end segments ($c=0$ or $b=N$).  In the
boundary-condition arguments below, segments meeting $0$ or $N$
are continued along the corresponding constant tails.

Write $v_P$ for the source of a segment $P$.  Across every matching
$v_P>v_{P'}$.  If not, put $i:=v_P$ and take $x\ne0$ at level $i+1$
of $P$: the edge $(i,i+1)$ leaves the source, so $t_ix=0$, and the
$u$-edges give $u^\infty x\ne0$.  The matching carries
$\tau(u^\infty x)\ne0$ into the level-$0$ line of $P'$, and the
$t$-edges of $P'$ lift it to $y$ at level $i-1$ with
$t^{-\infty}y=\tau(u^\infty x)$ and $u_{i-1}y=0$, since the edge
$(i-1,i)$ sits below the source of $P'$.  This violates the
boundary condition.  A band is a periodic word, its segments all of
the middle shape, and the claim around the loop gives
$v>v>\cdots>v$.  So $w$ is a string, its two extreme segments
carrying the free ends, of the first and third shapes, and $G$ is
the chain $Q_0,\dots,Q_{n+1}$ of the statement, with
$b>v_1>\cdots>v_n>c$.  The string module $M(w)$ has one basis vector $b_i$ at each
position of $w$, so the levels of each segment $Q_j$ are
one-dimensional, and every arrow, $\widetilde\tau$ included,
carries basis vectors to basis vectors: in this basis every
matching is the identity.

At the two ends the inequalities improve by one.  Suppose $n\ge1$
and $v_1=b-1$, and take $x\ne0$ at level $b$ of $Q_0$.  Then
$t_{b-1}x=0$, since $Q_0$ has no level $b-1$, and the $u$-edges
give $u^\infty x\ne0$.  The matching and the $t$-edges of $Q_1$
lift $\tau(u^\infty x)$ to $y$ at level $b-2$ with
$t^{-\infty}y=\tau(u^\infty x)$ and $u_{b-2}y=0$, since $v_1=b-1$
makes the edge $(b-2,b-1)$ of $Q_1$ a $t$-edge.  This violates the
boundary condition, so $b\ge v_1+2$.  Dually, if $v_n=c+1$, a
nonzero $x$ at level $c+2$ of $Q_n$ lifts to $y$ at level $c$ of
$Q_{n+1}$ with $u_cy=0$, since $Q_{n+1}$ has no level $c+1$, so
$v_n\ge c+2$.  For $n=0$ and $b\le c+2$ the same $x$ lifts to $y$
at level $b-2$ of $Q_1$, with $u_{b-2}y=0$ since the edge
$(b-2,b-1)$ is a $t$-edge or its level is absent, so $b\ge c+3$.
\end{proof}

The linear counterpart of the classification used above is the
theory of representations of \emph{bunches of chains}
\cite{Bondarenko92,BurbanDrozd}.

It remains to prove the $\operatorname{Ext}^1$ statement to
conclude the proof of Proposition~\ref{prop:chain-presentation}.
Let $\epsilon=\epsilon'\epsilon''$ be a word satisfying the
source-sink condition, the prefix $\epsilon'$
ending with the letter $1$ at the position $v_1$ and the suffix
$\epsilon''$ containing exactly one letter $1$, at its last
position $b-1$.  Fix an interval $[0,N]$ whose interior contains
the positions of $\epsilon$, and put $G'=S(M(\epsilon'))$ and
$G''=S(M(\epsilon''))$.

\begin{lemma}
\label{lem:extension-uniqueness}
In the category of coherent $F$-gauges annihilated by $p$, the
group $\operatorname{Ext}^1(G'',G')$ is isomorphic to $(k,+)$, and
the automorphisms of $G'$ act transitively on its nonzero elements.
\end{lemma}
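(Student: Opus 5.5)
The plan is to compute $\operatorname{Ext}^1(G'',G')$ inside finite-dimensional $\Lambda_N$-modules on which $\widetilde\tau$ is bijective. By the proof of Lemma~\ref{lem:regluing-normal-form}, these are exactly the coherent $F$-gauges of level $[0,N]$ annihilated by $p$. Extensions of two such modules are again of level $[0,N]$, and they have bijective $\widetilde\tau$ by the five lemma, so the Ext group can be computed as Yoneda extensions of representations of the semilinear quiver with relations. I will describe an extension $0\to G'\to E\to G''\to0$ concretely. Levelwise take $E^i=G'^i\oplus G''^i$, and perturb the structure maps by off-diagonal blocks $\beta_a\colon G''\to G'$, one for each arrow $a\in\{u_i,t_i,\widetilde\tau\}$, semilinear like $a$. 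These blocks are subject to the linearized relations $u\beta_t+\beta_u t=0$ and $t\beta_u+\beta_t u=0$, and they are taken modulo coboundaries $\beta_a=a h-h a$ for levelwise $h\colon G''\to G'$. This is the standard two-step complex for representations with relations $ut=tu=0$.

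The next step is to use the explicit bases. By Example~\ref{ex:sorted-path} and Proposition~\ref{prop:one-star-gauge}, $G''$ is a single row $e$ with one zero level on its block $[v_1+1,b-1]$. On this row $t$ is the identity below $v_1+1$ and $u$ is the identity above $b-1$, and $\widetilde\tau(e)=e$. The gauge $G'$ has the rows $e_1,\dots,e_{d-1}$ with the sorted-path structure. I will first gauge away all blocks $\beta_{u_i},\beta_{t_i}$ by solving for $h$ level by level. Along each constant stretch of $G''$, one of $u,t$ is an isomorphism, so $h$ can be propagated from one level to the next. The obstruction should then concentrate in the $\widetilde\tau$-block and at the source position $v_1$, where $G'$ has its rightmost block ending with the letter $1$. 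What remains is a single scalar coordinate $\beta\colon e\mapsto\lambda\,e_{d-1}$, placed either at level $v_1$ via $u$ (the slanted arrow in Example~\ref{ex:sorted-path}) or equivalently in the $\widetilde\tau$-matching. Every other coordinate is killed by some choice of $h$, or it is forced to vanish by the relations $ut=tu=0$ combined with $pG'=0$. In this way I expect to show that $\operatorname{Ext}^1\cong k$ through $\lambda$, with additive structure given by Baer sum, which adds these blocks.

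For transitivity, I will use the automorphisms of $G'$. Since $\operatorname{End}(G')$ reduces modulo nilpotents to scalars $\lambda_j$ with $\lambda_{j+1}=\sigma(\lambda_j)$ (see the indecomposability argument in Proposition~\ref{prop:chain-presentation}), the scalar automorphisms of $G'$ act on the last row $e_{d-1}$ by some $\sigma^{m}(c)$ with $c\in k^\times$. Pushout along such an automorphism multiplies the class $\lambda$ by $\sigma^m(c)$. Because $\sigma$ is an automorphism of $k$, every nonzero $\lambda$ is then reached from $1$. If the normalization puts the class in the $\widetilde\tau$-slot instead, the action becomes $\lambda\mapsto\sigma^{m+1}(c)\lambda\,\sigma^m(c)^{-1}$-type twists. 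Even then, scaling the row alone, or scaling $G''$ if needed, still acts through a surjective map $k^\times\to k^\times$, so transitivity holds in either case.

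The main obstacle is the cocycle bookkeeping in the middle step: showing that all perturbations except one coordinate are coboundaries, and that this coordinate is not a coboundary. Its nontriviality should follow from the fact that the explicit nonsplit extension $S(M(\epsilon))$ exists. Each scalar $\lambda$ gives a gauge, and $\lambda\ne0$ yields an indecomposable middle. My first attempt at the one-dimensionality will be a dimension count. Compute $\dim\operatorname{Hom}(G'',G')$ (zero by the shapes: $G''$ has no line at level $v_1+1$ where $G'$'s source row would need one) and use the Euler form of the two-step complex, with $\dim$ of cochains minus coboundaries, while checking that there are no second-order obstructions among the $\beta$'s.
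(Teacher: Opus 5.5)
Your setup is the paper's. You use off-diagonal blocks $c^u_r$, $c^t_r$ and a gluing corner, subject to $u'_rc^t_r+c^u_rt''_r=0$ and taken modulo changes of splitting $(h_r)$. These are cleared by recursion where $u'_r$ (for $r\ge v_1+1$) or $t''_r$ (for $r\le v_1-1$) is invertible. But you stop exactly where the content of the lemma lies, and your one structural guess is wrong: the surviving class cannot ``equivalently'' sit in the gluing block. Two facts are needed, and neither is in your plan.

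(a) The gluing corner is always a coboundary. After the $u$- and $t$-blocks are cleared, take the families $h_r:=(u'_r)^{-1}h_{r+1}u''_r$ for $r\ge b$ and $h_r:=0$ for $r\le b-1$, with arbitrary limit $h_{+\infty}$. They disturb no $u$- or $t$-block, since $t'_r=t''_r=0$ for $r\ge v_1+1$ and $G''^{b-1}=0$, and they have $h_{-\infty}=0$. Since $\tau'$ is bijective, $\tau'\sigma^*(h_{+\infty})$ then cancels any corner.

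(b) The component of $c^u_{v_1}$ from $G''^{v_1}$ into the level-$(v_1+1)$ line of the summand $(v_1+1\to\cdots\to N)$ of $G'$ is invariant under every change of splitting. Indeed $G''^{v_1+1}=0$ forces $u''_{v_1}=0$ and $h_{v_1+1}=0$, so $c^u_{v_1}$ moves only by $u'_{v_1}h_{v_1}$. The image of $u'_{v_1}$ is spanned by the rows of $G'$ present at level $v_1$, and that summand starts at $v_1+1$.

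Without (a) you have no bound $\dim\le1$, and without (b) no injectivity of the class-to-scalar map. You also need to check that the matrices whose only nonzero entry is $c^u_{v_1}$ satisfy $t_ru_r=0$; they do, because $t'_{v_1}$ kills that line.

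Your dimension-count fallback fails at the step you flag. The map $C^1\to C^2$ is not surjective: for $N>b$, on an edge $(r,r+1)$ with $b\le r<N$ one has $t'_r=t''_r=0$, so both relation components there depend only on the block over $t_r$. So there are ``second-order obstructions'', and the Euler form does not give $\dim\operatorname{Ext}^1$ without computing that rank.

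If you want to avoid proving (b) directly, there is a route you did not state. Once every class is $[\lambda\beta_0]$, additivity and pushout along automorphisms of $G'$ make $\{\lambda:[\lambda\beta_0]=0\}$ an additive subgroup of $k$ stable under $k^\times$, hence $0$ or $k$. The nonsplit extension $S(M(\epsilon))$ excludes $k$. One nonsplit extension by itself does not give injectivity.

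On transitivity, your main line matches the paper: pushout multiplies the class by the scalar the automorphism has on the level-$(v_1+1)$ line, and $\sigma$ is bijective. You should construct the automorphism directly, as $a$ on $(v_1+1\to\cdots\to N)$ and $\sigma(a),\sigma^2(a),\dots$ on the following summands of the open path. The indecomposability argument you cite only constrains endomorphisms modulo a nilpotent ideal; it does not produce them. Note also that the row $e_{d-1}$ carries two different scalars, $a$ and $\sigma(a)$, on its two parts.

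Drop the fallback for a class in the gluing slot. By (a) that case never arises, and as written the fallback is wrong on two counts. Scaling $G''$ is not an automorphism of $G'$. And $c\mapsto\sigma(c)/c=c^{p-1}$ is not surjective on $k^\times$ for a general perfect field; it is trivial on $\mathbb F_p^\times$.
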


\begin{proof}
Let $0\to G'\to E\to G''\to0$ be an extension of $F$-gauges
annihilated by $p$.  Since $pE=0$, the levels of $E$
are $k$-vector spaces and splittings $E^r=G'^{\,r}\oplus G''^{\,r}$
exist.  Choose one at the level $N$, transported along the
isomorphisms $u_r$ for $r\ge N$, one at the level $0$, transported
along the isomorphisms $t_r$ for $r\le-1$, and arbitrary splittings in
between.
In these splittings the structure maps of $E$ are the triangular
matrices
\[
 u_r=\begin{pmatrix}u'_r & c^u_r\\ 0 & u''_r\end{pmatrix},\qquad
 t_r=\begin{pmatrix}t'_r & c^t_r\\ 0 & t''_r\end{pmatrix},
\]
with diagonal blocks the structure maps of $G'$ and $G''$ and with
entries $c^u_r:G''^{\,r}\to G'^{\,r+1}$ and
$c^t_r:G''^{\,r+1}\to G'^{\,r}$.  Since $E$, $G'$, and $G''$ are killed
by $p$,
\[
 0\ =\ u_rt_r\ =\
 \begin{pmatrix}u'_rt'_r & \ u'_rc^t_r+c^u_rt''_r\\ 0 & u''_rt''_r\end{pmatrix}
\]
has vanishing diagonal blocks, and its corner is the relation
\begin{equation}
 u'_rc^t_r+c^u_rt''_r=0.
 \label{eq:splitting-relation}
\end{equation}
Moreover $u_r=0$ for
$r\le-1$ and $t_r=0$ for $r\ge N$, since $E$ has level $[0,N]$ and
$u_rt_r=t_ru_r=p$.  In the transported splittings every
entry therefore vanishes outside $0\le r\le N-1$.  A change of splittings is conjugation by the matrices
$g_r=\left(\begin{smallmatrix}1 & h_r\\ 0 & 1\end{smallmatrix}\right)$
with $h_r:G''^{\,r}\to G'^{\,r}$:
\[
 g_{r+1}^{-1}\,u_r\,g_r=
 \begin{pmatrix}u'_r & \ c^u_r+u'_rh_r-h_{r+1}u''_r\\ 0 & u''_r\end{pmatrix},
 \qquad
 g_r^{-1}\,t_r\,g_{r+1}=
 \begin{pmatrix}t'_r & \ c^t_r+t'_rh_{r+1}-h_rt''_r\\ 0 & t''_r\end{pmatrix}.
\]

We show that suitable $h_r$ make $c^u_r=c^t_r=0$ for every $r$, except
for a single component of $c^u_{v_1}$ at the edge $(v_1,v_1+1)$.

On the edges $r\ge v_1+1$ the maps $t'_r$ and $t''_r$ vanish and
$u'_r$ is invertible, so \eqref{eq:splitting-relation} gives $c^t_r=0$.
Put $h_r:=0$ for $r\ge N$.  The recursion
$h_r:=-(u'_r)^{-1}(c^u_r-h_{r+1}u''_r)$ then descends to $r=v_1+1$ and
clears every $c^u_r$, while no $c^t_r$ changes.

At the edge
$(v_1,v_1+1)$, $G''^{\,v_1+1}=0$ gives $c^t_{v_1}=0$ and $u''_{v_1}=0$,
so $c^u_{v_1}$ changes only by $u'_{v_1}h_{v_1}$: a choice of $h_{v_1}$
clears the components of $c^u_{v_1}$ in the image of $u'_{v_1}$, which
is spanned by the rows of $G'$ present at level $v_1$.  The remaining
component maps the line $G''^{\,v_1}$ into the line of the absent row
in $G'^{\,v_1+1}$.  Write $\zeta\in k$ for this scalar: it is unchanged
under every $(h_r)$.

On the edges
$r\le v_1-1$ the maps $t''_r$ are invertible, so
\eqref{eq:splitting-relation} gives
$c^u_r=-u'_rc^t_r(t''_r)^{-1}$.  The recursion
$h_r:=(c^t_r+t'_rh_{r+1})(t''_r)^{-1}$ descends from the chosen
$h_{v_1}$ and clears every $c^t_r$, hence every $c^u_r$.

It remains to normalize the gluing.  In the splittings of the limits
$E^{\pm\infty}$ induced at the two tails, the gluing of $E$ and its
changes are the triangular matrices
\[
 \tau_E=\begin{pmatrix}\tau' & c^\tau\\ 0 & \tau''\end{pmatrix},
 \qquad
 g_{-\infty}^{-1}\,\tau_E\,\sigma^*(g_{+\infty})=
 \begin{pmatrix}\tau' & \ c^\tau+\tau'\sigma^*(h_{+\infty})-h_{-\infty}\tau''\\
 0 & \tau''\end{pmatrix},
\]
with diagonal blocks the gluings of $G'$ and $G''$ and with
$h_{\pm\infty}$ the limit values of the family $(h_r)$.  The family
$h_r:=(u'_r)^{-1}h_{r+1}u''_r$ for $r\ge b$, determined by an arbitrary
$h_{+\infty}$, vanishes for $r\le b-1$ and changes no $c^u_r$ or
$c^t_r$, and the choice
$\sigma^*(h_{+\infty}):=-(\tau')^{-1}c^\tau$ makes the gluing
block-diagonal.  The normalizations above are equivalences of
extensions, so the class of the extension is determined by $\zeta$,
with $\zeta=0$ the split extension, and every $\zeta$ occurs,
through the triangular matrices with $c^u_{v_1}$ the single nonzero
entry.  These satisfy $t_ru_r=0$ as well: the image of $c^u_{v_1}$
lies in the row absent at the level $v_1$, so $t'_{v_1}$ kills it.
Under Baer sum the entries $c^u_r$ and $c^t_r$ add, and $\zeta$
with them, so $E\mapsto\zeta$ is an isomorphism
$\operatorname{Ext}^1(G'',G')\simeq(k,+)$.

For $a\in k^\times$, let an automorphism of $G'$ act by $a$ on the
summand $(v_1+1\rightarrow\cdots\rightarrow N)$ and by the successive
powers $\sigma(a),\sigma^2(a),\dots$ on the following summands of its
open path.  Scalars commute with the structure maps inside each
summand, and across each identity matching
(Lemma~\ref{lem:regluing-normal-form}) the two scalars differ by
$\sigma$, so this is an automorphism of $G'$.  Its pushout multiplies
$\zeta$ by $a$, since $c^u_{v_1}$ has image in the
level-$(v_1+1)$ line of the first summand.  So the automorphisms of
$G'$ act transitively on the nonzero classes.
\end{proof}

\begin{corollary}
\label{cor:postnikov-layers}
Let $M(\epsilon)$ be a modified interval object on $[1,m]$
satisfying the source-sink condition, with rightward word
$\epsilon'$ as in Remark~\ref{rem:two-normal-forms}.  The Postnikov
layers of Lemma~\ref{lem:postnikov} are
\[
 \Dom^j(M(\epsilon))=U_{\epsilon'_j},\qquad 1\le j\le m.
\]
Moreover $\Dom^n(N\{a\})=\Dom^{n+a}(N)$ for every $N$ and
$a\in\mathbb Z$.
\end{corollary}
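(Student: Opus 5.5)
The Postnikov layers are ordinary cohomology, so the plan is to compute them from the rightward normal form of Remark~\ref{rem:two-normal-forms}. That presentation has all arrows pointing right, and each vertex of the $F$-gauge picture has a single Nygaard marking. By Lemma~\ref{lem:normalization} the moves are isomorphisms of the underlying diagonal dominoes, so we may replace $M(\epsilon)$ by $M([1,m],[1,m-1];\epsilon')$, with $\epsilon'=\sum_{\epsilon'_j=1}\delta_j$. This object is the modification $M([1,m],[1,m-1])(\epsilon')$ of an $\mathbf s$-acyclic interval object.

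First I identify the Postnikov layers of the unmodified interval object $M([1,m],[1,m-1])$, with every arrow a $u$. I expect its layers to be $\Dom^j=U_0$ for $1\le j\le m$ and zero otherwise. Lemma~\ref{lem:dictionary} gives rank $m$ and degree $0$, and Lemma~\ref{lem:postnikov} gives $\rkD(\Dom^n)=\rkD^n$, so this reduces to showing that $\rkD^j=1$ exactly at the levels of $I$. The natural check is Ekedahl's dictionary \cite[Ch.~III, \S1]{ekedahl3}: there the rightward chain $1\to2\to\cdots\to m$ is the iterated extension in which each vertex $j$ contributes a copy of $U_0\{1-j\}$, exactly as the vertex $q$ does in the proof of Proposition~\ref{prop:one-star-gauge}. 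Since all arrows point the same way, the prefix cuts of Lemma~\ref{lem:prefix-cut} give Postnikov-compatible sequences. Hence the ordinary cohomology in the relevant degree is the single domino $U_0$ shifted to the edge entering level $j$. Each $\Dom^j$ is an ordinary domino of rank one and degree $0$ (it is $\mathbf s$-acyclic), hence $U_0$ by the type classification of rank-one dominoes.

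Next I apply the modification $(\epsilon')$. Modifications act degreewise and commute with ordinary cohomology, as noted in the proof of Lemma~\ref{lem:nygaard-shift}. On the layer $\Dom^j$, whose differential enters internal degree $j$, the modification $(\epsilon')$ acts as $\epsilon'_j\delta_1$ after the shift to degrees $0,1$. Thus $\Dom^j(M(\epsilon))=U_0(\epsilon'_j\delta_1)=U_{\epsilon'_j}$ by Definition~\ref{def:autoequivalence}.

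For the final claim, $\Dom^n(N\{a\})=H^{1-n}(N(a)[-a])(n-1)=H^{1-n-a}(N)(a+n-1)=\Dom^{n+a}(N)$ follows by unwinding the definitions $M\{a\}=M(a)[-a]$ and $\Dom^n=H^{1-n}(-)(n-1)$.

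The main obstacle is the first step: pinning down, with the correct indexing convention, that each vertex $j$ of the rightward chain contributes its layer at $\Dom^j$ and not at $\Dom^{j\pm1}$. I would calibrate against the $m=1$ case $M_1=U_1$, which gives $\epsilon'=1$ and $\Dom^1=U_1$, and against Example~\ref{ex:equivalence-examples}(iii).
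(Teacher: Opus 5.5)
Your overall route is the paper's. You pass to the rightward normal form, filter by one-vertex pieces, and read off $H^{1-n}$. The last claim follows by unwinding $N\{a\}=N(a)[-a]$, and you do that correctly. Treating the unmodified chain first and then applying $(\epsilon')$ is a harmless repackaging of the paper's filtration with graded pieces $U_{\epsilon'_n}\{1-n\}$, since modifications commute with ordinary cohomology.

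\textbf{The gap.} It lies in the one step that carries the proof. You assert that the cut sequences are Postnikov compatible ``since all arrows point the same way'', and that is not the reason. In the leftward normal form all arrows also point one way, yet there the cuts are not compatible with the Postnikov filtration. For example, $U_{1/2}=M(01)$ has leftward word $01$ but layers $\Dom^1=U_1$, $\Dom^2=U_0$. This happens because the connecting map $U_1(-1)\to U_0$ of the cut sequence $0\to U_0\to U_{1/2}\to U_1\{-1\}\to0$ is nonzero.

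\textbf{The missing idea.} What makes the rightward chain work is the direction of the cut.
\begin{itemize}
\item No arrow leaves a suffix $[n,m]$, so the suffixes are the subobjects. The prefixes of Lemma~\ref{lem:prefix-cut} are quotients here, not subobjects.
\item Vertex $j$ contributes a piece in cohomological degree $1-j$. Hence the suffix lies in $D^{\le1-n}$ and the complementary quotient lies in $D^{\ge2-n}$.
\item This forces the suffix to be $\tau_{\le1-n}M(\epsilon)$, so $H^{1-n}$ is the single piece at vertex $n$.
\end{itemize}
That degree comparison is the whole of the paper's proof. It also settles the indexing question you flagged as the main obstacle.

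\textbf{A second, smaller problem.} Your reduction to ``$\rkD^j=1$ at the levels of $I$'', and the parenthetical that each layer is $\mathbf s$-acyclic, do not stand on their own. Rank one in each layer and total degree zero only give layers $U_{a_j}$ with $\sum_ja_j=0$. Moreover, $\mathbf s$-acyclicity of the object does not pass to its layers. The unmodified chain $1\leftarrow2$, which the move identifies with $(1^*\to2)(-\delta_2)$, is $\mathbf s$-acyclic, yet its layers are $\Dom^1=U_1$ and $\Dom^2=U_{-1}$. Only the filtration argument shows that each layer of the rightward chain is $U_0$, and once that argument is in place the rank and degree check is unnecessary.
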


\begin{proof}
Write $F_n\subseteq M(\epsilon)$ for the subobject spanned by the
suffix $[n,m]$ in the rightward presentation, as in
Lemma~\ref{lem:prefix-cut}.  The graded piece $F_n/F_{n+1}$ is the
one-vertex object
$M([n,n],\varnothing;\epsilon'_n\delta_n)=U_{\epsilon'_n}\{1-n\}$,
with $U_1=U_0(\delta_1)$, concentrated in cohomological degree
$1-n$.  So $F_n$ lies in $D^{\le1-n}$ and $M(\epsilon)/F_n$ lies in
$D^{\ge2-n}$, which identifies $F_n=\tau_{\le1-n}M(\epsilon)$ and
$H^{1-n}(M(\epsilon))=U_{\epsilon'_n}(1-n)$.  The twist in
Lemma~\ref{lem:postnikov} gives
$\Dom^n(M(\epsilon))=U_{\epsilon'_n}$.  The last claim follows from
$H^q(N\{a\})=H^{q-a}(N)(a)$.
\end{proof}

\subsection{Classification of stable objects}
\label{subsec:stable-classification}

It remains to identify the stable objects among the candidates
$M(\epsilon)$ of Proposition~\ref{prop:chain-presentation}.  Stability
bounds the weight of every saturated subobject, in particular of the
prefix and suffix subobjects of $M(\epsilon)$, and the words meeting these
numerical bounds are exactly the Christoffel words, defined next.

\begin{definition}[Christoffel words and Christoffel dominoes]
\label{def:christoffel-domino}
For $0\le d\le q$ with $\gcd(d,q)=1$, the \emph{Christoffel word} of slope $d/q$ is
\[
  \epsilon_j(d/q):=\Bigl\lfloor\frac{jd}{q}\Bigr\rfloor
  -\Bigl\lfloor\frac{(j-1)d}{q}\Bigr\rfloor,
  \qquad 1\le j\le q.
\]
For $0<d<q$ it begins with $0$ and ends with $1$, so $M(\epsilon(d/q))$
satisfies the source-sink condition.  The diagonal domino
\[
  U_{d/q}:=M\bigl(\epsilon(d/q)\bigr)
\]
is called the \emph{Christoffel domino} of slope $d/q$.
\end{definition}

In the geometric picture of \cite[\S 1.1]{BLRS09}, the letters $0$ and
$1$ mark unit steps to the right and upward, and $\epsilon(d/q)$ traces
the lower lattice approximation to the segment from $(0,0)$ to
$(q-d,d)$: its first $n$ letters contain $\lfloor nd/q\rfloor$ letters
$1$.

\begin{figure}[htbp]
\centering
\begin{tikzpicture}[scale=1.05, every node/.style={font=\small}]
  \draw[step=1, gray!35, very thin] (0,0) grid (3,2);
  \draw[densely dashed, black!60] (0,0) -- (3,2);

  \draw[blue!65!black, line width=1.35pt]
    (0,0) -- node[midway, below=3pt] {$0$} (1,0)
          -- node[midway, below=3pt] {$0$} (2,0)
          -- node[midway, left=3pt]  {$1$} (2,1)
          -- node[midway, below=3pt] {$0$} (3,1)
          -- node[midway, right=3pt] {$1$} (3,2);

  \foreach \x in {0,...,3}
    \foreach \y in {0,...,2}
      \fill[black!70] (\x,\y) circle (1.15pt);

  \node[below left=2pt] at (0,0) {$(0,0)$};
  \node[above right=2pt] at (3,2) {$(3,2)$};
\end{tikzpicture}
\caption{The Christoffel word $\epsilon(2/5)=00101$ as a lattice
path.}
\label{fig:christoffel-path}
\end{figure}
\FloatBarrier

Two reduced fractions
$\tfrac ab<\tfrac ce$ in $[0,1]$ are \emph{Farey neighbours} if
$bc-ae=1$.  Their \emph{mediant} $\tfrac{a+c}{b+e}$ again satisfies $\gcd(a+c,b+e)=1$, and every
$\tfrac dq$ with $0<d<q$ is the mediant of a unique Farey pair, its
\emph{parents}.  These are determined by $b\equiv d^{-1}\pmod q$ with $0<b<q$
and $a=(db-1)/q$, and then $db-aq=cq-ed=bc-ae=1$: the mediant is a
Farey neighbour of each of its parents.

\begin{lemma}[{Farey concatenation, cf.\ \cite[Chapter~1]{BLRS09}}]
\label{lem:farey-concat}
Let $\tfrac ab<\tfrac ce$ be Farey neighbours with mediant $\tfrac dq$, $d=a+c$,
$q=b+e$. Then the Christoffel word of slope $\tfrac dq$ is the concatenation of the
Christoffel words of its parents:
\[
  \epsilon_j(d/q)=
  \begin{cases}
    \epsilon_j(a/b), & 1\le j\le b,\\[1mm]
    \epsilon_{j-b}(c/e), & b<j\le q.
  \end{cases}
\]
\end{lemma}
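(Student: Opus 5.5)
The plan is to compare prefix sums directly.  Since
$\epsilon_j(\lambda)=\lfloor j\lambda\rfloor-\lfloor (j-1)\lambda\rfloor$,
the identity reduces to two statements about floor functions.
\begin{enumerate}
\item For $1\le j\le b$, we need $\lfloor jd/q\rfloor=\lfloor ja/b\rfloor$.
\item For $0\le i\le e$, we need $\lfloor (b+i)d/q\rfloor=a+\lfloor ic/e\rfloor$.
\end{enumerate}
Taking differences of consecutive indices then recovers the letters.  In the second statement the case $i=0$ reads $\lfloor bd/q\rfloor=a$, which is also the case $j=b$ of the first, so the two ranges glue at $j=b$.

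For the first statement, fix $1\le j\le b$ and compare the two fractions $jd/q$ and $ja/b$.  Since $a/b<d/q$ with $db-aq=1$, we get
\[
\frac{jd}{q}-\frac{ja}{b}=\frac{j}{bq}\in\Bigl(0,\frac1q\Bigr].
\]
Write $ja=mb+s$ with $0\le s\le b-1$, so that $\lfloor ja/b\rfloor=m$.  Then
\[
\frac{jd}{q}=m+\frac{s}{b}+\frac{j}{bq}\le m+\frac{b-1}{b}+\frac{1}{q}.
\]
The right-hand side is at most $m+1$ because $q\ge b$.  Equality forces $j=b$ and $s=b-1$.  But for $j=b$ we have $s=0$, so $s=b-1$ would require $b=1$, and then $jd/q=d/q<1$ anyway.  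Hence $jd/q<m+1$, while clearly $jd/q>m$.  This gives the first statement.

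For the second statement, use $ce-\,$ the neighbour relation $cq-ed=1$.  It gives
\[
\frac{(b+i)d}{q}=a+\frac{1}{q}+\frac{id}{q},\qquad
\frac{ic}{e}-\frac{id}{q}=\frac{i}{eq}.
\]
Here the first identity uses $bd=aq+1$.  So it suffices to show $\lfloor (1+id)/q\rfloor=\lfloor ic/e\rfloor$ for $0\le i\le e$.  Rewriting with the second identity,
\[
\frac{1+id}{q}=\frac{ic}{e}+\frac{e-i}{eq}.
\]
The correction term $(e-i)/(eq)$ lies in $[0,1/q)$ for $0\le i\le e$, so it can only raise the floor if $ic/e$ lies within $1/q$ below an integer.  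Write $ic=ne+r$ with $0\le r<e$.  If $r\ge1$, then
\[
\frac{r}{e}+\frac{e-i}{eq}\le\frac{e-1}{e}+\frac{e}{eq}\le 1,
\]
using $q\ge e$.  Equality would require $i=0$, which is impossible since $r\ge1$.  So the sum is $<1$, and if $r=0$ the sum is plainly $<1$.  In both cases the floors agree, which gives the second statement.

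The only delicate point is these two boundary inequalities, which use $q\ge b$ and $q\ge e$ together with the exact Farey relations.  I expect them to be the main thing to check carefully.  Everything else is bookkeeping: difference the prefix identities, and note that at $j=b$ the two pieces agree with value $a$.
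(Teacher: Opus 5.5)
Your proof is correct and follows the paper's argument: you compare the prefix sums $\lfloor nd/q\rfloor$ with $\lfloor na/b\rfloor$ and with $a+\lfloor mc/e\rfloor$, use $db-aq=cq-ed=1$ to write the differences as $n/(qb)$ and $(e-m)/(qe)$, each at most $1/q$, and then bound the fractional parts.

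The only differences are cosmetic. Since $q=b+e$ is strictly larger than both $b$ and $e$, the inequalities $1/q<1/b$ and $1/q<1/e$ give strictness immediately, so your separate equality-case checks are unnecessary. Also, the correction term $(e-i)/(eq)$ equals $1/q$ at $i=0$, so it lies in $[0,1/q]$ rather than $[0,1/q)$. This slip is harmless, because your subsequent bound uses $\le 1/q$.
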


\begin{proof}
The $n$-th partial sum of the Christoffel word of slope $d/q$ is
$\lfloor nd/q\rfloor$, so it suffices to compare the partial sums on the
two blocks.  For $0\le n\le b$, since $db-aq=1$,
\[
  \frac{nd}{q}-\frac{na}{b}=\frac{n(db-aq)}{qb}=\frac{n}{qb}\in\Bigl[0,\tfrac1q\Bigr].
\]
The fractional part of $na/b$ is a multiple of $1/b$, at most $1-1/b$,
and the difference $n/qb$ is at most $1/q<1/b$, so their sum stays below
$1$ and $\lfloor nd/q\rfloor=\lfloor na/b\rfloor$.  Similarly, for $0\le m\le e$,
since $cq-ed=1$,
\[
  \frac{(b+m)d}{q}-a-\frac{mc}{e}
  =\frac{e(db-aq)-m(cq-ed)}{qe}
  =\frac{e-m}{qe}\in\Bigl[0,\tfrac1q\Bigr]
\]
gives $\lfloor(b+m)d/q\rfloor=a+\lfloor mc/e\rfloor$.  Taking first
differences proves the claim.
\end{proof}

Next we characterize the stable diagonal dominoes of slope
$\lambda=d/q$.

\begin{theorem}
\label{thm:christoffel-stability}
Let $M(\epsilon)$ be a modified interval object satisfying the
source-sink condition, of slope $d/q$ with $\gcd(d,q)=1$.  Then
$M(\epsilon)$ is stable if and only if $\epsilon$ is the Christoffel
word $\epsilon(d/q)$.  Up to Breuil--Kisin twist, the Christoffel
domino $U_{d/q}$ is thus the unique stable diagonal domino of slope
$d/q$.
\end{theorem}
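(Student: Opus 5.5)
Both directions go through the weight $\theta=\theta_{d/q}$ of Definition~\ref{def:weight} and the prefix subobjects of Lemma~\ref{lem:prefix-cut}. Write $\epsilon$ on $[1,q']$ in leftward normal form. Lemma~\ref{lem:dictionary} gives $\rkD=q'$ and $\degD=\sum\epsilon_i$, and the slope is $d/q$ in lowest terms, so $q'=mq$ and $\sum\epsilon_i=md$ for some $m\ge1$. For a prefix $[1,n]$ with $n<q'$, Lemmas~\ref{lem:prefix-cut} and~\ref{lem:dictionary} give a saturated subobject $P_n$ of rank $n$ and degree $S_n:=\epsilon_1+\cdots+\epsilon_n$. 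Hence $\theta(P_n)=qS_n-dn$. Stability of $M(\epsilon)$ forces $\theta(P_n)<0$, that is $S_n<nd/q$, for all $0<n<q'$.

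\emph{Only if.} If $m\ge2$, take $n=q$. Then $S_q<d$ and $S_q\in\mathbb Z$ give $S_q\le d-1$. Applying the same bound at $n=2q,\dots$ gives a total deficit at $n=q'$ that contradicts $S_{q'}=md$; I will also check this against the dual suffix bound, which I expect to close the case $m\ge2$. Alternatively, the two presentations of Remark~\ref{rem:two-normal-forms} give subobjects from both ends, and combining the prefix and suffix bounds at the cut $n=q$ should force equality, contradicting strictness. So $q'=q$ and $\sum\epsilon_i=d$. The prefix inequality gives $S_n\le\lfloor nd/q\rfloor$, with strictness automatic since $nd/q\notin\mathbb Z$ for $0<n<q$. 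For the reverse bound $S_n\ge\lfloor nd/q\rfloor$, use the quotient suffixes: the complementary quotient on $[n+1,q]$ is nonzero and has slope strictly greater than $d/q$ by additivity. In the rightward normal form $\epsilon'$, the suffix subobject gives $\epsilon'$-suffix sums strictly above the line. Since $\epsilon'$ differs from $\epsilon$ only by moving a $1$ from the end to the front, this translates into $S_n>nd/q-1$. Then $S_n=\lfloor nd/q\rfloor$ for all $n$, which is the Christoffel word.

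\emph{If.} This is the main obstacle, because prefixes are only some of the saturated subobjects. I would argue by induction on $q$ using Lemma~\ref{lem:farey-concat}. Write $d/q$ as the mediant of its Farey parents $a/b<c/e$. The prefix cut of Lemma~\ref{lem:prefix-cut} gives a saturated sequence $0\to U_{a/b}\to U_{d/q}\to U_{c/e}\to0$, with the conventions $U_{0/1}$, $U_{1/1}$ as suitable twists of $U_0,U_1$. Both ends are stable by induction, with slopes $a/b<d/q<c/e$. Let $N\subset U_{d/q}$ be saturated with $\muD(N)\ge d/q$, and consider $N\cap U_{a/b}$ and the image of $N$ in $U_{c/e}$. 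Stability of the ends and the slope inequalities give $\theta(N)\le0$, with equality only in degenerate configurations. I then need to exclude a subobject mapping isomorphically onto $U_{c/e}$, which would split the sequence. Non-splitting follows from indecomposability (Lemma~\ref{lem:chain-endo}) together with $\operatorname{Hom}(U_{c/e},U_{a/b})=0$ from Lemma~\ref{lem:hom-vanishing}. The delicate bookkeeping is the case where $N$ meets both ends partially: the weights can be as large as $-1$ on each piece, so the integrality $\theta\in\mathbb Z$ and the Farey identity $bc-ae=1$ will be needed to show the sum stays negative.

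Uniqueness up to Breuil--Kisin twist then follows. A stable domino of slope $d/q$ in $(0,1)$ is annihilated by $p$ and indecomposable by Lemma~\ref{lem:annihilated-by-p}, and its HN slope lies in $(0,1)$. Proposition~\ref{prop:chain-presentation} makes it a twist of some $M(\epsilon)$, and the equivalence proved above identifies $\epsilon$ with the Christoffel word.
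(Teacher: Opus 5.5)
Your outline matches the paper's: prefix and suffix weights for ``only if'', induction along Farey parents for ``if'', and Proposition~\ref{prop:chain-presentation} for uniqueness. Two steps need repair, and the second is a real gap.

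\emph{The ``only if'' direction.} Your prefix-deficit argument for excluding $m\ge2$ is false. The prefix bounds give nothing at $n=q'$. For example, $\epsilon=0011$ at slope $1/2$ has prefix weights $2S_n-n=-1,-2,-1$, all negative. Yet $M(0011)$ has the subobject $U_{1/2}\{-2\}$ of its own slope, so it is not stable. Your alternative is the right one and is the paper's argument. The suffix $[n+1,q']$ in the rightward normal form (Remark~\ref{rem:two-normal-forms}) has no arrow leaving it, so it spans a saturated subobject of degree $S_{q'}-S_n-1$. This is a tidier way to get the paper's $P'_n$ than its local moves. Stability then gives $S_n>nd/q-1$, and at $n=q<q'$ this is incompatible with $S_q<d$. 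So derive both bounds first, then exclude $m\ge2$.

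\emph{The ``if'' direction.} Here the gap is genuine, and it lies in a different case from the one you flag. Set $A=U_{a/b}$, let $C$ be the twist of $U_{c/e}$, put $X=N\cap A$, and let $Y'$ be the image of $N$ in $C$.
\begin{itemize}
\item When $(Y')^{\mathrm{sat}}\ne C$, nothing is delicate. Still, stability of $C$ only gives $\muD(Y')<c/e$, which lies above $d/q$. You need $cq-ed=1$ and $\rkD(Y')<e$ to get $\theta_{d/q}(Y')<0$.
\item The critical case is $(Y')^{\mathrm{sat}}=C$, where $\theta_{d/q}(C)=+1$. If $Y'\ne C$, the finite-torsion defect gives $\theta_{d/q}(Y')\le1-q$.
\item If $Y'=C$ and $X=0$, the sequence splits. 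Indecomposability alone rules this out; the Hom vanishing you cite plays no role. If $X=A$, then $N=U$.
\item If $Y'=C$ and $0\ne X\subsetneq A$, the bound $\theta_{d/q}(X)\le-1$ you describe only yields $\theta_{d/q}(N)\le0$. That proves semistability, not stability.
\end{itemize}
The missing idea is the weight transfer $\theta_{d/q}(X)=\tfrac qb\,\theta_{a/b}(X)-\tfrac{\rkD(X)}{b}$, which follows from $db-aq=1$. Combined with the inductive bound $\theta_{a/b}(X)\le-1$, it gives $\theta_{d/q}(X)\le-\tfrac{q+1}{b}<-1$, and hence $\theta_{d/q}(N)\le1-\tfrac{q+1}{b}<0$. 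The companion identity $\theta_{d/q}(Y)=\tfrac qe\,\theta_{c/e}(Y)+\tfrac{\rkD(Y)}{e}$ handles proper $Y\subsetneq C$. Without these estimates the ``if'' direction, which is the heart of the theorem, is not established.
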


\begin{proof}
Write $m$ for the number of vertices and
$s_n:=\sum_{j=1}^{n}\epsilon_j$ for the partial sums, $0\le n\le m$,
so that $s_m$ is the degree of $M(\epsilon)$.
Stability will be tested on two families of saturated subobjects,
indexed by $0<n<m$.  The prefix $[1,n]$ spans a saturated subobject
$P_n$ of rank $n$ and degree $s_n$ (Lemma~\ref{lem:prefix-cut}).
For the suffix $[n+1,m]$ it is slightly trickier, but we can still
define a saturated subobject $P'_n$ of rank $m-n$ and degree
$s_m-s_n-1$ by a sequence of moves of Lemma~\ref{lem:normalization},
as follows.  If $n$ is starred, let $b-1$ be the nearest unstarred vertex below
it, which exists because $\epsilon_1=0$, so that the vertices
$b,\dots,n$ are starred.  The moves shift their modifications one step left, at $b$
first and at $n$ last, and the vertex $n$ ends unstarred.  If $n+1$
is unstarred, let $c+1$ be the nearest starred vertex above it,
which exists because $\epsilon_m=1$, so that the vertices
$n+1,\dots,c$ are unstarred.  The moves carry its modification to
$c$, then to $c-1$, and so on down to $n+1$.  The move
$n\leftarrow(n+1)^*\mapsto n^*\to(n+1)$ then reverses the one arrow
leaving the suffix.  In this chart the suffix spans a saturated subobject, and only the
last move carries a modification across the edge, so the degree is
$s_m-s_n-1$ (Lemma~\ref{lem:dictionary}).

By Definition~\ref{def:weight},
\[
 h_n:=\theta_{d/q}(P_n)=q\,s_n-d\,n,
 \qquad
 \theta_{d/q}(P'_n)=q(s_m-s_n-1)-d(m-n)=-h_n-q,
\]
using $qs_m=dm$, and $h_0=h_m=0$.

Suppose $M(\epsilon)$ is stable.  Then $\theta_{d/q}(P_n)\le-1$ and
$\theta_{d/q}(P'_n)\le-1$, so $-q<h_n<0$ for every $0<n<m$.

Suppose $-q<h_n<0$ for every $0<n<m$.  Division by $q$ turns the
inequalities into
\[
  \frac{dn}{q}-1<s_n<\frac{dn}{q}.
\]
An open interval of length one contains an integer only when its
endpoints are not integers, and that integer is then
$\lfloor dn/q\rfloor$.  So $s_n=\lfloor dn/q\rfloor$, and $q\nmid dn$ for $0<n<m$, hence
$q\nmid n$ by $\gcd(d,q)=1$.  On the other hand $qs_m=dm$ gives
$q\mid dm$, hence $q\mid m$.  So $m=q$ and $\epsilon$ is the
Christoffel word of slope $d/q$.

Conversely, write $U=U_{d/q}=M(\epsilon(d/q))$.  We show that every
proper nonzero saturated subobject $N\subset U$ satisfies
$\theta_{d/q}(N)\le-1$, and
induct on $q$ along the Farey parents $\tfrac ab<\tfrac dq<\tfrac ce$, so that
$d=a+c$, $q=b+e$ and $db-aq=cq-ed=1$.  Both parents have strictly smaller
denominators, and the statement is invariant under Breuil--Kisin twist, so the
induction hypothesis applies to them.  The base is the boundary
slopes $\tfrac01$ and $\tfrac11$.  The one-vertex chains
$U_{0/1}=U_0$ and $U_{1/1}=U_1$ fail the source-sink condition, so
the base is not an instance of the theorem, and we check it
directly: a one-vertex chain has a single one-dimensional level,
hence no nonzero proper saturated subobject, and the statement holds
vacuously.

By Lemma~\ref{lem:farey-concat} the word $\epsilon(d/q)$ restricts on $[1,b]$ to the
Christoffel word of $\tfrac ab$ and on $[b+1,q]$ to that of $\tfrac ce$, so the prefix
$[1,b]$ gives a saturated exact sequence (Lemma~\ref{lem:prefix-cut})
\[
  0\longrightarrow A\longrightarrow U\longrightarrow C\longrightarrow0
\]
in which $A=U_{a/b}$ and $C$ is the Breuil--Kisin twist of $U_{c/e}$
placed on the levels $[b+1,q]$, with $\theta_{d/q}(A)=qa-db=-1$ and
$\theta_{d/q}(C)=qc-de=1$.

Let $N\subset U$ be a proper nonzero saturated subobject. Put $X:=N\cap A$, let $Y'$
be the image of $N$ in $C$, and let $Y:=(Y')^{\mathrm{sat}}\subseteq C$. Then $X$ is
saturated in $A$ and in $N$, since $A/X$ and $N/X\cong Y'$ embed into the diagonal
dominoes $U/N$ and $C$, and $\DomD$ is closed under subobjects. Additivity and
Proposition~\ref{prop:torsion-structure}(ii) give
\[
  \theta_{d/q}(N)=\theta_{d/q}(X)+\theta_{d/q}(Y')
  =\theta_{d/q}(X)+\theta_{d/q}(Y)-q\,t,
  \qquad t:=\degD(Y/Y')\ge0.
\]
The relations $db-aq=cq-ed=1$ transfer the parent weights: for
subobjects $X\subseteq A$ and $Y\subseteq C$,
\[
  \theta_{d/q}(X)=q\,\degD(X)-d\,\rkD(X)
  =\tfrac qb\,\theta_{a/b}(X)-\tfrac{db-aq}{b}\,\rkD(X)
  =\tfrac qb\,\theta_{a/b}(X)-\tfrac{\rkD(X)}{b}
\]
and
\[
  \theta_{d/q}(Y)=q\,\degD(Y)-d\,\rkD(Y)
  =\tfrac qe\,\theta_{c/e}(Y)+\tfrac{cq-ed}{e}\,\rkD(Y)
  =\tfrac qe\,\theta_{c/e}(Y)+\tfrac{\rkD(Y)}{e}.
\]
The induction hypothesis, $\theta_{a/b}(X)\le-1$ for
$0\ne X\subsetneq A$ and likewise on $C$, turns these into
\[
  \theta_{d/q}(X)\le
  \begin{cases}
    0, & X=0,\\
    -1, & X=A,\\
    -\tfrac{q+1}{b}, & 0\ne X\subsetneq A,
  \end{cases}
  \qquad
  \theta_{d/q}(Y)\le
  \begin{cases}
    0, & Y=0,\\
    1, & Y=C,\\
    -\tfrac{b+1}{e}, & 0\ne Y\subsetneq C.
  \end{cases}
\]
Only $Y=C$ contributes a positive bound.  For $Y\ne C$ the bounds
are nonpositive, and one of $X$, $Y$ is nonzero, so
$\theta_{d/q}(N)<0$.  So assume $Y=C$.  For $t\ge1$ the correction dominates,
$\theta_{d/q}(N)\le1-q\le-1$.  For $t=0$ we get $Y'=Y=C$, so $N$
maps onto $C$, and we claim $0\ne X\subsetneq A$.  If $X=0$, then
$N\to C$ is an isomorphism and splits the sequence, contradicting
the indecomposability of $U$ (Lemma~\ref{lem:chain-endo}).  If
$X=A$, then $N$ has full rank $q=b+e$, hence $N=U$
(Proposition~\ref{prop:torsion-structure}(i)), contradicting
properness.  So $0\ne X\subsetneq A$ and
$\theta_{d/q}(N)\le1-\tfrac{q+1}{b}<0$, since $b<q$.  The integer
$\theta_{d/q}(N)$ is thus at most $-1$.

Finally, a stable diagonal domino of slope $d/q$ is annihilated by
$p$ and indecomposable (Lemma~\ref{lem:annihilated-by-p}), hence a
Breuil--Kisin twist of a modified interval object $M(\epsilon)$
satisfying the source-sink condition
(Proposition~\ref{prop:chain-presentation}), and the equivalence
just proved forces $\epsilon=\epsilon(d/q)$.
\end{proof}

In the lattice path of Figure~\ref{fig:christoffel-path}, the vertex
reached after $n$ letters is $(n-s_n,\,s_n)$, with $s_n$ the number
of ones among the first $n$ letters.  By
Theorem~\ref{thm:christoffel-stability}, a word is stable exactly
when its path is the \emph{highest} lattice path strictly below the
segment from $(0,0)$ to $(q-d,d)$.  A path that crosses the segment
meets a destabilizing prefix, and a path that drops a step below it
meets a destabilizing suffix.  Figure~\ref{fig:staircase-failures}
shows both at slope $2/5$.

\begin{figure}[htbp]
\centering
\begin{tikzpicture}[scale=1.0, every node/.style={font=\small}]
  \draw[step=1, gray!35, very thin] (0,0) grid (3,2);
  \draw[densely dashed, black!60] (0,0) -- (3,2);
  \draw[black!25, line width=1.1pt] (0,0) -- (2,0) -- (2,1) -- (3,1) -- (3,2);
  \draw[blue!65!black, line width=1.35pt] (0,0) -- (1,0) -- (1,1) -- (2,1) -- (3,1) -- (3,2);
  \foreach \x in {0,...,3} \foreach \y in {0,...,2}
    \fill[black!70] (\x,\y) circle (1.15pt);
  \fill[blue!65!black] (1,1) circle (2.6pt);
  \node at (1.5,-0.75) {(a)\quad $\epsilon=01001$};
  \begin{scope}[xshift=5.4cm]
    \draw[step=1, gray!35, very thin] (0,0) grid (3,2);
    \draw[densely dashed, black!60] (0,0) -- (3,2);
    \draw[black!25, line width=1.1pt] (0,0) -- (2,0) -- (2,1) -- (3,1) -- (3,2);
    \draw[blue!65!black, line width=1.35pt] (0,0) -- (3,0) -- (3,1) -- (3,2);
    \foreach \x in {0,...,3} \foreach \y in {0,...,2}
      \fill[black!70] (\x,\y) circle (1.15pt);
    \fill[blue!65!black] (3,0) circle (2.6pt);
    \node at (1.5,-0.75) {(b)\quad $\epsilon=00011$};
  \end{scope}
\end{tikzpicture}
\caption{The two other words of length $5$ and degree $2$ satisfying
the source-sink condition, drawn as in
Figure~\ref{fig:christoffel-path}, with $\epsilon(2/5)$ in grey.  In
(a) the path crosses the segment at the marked vertex, and the
prefix $[1,2]=01$ defines a subobject isomorphic to $U_{1/2}$.  In (b) it drops a step below at the marked vertex, the move of
Lemma~\ref{lem:normalization} rewrites
$00011=(1\leftarrow2\leftarrow3\leftarrow4^{*}\leftarrow5^{*})$ as
$(1\leftarrow2\leftarrow3^{*}\rightarrow4\leftarrow5^{*})$, and the
suffix $4\leftarrow5^{*}$ defines a subobject isomorphic to a
Breuil--Kisin twist of $U_{1/2}$.  Both subobjects have rank $2$,
degree $1$, and weight $1$, so both destabilize.}
\label{fig:staircase-failures}
\end{figure}
\FloatBarrier

\begin{theorem}[Classification of stable objects]
\label{thm:stable-classification}
Let $\lambda\in\mathbb Q$ and write it uniquely as
$\lambda=r+d/q$, where $r\in\mathbb Z$, $0<d\le q$, and $\gcd(d,q)=1$.
The simple objects of $\Delta_{\mathrm{tor}}^{ss}(\lambda)$ are precisely the
Breuil--Kisin twists of $U_{d/q}(r\mathbf1)$.
The simple objects of $\Delta_{\mathrm{tor}}^{ss}(+\infty)$ are precisely the
Breuil--Kisin twists of the simple finite-length Dieudonn\'e
modules.  Thus every
finite rational slope has a unique stable object up to Breuil--Kisin twist.
\end{theorem}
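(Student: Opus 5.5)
The plan is to assemble the classification from the pieces already proved, reducing every finite slope to the window $(0,1)$ by a Nygaard modification and treating $+\infty$ separately.

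\emph{Slope $+\infty$.} By Theorem~\ref{thm:fixed-slope}, the category $\Delta_{\mathrm{tor}}^{ss}(+\infty)$ is $\Delta_{\mathrm{ft}}$. Its simple objects are described by Lemma~\ref{lem:simple-finite-dieudonne}, and this settles the second assertion.

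\emph{Reduction to $(0,1]$.} Let $\lambda=r+d/q$ be finite. The modification $(r\mathbf1)$ is an exact autoequivalence of $\Delta$ that preserves $\Delta_{\mathrm{tor}}$ and the finite-torsion part. It preserves rank, and by Lemma~\ref{lem:nygaard-shift} it adds $r$ to every finite slope. It therefore carries subobjects to subobjects, and it gives an equivalence $\Domss(d/q)\simeq\Domss(\lambda)$ that preserves simple objects. It also commutes with Breuil--Kisin twists, since $\{i\}=(i)[-i]$ and the modification acts termwise. So the simple objects of slope $\lambda$ are the images under $(r\mathbf1)$ of the simple objects of slope $d/q\in(0,1]$.

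\emph{The two cases.} When $d=q=1$ the slope is $1$. Proposition~\ref{thm:integer-simples} at $r=1$ gives the simples $U_1\{m\}$, and $U_1=U_{1/1}$ by the convention of Theorem~C. Applying $(r\mathbf1)$ to $U_1\{m\}$ gives the twists of $U_1(r\mathbf1)$; this matches Proposition~\ref{thm:integer-simples} at $r+1$, because $U_1(r\mathbf1)\simeq U_{r+1}$. When $0<d<q$, the simple objects of $\Domss(d/q)$ are exactly its stable objects (Theorem~\ref{thm:fixed-slope}). Theorem~\ref{thm:christoffel-stability} then shows that these are precisely the Breuil--Kisin twists of $U_{d/q}$. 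Its converse direction provides existence: $U_{d/q}$ is stable, and Breuil--Kisin twists preserve stability because they preserve rank, degree and the lattice of subobjects. Its forward direction provides uniqueness, through Lemma~\ref{lem:annihilated-by-p} and Proposition~\ref{prop:chain-presentation}.

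The main point needing care, though it is mild, is checking that the modification $(r\mathbf1)$ really induces an equivalence of the semistable categories. This requires that it maps subobjects of a diagonal domino bijectively to subobjects, and that it preserves both $\DomD$ and saturation. Both follow from its exactness for the diagonal $t$-structure and from the fact that it preserves $\Delta_{\mathrm{ft}}$, which holds because its inverse is $(-r\mathbf1)$. The final sentence of the theorem then follows by combining the two cases.
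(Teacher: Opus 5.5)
Your proposal is correct and follows the paper's proof. Slope $+\infty$ comes from Lemma~\ref{lem:simple-finite-dieudonne}, the case $d=q=1$ from Proposition~\ref{thm:integer-simples} with $U_1(r\mathbf1)\simeq U_{r+1}$, and the case $0<d<q$ by transporting along the modification $(\pm r\mathbf1)$ (Lemma~\ref{lem:nygaard-shift}) to Theorem~\ref{thm:christoffel-stability}.

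Two small remarks. First, $(r\mathbf1)$ commutes with the internal shift $(a)$ only up to an overall Frobenius twist; this is harmless because Frobenius twists of $U_{d/q}$ are isomorphic to $U_{d/q}$, and the paper uses this step implicitly too. Second, saturation plays no role in the transport, since semistability is tested on all subobjects, and an exact, rank-preserving autoequivalence of $\Delta_{\mathrm{tor}}$ matches those bijectively.
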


\begin{proof}
Theorem~\ref{thm:fixed-slope} identifies the simple objects with the stable
ones at every slope.  At slope $+\infty$ the assertion is
Lemma~\ref{lem:simple-finite-dieudonne}.  If $d/q=1$, then
$U_{1/1}=U_1$ and $U_{1/1}(r\mathbf1)=U_{r+1}$, so the assertion is
Proposition~\ref{thm:integer-simples}.  Suppose that $0<d<q$.  The exact
autoequivalence $M\mapsto M(-r\mathbf1)$ shifts every slope by $-r$ and
preserves stability (Lemma~\ref{lem:nygaard-shift}), so we may take
$\lambda=d/q$, where the assertion is
Theorem~\ref{thm:christoffel-stability}.
\end{proof}

Ekedahl calls an object $U$ of the exact category of diagonal
dominoes of pure type $\tfrac12$ \emph{weakly simple} if there is
no short exact sequence $0\to U'\to U\to U''\to0$ where both $U'$
and $U''$ are nonzero and of pure type $\tfrac12$.  He asserts
that the weakly simple objects are the two families $N_{m,n}$ and
$N_m$ \cite[Theorem~III.2.7]{ekedahl3}.  In our notation, these are
the Breuil--Kisin twists of the modified interval objects
$M(\epsilon)$ with $\epsilon=0\cdots01$ and $\epsilon=011$, of
slopes $1/(n-m+1)$ and $2/3$.  The list is incomplete, and the
classification of stable objects corrects it.

\begin{theorem}[Ekedahl's Theorem III.2.7, corrected]
\label{thm:weak-simplicity}
\label{rem:weak-simplicity}
Up to Breuil--Kisin twist, the weakly simple diagonal dominoes of
pure type $\tfrac12$ are exactly the Christoffel dominoes $U_{1/q}$
and $U_{(q-1)/q}$ for $q\ge2$.
\end{theorem}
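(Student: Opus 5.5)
The plan is to translate weak simplicity into HN language and then use the classification of stable objects. By Theorem~\ref{thm:refinement}(ii),(iii), a diagonal domino is of pure type $\tfrac12$ exactly when all its HN slopes lie in $(0,1)$. So $U$ is weakly simple when it has no short exact sequence $0\to U'\to U\to U''\to0$ in $\DomD$ with $U',U''$ nonzero and with all HN slopes in $(0,1)$.

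The first step is to show that a weakly simple $U$ is semistable. Suppose it is not. Take the first HN step $U_1=F^{\ge\mu_{\max}}_{\HN}U$. It is saturated, with quotient in $\DomD$, and both $U_1$ and $U/U_1$ have all HN slopes inside $(0,1)$. This is a forbidden sequence, so $U$ is semistable of some slope $d/q\in(0,1)$. Inside the finite-length abelian category $\Domss(d/q)$ (Theorem~\ref{thm:fixed-slope}), any proper nonzero subobject gives a saturated sequence of semistables of slope $d/q$, again forbidden. Hence $U$ is simple there, and by Theorem~\ref{thm:christoffel-stability} it is a twist of $U_{d/q}$.

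The second step, which is the main obstacle, decides which Christoffel dominoes are weakly simple. Here we need saturated sequences whose terms are not semistable but still have slopes in $(0,1)$. We use the prefix cuts of Lemma~\ref{lem:prefix-cut} and the suffix subobjects $P'_n$ from the proof of Theorem~\ref{thm:christoffel-stability}.

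Write $\epsilon=\epsilon(d/q)$ on $[1,q]$. A prefix $[1,n]$ gives a saturated sequence with sub $M(\epsilon_1\cdots\epsilon_n)$ and quotient the modified interval object on $[n+1,q]$. The sub satisfies the source-sink condition, and so has slopes in $(0,1)$ by Lemma~\ref{lem:chain-endo}, exactly when $\epsilon_n=1$ (its first letter is $0$). The quotient does when $\epsilon_{n+1}=0$. So a factor $10$ at positions $n,n+1$ with $1\le n<q-1$ destabilizes weak simplicity. For $d\ge2$ and $d\le q-2$, the word $\epsilon(d/q)$ contains at least two $1$'s and at least two $0$'s. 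Since it begins with $0$ and ends with $1$, it contains a $10$ before its final letter. The remaining cases are $d=1$, with word $0^{q-1}1$, and $d=q-1$, with word $01^{q-1}$; neither contains a factor $10$.

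For these two families we must show that there is no splitting at all. Suppose $U=U_{1/q}$, or a twist of it, had a sequence with $U'\neq0$, $U''\neq0$, all slopes in $(0,1)$. Saturate: $U'$ is saturated since $U''\in\DomD$. Then $\muD(U')>0$ and $\muD(U'')>0$ with $\degD(U')+\degD(U'')=1$, so one of them has degree $\le0$, contradicting positivity. For $U_{(q-1)/q}$ the dual argument applies: $\muD<1$ gives $\degD\le\rkD-1$ for each term, and these sum to $\degD\le q-2<q-1$. Integrality of degree and rank is used in both cases.

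Finally, we note that the corrected list includes $q=2$, and that $U_{1/2}$ appears in both families. We should also check that the small case $q=3$ recovers Ekedahl's $011$ and $001$. The point where Ekedahl's argument fails is the extension asserted for $U_{(q-1)/q}$ with $q\ge4$, which the degree count above forbids.
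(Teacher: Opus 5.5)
Your proof has one step that fails as written: the claim that $\epsilon(d/q)$ contains a factor $10$ when $2\le d\le q-2$. Everything else is correct. Your first and third steps are essentially the paper's argument. Weak simplicity forces a single HN slope and then simplicity in $\Domss(d/q)$. For $d=1$ and $d=q-1$, the bounds $1\le\degD\le\rkD-1$ on each piece rule out any violating sequence.

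Your middle step differs only in form. You cut $U_{d/q}$ at an arbitrary factor $10$ and control the non-stable pieces with Lemma~\ref{lem:chain-endo}. The paper instead cuts at the Farey parents and gets $0\to U_{a/b}\to U_{d/q}\to U_{c/e}\{-b\}\to0$ with stable pieces. That cut is a special case of yours, since the junction of $\epsilon(a/b)$ and $\epsilon(c/e)$ is a factor $10$.

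The failing inference is this. Beginning with $0$, ending with $1$, and containing at least two of each letter does not produce a factor $10$. The word $0011$, or any $0^{q-d}1^{d}$, is a counterexample. What excludes $0^{q-d}1^{d}$ is the Christoffel property, which your argument never invokes.

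Either of the following repairs it.
\begin{enumerate}
\item Use the partial sums directly. One has $s_{q-d}=\lfloor (q-d)d/q\rfloor\ge1$, because $(q-d)d\ge q$ is equivalent to $q(d-1)\ge d^2$. That holds since $q(d-1)\ge(d+2)(d-1)=d^2+d-2\ge d^2$ for $2\le d\le q-2$. So a letter $1$ occurs among the first $q-d$ positions, and one of the $q-d$ zeros lies after it. This gives a factor $10$ at positions $n,n+1$, with $n+1<q$ because $\epsilon_q=1$.
\item Use Lemma~\ref{lem:farey-concat}. The relation $db-aq=1$ forces $a\ge1$ once $d\ge2$. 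The relation $cq-ed=1$ forces $c<e$ once $q-d\ge2$. So both parents lie in $(0,1)$, $\epsilon(a/b)$ ends in $1$, $\epsilon(c/e)$ begins with $0$, and the junction at positions $b,b+1$ is the factor you need.
\end{enumerate}
With either repair, your argument is complete.
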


\begin{proof}
We claim that weak simplicity implies stability.  If $U$ is of pure type
$\tfrac12$, that is of HN slopes in $(0,1)$
(Theorem~\ref{thm:refinement}), then it must have exactly one HN
slope, because otherwise the HN filtration gives rise to a short
exact sequence violating weak simplicity.  In other words, $U$ must
be semistable.  Moreover, it is stable: a proper nonzero subobject
in $\Domss(\lambda)$ gives rise to the same violation.  So $U$ is
a Breuil--Kisin twist of $U_{d/q}$
(Theorem~\ref{thm:stable-classification}).  For $2\le d\le q-2$
both Farey parents $\tfrac ab<\tfrac dq<\tfrac ce$ lie in $(0,1)$,
and the saturated exact sequence
$0\to U_{a/b}\to U_{d/q}\to U_{c/e}\{-b\}\to0$
from the proof of Theorem~\ref{thm:christoffel-stability} has both
pieces of pure type $\tfrac12$ and violates weak simplicity.  For $d=1$ and $d=q-1$ one
parent is the boundary slope $\tfrac01$ or $\tfrac11$, and the
corresponding piece leaves pure type $\tfrac12$.  Conversely, a
violating short exact sequence for $U_{1/q}$ or $U_{(q-1)/q}$
splits the degree as $d=d_1+d_2$ with $1\le d_i\le r_i-1$
(Theorem~\ref{thm:refinement}(ii)), impossible for $d=1$ and, with
$r_1+r_2=q$, for $d=q-1$.  So the weakly simple objects are exactly
the Breuil--Kisin twists of $U_{1/q}$ and $U_{(q-1)/q}$.
\end{proof}

\begin{remark}[The gap in Ekedahl's proof]
\label{rem:weak-simplicity-gap}
Towards the end of the proof \cite[p.~67]{ekedahl3}, Ekedahl asserts
that the modified interval object $M(0111)$, in his notation
$m\leftarrow(m+1)^{*}\leftarrow(m+2)^{*}\leftarrow(m+3)^{*}$, is an
extension of $M(01)$ by the Breuil--Kisin twist $M(01)\{-2\}$, hence
not weakly simple.  Such an extension is impossible: degrees are
additive, and $\degD M(0111)=3$ while the two pieces have degree $1$
each (Lemma~\ref{lem:dictionary}).  This is where the slopes
$(q-1)/q$ with $q\ge4$ disappear from his list.
\end{remark}

\subsection{Extensions between stable objects}
\label{subsec:ext-stable}

Theorem~\ref{thm:fixed-slope} presents each
$\Delta^{ss}_{\mathrm{tor}}(\lambda)$ as a finite-length abelian
category, and Theorem~\ref{thm:stable-classification} lists its
simple objects.  In order to describe the category itself, we
compute the extensions between the simple objects.  The
modification $(r\mathbf1)$ is an exact autoequivalence of
$D^b_c(R)$ shifting every slope by $r$ (Lemma~\ref{lem:nygaard-shift},
cf.\ \cite[Theorem~III.2.3(ii)]{ekedahl3}), so it suffices to treat
$\lambda\in[0,1)$.  The slope $0$ is described by
Theorem~\ref{thm:integer-slope-category}.  For $\lambda\in(0,1)$ the
objects of $\Delta^{ss}_{\mathrm{tor}}(\lambda)$ lie in
$\Delta_{\mathrm{tor}}$ and, through $S$ with no shift, in
$\Coh_{\mathrm{tor}}(k^{\mathrm{Syn}})$
(Theorem~\ref{thm:phase-rotation}), and we compute on the
$F$-gauge side.

For $A,B\in\Delta_{\mathrm{tor}}$ write
$\operatorname{Ext}^i(A,B):=\operatorname{Hom}_{D^b_c(R)}(A,B[i])$,
so that $\operatorname{Ext}^1(A,B)$ classifies the extensions of $A$
by $B$ in $\Delta$.  These are $\mathbb Z_p$-modules, since $W$ is
not central in $R$.  When $\operatorname{End}(B)=k$ they are
$k$-vector spaces through the action of $\operatorname{End}(B)$, and
we put $\chi(A,B):=\sum_i(-1)^i\dim_k\operatorname{Ext}^i(A,B)$.  For the
objects below $\operatorname{End}(A)=k$ acts as well, through a
structure twisted by a power of $\sigma$
(Lemma~\ref{lem:extension-uniqueness}), with the same dimension.

\begin{theorem}
\label{thm:integer-slope-category}
Let $r\in\mathbb Z$.  The functor $M\mapsto S(M(-r\mathbf1)[1])$ is
an exact equivalence from $\Delta^{ss}_{\mathrm{tor}}(r)$ onto the
category of finite-length $F$-gauges.  These are the coherent
sheaves on $k^{\mathrm{Syn}}$ supported on the Hodge point
$k^{\mathrm{Hodge}}$ of Definition~\ref{def:stacks} and form the
kernel of
the restriction $\Coh(k^{\mathrm{Syn}})\to\Coh(k^{\Delta})$ to the
open point.  The equivalence carries $U_r\{m\}$ to the $F$-gauge
$k_{1-m}$ given by $k$ placed in level $1-m$.
\end{theorem}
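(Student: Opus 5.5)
The plan is to reduce to $r=0$ and then read off the image of $\Domss(0)$ under $S(-[1])$ inside the $F$-gauge heart, using the phase rotation.

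\emph{Reduction.} The modification $M\mapsto M(-r\mathbf1)$ is an exact autoequivalence of $\Delta_{\mathrm{tor}}$ that shifts every slope by $-r$ and preserves semistability (Lemma~\ref{lem:nygaard-shift}). It therefore carries $\Delta^{ss}_{\mathrm{tor}}(r)$ isomorphically onto $\Domss(0)$, and $U_r\{m\}$ to $U_0\{m\}$, using $U_j(r\mathbf1)\simeq U_{j+r}$. It also commutes with the Breuil--Kisin twist, since both act degreewise.

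\emph{Landing in the heart.} Slope $0$ corresponds to phase $\tfrac12$. By Theorem~\ref{thm:gauge-heart-slope-zero}, the shift $M[1]$ of an object of $\Domss(0)$ lies in $\mathcal P_\Delta(3/2)\subset\mathcal G_{\mathrm{tor}}$. So $S(M[1])$ is a torsion $F$-gauge in $\Coh(k^{\mathrm{Syn}})$.

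\emph{Exactness.} A short exact sequence in $\Domss(0)$ is a triangle in $\Dtor$ with all three terms in $\mathcal P_\Delta(3/2)[-1]$. After shifting, it is a triangle with all terms in the heart $\mathcal G$, hence a short exact sequence there. $S$ is exact on hearts by Proposition~\ref{prop:ekedahl-heart-hrs}. Full faithfulness is inherited from $S$ and the shift.

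\emph{Identifying the image.} By Proposition~\ref{prop:gauge-euler-invariants}, $G=S(M[1])$ has $\rkFG(G)=\degD(M)=0$. Then $G^{-\infty}=\sigma^*G^{\infty}=0$, so $G$ restricts to zero on the open point. The remaining claim is the converse: a coherent $F$-gauge with $G^{\pm\infty}=0$ has finite length and lies in the image. Such a $G$ has each level finitely generated and $t,u$ eventually isomorphisms onto $0$. So $G$ is a finite-length graded module over $W[u,t]/(ut-p)$ supported in finitely many levels, that is, a coherent sheaf on the closed substack $k^{\mathcal N}\setminus(\{u\ne0\}\cup\{t\ne0\})$. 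Its reduction is supported on the Hodge point. Its composition factors are simple objects with vanishing limits, namely $k$ in a single level $\ell$, with $u=t=0$; these are the $k_\ell$.

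By Example~\ref{ex:equivalence-examples}(iii), $S(U_0\{m\}[1])$ is concentrated in level $1-m$, so $k_{1-m}=S(U_0\{m\}[1])$ lies in the image; this gives the last assertion. The essential image $S(\Domss(0)[1])$ is closed under extensions, because it equals $\mathcal G\cap S(\mathcal P_\Delta(3/2))$ and semistable slices are extension closed (Theorem~\ref{thm:fixed-slope}). Induction on length then shows that every finite-length $F$-gauge lies in the image.

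\emph{Main obstacle.} The delicate step is the identification of the kernel of restriction to the open point with the finite-length $F$-gauges, and hence with the sheaves supported on $k^{\mathrm{Hodge}}$. This needs the simples of that kernel to be exactly the $k_\ell$. I would argue it directly from the gauge description in Definition~\ref{def:stacks}. Vanishing limits force $u$ and $t$ to be nilpotent on the finitely many nonzero levels. A nonzero graded submodule killed by $u$ and $t$ then exists, namely the socle in an extreme level, and it gives a sub $k_\ell$. The gluing $\tau$ is vacuous here because both limits vanish.
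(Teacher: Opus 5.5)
Your proof is correct. The reduction to $r=0$, the placement of $\Domss(0)[1]=\mathcal P_\Delta(3/2)$ inside $\mathcal G_{\mathrm{tor}}$, exactness, full faithfulness and the vanishing $G^{\pm\infty}=0$ all match the paper. The paper reads the last one directly off $S(U[1])^{\pm\infty}=\mathbf s(U)[1]=0$ rather than going through $\rkFG$.

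The genuine difference is essential surjectivity. The paper works on the Raynaud side in one step:
\begin{itemize}
\item Write $G=S(E)$ with $E\in\mathcal G$. Then $G^{\pm\infty}=0$ gives $\mathbf s(E)=0$.
\item The amplitude of $\mathbf s$ forces $\widetilde H^0E$ and $\widetilde H^{-1}E$ to be $\mathbf s$-acyclic, so $\widetilde H^0E\in\mathcal T\cap\mathcal F=0$.
\item Hence $E=U[1]$ with $U$ an $\mathbf s$-acyclic torsion object.
\item Finally $U_{\mathrm{ft}}=0$, because subobjects of $U$ have vanishing $H^0(\mathbf s)$ while nonzero finite-torsion objects have positive degree.
\end{itemize}

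You instead argue by d\'evissage on the gauge side. The simple objects are the $k_\ell=S(U_0\{1-\ell\}[1])$, and the image is extension closed by Theorem~\ref{thm:fixed-slope}. Spell that last step out as follows: a triangle $M_1[1]\to E\to M_2[1]$ with $M_i\in\Delta$ makes $E[-1]$ an extension of $M_2$ by $M_1$ in the heart $\Delta$. Your formula $\mathcal G\cap S(\mathcal P_\Delta(3/2))$ mixes the two sides of the equivalence.

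The paper's route is shorter and needs no knowledge of simple objects on either side. Yours avoids the torsion pair and the amplitude argument. It also has a by-product: an exact equivalence matches simple objects, so it reproves Proposition~\ref{thm:integer-simples} at $r=0$ without Ekedahl's Lemma~II.1.2.1.

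One justification needs repair. A nonzero vector killed by both $u$ and $t$ need not lie in an extreme level. For example, in $k\xrightarrow{u}k\xleftarrow{t}k$ on levels $0,1,2$, such a vector exists only in level $1$. Argue instead that $ut=tu=p$ on $\bigoplus_iG^i$, so $u$ and $t$ commute. Then $t$ preserves the nonzero graded subspace $\ker u$ and is nilpotent there. This gives a nonzero homogeneous $x$ with $ux=tx=0$, hence $px=tux=0$ and $Wx\simeq k_\ell$.
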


\begin{proof}
By the reduction above we may take $r=0$, where
$\Delta^{ss}_{\mathrm{tor}}(0)$ is the category of
$\mathbf s$-acyclic diagonal dominoes, Ekedahl's diagonal dominoes
of type $0$ (Theorem~\ref{thm:refinement}(i),(iii) and
\cite[Definition~III.2.4]{ekedahl3}).  For such $U$ the
$F$-gauge $S(U[1])$ lies in $\Coh_{\mathrm{tor}}(k^{\mathrm{Syn}})$
(Theorem~\ref{thm:phase-rotation}) and satisfies
$S(U[1])^{\pm\infty}=\mathbf s(U)[1]=0$.  The restriction of a
coherent $F$-gauge $G$ to the open point is
$G^{-\infty}\cong\sigma^*G^{\infty}$, so the $F$-gauges supported on
the Hodge point are those with $G^{\pm\infty}=0$, and these have
finite length by coherence.  Conversely let $G$ be such an $F$-gauge, and
write $G=S(E)$ with
$E\in\mathcal G$.  Then $\mathbf s(E)=0$, so the amplitude of
$\mathbf s$ forces
$\mathbf s(\widetilde H^0E)=\mathbf s(\widetilde H^{-1}E)=0$.  The
$\mathbf s$-acyclic object $\widetilde H^0E$ lies in
$\mathcal T\cap\mathcal F=0$, so $E=U[1]$ with
$U:=\widetilde H^{-1}E$ $\mathbf s$-acyclic, hence torsion, and
$U_{\mathrm{ft}}=0$, since a nonzero finite-torsion subobject has
positive degree while $H^0(\mathbf s(N))\subseteq H^0(\mathbf s(U))=0$
for every $N\subseteq U$.  So $U\in\Delta^{ss}_{\mathrm{tor}}(0)$.
Both functors are exact, and $S(U_0\{m\}[1])$ is $k$ in level $1-m$
by Example~\ref{ex:equivalence-examples}(iii).
\end{proof}

The extensions between the simple objects $U_r\{m\}$ follow from
the computation of $\operatorname{Tor}^R_\bullet(R_1,U_j)$ in
\cite[Corollary~I.3.7]{IR83} and Ekedahl's comparison
\cite[Corollary~III.1.5.4(iii)]{ekedahl2}
\[
 \operatorname{RHom}_R(U_0,N)\simeq(R_1\otimes^{\mathbf L}_RN)[-2](1).
\]
For the graded group
$\underline{\operatorname{Ext}}^i_R(U_0,U_0)
=\bigoplus_a\operatorname{Ext}^i_R(U_0,U_0(a))$ these give
$\underline{\operatorname{Ext}}^0_R(U_0,U_0)=k[d]$,
$\underline{\operatorname{Ext}}^1_R(U_0,U_0)=0$ and
$\underline{\operatorname{Ext}}^2_R(U_0,U_0)=k[d](1)$, with $d^2=0$ and $d$ in
degree $1$.  Since $U_0\{m\}=U_0(m)[-m]$, the group
$\operatorname{Ext}^i(U_0\{m\},U_0\{n\})$ is the part of degree
$n-m$ of $\underline{\operatorname{Ext}}^{\,i+m-n}_R(U_0,U_0)$, and
the modification $(r\mathbf1)$ carries $U_0\{m\}$ to $U_r\{m\}$.
Reading off the degrees,
\begin{equation}
\label{eq:ext-table}
\begin{array}{c|ccc}
 \operatorname{Ext}^i(U_r\{m\},U_r\{n\}) & n=m & |n-m|=1 & |n-m|\ge2\\
 \hline
 i=0 & k & 0 & 0\\
 i=1 & 0 & k & 0\\
 i=2 & k & 0 & 0\\
 i\ge3 & 0 & 0 & 0
\end{array}
\end{equation}

Let $M$ and $N$ be coherent $F$-gauges.  There is a fibre sequence
in $D(\mathbb Z_p)$,
\begin{equation}
 \operatorname{RHom}_{D_{\mathrm{qc}}(k^{\mathrm{Syn}})}(M,N)\longrightarrow
 \operatorname{RHom}_{D_{\mathrm{qc}}(k^{\mathcal N})}(j_{\mathcal N}^*M,j_{\mathcal N}^*N)\longrightarrow
 \operatorname{RHom}_{D_{\mathrm{qc}}(k^{\Delta})}(j_\Delta^*M,j_\Delta^*N),
 \label{eq:gluing-fibre}
\end{equation}
whose second map is
$f\mapsto f^{-\infty}-\tau_N\,\sigma^*(f^\infty)\,\tau_M^{-1}$, by
the equalizer description of Definition~\ref{def:stacks}.  We write
$M|_{k^{\mathcal N}}:=j_{\mathcal N}^*M$ for the underlying gauge,
$\operatorname{RHom}_A$ for the middle term, the $\operatorname{RHom}$
over the graded Rees algebra $A=W[u,t]/(ut-p)$ of
Definition~\ref{def:stacks}, and $\operatorname{RHom}_W$ for the last.

For $a\in\mathbb Z$ let $K_u(a)$, $K_t(a)$ and $L(a)$ be the graded
modules $A/(t)$, $A/(u)$ and $A/p$ generated in degree $a$.  They
are the shifted structure sheaves of the divisors $t=0$, $u=0$ and
$p=ut=0$ of $k^{\mathcal N}$, and their levels are $k$ in the
degrees $\ge a$, in the degrees $\le a$ and in all degrees
respectively.  Resolving by the nonzerodivisors $t$,
$u$ and $p$, for a graded $A$-module $N$ with finite-length levels
\begin{equation}
\begin{gathered}
 \operatorname{RHom}_A(K_u(a),N)=\fib(t\colon N^a\to N^{a-1}),\qquad
 \operatorname{RHom}_A(K_t(a),N)=\fib(u\colon N^a\to N^{a+1}),\\
 \operatorname{RHom}_A(L(a),N)=\fib(p\colon N^a\to N^a),
\end{gathered}
\label{eq:interval-ext}
\end{equation}
so that $\chi_A(K_u(a),N)=\ell(N^a)-\ell(N^{a-1})$,
$\chi_A(K_t(a),N)=\ell(N^a)-\ell(N^{a+1})$ and $\chi_A(L(a),N)=0$.

\begin{proposition}
\label{prop:ext-stable}
Let $\lambda=d/q\in(0,1)$ with $\gcd(d,q)=1$, and write
$U:=U_{d/q}$.  The following hold in
$D^b_c(R)\simeq\Perf(k^{\mathrm{Syn}})$.
\begin{enumerate}
\item $\operatorname{Hom}(U,U)=k$, $\operatorname{Ext}^1(U,U)=0$,
  $\operatorname{Ext}^2(U,U)=k$, and $\operatorname{Ext}^i(U\{n\},U\{m\})=0$
  for $i\ge3$ and all $n,m$.
\item For $n\ne m$,
  $\dim_k\operatorname{Ext}^1(U\{n\},U\{m\})-\dim_k\operatorname{Ext}^2(U\{n\},U\{m\})
  =\delta_{|n-m|,q}$.
\end{enumerate}
\end{proposition}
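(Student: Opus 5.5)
We compute on the $F$-gauge side, with $G:=S(U)$ and $G\{n\}=S(U\{n\})$; by Theorem~\ref{thm:phase-rotation} these are honest torsion coherent $F$-gauges. The tool is the fibre sequence \eqref{eq:gluing-fibre}. Its middle term is computed by resolving the underlying gauge of $G$ by the sheaves $K_u(a)$, $K_t(a)$, $L(a)$ and using \eqref{eq:interval-ext}. Its last term is $\operatorname{RHom}_W(k^d,k^d)$, where $k^d=G^{-\infty}$ as in Example~\ref{ex:sorted-path}.

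\emph{The Euler characteristic.} For part (ii) I will compute $\chi(G\{n\},G\{m\})$ additively, for which a filtration of the source suffices. By Example~\ref{ex:sorted-path}, $G|_{k^{\mathcal N}}$ is a direct sum of $d$ rows, one for each block $B_a$ of the Christoffel word. Each row is, up to a finite-length correction supported on the levels of its block, an $L$-type module glued from a $K_t$ and a $K_u$ piece with a gap of length $|B_a|$. Then $\chi_A$ is read off \eqref{eq:interval-ext} as a sum of level-length differences of $G\{m\}$ at the endpoints of the blocks, shifted by $n$. The gluing term contributes $\chi_W(k^d,k^d)$, which is zero for finite-length $W$-modules since $\chi_W(k,k)=0$. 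So
\[
 \chi(G\{n\},G\{m\})=\sum_a\bigl(\ell(G^{c_a+1+n-m})-\ell(G^{c_{a-1}+n-m})\bigr)
\]
up to sign conventions. Since exactly one basis vector is missing in each level of $[1,q]$, this telescopes to a count of block endpoints that, after translation by $n-m$, fall outside $[1,q]$. The count vanishes unless $n-m$ lies within distance $q$ of zero. The Christoffel balance property, that the blocks are as equidistributed as possible, should make it cancel for $0<|n-m|<q$ and leave $-1$ at $|n-m|=q$. I must also check that $\operatorname{Ext}^0(G\{n\},G\{m\})=0$ for $n\ne m$; this is Hom-vanishing from stability (Theorem~\ref{thm:fixed-slope}), since twists of $U$ are nonisomorphic simple objects of the same slope. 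Part (ii) then follows from $\chi=-(\dim\operatorname{Ext}^1-\dim\operatorname{Ext}^2)$, once the sign and the $k$-structure are fixed as in the paragraph before Theorem~\ref{thm:integer-slope-category}.

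\emph{Part (i).} $\operatorname{Hom}(U,U)=k$ holds by Lemma~\ref{lem:chain-endo}. For the vanishing above degree $2$, the middle term of \eqref{eq:gluing-fibre} has amplitude $[0,2]$: $A$-modules killed by $p$ have projective resolutions of length $2$ over the regular two-dimensional $A$, coming from the resolutions by $t$, $u$, $p$ in \eqref{eq:interval-ext}. The last term has amplitude $[0,1]$. Hence $\operatorname{Ext}^i=0$ for $i\ge3$.

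For the remaining groups I use $\chi(U,U)$. At $n=m$ the telescoping computation should give $\chi=2$. Then $\dim\operatorname{Ext}^2-\dim\operatorname{Ext}^1=1$, so it suffices to show $\operatorname{Ext}^1(U,U)=0$. I would prove rigidity by induction along the Farey sequence $0\to U_{a/b}\to U\to U_{c/e}\{-b\}\to0$ from the proof of Theorem~\ref{thm:christoffel-stability}. Alternatively, and more directly, I would repeat the splitting argument of Lemma~\ref{lem:extension-uniqueness} for a self-extension $0\to G\to E\to G\to0$. That argument clears every correction entry except those at the sources $v_j$. The leftover scalars would be killed using the $\sigma$-twisted automorphisms, together with the fact that the diagonal gluing has the $e_a$-rows matched identically. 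I also need to verify that a self-extension $E$ is automatically killed by $p$. This follows because $\operatorname{Ext}^1$ is computed in $\Delta$ and stays within $\Domss(\lambda)$, whose objects have $\operatorname{End}$ determined by the simple $U$. It needs care, since $\operatorname{Ext}^1$ over $\mathbb Z_p$ may see non-$p$-torsion extensions. If those exist, I would include the $p$-extension class in the accounting via the $L(a)$ term.

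\emph{Main obstacle.} The main obstacle is the combinatorial telescoping: showing that the block-endpoint count yields exactly $2$ at $n=m$, $0$ for $0<|n-m|<q$, and $-1$ at $|n-m|=q$. This is where the Christoffel property, namely $s_n=\lfloor nd/q\rfloor$, must be used. I would first test it on $U_{1/q}$, whose gauge is explicit in Proposition~\ref{prop:one-star-gauge}, and then handle general $d/q$ by additivity along the Farey extension.
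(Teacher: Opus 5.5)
Your framework is the paper's: the fibre sequence \eqref{eq:gluing-fibre}, additivity of $\chi_A$ via \eqref{eq:interval-ext}, $\chi_W(k^d,k^d)=0$, Hom-vanishing between the nonisomorphic simple twists, and the amplitude bound for $i\ge3$ (your weaker amplitude $[0,2]$ for the middle term suffices there). However, neither substantive step is carried out, and in both the mechanism you propose is wrong.

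\emph{The Euler characteristic.} As written, your displayed formula gives $0$ at $n=m$, not $2$. With $\ell(G^r)=d-\mathbf 1_{[1,q]}(r)$, each of its two sums picks up exactly $d-1$ endpoints in $[1,q]$. The underlying problem is that the rows $e_a$ are not summands of the underlying gauge: the $u$-components $e_a\mapsto e_{a-1}$ at the sources join the left half of one row to the right half of the previous one. By Lemma~\ref{lem:regluing-normal-form},
\[
 G|_{k^{\mathcal N}}=K_u(q+1)\oplus L(v_1)\oplus\cdots\oplus L(v_{d-1})\oplus K_t(0),
\]
and $\tau$ only matches the ends. Since $\chi_A(L(a),-)=0$, only the two $K$-summands of $G\{n\}$ contribute, and \eqref{eq:interval-ext} gives
\[
 \chi(U\{n\},U\{m\})=\bigl(\ell(G^{q+1+m-n})-\ell(G^{q+m-n})\bigr)+\bigl(\ell(G^{m-n})-\ell(G^{1+m-n})\bigr)=2\delta_{n,m}-\delta_{|n-m|,q}.
\]
If you instead filter each row as $K_t(c_{a-1})$ and $K_u(c_a+1)$, you get four terms per row, and the interior ones cancel in pairs. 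This uses only that the levels of $G$ have length $d-1$ on $[1,q]$ and $d$ elsewhere. So it holds for every source-sink word on $q$ vertices: the Christoffel property plays no role, and there is no balance argument to look for.

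\emph{Rigidity.} Your claim that a self-extension $E$ is automatically killed by $p$ does not follow. Multiplication by $p$ on $E$ factors as $E\twoheadrightarrow U\xrightarrow{\varphi}U\hookrightarrow E$ with $\varphi\in\operatorname{End}(U)=k$, and nothing you say forces $\varphi=0$. Extensions of $p$-torsion stables need not be $p$-torsion; compare $Y$ in Theorem~\ref{thm:superspecial-fourfold}. Nor can automorphisms remove leftover scalars in a splitting computation: $\operatorname{Aut}(G)=k^\times$ only rescales classes, as in Lemma~\ref{lem:extension-uniqueness}. The Farey induction would also require mixed Ext groups and connecting maps that you do not control.

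The paper uses the classification instead. A nonsplit $E$ has length two in $\Domss(d/q)$, hence is indecomposable, and additivity gives $\ell(S(E)^r)=2d-2$ for $1\le r\le q$ and $2d$ otherwise.
\begin{enumerate}
\item If $pE=0$, Proposition~\ref{prop:chain-presentation} makes $E$ a twist of a modified interval object on $2q$ vertices. Its levels drop by exactly one along an interval of length $2q$ (Example~\ref{ex:sorted-path}), which contradicts the lengths above.
\item If $pE\ne0$, then $\varphi\ne0$, so $pE=U=E[p]$, and the levels of $S(E)$ are free $W/p^2$-modules lifting those of $G$. Since $u=0$ on $G^0$, one gets $pS(E)^0\subseteq t(pS(E)^1)$, where the left side has length $d$ and the right side at most $d-1$, a contradiction.
\end{enumerate}
Both cases are needed. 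The second is exactly the non-$p$-torsion extension you flagged but did not handle.
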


\begin{proof}
The twists $U\{n\}$ are pairwise nonisomorphic simple objects of
$\Domss(d/q)$ (Theorem~\ref{thm:fixed-slope}), so
$\operatorname{Hom}(U\{n\},U\{m\})=0$ for $n\ne m$, and
$\operatorname{Hom}(U,U)=k$ by Lemma~\ref{lem:chain-endo}.  It
therefore suffices to prove three facts:
(1) $\operatorname{Ext}^i(U\{n\},U\{m\})=0$ for $i\ge3$ and all $n,m$,
(2) $\chi(U\{n\},U\{m\})=2\delta_{n,m}-\delta_{|n-m|,q}$, and
(3) $\operatorname{Ext}^1(U,U)=0$.  Put $G:=S(U)$ and $G_m:=S(U\{m\})=G\{m\}$, coherent $F$-gauges with
$G_m^{\pm\infty}\cong k^d$ and $G\{m\}^r=G^{r+m}$.  Let
$c_1<\dots<c_d=q$ be the positions of the letters $1$ in
$\epsilon(d/q)$ and put $v_j:=c_{d-j}$.  By
Lemma~\ref{lem:regluing-normal-form} and
Example~\ref{ex:sorted-path},
\begin{equation}
 G|_{k^{\mathcal N}}=K_u(q+1)\oplus L(v_1)\oplus\cdots\oplus L(v_{d-1})\oplus K_t(0),
 \label{eq:stable-gauge-shape}
\end{equation}
with $\tau$ carrying the line at $+\infty$ of each summand to the
line at $-\infty$ of the next, and $\ell(G^r)$ is $d-1$ for
$1\le r\le q$ and $d$ otherwise.

Consider the long exact sequence of $\operatorname{Ext}$ groups
induced by \eqref{eq:gluing-fibre} for $G_n$ and $G_m$.  Its terms
$\operatorname{Ext}^i_A(G_n|_{k^{\mathcal N}},G_m|_{k^{\mathcal N}})$ vanish for $i\ge2$, since
$\operatorname{RHom}_A(G_n|_{k^{\mathcal N}},G_m|_{k^{\mathcal N}})$ is a sum of fibres of single maps
by \eqref{eq:stable-gauge-shape} and \eqref{eq:interval-ext}, and
its terms $\operatorname{Ext}^i_W(k^d,k^d)$ vanish for $i\ge2$ since
$W$ is a discrete valuation ring.  So
$\operatorname{Ext}^i(U\{n\},U\{m\})=0$ for $i\ge3$ and
\begin{equation}
 \operatorname{Ext}^2(U\{n\},U\{m\})
 =\operatorname{coker}\bigl(\operatorname{Ext}^1_A(G_n|_{k^{\mathcal N}},G_m|_{k^{\mathcal N}})\to
 \operatorname{Ext}^1_W(k^d,k^d)\bigr).
 \label{eq:ext2-coker}
\end{equation}

The sequence \eqref{eq:gluing-fibre} for $G_n$ and $G_m$ is one of
$\operatorname{End}(G_m)$-modules, and $\operatorname{End}(G_m)=k$
acts on its last two terms level by level through scalars twisted
by powers of $\sigma$ (Lemma~\ref{lem:extension-uniqueness}), so
there the dimension over $k$ is the $W$-length.  Hence
$\chi(U\{n\},U\{m\})=\chi_A(G_n|_{k^{\mathcal N}},G_m|_{k^{\mathcal N}})-\chi_W(k^d,k^d)$.
The second term vanishes, since $\operatorname{Hom}_W(k^d,k^d)$ and
$\operatorname{Ext}^1_W(k^d,k^d)$ both have length $d^2$.  By
\eqref{eq:stable-gauge-shape}, $G_n|_{k^{\mathcal N}}$ is a direct
sum of $L$-summands and of the two $K$-summands $K_u(q+1-n)$ and
$K_t(-n)$.  Since $\chi_A(L(a),N)=0$ for every $N$ by
\eqref{eq:interval-ext},
\[
 \chi(U\{n\},U\{m\})=\chi_A(K_u(q+1-n),G_m|_{k^{\mathcal N}})
 +\chi_A(K_t(-n),G_m|_{k^{\mathcal N}}).
\]
The levels of $G_m$ of length $d-1$ are those in $[1-m,q-m]$, the
others having length $d$, so $\ell(G_m^a)-\ell(G_m^{a-1})$ is $1$
for $a-1=q-m$, is $-1$ for $a=1-m$, and vanishes otherwise.  With
$a=q+1-n$ and, symmetrically, $a=-n$, \eqref{eq:interval-ext} gives
\[
\begin{aligned}
 \chi_A(K_u(q+1-n),G_m|_{k^{\mathcal N}})&=\ell(G_m^{q+1-n})-\ell(G_m^{q-n})=\delta_{n,m}-\delta_{n-m,q},\\
 \chi_A(K_t(-n),G_m|_{k^{\mathcal N}})&=\ell(G_m^{-n})-\ell(G_m^{1-n})=\delta_{n,m}-\delta_{n-m,-q},
\end{aligned}
\]
and $\chi(U\{n\},U\{m\})=2\delta_{n,m}-\delta_{|n-m|,q}$.

Let $0\to U\to E\to U\to0$ be a nonsplit extension in
$\Delta_{\mathrm{tor}}$, so that $E\in\Domss(d/q)$ has rank $2q$
and $\ell(S(E)^r)$ is $2d-2$ for $1\le r\le q$ and $2d$ otherwise.
Then $E$ is indecomposable.  Indeed, let $E=E_1\oplus E_2$ with
both summands nonzero.  Since $E$ has length two in $\Domss(d/q)$,
each $E_i$ is simple, so the composite $U\to E\to E_i$ is zero or
an isomorphism.  Since
$U\to E$ is nonzero, it is an isomorphism for some $i$, and then
$E\to E_i\cong U$ retracts $U\to E$, so the sequence splits, a
contradiction.  If $pE=0$, then $E$ is a twist of a modified
interval object on $2q$ vertices
(Proposition~\ref{prop:chain-presentation}), whose levels drop by
one exactly on an interval of length $2q$
(Example~\ref{ex:sorted-path}), contrary to the lengths of the
$S(E)^r$.  If $pE\ne0$, then $p$ kills the subobject and the
quotient, so it factors as
$E\twoheadrightarrow U\xrightarrow{\varphi}U\hookrightarrow E$ with
$\varphi\in\operatorname{End}(U)=k$ nonzero.  So $pE=U$, and
$E[p]=U$, since $E[p]\supseteq U$ has rank $q$ and $E[p]/U$ is
finite torsion in the diagonal domino $E/U$.  Every level of $S(E)$
is then a free $W/p^2$-module lifting the corresponding level of
$G$.  By \eqref{eq:stable-gauge-shape}, $u$ vanishes on $G^0$, so
$u(S(E)^0)\subseteq pS(E)^1$, and $tu=p$ gives
$pS(E)^0\subseteq t(pS(E)^1)$.  The left side has length
$\ell(G^0)=d$ and the right side at most $\ell(G^1)=d-1$, a
contradiction.  So $\operatorname{Ext}^1(U,U)=0$.
\end{proof}

We describe the fixed-slope category and thereby separate the two
extension groups in Proposition~\ref{prop:ext-stable}(ii).
Fix
$\lambda=d/q\in(0,1)$ with $\gcd(d,q)=1$, and put
$U:=U_{d/q}$.  For $0\le r<q$, let
$\Domss(\lambda)_r$ be the full subcategory of
$\Domss(\lambda)$ whose composition factors are
$U\{r+aq\}$, $a\in\mathbb Z$.

\begin{theorem}[Structure of fixed-slope categories]
\label{thm:rational-slope-category}
There are exact $\mathbb Z_p$-linear equivalences
\[
 \Domss(\lambda)
 \simeq \prod_{r=0}^{q-1}\Domss(\lambda)_r
 \simeq \Domss(0)^{\,q}.
\]
The first is the canonical decomposition by the residue of the
twist modulo $q$, and the equivalence on the $r$th factor sends
$U\{r+aq\}$ to $U_0\{a\}$.  Moreover,
\[
 \dim_k\operatorname{Ext}^1(U\{n\},U\{m\})=\delta_{|n-m|,q},
 \qquad
 \dim_k\operatorname{Ext}^2(U\{n\},U\{m\})=\delta_{n,m}.
\]
\end{theorem}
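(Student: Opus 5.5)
The plan is to compute the two extension groups first, and then use them to decompose the finite-length category $\Domss(\lambda)$ and compare it with $\Domss(0)$.

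\emph{Ext groups.} Proposition~\ref{prop:ext-stable} already gives $\operatorname{Ext}^1(U,U)=0$ and $\operatorname{Ext}^2(U,U)=k$. It also gives $\dim\operatorname{Ext}^1-\dim\operatorname{Ext}^2=\delta_{|n-m|,q}$ for $n\ne m$. So it suffices to show $\operatorname{Ext}^2(U\{n\},U\{m\})=0$ for $n\ne m$. For this I will use the cokernel description \eqref{eq:ext2-coker}: the map $\operatorname{Ext}^1_A(G_n|,G_m|)\to\operatorname{Ext}^1_W(k^d,k^d)$ should be surjective when $n\ne m$. By \eqref{eq:stable-gauge-shape} and \eqref{eq:interval-ext}, the source is a sum of cokernels of single maps $t$, $u$ or $p$ on the levels of $G_m$. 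Every $L$-summand contributes $\operatorname{coker}(p)=G_m^a$, which has dimension $d$ or $d-1$, and maps to the corresponding column block of $\operatorname{Ext}^1_W(k^d,k^d)=\operatorname{Hom}(k^d,k^d)$. The matrix should be handled row by row in the basis $e_1,\dots,e_d$ of Example~\ref{ex:sorted-path}. A column is missed only at the single missing basis vector of a level in the block $[1-m,q-m]$. That gap is exactly compensated by the $K$-summand terms unless the shifted blocks coincide, i.e.\ unless $n=m$. This bookkeeping is the step I expect to be the main obstacle: the gluing map twists by $\sigma$ and shifts summands. As a fallback, I would take a dimension-counting route. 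Compute $\dim\operatorname{Ext}^1_A$ directly from \eqref{eq:interval-ext}, and compute $\dim\operatorname{Hom}$ of the fibre sequence, which is zero for $n\ne m$. Then Euler characteristic bookkeeping along \eqref{eq:gluing-fibre} forces the cokernel to vanish, provided the $\operatorname{Ext}^1$ count equals $\delta_{|n-m|,q}$. That count can be checked independently by exhibiting, via Lemma~\ref{lem:extension-uniqueness}-style triangular matrices, a unique nonsplit extension when $|n-m|=q$ and none otherwise.

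\emph{Block decomposition.} $\Domss(\lambda)$ is finite length with simples $U\{n\}$ (Theorems~\ref{thm:fixed-slope}, \ref{thm:stable-classification}). Extensions between simples vanish unless the twists agree modulo $q$. The standard argument by induction on length shows that every object splits canonically as $\bigoplus_r E_r$ with $E_r\in\Domss(\lambda)_r$: peel off a simple, and use $\operatorname{Ext}^1$-vanishing between the pieces, which extends from simples to finite-length objects by the long exact sequences. $\operatorname{Hom}$ between different residues vanishes for the same reason. This gives the first equivalence.

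\emph{Each block is $\Domss(0)$.} For each $r$, both $\Domss(\lambda)_r$ and $\Domss(0)$ are finite-length categories. Their simples are $\{U\{r+aq\}\}_a$ and $\{U_0\{a\}\}_a$ respectively, with endomorphism rings $k$. The graded $\operatorname{Ext}^{\le2}$ tables agree by the computation above and \eqref{eq:ext-table}. To get an actual equivalence I would realize it by a functor. The natural candidate is tensoring with $U$ over the endomorphism dg-algebra, i.e.\ comparing the full triangulated subcategories generated by $\{U\{r+aq\}\}$ and $\{U_0\{a\}\}$. First check that the $A_\infty$-structure on $\bigoplus\operatorname{Ext}^*$ is formal by degree reasons: the Ext algebra is concentrated in degrees $0,1,2$, with $\operatorname{Ext}^1$ only between neighbours, so higher products land in the wrong bidegree. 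Then identify both with the same graded algebra and restrict to the hearts generated by the simples under extension. Exactness and $\mathbb Z_p$-linearity follow since the functor is triangulated and preserves the hearts.
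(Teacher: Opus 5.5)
The gap is in your last step, identifying each block with $\Domss(0)$: the formality argument does not work.

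First, degree reasons do not exclude higher products. $m_n$ has degree $2-n$, so $m_4$ applied to four $\operatorname{Ext}^1$-classes along a closed path $X_a\to X_{a+1}\to X_a\to X_{a+1}\to X_a$ of simples lands in $\operatorname{Ext}^2(X_a,X_a)=k$. That is exactly where $\operatorname{Ext}^2$ lives.

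Second, and more seriously, these categories are only $\mathbb Z_p$-linear ($W$ is not central in $R$), and they contain objects on which $p\neq0$. For instance, view $\Domss(0)$ through Theorem~\ref{thm:integer-slope-category} as finite-length gauges. The gauge $k\rightleftarrows W/p^2\rightleftarrows k$ on levels $-1,0,1$ is one such object: the two maps out of $W/p^2$ are the projections and the two maps into it are multiplication by $p$. It is an iterated extension of simples with $p\neq0$. The endomorphism dg-algebra of the simples lives over $\mathbb Z_p$ and has cohomology killed by $p$. Formality would therefore make the thick subcategory they generate $\mathbb F_p$-linear, which it is not.

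For the same reason, no comparison of $\operatorname{Ext}$ tables can identify the category. Graded finite-length $k[u,t]$-modules with $\deg u=1$, $\deg t=-1$ have the same simples and the same table \eqref{eq:ext-table}. Yet every object there is killed by $p$, so that category is not equivalent to $\Domss(0)$.

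What you need is an actual functor, and the paper constructs one. After the twist $M\mapsto M\{-r\}$, take the Nygaard modification $M\mapsto M(-\epsilon)$, where $\epsilon$ is the $q$-periodic extension of $\epsilon(d/q)$. This is a diagonal-exact autoequivalence of $D^b_c(R)$ that removes the markings of all $U\{aq\}$ at once, up to a Frobenius twist. So $M\mapsto S(M(-\epsilon)[1])$ sends $U\{aq\}$ to the interval gauge $k\xleftarrow{t}\cdots\xleftarrow{t}k$ on $[1-aq,q-aq]$.

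In the extension closure of these gauges, $t_i$ is invertible for $i\notin q\mathbb Z$. Contract each block of $q$ levels to $N^j=G^{qj}$, with $t=t_{qj}\cdots t_{q(j+1)-1}$ and $u=(t_{qj+1}\cdots t_{q(j+1)-1})^{-1}u_{qj}$. This is an exact equivalence onto graded $W[u,t]/(ut-p)$-modules of finite length, that is, onto $\Domss(0)$. The $\operatorname{Ext}^1$ formula is then transported from \eqref{eq:ext-table}.

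Separately, your block decomposition needs $\operatorname{Ext}^1(U\{n\},U\{m\})=0$ for $n\neq m$ and $|n-m|\neq q$, which you leave as an unexecuted matrix computation. The paper has a short proof.
\begin{itemize}
\item A nonsplit extension $E$ is indecomposable. It is killed by $p$, since multiplication by $p$ factors through $\operatorname{Hom}(U\{n\},U\{m\})=0$.
\item So $E$ is a modified interval object by Proposition~\ref{prop:chain-presentation}, and $\ell_W(S(E)^i)=2d-\mathbf 1_J(i)$ for an interval $J$.
\item Additivity instead gives $2d-\mathbf 1_{I_n}(i)-\mathbf 1_{I_m}(i)$ with $I_v=[1-v,q-v]$.
\item Comparing the two forces $I_n$ and $I_m$ to be adjacent, i.e.\ $|n-m|=q$.
\end{itemize}
Your deduction of $\operatorname{Ext}^2$ from Proposition~\ref{prop:ext-stable} then matches the paper's.
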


\begin{proof}
We separate the residue classes, remove the Christoffel markings,
and contract the resulting intervals.

For $n\ne m$, a nonsplit extension
$0\to U\{m\}\to E\to U\{n\}\to0$ is indecomposable, since it has
length two.  Since $p$ annihilates both ends, multiplication by $p$ on $E$
factors as $E\twoheadrightarrow U\{n\}\to U\{m\}\hookrightarrow E$.
The middle map is zero for $n\ne m$, so $pE=0$.
Since $E\in\Domss(\lambda)$ with $\lambda\in(0,1)$,
Proposition~\ref{prop:chain-presentation} now identifies $E$
with a modified interval object.
Its rank is $2q$ and its degree is $2d$, so its interval $J$
has $2q$ vertices and $2d$ markings
(Lemma~\ref{lem:dictionary}).

This interval description gives, by Example~\ref{ex:sorted-path},
\[
 \ell_W(S(E)^i)=2d-\mathbf1_J(i).
\]
On the other hand, the same example gives
$\ell_W(S(U\{v\})^i)=d-\mathbf1_{I_v}(i)$, where
$I_v=[1-v,q-v]$.  Since $S$ takes the extension above to a
short exact sequence of $F$-gauges, additivity of length gives
\[
 \ell_W(S(E)^i)=2d-\mathbf1_{I_n}(i)-\mathbf1_{I_m}(i).
\]
Comparing the two formulas yields
$\mathbf1_{I_n}+\mathbf1_{I_m}=\mathbf1_J$.
The intervals $I_n$ and $I_m$ cannot overlap, since the left
side would then take the value $2$.  Their union is the interval
$J$, so they have no gap and must be adjacent.
If $n>m$, adjacency means $q-n+1=1-m$, hence $n-m=q$.
The case $m>n$ is symmetric, so $|n-m|=q$.

Induction on composition length gives vanishing of
$\operatorname{Hom}$ and $\operatorname{Ext}^1$ between distinct
$\Domss(\lambda)_r$, and hence the canonical product
decomposition.

The Breuil--Kisin twist $M\mapsto M\{-r\}$ identifies
$\Domss(\lambda)_r$ with $\Domss(\lambda)_0$, sending
$U\{r+aq\}$ to $U\{aq\}$, so it suffices to treat $r=0$.
Let $\epsilon$ be the $q$-periodic extension of the Christoffel
word $\epsilon(d/q)$.  By Definition~\ref{def:autoequivalence},
$M\mapsto M(-\epsilon)$ is a diagonal-exact autoequivalence.
The modification by $-\epsilon$ cancels the marking of $U\{aq\}$,
up to an overall Frobenius twist, which preserves
the standard interval gauge up to isomorphism.

Consequently, by Theorem~\ref{thm:integer-slope-category},
$M\mapsto S\bigl(M(-\epsilon)[1]\bigr)$
sends $U\{aq\}$ to the interval gauge
\[
 S\bigl(U\{aq\}(-\epsilon)[1]\bigr)
 \simeq
 \underbrace{k\xleftarrow{\ t=1\ }k\xleftarrow{\ t=1\ }\cdots
 \xleftarrow{\ t=1\ }k}_{q\text{ copies of }k},
\]
supported on the levels $I_{aq}=[1-aq,q-aq]$, with all
$u$-arrows and all levels outside $I_{aq}$ zero.
This functor comes from a derived equivalence.  Both sides are
finite extension closures of these corresponding objects,
so it restricts to an exact equivalence from
$\Domss(\lambda)_0$ onto the extension closure of these interval
gauges.

For a gauge $G$ in this closure, $t_i$ is invertible for
$i\notin q\mathbb Z$, by extension-closedness of this condition.
Set $N^j=G^{qj}$ and define its arrows by
\[
 u=(t_{qj+1}\cdots t_{q(j+1)-1})^{-1}u_{qj},
 \qquad
 t=t_{qj}\cdots t_{q(j+1)-1}.
\]
They satisfy $ut=tu=p$, so $N$ is a graded
$A=W[u,t]/(ut-p)$-module of finite total $W$-length.
Conversely, repeat $N^j$ on $[q(j-1)+1,qj]$, with $t=1$,
$u=p$ internally and the given arrows at the boundaries.
Expanding a composition series places this gauge in the stated
extension closure.  Both constructions act on morphisms
levelwise and are exact.  Starting with $N$ recovers $N$
identically; starting with $G$, the maps
\[
 t_i\cdots t_{qj-1}:G^{qj}\xrightarrow{\sim}G^i,
 \qquad q(j-1)+1\le i\le qj,
\]
with the empty product interpreted as the identity, give a
natural isomorphism from the resulting gauge to $G$.
Compatibility with the internal arrows follows from $ut=tu=p$,
and with the boundary arrows from the definitions above.
Thus we obtain an exact $\mathbb Z_p$-linear equivalence;
composing with the preceding equivalence sends $U\{aq\}$ to
$k$ in degree $1-a$, hence to $U_0\{a\}$ by
Theorem~\ref{thm:integer-slope-category}.  The initial twist
gives the equivalence on every factor.

Finally, extension-closedness of $\Domss(\lambda)$ identifies
its Yoneda $\operatorname{Ext}^1$ with the ambient group.
The equivalences and \eqref{eq:ext-table} give the
$\operatorname{Ext}^1$ formula, with dimensions over the
endomorphism fields.  Proposition~\ref{prop:ext-stable}(ii) then gives
$\operatorname{Ext}^2(U\{n\},U\{m\})=0$ for $n\ne m$, and part~(i)
gives the case $n=m$.
\end{proof}

For example, the prefix $01$ in $M(0101)$ gives an extension
of $U_{1/2}\{-2\}$ by $U_{1/2}$.
For $M(0011)$, Ekedahl's move (Lemma~\ref{lem:normalization})
at the middle edge gives
\[
 M(0011)=(1\leftarrow2\leftarrow3^*\leftarrow4^*)
 \simeq(1\leftarrow2^*\rightarrow3\leftarrow4^*).
\]
No arrow leaves the suffix $[3,4]$ in this presentation, so it
defines a subobject $U_{1/2}\{-2\}$, with quotient $U_{1/2}$
on $[1,2]$.  Thus $M(0011)$ gives an extension in the opposite
direction.  Both extensions are nonsplit, since their middle
terms are indecomposable (Lemma~\ref{lem:chain-endo}).

\begin{remark}
\label{rem:ext-comparison}
Proposition~\ref{prop:ext-stable}(i), the table~\eqref{eq:ext-table},
and integer Nygaard modifications
(Lemma~\ref{lem:nygaard-shift}) show that every stable diagonal
domino is rigid, that is, it has no self-extensions and hence
no first-order deformations.  On a curve of genus $g\ge2$ a stable bundle of rank $r$
has $\operatorname{Ext}^1$ of dimension $r^2(g-1)+1$, and on the
Fargues--Fontaine curve the stable bundles $\mathcal O(\lambda)$ are
rigid with $\operatorname{Ext}^2=0$.
Here $\operatorname{Ext}^2(U_{d/q},U_{d/q})=k$.
By \eqref{eq:gluing-fibre}, the groups $\operatorname{Ext}^i$
between coherent $F$-gauges vanish for $i\ge3$, since $A$ is
regular of dimension $2$ and $W$ of dimension $1$
\cite[Construction~3.3.1]{BhattFgauges}.
Thus $\Coh(k^{\mathrm{Syn}})$ has global dimension $2$.

\end{remark}

\section{Mazur--Ogus varieties}
\label{sec:abelian}

In this final section we place the torsion $F$-gauges studied so
far in a geometric setting: through a Mazur--Ogus d\'evissage, we
extract from an $F$-crystal its largest Hodge--Witt subobject and a
torsion quotient, which is a diagonal domino by
Proposition~\ref{prop:MO-devissage} below, so the theory developed
above applies to it.  As an
application, we show that the stable object $U_{1/2}$ is the middle
diagonal domino of the superspecial abelian threefold, and that
$U_{1/3}$ enters the middle diagonal domino of the superspecial
abelian fourfold, whose structure shows that its slope spectral
sequence does not degenerate at $E_2$.  To our
knowledge, these are the first geometric examples of diagonal
dominoes with non-integer slopes.

\begin{definition}
\label{def:MO-HW}
A coherent complex $M\in D^b_c(R)$ is \emph{Hodge--Witt} if all
$H^j(M)^i$ are finitely generated $W$-modules.  An object
$M\in\Delta$ is \emph{Mazur--Ogus} if
$\operatorname{Hom}_\Delta(M,N)=\operatorname{Hom}_\Delta(N,M)=0$
for all $\mathbf s$-acyclic $N$ \cite[Section~I.4]{ekedahl3}.  A
smooth proper variety $X/k$ is \emph{Mazur--Ogus} if every diagonal
cohomology $\widetilde H^{n}\bigl(R\Gamma(X,W\Omega^\bullet)\bigr)$
is a Mazur--Ogus object, equivalently if $H^*_{\mathrm{crys}}(X/W)$
is torsion-free and the Hodge--de Rham spectral sequence degenerates
at $E_1$ \cite[Definition~IV.1.1 and Theorem~IV.1.2]{ekedahl3}.  For
example, abelian varieties are Mazur--Ogus.
\end{definition}

Mazur--Ogus objects admit a particularly clean d\'evissage theorem,
which we recall in Proposition~\ref{prop:MO-devissage} below
together with its translation to $F$-gauges through the functor
$S$.  We begin with the dictionary between vector bundle $F$-gauges
and virtual $F$-crystals.

\begin{definition}[Virtual $F$-crystals]
\label{def:virtual-crystal}
A \emph{virtual $F$-crystal} over $k$ is a pair $(H,\varphi)$ with
$H$ a finite free $W$-module and $\varphi\colon H\to H[1/p]$ an
injective $\sigma$-semilinear map.  Define its \emph{Nygaard
$F$-gauge} $\mathcal N(H,\varphi)$ to be the $F$-gauge with levels
$\mathcal N^r:=\{x\in H:\varphi(x)\in p^rH\}$, with $t$ the inclusion
$\mathcal N^{r+1}\subseteq\mathcal N^{r}$, and with $u$ multiplication
by $p$.  The gluing $\tau$ sends the image in $\mathcal N^{\infty}$
of $x\in\mathcal N^{r}$ to $p^{-r}\varphi(x)\in H$, and this is well
defined: $x$ and $u(x)=px$ represent the same element of
$\mathcal N^{\infty}$, and $p^{-(r+1)}\varphi(px)=p^{-r}\varphi(x)$.
\end{definition}

Along $t$, the levels form the decreasing filtration
$\varphi^{-1}(p^{\bullet}H)$ of $H$, and the $F$-gauge is this filtration
together with $\varphi$.  For a Mazur--Ogus variety, Mazur's theorem
identifies the levels of
$\mathcal N\bigl(H^{n}_{\mathrm{crys}}(X/W),\varphi\bigr)$ with the
Nygaard filtration
$\mathrm{Fil}^{\ge\bullet}_{\mathcal N}H^{n}_{\mathrm{crys}}(X/W)$
\cite[Section~3.5]{BhattFgauges}.

Ekedahl constructs the Nygaard $F$-gauge as the right adjoint of
$G\mapsto G^{-\infty}$ on torsion-free $F$-gauges and calls it the
Hodge $F$-gauge structure \cite[Section~II.2]{ekedahl3}.  In modern
terms, vector bundle $F$-gauges are equivalent to virtual
$F$-crystals via $G\mapsto G^{-\infty}$
\cite[Proposition~4.3.1]{BhattFgauges}, with inverse the Nygaard
$F$-gauge.  The precise recovery statement we need is the following.

\begin{lemma}[{\cite[Proposition~III.4.1 and Theorem~III.4.2]{ekedahl3}}]
\label{lem:ekedahl-reconstruction}
Let $M\in\Delta$ be Mazur--Ogus.  Then $\mathbf s(M)$ is a finite free $W$-module
in degree $0$, the gluing of $S(M)$ makes
$S(M)^{-\infty}=\mathbf s(M)$ a virtual $F$-crystal
$(\mathbf s(M),\varphi)$, and
$S(M)=\mathcal N\bigl(\mathbf s(M),\varphi\bigr)$.
\end{lemma}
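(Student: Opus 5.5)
The plan is to reduce everything to Ekedahl's analysis of the $F$-gauge heart and the adjunction between vector bundle $F$-gauges and virtual $F$-crystals. There are three steps: show that $\mathbf s(M)$ is free in degree $0$, show that $S(M)$ is a vector bundle $F$-gauge, and then identify it with the Nygaard $F$-gauge of its generic fibre.

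\emph{Step one: the shape of $\mathbf s(M)$.} By Definition~\ref{def:s-torsion-vocab}, $\mathbf s(M)$ is concentrated in degrees $0$ and $1$. The Mazur--Ogus condition kills homomorphisms to and from $\mathbf s$-acyclic objects, and I would use it to exclude torsion.

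For $H^1$, I would use the torsion pair of Proposition~\ref{prop:ekedahl-heart-hrs}. The quotient $M/G^2M$ is $\mathbf s$-$1$-torsion, and Ekedahl's analysis of $\mathbf s$-$1$-torsion objects \cite[Section~I.4]{ekedahl3} should show that a nonzero such object surjects onto an $\mathbf s$-acyclic one, namely a twist of $U_0$. That surjection is forbidden, so $G^2M=M$, and $M$ lies in $\mathcal T$.

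For torsion in $H^0$, I would pass through $M_{\mathrm{ft}}$ and the diagonal domino part. A nonzero $M_{\mathrm{ft}}$ would contain an $\alpha_p$-type piece receiving a map from $U_0$ (Remark~\ref{rem:chain-conditions}), so it is ruled out. Otherwise the torsion would sit in a subobject whose type-$0$ graded piece is $\mathbf s$-acyclic and nonzero, which is again ruled out.

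This gives $H^1(\mathbf s(M))=0$ and $H^0(\mathbf s(M))$ finite free. Since $\mathbf s$ is $W$-coherent, the rank is finite.

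\emph{Step two: $S(M)$ is a vector bundle.} Since $M\in\mathcal T\subset\mathcal G$, the object $S(M)$ lies in $\Coh(k^{\mathrm{Syn}})$. Each level is $S(M)^r=\sigma_*\mathbf s(M(-\delta_r))$. The modification $M(-\delta_r)$ is again diagonal (Definition~\ref{def:autoequivalence}), and it agrees with $M$ up to a finite modification. I would show that each level is torsion-free by the same Hom argument applied to $M(-\delta_r)$. This should work because a torsion element would produce a nonzero finite-torsion or $\mathbf s$-acyclic sub- or quotient object of a modification of $M$. Pulling that object back along the inverse modification then gives a contradiction with the Mazur--Ogus property.

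With torsion-free levels, the maps $t$ are injective after inverting $p$. The gluing $\tau$ then yields a $\sigma$-semilinear $\varphi\colon\mathbf s(M)\to\mathbf s(M)[1/p]$, obtained by composing $\tau$ with $u^\infty$ and with the inverse of $t^{-\infty}$ on each level. Injectivity of $\varphi$ follows because $\tau$ is an isomorphism and the levels are lattices.

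\emph{Step three: identification with $\mathcal N(\mathbf s(M),\varphi)$.} There is a canonical comparison map $S(M)\to\mathcal N(\mathbf s(M),\varphi)$. On level $r$ it sends $x$ to $t^{-\infty}x$, and $\varphi(t^{-\infty}x)=p^r\cdot(\text{image of }u^\infty x)$ lies in $p^r\mathbf s(M)$, so the map lands in $\mathcal N^r$. This is the counit of Ekedahl's right adjoint, and the map is injective by torsion-freeness.

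I expect the main obstacle to be surjectivity. My plan is to take the cokernel $C$ and use Theorem~\ref{thm:ekedahl-equivalence} to write $C=S(E)$. The object $C$ has zero limits at $\pm\infty$, so it is supported at the Hodge point. Theorem~\ref{thm:integer-slope-category} then shows that $E=U[1]$ with $U$ an $\mathbf s$-acyclic diagonal domino. The resulting extension gives a nonzero morphism between $M$ and a shift of an $\mathbf s$-acyclic object. Hom from $M$ into the $\mathbf s$-acyclic $U$, realized through the connecting map, is forbidden by the Mazur--Ogus condition, so $C=0$.

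This last step is where I would check Ekedahl's \cite[Theorem~III.4.2]{ekedahl3} most carefully. The connecting morphism lands in $\operatorname{Ext}^1$ rather than $\operatorname{Hom}$, so the argument needs the right $t$-structure bookkeeping to convert the extension class into a genuine map.
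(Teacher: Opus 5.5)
The paper gives no argument for this lemma beyond the citation to Ekedahl, so your outline has to stand on its own. Its architecture can be completed: degree-$0$ freeness of $\mathbf s(M)$, then torsion-free levels, then a cokernel supported at the Hodge point. But Steps one and two rest on claims that are false as stated.

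\textbf{Step one.} A nonzero $\mathbf s$-$1$-torsion object need not surject onto a twist of $U_0$. Indeed $U_{-1}$ is $\mathbf s$-$1$-torsion of rank one. Such a surjection would have a rank-zero, hence finite-torsion, kernel inside a diagonal domino, so it would be an isomorphism, contradicting $\degD(U_{-1})=-1$. A nonzero $M_{\mathrm{ft}}$ need not contain $\alpha_p$; take $T=(k,F=\sigma,V=0)$. And a torsion subobject such as $U_1$, with $H^0(\mathbf s(U_1))=k$, has zero type-$0$ graded piece.

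What you actually need is:
(a) every nonzero $Q\in\mathcal F$ maps nonzero to some $U_0\{m\}$;
(b) every nonzero torsion $P\in\mathcal T$ receives a nonzero map from some $U_0\{m\}$.
Torsion in $H^0(\mathbf s(M))$ comes from $H^0(\mathbf s(M[p^n]))$, with $M[p^n]:=\ker(p^n)$ in $\Delta$. So $G^2(M[p^n])$, which is nonzero because $H^0(\mathbf s)$ vanishes on $\mathcal F$, is such a $P$ inside $M$. No separate treatment of $M_{\mathrm{ft}}$ is then needed.

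Both facts are proved on the $F$-gauge side, where $S(U_0\{1-j\}[1])$ is the skyscraper $k_j$. For (a), $\operatorname{Hom}_\Delta(Q,U_0\{1-j\})=\operatorname{Hom}(S(Q[1]),k_j)$. If $G=S(Q[1])\ne0$, graded Nakayama (descend from the top of the level range using $tu=p$) gives some $j$ with $G^j\ne uG^{j-1}+tG^{j+1}$. For (b), $\operatorname{Hom}_\Delta(U_0\{1-j\},P)=\operatorname{Ext}^1(k_j,S(P))$, the middle cohomology of the Koszul complex on $u,t$; for $P=L$ finite torsion and $j=0$ it is $L[V]$. Suppose it vanished for all $j$. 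Since $\operatorname{Hom}(k_j,S(P))=0$ as well, the Euler characteristics $2\ell(S(P)^j)-\ell(S(P)^{j-1})-\ell(S(P)^{j+1})=\dim_k\operatorname{Ext}^2(k_j,S(P))\ge0$ would make the level lengths concave with equal constant tails. They would then be constant, every $\operatorname{Ext}^2(k_j,S(P))$ would vanish, and Nakayama would give $S(P)=0$. Finally, $H^1(\mathbf s(M))=0$ follows from $M\in\mathcal T$ only because $\mathbf s(M)=S(M)^{-\infty}$ is then a colimit of modules; you should say this.

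\textbf{Step two.} This step is logically invalid. Nygaard modifications do not preserve $\mathbf s$-acyclicity ($U_0(\delta_1)=U_1$). So an $\mathbf s$-acyclic sub- or quotient object of $M(-\delta_r)$ pulls back to one of $M$ about which the Mazur--Ogus hypothesis says nothing, and $M(-\delta_r)$ need not be Mazur--Ogus. For the superspecial abelian surface, $M=\widetilde H^2$ surjects onto $D_2\cong U_1$ (Example~\ref{ex:artin-invariant}), so $M(-\delta_1)$ surjects onto $U_0$, even though the level $S(M)^1$ is a lattice.

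You do not need the hypothesis here at all. Once $S(M)^{\pm\infty}$ are torsion-free, take a nonzero $p$-torsion element of a level. Some power of $t$ kills it; pushing along the last nonvanishing power gives a nonzero element killed by $t$. Repeating with $u$ and using $tu=ut=p$ gives a nonzero element killed by $p$, $u$ and $t$, that is, a nonzero map $k_j\to S(M)$. But $\operatorname{Hom}(k_j,S(M))=\operatorname{Hom}(U_0\{1-j\}[1],M)=0$, since $U_0\{1-j\}$ and $M$ both lie in the heart $\Delta$.

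\textbf{Step three.} This step is sound after three corrections. First, the comparison map is the unit of the adjunction, not the counit. Second, the connecting map of $0\to S(M)\to\mathcal N(\mathbf s(M),\varphi)\to S(U[1])\to0$ is a class in $\operatorname{Ext}^1(S(U[1]),S(M))=\operatorname{Hom}_\Delta(U,M)$. That is a map from $U$ into $M$, not from $M$ into $U$, and no further $t$-structure bookkeeping is needed. Third, its vanishing only splits the sequence. To conclude $U=0$ you still need that the levels of the Nygaard $F$-gauge are torsion-free while those of $S(U[1])$ have finite length.
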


In order to describe the Nygaard $F$-gauge explicitly, we normalize
$\varphi$.  Since $\sigma$ is bijective, $\varphi(H)$ is a
$W$-lattice in $H[1/p]$, so there are $W$-bases $(e_j)$, $(f_j)$ of
$H$ and integers $a_1\le\dots\le a_d$ with
$\varphi(e_j)=p^{a_j}f_j$.  We call the $a_j$ the \emph{elementary
divisors} of $\varphi$.

\begin{lemma}
\label{lem:nygaard-gauge}
$\mathcal N(H,\varphi)$ is a coherent $F$-gauge of level $[a_1,a_d]$
with $\mathcal N^{-\infty}=H$, and
$\mathcal N(H,p^{j}\varphi)=\mathcal N(H,\varphi)\{-j\}$.  For a
finite free $V$-complete Dieudonn\'e module $L$ and $i\in\mathbb Z$,
$S(L)\{-i\}=\mathcal N(L,p^{i}F)$, with elementary divisors in
$\{i,i+1\}$ and slopes in $[i,i+1)$.
\end{lemma}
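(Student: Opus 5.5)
Everything reduces to the Smith normal form $\varphi(e_j)=p^{a_j}f_j$, which splits the Nygaard gauge into rank-one pieces. Since $\varphi$ is $\sigma$-semilinear and $\sigma$ is bijective, write $x=\sum x_je_j$. Then $\varphi(x)=\sum\sigma(x_j)p^{a_j}f_j$, so
\[
 \mathcal N^r=\bigoplus_j p^{\max(r-a_j,0)}We_j .
\]
Thus $\mathcal N(H,\varphi)$ is the direct sum of the rank-one Nygaard gauges of $(We_j,\;e_j\mapsto p^{a_j}f_j)$, apart from the gluing.

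From this formula the level is read off directly. For $r\le a_1$ we have $\mathcal N^r=H$, so $t$ is the identity below $a_1$. For $r\ge a_d$ each summand is $p^{r-a_j}We_j$, so $u$, which is multiplication by $p$, is an isomorphism $\mathcal N^r\to\mathcal N^{r+1}$ above $a_d$. Hence the level is $[a_1,a_d]$, and $\mathcal N^{-\infty}=H$ because $t$ is the inclusion. Coherence follows in the same way. The gauge is $p$-complete, and modulo $(u,t)$ each rank-one summand contributes a single $k$ in degree $a_j$, which is finite-dimensional. Finally, $\tau$ identifies $\mathcal N^\infty\cong H$ (through $x\mapsto p^{-r}\varphi(x)$) with $\varphi(H)[\,\cdot\,]$ rescaled. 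Concretely, in the bases $(e_j),(f_j)$ it sends $p^{-a_j}e_j$-type generators of $\mathcal N^\infty$ to $f_j$, so $\tau\colon\sigma^*\mathcal N^\infty\to H$ is an isomorphism.

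For the twist, use $(p^j\varphi)(x)\in p^rH$ if and only if $\varphi(x)\in p^{r-j}H$. This gives $\mathcal N(H,p^j\varphi)^r=\mathcal N(H,\varphi)^{r-j}$, and $u$, $t$ and $\tau$ correspond: for $x$ in level $r$ we get $p^{-r}p^j\varphi(x)=p^{-(r-j)}\varphi(x)$. By Example~\ref{ex:equivalence-examples}(ii), $(G\{-j\})^r=G^{r-j}$, which is exactly this reindexing.

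For the last claim, the twist formula just proved reduces it to $i=0$, that is, to $S(L)=\mathcal N(L,F)$. Here I would compare with the display in Example~\ref{ex:equivalence-examples}(i): $S(L)$ has $L$ in levels $\le0$ with $t=1$, $\sigma_*L$ in levels $\ge1$ with $t=p$, and at the edge $(0,1)$ the maps are $u=F$ and $t=V$. Consider $\mathcal N(L,F)$, with $\mathcal N^r=\{x:Fx\in p^rL\}$. For $r\le0$ this is all of $L$. For $r=1$ it is $F^{-1}(pL)=F^{-1}(FVL)=VL$, using $FV=p$ and injectivity of $F$ on a free module. For $r\ge1$ it is $p^{r-1}VL$. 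The isomorphism $\sigma_*L\to VL$, $y\mapsto Vy$, together with multiplication by $p^{r-1}$, then matches the levels, the maps $t$ and $u$ (for instance $x\mapsto px=VFx$, which matches $F$ after the identification), and the gluing. This is essentially Lemma~\ref{lem:ekedahl-reconstruction} applied to the Mazur--Ogus object $L$, and I would cite it as a cross-check. The elementary divisors of $F$ lie in $\{0,1\}$ because $pL=FVL\subseteq FL\subseteq L$. The slopes lie in $[0,1)$ because $V$-completeness makes $V$ topologically nilpotent, which excludes slope $1$ (by Dieudonn\'e--Manin, slope $1$ means $V$ is bijective). Shifting by $i$ gives divisors in $\{i,i+1\}$ and slopes in $[i,i+1)$.

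The main obstacle I expect is the bookkeeping of Frobenius twists in this identification, that is, checking that $\sigma_*$ and the gluing $\tau$ match on the nose rather than only up to isomorphism.
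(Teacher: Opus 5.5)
Your proposal is correct and follows essentially the same route as the paper. The Smith normal form gives $\mathcal N^r=\bigoplus_j p^{\max(0,r-a_j)}We_j$, which yields the level, coherence and bijectivity of $\tau$; the reindexing $\mathcal N(H,p^j\varphi)^r=\mathcal N(H,\varphi)^{r-j}$ gives the twist; and embedding the levels of $S(L)$ into $L$ by $t^{-\infty}$ (image $p^{r-1}VL$ for $r\ge1$) identifies $S(L)$ with $\mathcal N(L,F)$, with the gluing matching on the nose since $p^{-1}F(Vy)=y$. The appeal to Lemma~\ref{lem:ekedahl-reconstruction} is unnecessary, and you should note that the isomorphism $y\mapsto Vy$ needs $V$ injective, which follows from $FV=p$ and torsion-freeness.
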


\begin{proof}
An element $x=\sum_jw_je_j$ of $H$ has
$\varphi(x)=\sum_j\sigma(w_j)p^{a_j}f_j$, which lies in $p^{r}H$
exactly when $w_j\in p^{\max(0,r-a_j)}W$ for all $j$, since $(f_j)$
is a basis and $\sigma$ preserves valuations.
The single basis $(e_j)$ therefore presents all levels at once,
$\mathcal N^{r}=\bigoplus_j p^{\max(0,r-a_j)}We_j$, so $ut=tu=p$,
$t$ is bijective for $r\le a_1$, and $u$ is bijective for $r\ge a_d$.
The levels are finite free, so the $F$-gauge is derived $p$-complete and
coherent.  For $r\ge a_d$ one computes
$\varphi(\mathcal N^{r})=p^{r}H$, so $\tau$ is bijective from
$\mathcal N^\infty\cong\mathcal N^{r}$ onto $H=\mathcal N^{-\infty}$.  The twist rule is immediate from
$\{x:p^{j}\varphi(x)\in p^{r}H\}=\mathcal N^{r-j}$.

For the second assertion, $V$ is injective on $L$, as $Vx=0$ gives
$px=FVx=0$.  The maps $t$ in the display of
Example~\ref{ex:equivalence-examples}(i) therefore embed the levels
of $S(L)$ into $S(L)^{-\infty}=L$, with image
$\{x\in L:F(x)\in p^{r}L\}$ in level $r$ (equal to $L$ for $r\le0$
and to $p^{r-1}VL$ for $r\ge1$, as $F$ is injective and $FV=p$),
with $u$ acting as multiplication by $p$, and with the same gluing.
So $S(L)=\mathcal N(L,F)$, and twisting gives
$S(L)\{-i\}=\mathcal N(L,p^{i}F)$.  Finally the containments
$pL\subseteq F(L)\subseteq L$ bound the elementary divisors of $F$
by $0$ and $1$, and $V$-completeness makes $V$ topologically
nilpotent, so the slopes of $F$ lie in $[0,1)$.  Multiplying by
$p^{i}$ shifts divisors and slopes by $i$.
\end{proof}

We call an $F$-gauge \emph{Hodge--Witt} if it is a finite direct sum
$\bigoplus_i S(L_i)\{-i\}$ with each $L_i$ a finite free
$V$-complete Dieudonn\'e module.

\begin{proposition}[Mazur--Ogus d\'evissage]
\label{prop:MO-devissage}
Let $X/k$ be a smooth proper Mazur--Ogus variety and let
$(H,\varphi)$ be $H^{n}_{\mathrm{crys}}(X/W)$ with its Frobenius.
Each diagonal cohomology
$\widetilde H^n=\widetilde H^n\bigl(R\Gamma(X,W\Omega^\bullet)\bigr)$
has a largest Hodge--Witt subobject $HW(\widetilde H^n)$, which is
finite free, and in the short exact sequence
\[
 0\longrightarrow HW(\widetilde H^n)\longrightarrow\widetilde H^n
 \longrightarrow D_n\longrightarrow0
\]
the quotient $D_n$ is a diagonal domino with $F^{>0}_{\HN}D_n=D_n$.
The functor $S$ carries this
sequence to a short exact sequence of coherent $F$-gauges
\[
0\longrightarrow S\bigl(HW(\widetilde H^{n})\bigr)
\longrightarrow\mathcal N\bigl(H^{n}_{\mathrm{crys}}(X/W),\varphi\bigr)
\longrightarrow S(D_n)\longrightarrow0,
\]
in which $S\bigl(HW(\widetilde H^{n})\bigr)=HW\bigl(\mathcal N(H,\varphi)\bigr)$,
the largest Hodge--Witt subobject of the middle term.
\end{proposition}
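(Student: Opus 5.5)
The plan is to work on the $F$-gauge side, where $\widetilde H^n$ becomes the Nygaard $F$-gauge $\mathcal N(H,\varphi)$, and pull the answer back through $S$.

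\emph{Step 1: identify the middle term.} By Lemma~\ref{lem:ekedahl-reconstruction}, applied to the Mazur--Ogus object $\widetilde H^n$, we have $\mathbf s(\widetilde H^n)$ finite free and $S(\widetilde H^n)=\mathcal N(\mathbf s(\widetilde H^n),\varphi)$. For a Mazur--Ogus variety the diagonal spectral sequence degenerates and crystalline cohomology is torsion-free, so $\mathbf s(\widetilde H^n)\simeq H^n_{\mathrm{crys}}(X/W)$ with its Frobenius. In particular $\widetilde H^n$ has no $\mathbf s$-$1$-torsion quotient beyond what $\varphi$ allows, and $S(\widetilde H^n)$ lies in the $F$-gauge heart. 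Equivalently, $\widetilde H^n\in\mathcal T$: the $\mathbf s$-$1$-torsion quotient would inject $H^1(\mathbf s)$ into $H^1$ of a free module in degree $0$, hence vanish.

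\emph{Step 2: construct $HW$ on the gauge side.} Normalize $\varphi$ by elementary divisors. Using the slope decomposition of the $F$-crystal $H$, I would define $HW(\mathcal N(H,\varphi))$ as the sum, over $i$, of the images of maps $S(L)\{-i\}\to\mathcal N(H,\varphi)$ with $L$ finite free and $V$-complete. By Lemma~\ref{lem:nygaard-gauge}, the relevant pieces are the sub-$F$-crystals $H_i\subseteq H$, namely the slope-$[i,i+1)$ parts on which $p^{-i}\varphi$ has divisors in $\{0,1\}$ compatibly with the Nygaard filtration. Any subgauge is determined by its generic fibre together with the induced filtration. So the largest Hodge--Witt subobject corresponds to the largest sub-$F$-crystal $H'=\bigoplus H_i$ whose induced Nygaard $F$-gauge $\mathcal N(H',\varphi|)$ is Hodge--Witt and embeds levelwise. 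That $H'$ is finite free follows because $H'$ is a saturated submodule of $H$.

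Transporting through the equivalence $S$, which is exact and fully faithful and preserves the class of Hodge--Witt objects since $S(L)\{-i\}=S(L\{-i\})$, gives $HW(\widetilde H^n)$ together with the identification $S(HW(\widetilde H^n))=HW(\mathcal N(H,\varphi))$.

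\emph{Step 3: the quotient, which is the main obstacle.} I must show that $D_n$ is torsion, has $(D_n)_{\mathrm{ft}}=0$, and has all HN slopes positive. The quotient gauge $S(D_n)$ has generic fibre $H/H'$.

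For torsion, I would argue that $H'$ has full rank. Since $\mathcal N(H,\varphi)[1/p]$ is an isocrystal, the isoclinic decomposition shows that after inverting $p$ every $H$ is Hodge--Witt. So $H/H'$ is torsion, and $D_n\in\Delta_{\mathrm{tor}}$.

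For $(D_n)_{\mathrm{ft}}=0$: a finite-torsion subobject of $D_n$ would have a preimage that is an extension of $L\{i\}$ by $HW$. I would show such an extension of Dieudonn\'e type is again Hodge--Witt, contradicting maximality. This is the delicate point, and I would try to reduce it to the Dieudonn\'e-module statement that an extension of a finite-length $V$-nilpotent Dieudonn\'e module by a free $V$-complete one, sitting inside torsion-free $H$, is free.

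For positivity of the slopes, $F^{>0}_{\HN}D_n=D_n$ is equivalent, by Theorem~\ref{thm:gauge-heart-slope-zero} (since $G^2=F^{>0}_{\HN}$), to $D_n$ having no $\mathbf s$-$1$-torsion quotient. This follows from $\widetilde H^n\in\mathcal T$ (Step 1) and the fact that $\mathcal T$ is closed under quotients. The same fact gives that $S$ carries the sequence to a short exact sequence of $F$-gauges, with all three terms in $\mathcal T$.
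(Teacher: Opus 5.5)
Your proposal has genuine gaps: the step that moves the maximal Hodge--Witt subobject between the gauge side and $\Delta$ is asserted rather than proved, and the crystal-side construction contains a false claim.

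\textbf{Transport between the two hearts.} Your Step~2 says that ``transporting through the equivalence $S$'' produces $HW(\widetilde H^n)$ together with $S(HW(\widetilde H^n))=HW(\mathcal N(H,\varphi))$. This is exactly what needs proof. $S$ identifies $\Coh(k^{\mathrm{Syn}})$ with the tilted heart $\mathcal G=\langle\mathcal F[1],\mathcal T\rangle$, not with $\Delta$. So subobjects of $\mathcal N(H,\varphi)$ and subobjects of $\widetilde H^n$ do not correspond automatically. Moreover, ``Hodge--Witt'' means finitely generated $H^j(-)^i$ on the $\Delta$ side. That class contains torsion objects $L\{i\}$ with $L$ of finite length, whereas on the gauge side the $L_i$ must be free. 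So $S$ does not simply preserve the class.

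Two arguments are needed:
\begin{enumerate}
\item If $\mathcal H\subseteq\mathcal N(H,\varphi)$ is Hodge--Witt with quotient $\mathcal Q$, the $\Delta$-kernel of $S^{-1}\mathcal H\to\widetilde H^n$ is $\widetilde H^{-1}(S^{-1}\mathcal Q)\in\mathcal F$. This is torsion, so it vanishes in the $p$-torsion-free $S^{-1}\mathcal H$.
\item Conversely, a $\Delta$-subobject $P\subseteq\widetilde H^n$ gives a subobject of $F$-gauges only once $P$ and $\widetilde H^n/P$ lie in $\mathcal T$.
\end{enumerate}
The paper's maximality argument is the first of these. It never constructs $HW(\widetilde H^n)$ itself: it quotes Ekedahl's Theorem~III.4.6 for the $\Delta$-side d\'evissage (existence, freeness, and $D_n$ a diagonal domino with positive slopes). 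It then identifies the middle term as you do and checks that both outer terms lie in the $F$-gauge heart.

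\textbf{The crystal-side construction.} It contains an error and leaves its key facts unproved.
\begin{itemize}
\item $H'$ is not saturated in $H$. A saturated submodule of full rank is all of $H$. That would give $S(D_n)^{-\infty}=H/H'=0$, so $D_n$ would have degree $0$, and with all HN slopes positive it would vanish. This contradicts $D_3\cong U_{1/2}$ (Corollary~\ref{thm:superspecial-threefold}). Indeed, in the proof of Proposition~\ref{prop:middle-domino-slope} the maximal sublattice $\langle p^sy,y'\rangle$ of a summand has colength $s$. Freeness needs no saturation; it is automatic over $W$.
\item You assert, but do not prove, that the maximal $F$- and $V$-stable submodule of $H_{[i,i+1)}$ is a lattice, and that your sum of images is itself Hodge--Witt. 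Both need the arguments of Proposition~\ref{prop:crystal-devissage}.
\item The step $(D_n)_{\mathrm{ft}}=0$ is left open. On the $\Delta$ side it is immediate. Objects with finitely generated $H^j(-)^i$ are closed under extensions, so the preimage of a finite-torsion subobject of $D_n$ is Hodge--Witt and lies in $HW(\widetilde H^n)$. This again requires $HW(\widetilde H^n)$ to be maximal in $\Delta$.
\end{itemize}

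\textbf{A smaller repair in Step~1.} The map $H^1\mathbf s(\widetilde H^n)\to H^1\mathbf s(Q)$ is surjective, not injective. It only shows that an $\mathbf s$-$1$-torsion quotient $Q$ is $\mathbf s$-acyclic. You need the Mazur--Ogus condition $\operatorname{Hom}(\widetilde H^n,Q)=0$ to conclude $Q=0$. With that fix, your deduction of $F^{>0}_{\HN}D_n=D_n$ from $\widetilde H^n\in\mathcal T$ and $G^2=F^{>0}_{\HN}$ is correct. It is more self-contained than the paper's citation.
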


\begin{proof}
The d\'evissage on the diagonal complexes side is Ekedahl's
\cite[Theorem~III.4.6]{ekedahl3}.

We first identify the middle term.  Every $\widetilde H^{j}$ is
Mazur--Ogus (Definition~\ref{def:MO-HW}), so each
$\mathbf s(\widetilde H^{j})$ is concentrated in degree $0$
(Lemma~\ref{lem:ekedahl-reconstruction}).  The spectral sequence
of the diagonal filtration therefore degenerates and identifies
$\mathbf s(\widetilde H^{n})$ with $H^{n}_{\mathrm{crys}}(X/W)$, on
which the gluing acts through $p^{j}F$ on the $W\Omega^{j}$ part,
that is as the crystalline Frobenius
\cite[Theorem~II.1.4]{Illusie79}.
Lemma~\ref{lem:ekedahl-reconstruction} now gives
$S(\widetilde H^{n})=\mathcal N(H,\varphi)$.

Next, the two outer terms lie in the heart, so the d\'evissage
becomes a short exact sequence in $\Coh(k^{\mathrm{Syn}})$.  Indeed
$S(HW(\widetilde H^{n}))$ is a Hodge--Witt $F$-gauge, since a
Hodge--Witt object of $\Delta$ splits into Breuil--Kisin twists of
finite free Dieudonn\'e modules \cite[Section~I.2]{ekedahl3}, and
every HN slope of $D_n$ is positive, so $S(D_n)$ lies in the
$F$-gauge heart (Theorem~\ref{thm:phase-rotation}).

Finally we prove the maximality of $S(HW(\widetilde H^{n}))$.  Let
$\mathcal H$ be a Hodge--Witt subobject of
$\mathcal N(H,\varphi)$, with quotient $\mathcal Q$.  The kernel of
$S^{-1}\mathcal H\to\widetilde H^{n}$ is
$\widetilde H^{-1}(S^{-1}\mathcal Q)$, torsion by the tilt
(Proposition~\ref{prop:ekedahl-heart-hrs}), hence zero in the
$p$-torsion-free $S^{-1}\mathcal H$.  So $S^{-1}\mathcal H$ is a
Hodge--Witt subobject of $\widetilde H^{n}$, and the maximality of
$HW(\widetilde H^{n})$ gives
$\mathcal H\subseteq S(HW(\widetilde H^{n}))$.
\end{proof}

It remains to compute the largest Hodge--Witt subobject from the
$F$-crystal.  Fix now a virtual $F$-crystal $(H,\varphi)$.  Let
$H[1/p]=\bigoplus_\lambda V_\lambda$ be the isoclinic decomposition,
which descends from $\bar k$ to the perfect field $k$ \cite{Manin63}.
Put
\[
 V_{[i,i+1)}:=\bigoplus_{i\le\lambda<i+1}V_\lambda,\qquad
 H_{[i,i+1)}:=H\cap V_{[i,i+1)},\qquad
 F:=p^{-i}\varphi\ \text{on }V_{[i,i+1)},
\]
so that $(H_{[i,i+1)},F)$ is a virtual $F$-crystal with slopes in
$[0,1)$.
Each $H_{[i,i+1)}$ is a lattice in $V_{[i,i+1)}$, since $p^{N}x\in H$
for $x\in V_{[i,i+1)}$ and $N\gg0$, so the sublattice
$\bigoplus_iH_{[i,i+1)}$ of $H$ has finite colength.  It is in
general not all of $H$: for $\varphi(e_1)=e_1$,
$\varphi(e_2)=pe_2$, and $H=We_1+Wp^{-1}(e_1+e_2)$, the two
intersections are $We_1$ and $We_2$, of index $p$ in $H$.

\begin{proposition}
\label{prop:crystal-devissage}
\begin{enumerate}
\item Each $H_{[i,i+1)}$ has a unique maximal sublattice $N$ with
  $pN\subseteq F(N)\subseteq N$, which we denote by
  $HW(H_{[i,i+1)})$.
\item Put $L:=\bigoplus_iHW(H_{[i,i+1)})$.  Then $\ell_W(H/L)<\infty$
  and $\mathcal N(L,\varphi|_L)=HW(\mathcal N(H,\varphi))$.
\end{enumerate}
\end{proposition}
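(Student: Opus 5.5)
For (i), I will first show that the sublattices $N\subseteq H_{[i,i+1)}$ with $pN\subseteq F(N)\subseteq N$ are closed under sums. If $N_1,N_2$ are two such lattices, then $F(N_1+N_2)=F(N_1)+F(N_2)$ is contained in $N_1+N_2$ and contains $pN_1+pN_2=p(N_1+N_2)$. Hence the sum is again admissible.

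Next I will show that the admissible sublattices exist and are bounded inside $H_{[i,i+1)}$. Existence is the point where the slope hypothesis enters. Since all slopes of $F$ lie in $[0,1)$, the lattice $M:=\sum_{n\ge0}F^n(H_{[i,i+1)})$ is $F$-stable and finitely generated, because slopes $\ge0$ bound the growth of $F^n$. Similarly the lattice $\sum_n p^nF^{-n}$ built using $V:=pF^{-1}$, whose slopes lie in $(0,1]$, is $V$-stable. Combining the two constructions, I expect to get a lattice $N_0$ with $F(N_0)\subseteq N_0$ and $V(N_0)\subseteq N_0$, that is $pN_0\subseteq F(N_0)$. Scaling by a power of $p$ then places $N_0$ inside $H_{[i,i+1)}$, since scaling preserves both conditions. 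This is essentially the classical fact that a $\sigma$-semilinear isocrystal with slopes in $[0,1]$ contains Dieudonné lattices. Every admissible $N$ lies in $H_{[i,i+1)}$ by definition, so the admissible lattices form a nonempty family of $W$-submodules of the noetherian module $H_{[i,i+1)}$. A maximal element exists, and closure under sums makes it unique.

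For (ii), finiteness of $\ell_W(H/L)$ follows from two facts: each $HW(H_{[i,i+1)})$ is a lattice in $V_{[i,i+1)}$, and $\bigoplus_iH_{[i,i+1)}$ has finite colength in $H$, as noted above. The main claim is the identification of the largest Hodge--Witt subobject, which I will split into two inclusions.

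For the first inclusion, Lemma~\ref{lem:nygaard-gauge} gives $\mathcal N(HW(H_{[i,i+1)}),p^iF)=S(L_i)\{-i\}$ with $L_i$ a finite free Dieudonné module. $V$-completeness of $L_i$ should follow from the slopes lying in $[0,1)$, so that $V$ is topologically nilpotent. Hence $\mathcal N(L,\varphi|_L)$ is Hodge--Witt. The inclusion $L\subseteq H$ induces a map $\mathcal N(L)\to\mathcal N(H)$ that is levelwise injective. I need it to be a monomorphism in $\Coh(k^{\mathrm{Syn}})$, which holds because kernels are computed levelwise.

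For the second inclusion, let $\mathcal H=\bigoplus_iS(L'_i)\{-i\}\subseteq\mathcal N(H,\varphi)$ be any Hodge--Witt subobject. Taking $(-)^{-\infty}$, which is exact, embeds $\bigoplus L'_i$ into $H$ compatibly with $\varphi$. Here $L'_i$ carries $p^iF$ with slopes in $[i,i+1)$ by Lemma~\ref{lem:nygaard-gauge}, so $L'_i$ lands in $V_{[i,i+1)}\cap H=H_{[i,i+1)}$ and is admissible there. By (i), $L'_i\subseteq HW(H_{[i,i+1)})$. The remaining point is that $\mathcal H\subseteq\mathcal N(L)$ as subobjects. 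Since the levels are all submodules of $H$ via $t^{-\infty}$, this reduces to a levelwise check: an element $x$ of level $r$ of $\mathcal H$ satisfies $\varphi(x)\in p^rL'$, so it lies in $\mathcal N(L)^r$.

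I expect the main obstacle to be showing that the $t$-maps of a Hodge--Witt subobject are injective into $H$. I plan to handle this using torsion-freeness of vector bundle $F$-gauges together with Lemma~\ref{lem:nygaard-gauge}. A second technical point is verifying that $\mathcal N(L)$ really is a subobject in the heart rather than merely a map with torsion cokernel; this needs the same levelwise injectivity argument.
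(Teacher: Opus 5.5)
Your proposal is correct and follows the paper's proof essentially step by step. The only variation is in (i): you get the largest admissible lattice from closure under sums together with the ascending chain condition. The paper instead writes it directly as $\bigcap_{r,s\ge0}V^{-r}F^{-s}(H_{[i,i+1)})$. It then uses the bounded $F$- and $V$-stable lattice $\sum_{r,s\ge0}V^rF^sH_{[i,i+1)}$ to show this intersection is a lattice; that lattice is exactly what your ``combining the two constructions'' produces, since $FV=VF=p$.

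The injectivity you flag as the main obstacle is immediate, as in the paper. The map $\mathcal H\to\mathcal N(H,\varphi)$ is levelwise injective, and $t^{-\infty}$ on $\mathcal N(H,\varphi)$ is the inclusion of each level into $H$.
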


\begin{proof}
For (i), a $W$-submodule $N\subseteq H_{[i,i+1)}$ satisfies
$pN\subseteq F(N)\subseteq N$ exactly when it is stable under $F$
and under $V:=pF^{-1}$, so the largest one is the intersection
\[
 HW\bigl(H_{[i,i+1)}\bigr)=\bigcap_{r,s\ge0}V^{-r}F^{-s}\bigl(H_{[i,i+1)}\bigr),
\]
stable under $F$ and $V$ since $V^{r}F^{s}(Fx)=V^{r}F^{s+1}(x)$ and
$V^{r}F^{s}(Vx)=V^{r+1}F^{s}(x)$.  To show that it is a lattice: the slopes of $F$ and of $V$ on
$V_{[i,i+1)}$ are nonnegative, so
$M:=\sum_{r,s\ge0}V^{r}F^{s}H_{[i,i+1)}$ is bounded and hence a
lattice.  It is stable under $F$ and $V$, hence there exists
$m\ge0$ such that $p^{m}M$ is a sub-Dieudonn\'e module of
$H_{[i,i+1)}$, and by maximality $p^{m}M$ is contained in
$HW(H_{[i,i+1)})$.  This shows that $HW(H_{[i,i+1)})$ is a lattice
as well, and the infinite intersection terminates in finitely many
steps.

For (ii), each $HW(H_{[i,i+1)})$ is a lattice in $V_{[i,i+1)}$ and
$\bigoplus_iV_{[i,i+1)}=H[1/p]$, so $L$ is a lattice contained in
$H$ and $\ell_W(H/L)<\infty$.  The summands $V_{[i,i+1)}$ are
$\varphi$-stable, so
\[
 \mathcal N(L,\varphi|_L)
 =\bigoplus_i\mathcal N\bigl(HW(H_{[i,i+1)}),p^{i}F\bigr)
 =\bigoplus_iS(L_i)\{-i\},
\]
where the Dieudonn\'e module
$L_i:=(HW(H_{[i,i+1)}),F,pF^{-1})$ is $V$-complete because the
slopes of $V$ are positive (Lemma~\ref{lem:nygaard-gauge}).  The
levelwise inclusions
$\{x\in L:\varphi(x)\in p^{r}L\}\subseteq\{x\in H:\varphi(x)\in p^{r}H\}$
are compatible with $u$, $t$, and $\tau$, so
$\mathcal N(L,\varphi|_L)$ is a Hodge--Witt subobject.

For its maximality, let $\mathcal H\simeq\bigoplus_iS(L'_i)\{-i\}$
be a Hodge--Witt subobject of $\mathcal N(H,\varphi)$.  Since the
levels of $\mathcal N(H,\varphi)$ are torsion free with injective
$t$, the inclusion embeds $L':=\mathcal H^{-\infty}=\bigoplus_iL'_i$
into $H$ compatibly with the gluings, identifying $p^{-i}\varphi$
with the $F$ of $L'_i$.  By Lemma~\ref{lem:nygaard-gauge}, $L'_i$ is
then an $F$- and $V$-stable submodule of $H_{[i,i+1)}$, so
$L'_i\subseteq HW(H_{[i,i+1)})$ and $L'\subseteq L$, and the level
$\mathcal H^{r}$ is carried to
$\{x\in L':\varphi(x)\in p^{r}L'\}\subseteq\mathcal N(L,\varphi|_L)^{r}$.
So $\mathcal H\subseteq\mathcal N(L,\varphi|_L)$, that is
$\mathcal N(L,\varphi|_L)=HW(\mathcal N(H,\varphi))$.

\end{proof}

In the situation of Proposition~\ref{prop:MO-devissage}, both
$\mathcal N(L,\varphi|_L)$ and $S(HW(\widetilde H^{n}))$ equal
$HW(\mathcal N(H,\varphi))$.  Taking quotients gives
$S(D_n)=\mathcal N(H,\varphi)/\mathcal N(L,\varphi|_L)$.  This
quotient is a coherent torsion $F$-gauge: the functor
$(-)^{-\infty}$ is exact as a filtered colimit, so its
$-\infty$-level is $H/L$, of finite length, and it is killed by a
power of $p$ (proof of Corollary~\ref{cor:torsion-equivalence}).

\begin{example}[Artin invariants]
\label{ex:artin-invariant}
Let $k$ be algebraically closed and let $X/k$ be a supersingular
K3 surface or a supersingular abelian surface, with Artin invariant
$\sigma_0$ \cite{Artin74}.  The only
nonzero diagonal domino is $D_2\cong U_{\sigma_0}$.  Indeed the
largest Hodge--Witt subobject of $H^2_{\mathrm{crys}}$ is
$\operatorname{NS}(X)\otimes_{\mathbb Z}W$, of colength $\sigma_0$
\cite{Ogus79}.  The Hodge number $h^{0,2}=1$ leaves a single gap in
the levels of Lemma~\ref{lem:nygaard-gauge}, so the rank is one, and
all the information is the degree, equal to the slope
$\muD(D_2)=\sigma_0$.  It takes the values $1,2,\dots,10$ for K3
surfaces and $1,2$ for abelian surfaces, with $\sigma_0=1$ precisely
in the superspecial case.
\end{example}

Beyond dimension $2$ we consider the superspecial abelian
varieties $X=E^{g}$, one in every dimension $g$, and their middle
diagonal dominoes $D_g$.  Proposition~\ref{prop:middle-domino-slope}
computes the rank and the degree of $D_g$ from the levels of
$S(D_g)$ through the phase rotation.  The slope lies strictly
between $0$ and $1$ as soon as $g\ge3$.  Corollary~\ref{thm:superspecial-threefold} and
Theorem~\ref{thm:superspecial-fourfold} then identify $D_3$ and
$D_4$ themselves.

\begin{proposition}
\label{prop:middle-domino-slope}
Let $k$ be algebraically closed, let $E$ be a supersingular elliptic
curve, and write $D_g$ for the diagonal domino part of
Proposition~\ref{prop:MO-devissage} in the middle degree $n=g$ of
$X=E^{g}$.  Then
\[
 \rkD(D_g)=\sum_{2a<g}
 \Bigl(\Bigl\lfloor\frac g2\Bigr\rfloor-a\Bigr)
 \Bigl(\Bigl\lceil\frac g2\Bigr\rceil-a\Bigr)\binom ga^{2},
 \qquad
 \degD(D_g)=\sum_{2a<g}
 \Bigl(\Bigl\lfloor\frac g2\Bigr\rfloor-a\Bigr)\binom ga^{2}.
\]
\end{proposition}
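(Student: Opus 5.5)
The plan is to compute both invariants on the $F$-gauge side, where $D_g$ becomes an explicit quotient of Nygaard $F$-gauges. By Proposition~\ref{prop:MO-devissage}, every HN slope of $D_g$ is positive, so $S(D_g)$ lies in the $F$-gauge heart. By Proposition~\ref{prop:crystal-devissage} and the discussion after it,
\[
 S(D_g)=\mathcal N(H,\varphi)/\mathcal N(L,\varphi|_L),\qquad H=H^g_{\mathrm{crys}}(E^g/W).
\]
Proposition~\ref{prop:gauge-euler-invariants} (the rotation) then gives
\[
 \degD(D_g)=\rkFG(S(D_g))=\ell_W(H/L),\qquad
 \rkD(D_g)=-\degFG(S(D_g))=\sum_r\bigl(\ell_W(H/L)-\ell_W(Q^r)\bigr),
\]
where $Q^r:=\mathcal N^r(H)/\mathcal N^r(L)$. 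Here I use that $(-)^{-\infty}$ is exact, and that the Euler characteristic of the fibre of $t^{-\infty}$ is $\ell(Q^r)-\ell(Q^{-\infty})$ whether or not $t^{-\infty}$ is injective. So the whole problem reduces to knowing $H$, $\varphi$ and $L$ explicitly.

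For the crystal I take the superspecial Dieudonn\'e module of $E$: over $k=\bar k$ it has a basis $x,y$ with $\varphi(x)=y$ and $\varphi(y)=px$. Then $H^1(E^g)$ has basis $x_i,y_i$ for $1\le i\le g$, and $H=\Lambda^gH^1$ has the monomial basis $x_Ay_B$ with $A,B\subseteq\{1,\dots,g\}$ and $|A|+|B|=g$. Semilinearity costs nothing on these basis vectors, and
\[
 \varphi(x_Ay_B)=\pm p^{|B|}x_By_A .
\]
Hence $H$ is the direct sum of $\varphi$-stable pieces. For $|B|=a<g/2$ the piece is the two-dimensional block $W e\oplus W f$ with $e=x_Ay_B$, $f=x_By_A$, $\varphi(e)=\pm p^af$ and $\varphi(f)=\pm p^{g-a}e$. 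There are $\binom ga\binom g{g-a}=\binom ga^2$ such blocks for each $a$. When $g$ is even there are also the pieces with $|A|=|B|=g/2$, on which $p^{-g/2}\varphi$ is a unit-root $F$-crystal. Since all slopes equal $g/2$, $H$ is isoclinic, so $H_{[i,i+1)}=H$ with $i=\lfloor g/2\rfloor$ and $F=p^{-\lfloor g/2\rfloor}\varphi$.

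Next I compute $HW$ block by block. The intersection formula in Proposition~\ref{prop:crystal-devissage}(i) commutes with a direct sum of $F$- and $V$-stable summands, so this is legitimate. On the middle pieces nothing is removed. On a block with index $a$, put $c:=\lfloor g/2\rfloor-a$. A direct valuation check in both parities should show that the largest sublattice $N$ with $pN\subseteq F(N)\subseteq N$ is
\[
 N=Wp^{c}e\oplus Wf .
\]
Indeed, the conditions force the $e$-exponent to be at least $c$ and are satisfied with equality. This gives
\[
 \ell_W(H/L)=\sum_{2a<g}\Bigl(\Bigl\lfloor\frac g2\Bigr\rfloor-a\Bigr)\binom ga^2,
\]
which is the degree formula.

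For the rank, I compute the levels per block from Definition~\ref{def:virtual-crystal}. On the $e$-coordinate, $\mathcal N^r(H)$ has exponent $\max(0,r-a)$ and $\mathcal N^r(L)$ has exponent $\max(c,r-a)$. On the $f$-coordinate the exponents are $\max(0,r-(g-a))$ and $\max(0,r+c-(g-a))$. Therefore
\[
 c-\ell(Q^r)=\min\bigl(c,\max(0,r-a)\bigr)-\min\bigl(c,\max(0,r-\lceil g/2\rceil)\bigr),
\]
using $g-a-c=\lceil g/2\rceil$. This is the difference of two ramps of height $c$ that are shifted by $\lceil g/2\rceil-a$. Summing over $r$ gives $c(\lceil g/2\rceil-a)$. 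Multiplying by the number $\binom ga^2$ of blocks and summing over $a$ yields the rank formula.

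The step I expect to need the most care is the maximality of $N$ in each block, together with the claim that $HW$ splits along the blocks. The signs and Frobenius twists in $\varphi$ are harmless on monomials, but the maximality check must be made against arbitrary $F$- and $V$-stable sublattices, not only the diagonal ones.
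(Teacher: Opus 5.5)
Your proposal is correct and follows the paper's proof essentially step for step. It uses the same monomial block decomposition of $\wedge^g H^1$ and the same Hodge--Witt sublattice $\langle p^{c}e,f\rangle$ of colength $c=\lfloor g/2\rfloor-a$ in each block. It then passes through Proposition~\ref{prop:gauge-euler-invariants} in the same way, reading $\degD$ off $\ell_W(H/L)$ and $\rkD$ off the drops of the levels; your ramp-difference telescoping is just an algebraic form of the paper's explicit level table.

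The maximality step you flag is settled exactly as in the paper. Every $x=\alpha e+\beta f$ in an admissible $N$ satisfies $F(x)\in N\subseteq H$, and integrality of the $f$-coefficient $\pm\sigma(\alpha)p^{-c}$ of $F(x)$ forces $\alpha\in p^{c}W$. This argument applies to arbitrary, not only diagonal, sublattices $N$.
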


\begin{proof}
The Dieudonn\'e module of $E$ is $M=R^0/R^0(F-V)$, with basis
$x_0,x_1$ and $\varphi x_0=x_1$, $\varphi x_1=px_0$, so
$H^1_{\mathrm{crys}}$ is $M^{\oplus g}$, with basis
$x_0^{(i)},x_1^{(i)}$ in the $i$-th summand.  By crystalline
K\"unneth \cite{Illusie83}, $H:=H^{g}_{\mathrm{crys}}(X/W)$ is
$\wedge^{g}H^1$, isoclinic of slope $g/2$.  A basis is given by the
monomials $y_{A,B}$ wedging $x_1^{(i)}$ for $i\in A$ and $x_0^{(i)}$
for $i\in B$, over the pairs of subsets $A,B$ of
$\{1,\dots,g\}$ with $|A|=a$ and $|B|=g-a$, so there are
$\binom ga^{2}$ monomials of \emph{valuation} $a$.  Frobenius gives
$\varphi\,y_{A,B}=\pm p^{a}y_{B,A}$, so $H$ splits into
$\varphi$-stable lines with $A=B$, two-dimensional summands
with $A\ne B$ and $|A|=|B|=g/2$, and the remaining summands
$\langle y,y'\rangle=\langle y_{A,B},y_{B,A}\rangle$ on which
$\varphi$ acts by the matrix
\[
 \begin{pmatrix}0&\pm p^{b}\\ \pm p^{a}&0\end{pmatrix},
 \qquad b:=g-a,\quad 2a<g,
\]
that is $\varphi y=\pm p^{a}y'$ and $\varphi y'=\pm p^{b}y$, which
we record as the \emph{$\varphi$-valuations}
$(a,b)$.

All slopes equal $g/2$, so $H=H_{[v,v+1)}$ with
$v:=\lfloor g/2\rfloor$ and $F=p^{-v}\varphi$.  Projections to the
summands commute with $F$, so the maximal sublattice $L$ of
Proposition~\ref{prop:crystal-devissage} decomposes summandwise, and
the lines and the summands with $|A|=|B|=g/2$ have bijective
$F$, hence are Hodge--Witt and contribute nothing to $D_g$.
On a summand $\langle y,y'\rangle$ of valuations $(a,b)$ put
$s:=v-a=\lfloor g/2\rfloor-a$, a nonnegative integer since $2a<g$,
which vanishes exactly for $a=\lfloor g/2\rfloor$, when the summand
is Hodge--Witt.  Every $x$
with $F(x)$ integral lies in the sublattice
$\langle p^{s}y,y'\rangle$, which satisfies the condition of
Proposition~\ref{prop:crystal-devissage}(i) since
$F(p^{s}y)=\pm y'$ and $F(y')=\pm p^{g-2v}\cdot p^{s}y$.  So
$L\cap\langle y,y'\rangle=\langle p^{s}y,y'\rangle$, of colength
$\ell_W\bigl(\langle y,y'\rangle/\langle p^{s}y,y'\rangle\bigr)=s$
in the summand.
As in Lemma~\ref{lem:nygaard-gauge}, the Nygaard $F$-gauge of a
rank-one virtual $F$-crystal with elementary divisor $c$ is
\[
 \cdots\fgpair{p}{1}W\fgpair{p}{1}
 \underset{\scriptstyle c}{W}
 \fgpair{1}{p}W\fgpair{1}{p}\cdots,
\]
drawn with the levels decreasing to the right as in
Example~\ref{ex:equivalence-examples}, with the level $c$ marked.
So $\mathcal N(\langle y,y'\rangle,\varphi)$ consists of two such
rows, $y$ with $c=a$ and $y'$ with $c=b$, and $\tau$ carries the
line at $+\infty$ of each row to the line at $-\infty$ of the
other, since $p^{-r}\varphi(p^{r-a}y)=\pm y'$ and
$p^{-r}\varphi(p^{r-b}y')=\pm y$.  Likewise
$\mathcal N(\langle p^{s}y,y'\rangle,\varphi)$ consists of the rows
$p^{s}y$ with $c=a+s$ and $y'$ with $c=b-s$.  The inclusion
$\mathcal N(\langle p^{s}y,y'\rangle,\varphi)\subseteq
\mathcal N(\langle y,y'\rangle,\varphi)$ is levelwise, and on the
generators of the row $y$ it is multiplication by
$p^{s},p^{s-1},\dots,1$ at the levels $a,a+1,\dots,a+s$ and
constant on either side:
\[
\begin{tikzcd}[column sep=2.2em, row sep=1.3em]
 \cdots \arrow[r, shift left, "p"]
 & \underset{\scriptstyle a+s}{W} \arrow[l, shift left, "1"] \arrow[r, shift left, "p"]
 & \underset{\scriptstyle a+s-1}{W} \arrow[l, shift left, "1"] \arrow[r, shift left, "p"]
 & \cdots \arrow[l, shift left, "1"] \arrow[r, shift left, "p"]
 & \underset{\scriptstyle a}{W} \arrow[l, shift left, "1"] \arrow[r, shift left, "1"]
 & \cdots \arrow[l, shift left, "p"] \\
 \cdots \arrow[r, shift left, "p"]
 & \underset{\scriptstyle a+s}{W} \arrow[l, shift left, "1"] \arrow[r, shift left, "1"] \arrow[u, "1"]
 & \underset{\scriptstyle a+s-1}{W} \arrow[l, shift left, "p"] \arrow[r, shift left, "1"] \arrow[u, "p"]
 & \cdots \arrow[l, shift left, "p"] \arrow[r, shift left, "1"]
 & \underset{\scriptstyle a}{W} \arrow[l, shift left, "p"] \arrow[r, shift left, "1"] \arrow[u, "p^{s}"]
 & \cdots \arrow[l, shift left, "p"]
\end{tikzcd}
\]
Here the top row is the row $y$ of
$\mathcal N(\langle y,y'\rangle,\varphi)$ and the bottom row is
the row $p^{s}y$ of $\mathcal N(\langle p^{s}y,y'\rangle,\varphi)$,
both generated by $y$ and $p^{s}y$ multiplied by the indicated
powers of $p$.  The arrows to the right are $t$ and those to the
left are $u$, the vertical arrows are the inclusion, and every
square commutes.  On the row $y'$ the inclusion is multiplication by
$1,p,\dots,p^{s}$ at the levels $b-s,\dots,b$ and constant on
either side, and the gluings agree since both are $p^{-r}\varphi$.
Taking quotients row by row, the levels of $S(D_g)$ on this summand
are
\[
\begin{array}{c|ccccc}
 & r\le a & a\le r\le a+s & a+s\le r\le b-s & b-s\le r\le b & r\ge b\\
 \hline
 y & W/p^{s} & W/p^{a+s-r} & 0 & 0 & 0\\
 y' & 0 & 0 & 0 & W/p^{r+s-b} & W/p^{s}
\end{array}
\]
with $u$ the projection and $t$ multiplication by $p$ on the row
$y$, and the reverse on the row $y'$.  The quotient is therefore
\[
\begin{aligned}
 y'&\colon\ \cdots\fgpair{p}{1}
 \underset{\scriptstyle b}{W/p^{s}}
 \fgpair{1}{p}W/p^{s-1}\fgpair{1}{p}\cdots\fgpair{1}{p}W/p
 \fgpair{1}{p}\underset{\scriptstyle b-s}{0},\\
 y&\colon\ \underset{\scriptstyle a+s}{0}
 \fgpair{p}{1}W/p\fgpair{p}{1}\cdots\fgpair{p}{1}W/p^{s-1}
 \fgpair{p}{1}\underset{\scriptstyle a}{W/p^{s}}
 \fgpair{1}{p}\cdots,
\end{aligned}
\]
so its level at $r$ has length $s$ for $r\le a$, descends by unit
steps to $0$ at $a+s$, vanishes on
$[a+s,\,b-s]=[\lfloor g/2\rfloor,\lceil g/2\rceil]$, and ascends
back to $s$ at $b$.

The HN slopes of $D_g$ are positive
(Proposition~\ref{prop:MO-devissage}), so $S(D_g)$ lies in the
$F$-gauge heart and the phase rotation $Z_{\mathsf{FG}}=-iZ_\Delta$
identifies $\rkFG(S(D_g))=\degD(D_g)$ and
$\degFG(S(D_g))=-\rkD(D_g)$ (Theorem~\ref{thm:phase-rotation} and
Proposition~\ref{prop:gauge-euler-invariants}).  Write $G:=S(D_g)$,
with $G^{-\infty}=H/L$.  The first identification gives
$\degD(D_g)=\ell_W(H/L)=\sum_{2a<g}s\binom ga^{2}$, the degree
formula.  In the second, $\chi_W$ of
$\fib(t^{-\infty}\colon G^{r}\to G^{-\infty})$ is
$\ell_W(G^{r})-\ell_W(G^{-\infty})$, so $\rkD(D_g)$ is the sum
$\sum_{r}\bigl(\ell_W(G^{-\infty})-\ell_W(G^{r})\bigr)$ of the drops
of the levels below the limit.  On a summand the drop is $s$ on the
$b-a-2s+1$ levels of
$[a+s,b-s]$ and $1,\dots,s-1$ twice on the two ramps, in total
$s(b-a-s)=s\bigl(\lceil g/2\rceil-a\bigr)$.  Summing over the
summands gives the rank formula.
\end{proof}

For $g=2,3,4,5$ the formulas give
$(\rkD,\degD)(D_g)=(1,1)$, $(2,1)$, $(20,18)$ and $(56,27)$.

\begin{corollary}
\label{thm:superspecial-threefold}
The middle diagonal domino $D_3$ of the superspecial abelian
threefold is the Christoffel domino $U_{1/2}$.
\end{corollary}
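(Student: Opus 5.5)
By Proposition~\ref{prop:middle-domino-slope} with $g=3$, the domino $D_3$ has $\rkD(D_3)=2$ and $\degD(D_3)=1$, so $\muD(D_3)=1/2$. The plan is to show that $D_3$ is stable of slope $1/2$; Theorem~\ref{thm:christoffel-stability} (equivalently Theorem~\ref{thm:stable-classification}) then identifies it, up to Breuil--Kisin twist, with $U_{1/2}=M(01)$. The twist is then pinned down from the levels of $S(D_3)$ computed in the proof of Proposition~\ref{prop:middle-domino-slope}.

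Concretely, for $g=3$ only $a=1$ contributes to the sums (the case $a=0$ has $s=1-0=1$, weight $\binom30^2=1$; the case $a=1$ has $s=0$). So only one two-dimensional summand $\langle y,y'\rangle$ with $\varphi$-valuations $(a,b)=(0,3)$ and $s=1$ contributes. Its quotient gauge $G=S(D_3)$ is the two-row gauge displayed there. The row $y$ is $W/p=k$ at levels $r\le0$ and zero from level $1$ on. The row $y'$ is $k$ at levels $r\ge3$ and zero at level $2$ and below. So $G$ has zero levels exactly on $[1,2]$, $G^{\pm\infty}=k$, and $\tau$ joins the $+\infty$ line of $y'$ to the $-\infty$ line of $y$ via $p^{-r}\varphi$, a Frobenius-semilinear isomorphism. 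This is precisely the display of Proposition~\ref{prop:one-star-gauge} with $q=2$, namely $S(M_2)$, where $M_2=M([1,2],\varnothing;\delta_2)=M(01)=U_{1/2}$.

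Since $S$ is an equivalence and $D_3$ has positive HN slopes (Proposition~\ref{prop:MO-devissage}), so that $S(D_3)$ lies in the heart without shift, we get $D_3\simeq U_{1/2}$. The normalization of the zero levels to $[1,2]$ shows no twist is needed. As a consistency check, the stability of $U_{1/2}$ follows independently from Theorem~\ref{thm:christoffel-stability}, and the invariants $(2,1)$ match Lemma~\ref{lem:dictionary}.

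The main obstacle is the identification of the gluing. Proposition~\ref{prop:one-star-gauge} has $\tau$ equal to Frobenius $\sigma\colon k\to k$. Here $\tau$ is induced by $\pm p^{-r}\varphi$, which after reduction is $\pm$ a $\sigma$-semilinear bijection of $k$. Since $k$ is algebraically closed, a scalar automorphism rescales $\tau$ by $\sigma(\lambda)/\lambda$, and every unit has this form by Lang/Artin--Schreier. This lets us absorb the sign and any unit, giving an isomorphism of $F$-gauges. I must also double-check that the $t$ and $u$ maps at the ends of the rows are the identities (projections $W/p^s\to W/p^{s-1}$ with $s=1$ give zero maps into the zero levels, and constant identities on the tails), matching the display.
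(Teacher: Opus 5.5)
Your argument is correct, but it follows a different route from the paper's. It also differs from the plan announced in your first paragraph: you never prove that $D_3$ is stable, and you do not need to.

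What you actually do is read off the whole $F$-gauge $S(D_3)=\mathcal N(H,\varphi)/\mathcal N(L,\varphi|_L)$ from the single summand of $\varphi$-valuations $(0,3)$. Note that it is $a=0$, not $a=1$, that contributes; your parenthetical has this right. You then match this gauge with the display of Proposition~\ref{prop:one-star-gauge} for $q=2$, and conclude $D_3\simeq U_{1/2}$ by full faithfulness of $S$. The twist comes out for free.

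The paper argues numerically instead. It has $(\rkD,\degD)(D_3)=(2,1)$, and any graded piece of rank one has integer slope, positive by Proposition~\ref{prop:MO-devissage}. So a two-step HN filtration would have degree at least $2$, and a destabilizing saturated subobject would have rank one and slope $\tfrac12$. Hence $D_3$ is stable, Theorem~\ref{thm:christoffel-stability} gives $D_3\cong U_{1/2}\{a\}$, and only the positions $\{1,2\}$ of the zero levels of $S(D_3)$ are used to force $a=0$.

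The two routes buy different things. The paper's needs nothing about $S(D_3)$ beyond its invariants and zero levels, and it exercises the stability machinery. Yours bypasses the classification theorem altogether, and with it Proposition~\ref{prop:chain-presentation} and the string--band input. The cost is that you must check the structure maps and normalize the gluing.

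That last check is easier than you make it. Since levels $1$ and $2$ vanish, the underlying gauge is the direct sum of the row $y$ (levels $\le0$, $t$ bijective, $u=0$) and the row $y'$ (levels $\ge3$, $u$ bijective, $t=0$). Rescaling the row $y$ alone by a unit $c$ is then a gauge automorphism that multiplies $\tau$ by $c$. No Lang-type argument or algebraic closedness of $k$ is needed for this step.
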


\begin{proof}
By Proposition~\ref{prop:middle-domino-slope}, $\muD(D_3)=\tfrac12$.
A graded piece of $\Delta$-rank one is a Breuil--Kisin twist of a
single layer $U_j$ (Lemma~\ref{lem:postnikov}), of integer slope
$j$, positive here by Proposition~\ref{prop:MO-devissage}.  So a
two-step HN filtration of $D_3$ would have degree at least $2$, and
$D_3$ is semistable.  A proper saturated subobject of slope
$\tfrac12$ would have rank one, so $D_3$ is stable, and
Theorem~\ref{thm:christoffel-stability} gives
$D_3\cong U_{1/2}\{a\}$ for some $a\in\mathbb Z$.  The twist shifts
the levels, while the proof of
Proposition~\ref{prop:middle-domino-slope} puts the zero levels of
$S(D_3)$ at $r\in\{1,2\}$, matching those of $S(U_{1/2})$
(Proposition~\ref{prop:one-star-gauge}).  So $a=0$.
\end{proof}

We end by revisiting a theorem of Ekedahl: the slope spectral
sequence of a supersingular abelian fourfold does not degenerate at
$E_2$ \cite{ekedahl1}.  For the superspecial fourfold we recover it
from the structure of $D_4$.

\begin{theorem}
\label{thm:superspecial-fourfold}
Let $k$ be algebraically closed and let $D_4$ be the middle diagonal
domino of the superspecial abelian fourfold.  Then
$D_4\cong U_1\{-1\}^{\oplus16}\oplus Y$, where $Y$ is the nonsplit
extension of $U_{1/3}$ by $U_1\{-1\}$ and
$\operatorname{Ext}^1_\Delta(U_{1/3},U_1\{-1\})=k$.  Its
Harder--Narasimhan filtration has graded pieces
$U_1\{-1\}^{\oplus17}$ and $U_{1/3}$, and its Postnikov layers are
\[
 \Dom^1(D_4)=U_1,\qquad
 \Dom^2(D_4)=U_1^{\oplus16}\oplus U_{0,2},\qquad
 \Dom^3(D_4)=U_{-1},
\]
where $U_{0,2}=\widehat R/\widehat R(F^2,Fd)$ is the nonsplit
extension of $U_0$ by $U_2$ \cite[Example~3.10]{ZhangDominoes}.
\end{theorem}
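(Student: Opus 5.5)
Every computation will take place on the $F$-gauge side, where the proof of Proposition~\ref{prop:middle-domino-slope} has already written $G:=S(D_4)$ down explicitly. There $G$ is a direct sum over the two-dimensional summands $\langle y_{A,B},y_{B,A}\rangle$ with $2a<4$, that is $a\in\{0,1\}$, for which $s=2-a$ and $b=4-a$. For $a=1$ we have $s=1$ and $b=3$; these are $\binom41^2=16$ summands, each of the shape of $S(U_1\{-1\})$ from Example~\ref{ex:equivalence-examples}(iii), with zero level at $2$. For $a=0$ there are two summands, $(A,B)=(\varnothing,\{1,2,3,4\})$ and its swap, but these are the two ordered versions of the same pair $\{y,y'\}$, so they give a single summand. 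On it $s=2$ and $b=4$. Its rows are $y\colon W/p^2$ for $r\le0$, then $W/p$ at $1$, then $0$ at $2$; and $y'\colon 0$ at $2$, then $W/p$ at $3$, then $W/p^2$ for $r\ge4$, with $\tau$ swapping the two rows. Call this piece $G_0$. It has $\rkFG=2$, and its level drops sum to $2+1+1\cdot0\ldots$; more precisely, by the rank formula it has $\rkD=s(b-a-s)=4$ and $\degD=2$, which agrees with $(20,18)=16\cdot(1,1)+(4,2)$. So $D_4\cong U_1\{-1\}^{\oplus16}\oplus Y$ with $Y:=S^{-1}(G_0)$, of rank $4$ and degree $2$. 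The splitting is immediate because the decomposition into summands is one of $F$-gauges.

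Next I would analyse $Y$. It has $pY\ne0$: its levels contain $W/p^2$, and $p$ acts on them as multiplication. The plan is to exhibit a sub-$F$-gauge and a quotient directly. Consider the sub-$F$-gauge generated by $p\cdot G_0$, which consists of $pW/p^2$ in the levels $r\le0$ of row $y$ and in the levels $r\ge4$ of row $y'$, and zero in between. This is exactly the shape of $S(M_3)=S(U_{1/3})$ from Proposition~\ref{prop:one-star-gauge}, with three zero levels $[1,3]$ and $\tau$ a Frobenius; one checks that $u$ and $t$ vanish or are bijective in the right places. The quotient $G_0/pG_0$ is $k$ in levels $\le1$ on row $y$ and $k$ in levels $\ge3$ on row $y'$, with zero at level $2$, and it is a single indecomposable object of rank $1$ and degree $1$, namely $S(U_1\{-1\})$. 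This would make $Y$ an extension of $U_1\{-1\}$ by $U_{1/3}$. That contradicts the statement, which asks for $U_{1/3}$ as the quotient, so I expect the inclusion to go the other way: the torsion sub $G_0[p]$ should be $S(U_1\{-1\})$, with quotient $S(U_{1/3})$. I would redo the check carefully, since which one is the sub in $\Coh$ is the crux. In either case HN slopes force the destabilizing piece of slope $1$ to be the sub, and the HN pieces then come out as $U_1\{-1\}^{\oplus17}$ and $U_{1/3}$. The extension is nonsplit because $G_0$ has $W/p^2$ levels and so is not killed by $p$.

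For $\operatorname{Ext}^1_\Delta(U_{1/3},U_1\{-1\})=k$, I would follow Lemma~\ref{lem:extension-uniqueness}: split the levels of an extension, clear off-diagonal entries by the recursions for $h_r$, and use the relation $ut=tu=p$. This time $p$ need not vanish, and the one surviving scalar should record the $p^2$-lifting. Alternatively, compute $\chi$ via \eqref{eq:gluing-fibre} and \eqref{eq:interval-ext} as in Proposition~\ref{prop:ext-stable}, and show $\operatorname{Ext}^2=0$ and $\operatorname{Hom}=0$, the latter by Lemma~\ref{lem:hom-vanishing}. I expect this Ext computation, including the sub/quotient orientation, to be the main obstacle.

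For the Postnikov layers, Lemma~\ref{lem:postnikov} and the twist rule in Corollary~\ref{cor:postnikov-layers} show that $U_1\{-1\}$ contributes $U_1$ to $\Dom^2$. It remains to compute the layers of $Y$ directly from $S^{-1}(G_0)$, or from the extension. Rank $4$ and degree $2$ are consistent with layers $U_1$, $U_{0,2}$ (rank $2$, degree $2$) and $U_{-1}$, and I would verify the middle layer by comparing $\rkD^n$ with the level drops of $G_0$ at levels $1$, $2$ and $3$, namely $1$, $2$ and $1$.
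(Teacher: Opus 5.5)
Your decomposition $D_4\cong U_1\{-1\}^{\oplus16}\oplus Y$ agrees with the paper. Here $S(Y)=G_0$ is the summand of $\varphi$-valuations $(0,4)$, and your nonsplitness argument via the $W/p^2$ levels is also the paper's. The orientation step, however, rests on a miscomputation: $G_0/pG_0$ is not $S(U_1\{-1\})$.

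On the ramps, $t_0$ and $u_3$ are multiplication by $p$. Modulo $p$ they vanish, while $u_0$ and $t_3$ become isomorphisms. So the lines at levels $1$ and $3$ of $G_0/pG_0$ are killed by both $u$ and $t$. They span the point gauges $S(U_0[1])$ and $S(U_0\{-2\}[1])$, and the quotient by them is $S(U_{1/3})$.

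There is therefore no contradiction to explain away: $pG_0\cong S(U_{1/3})$ really is a subobject of $G_0$. Your sentence ``in either case HN slopes force the slope-$1$ piece to be the sub'' is not an argument.

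The check you postponed is where the content lies. $G_0[p]$ is $k$ in every level except $2$. In it, $t_0$ and $u_3$, being multiplication by $p$ from $W/p$ onto $pW/p^2$, are isomorphisms, so $G_0[p]\cong S(U_1\{-1\})$. Multiplication by $p$ then induces $G_0/G_0[p]\xrightarrow{\sim}pG_0\cong S(U_{1/3})$. This is the paper's argument, and the HN filtration follows by uniqueness.

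Your $\operatorname{Ext}^1$ plan fails as stated. Lemma~\ref{lem:hom-vanishing} needs $\muD(A)>\muD(B)$, but here $\muD(U_{1/3})<\muD(U_1\{-1\})$. In fact $\operatorname{Hom}(U_{1/3},U_1\{-1\})=k$: the inclusion $pG_0\subseteq G_0[p]$ above is a nonzero such map. Concretely, $S(U_{1/3})|_{k^{\mathcal N}}=K_u(4)\oplus K_t(0)$ and $S(U_1\{-1\})|_{k^{\mathcal N}}=K_u(3)\oplus K_t(1)$. Then \eqref{eq:interval-ext} gives $\operatorname{Hom}_A=k^2$, mapping onto $\operatorname{Hom}_W(k,k)=k$, and gives $\chi(U_{1/3},U_1\{-1\})=0$.

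With your values $\operatorname{Hom}=\operatorname{Ext}^2=0$, you would therefore conclude $\operatorname{Ext}^1=0$, contradicting the nonsplit $Y$. The count does work with $\operatorname{Hom}=k$ and $\operatorname{Ext}^2=0$. The latter follows because $\operatorname{Ext}^1_A=k^2\to\operatorname{Ext}^1_W(k,k)$ is already onto on the $K_t(0)$-component, by restriction at $-\infty$. Your first route does not get started, because levels $W/p^2$ admit no splittings of the kind used in Lemma~\ref{lem:extension-uniqueness}.

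Finally, level drops cannot give the Postnikov layers. The formula $\ell_W(S(M)^r)=\degD(M)-\rkD^r(M)$ records only the layer ranks $(1,2,1)$, not the layer degrees. It cannot distinguish $U_{0,2}$ from $U_0\oplus U_2$ or $U_1^{\oplus2}$. Three ingredients are missing:
\begin{enumerate}
\item The long exact sequence of ordinary cohomology of $0\to U_1\{-1\}\to Y\to U_{1/3}\to0$, with the layers $U_1,U_0,U_0$ of $U_{1/3}$ read off its rightward word $100$ (Corollary~\ref{cor:postnikov-layers}).
\item The nonvanishing of the connecting map $\partial\colon U_0(-2)\to U_1(-1)$. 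The paper gets this from $\tau_{\ge-1}U_{1/3}\cong U_{1/2}$ and $\operatorname{Ext}^1_\Delta(U_{1/2},U_1\{-1\})=0$, so that the class of $Y$ is detected by $\partial$; this also yields $\operatorname{Ext}^1=k$.
\item A separate proof that $H^{-1}(Y)$ is the nonsplit extension, i.e.\ that $p\neq0$ on it. The paper factors $p$ on $Y$ through the nonzero $\psi\colon U_{1/3}\to U_1\{-1\}$ (the Hom you claimed vanishes). It then shows $H^{-1}(\psi)\neq0$ using $\operatorname{Hom}(U_1,U_1\{-1\})=0$ from \eqref{eq:ext-table}.
\end{enumerate}
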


\begin{proof}
For $g=4$ the summands $\langle y,y'\rangle$ with $s>0$ in the proof
of Proposition~\ref{prop:middle-domino-slope} are sixteen of
$\varphi$-valuations $(1,3)$ and one of valuations $(0,4)$.  On a
summand of
valuations $(1,3)$, where $s=1$, the quotient has every level $k$
with a single zero at $r=2$, which is $S(U_1\{-1\})$ by
Proposition~\ref{prop:one-star-gauge}.  On the summand of valuations
$(0,4)$, where $s=2$, it is the $F$-gauge
\[
 S(Y):=\cdots\fgpair{p}{1}
 \underset{\scriptstyle 4}{W/p^{2}}\fgpair{1}{p}
 \underset{\scriptstyle 3}{W/p}\rightleftarrows
 \underset{\scriptstyle 2}{0}\rightleftarrows
 \underset{\scriptstyle 1}{W/p}\fgpair{p}{1}
 \underset{\scriptstyle 0}{W/p^{2}}\fgpair{1}{p}\cdots,
\]
drawn as in that proof, with the levels marked, the row $y'$ on the
left and the row $y$ on the right.  So
$S(D_4)=S(U_1\{-1\})^{\oplus16}\oplus S(Y)$, and this defines $Y$.

By the description of $u$ and $t$ in that proof, $S(Y)[p]$ has every
level $k$ with a single zero at $r=2$, so $S(Y)[p]\cong S(U_1\{-1\})$,
and $S(Y)/S(Y)[p]$ has zero levels exactly at $r=1,2,3$, so it is
$S(U_{1/3})$ by Proposition~\ref{prop:one-star-gauge}.  Applying
$S^{-1}$ gives $0\to U_1\{-1\}\to Y\to U_{1/3}\to0$, which is
nonsplit, since a direct sum would be killed by $p$ while
$S(Y)^0=W/p^2$.

The rightward word of $U_{1/3}$ is $100$
(Lemma~\ref{lem:normalization}), so
Corollary~\ref{cor:postnikov-layers} gives $H^{0}(U_{1/3})=U_1$,
$H^{-1}(U_{1/3})=U_0(-1)$ and $H^{-2}(U_{1/3})=U_0(-2)$, and the
long exact sequence of ordinary cohomology reads
\[
 0\to H^{-2}(Y)\to U_0(-2)\xrightarrow{\ \partial\ }U_1(-1)\to
 H^{-1}(Y)\to U_0(-1)\to0,
 \qquad H^{0}(Y)=U_1 .
\]
The rightward word $100$ gives
$\tau_{\ge-1}U_{1/3}\cong U_{1/2}$.
For $S(U_{1/2})$ and $S(U_1\{-1\})$, the underlying gauges are
$K_u(3)\oplus K_t(0)$ and $K_u(3)\oplus K_t(1)$.
By \eqref{eq:interval-ext}, the gluing map in
\eqref{eq:gluing-fibre} is surjective on $\operatorname{Hom}$
($k^2\to k$) and an isomorphism on $\operatorname{Ext}^1$
($k\to k$, by restriction at $-\infty$).
Thus $\operatorname{Ext}^1_\Delta(U_{1/2},U_1\{-1\})=0$.
Hence, by the truncation triangle,
the class of $Y$ in
$\operatorname{Ext}^1_\Delta(U_{1/3},U_1\{-1\})
=\operatorname{Hom}(U_{1/3},U_1\{-1\}[1])$ is determined by its
restriction $\partial$ to $\tau_{\le-2}U_{1/3}=U_0(-2)[2]$, an
element of $\operatorname{Hom}_R(U_0(-2),U_1(-1))=k$, a map
there being a $k$-linear map $U_0^0/VU_0^0\to U_1^{1}[F]$ between
one-dimensional $k$-vector spaces (Definition~\ref{def:domino}).  So $\partial\ne0$ and
$\operatorname{Ext}^1_\Delta(U_{1/3},U_1\{-1\})=k$.  Hence
$H^{-2}(Y)=[VU_0^{0}\to U_0^{1}](-2)=U_{-1}(-2)$, and $H^{-1}(Y)$ is
an extension of $U_0(-1)$ by
$[U_1^{0}\to U_1^{1}/U_1^1[F]](-1)=U_2(-1)$.  The summand $U_1\{-1\}$
has the single layer $\Dom^2(U_1\{-1\})=U_1$, so the layers of
$D_4$ are $\Dom^1(D_4)=H^0(Y)=U_1$,
$\Dom^2(D_4)=U_1^{\oplus16}\oplus H^{-1}(Y)(1)$ and
$\Dom^3(D_4)=H^{-2}(Y)(2)=U_{-1}$.

Since $\operatorname{Ext}^1_R(U_0,U_2)=k$
\cite[Corollary~I.3.7]{IR83}, \cite[Corollary~III.1.5.4(iii)]{ekedahl2},
with all nonzero classes giving isomorphic extensions, it remains
to see that $p\ne0$ on $H^{-1}(Y)$.  Multiplication by $p$ on $Y$
kills $U_1\{-1\}$ and $U_{1/3}$, so it factors as
$Y\twoheadrightarrow U_{1/3}\xrightarrow{\psi}U_1\{-1\}\hookrightarrow Y$
with $\psi\ne0$, as $pS(Y)\ne0$.  On $H^{-1}$ it is the composite
\[
 H^{-1}(Y)\twoheadrightarrow U_0(-1)\xrightarrow{\ H^{-1}(\psi)\ }
 U_1(-1)\longrightarrow H^{-1}(Y),
\]
whose last map has kernel $\partial(U_0(-2))=U_1^1[F]$.
Since $\tau_{\le-2}U_{1/3}$ lies in $D^{\le-2}$, restriction
identifies $\operatorname{Hom}(U_{1/3},U_1(-1)[1])$ with
$\operatorname{Hom}(\tau_{\ge-1}U_{1/3},U_1(-1)[1])$.  The triangle
$U_0(-1)[1]\to\tau_{\ge-1}U_{1/3}\to U_1$ gives an exact sequence
\[
 \operatorname{Ext}^1_R(U_1,U_1(-1))\to
 \operatorname{Hom}(\tau_{\ge-1}U_{1/3},U_1(-1)[1])\xrightarrow{\ H^{-1}\ }
 \operatorname{Hom}_R(U_0(-1),U_1(-1)),
\]
whose first term is $\operatorname{Hom}(U_1\{0\},U_1\{-1\})=0$
by the table~\eqref{eq:ext-table}.  So $H^{-1}(\psi)\ne0$, and its image is not inside $U_1^1[F]$,
since an $R$-linear map $U_0\to U_1$ with image in $U_1^1[F]$ kills
$U_0^0$ and $F(U_0^1)=U_0^1$.  Thus $p\ne0$ on $H^{-1}(Y)$, and
$H^{-1}(Y)=U_{0,2}(-1)$.

Finally $U_1\{-1\}^{\oplus17}\subseteq D_4$ is semistable of slope
$1$ with quotient $U_{1/3}$ of slope $\tfrac13$, which is the
Harder--Narasimhan filtration.
\end{proof}

\begin{corollary}
\label{cor:no-e2-degeneration}
The slope spectral sequence of the superspecial abelian fourfold
$X=E^4$ does not degenerate at $E_2$.
\end{corollary}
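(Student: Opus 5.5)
The corollary will follow from the Postnikov layer $\Dom^3(D_4)=U_{-1}$ of Theorem~\ref{thm:superspecial-fourfold}, via the Illusie--Raynaud dictionary between dominoes and differentials of the slope spectral sequence. The slope spectral sequence of $X$ is the spectral sequence of the Postnikov (ordinary cohomology) filtration of $R\Gamma(X,W\Omega_X^\bullet)$, with $E_1^{i,j}=H^j(X,W\Omega^i)$ and $d_1$ induced by $d$. By the Illusie--Raynaud d\'evissage \cite[\S I.2]{IR83}, $E_2$-degeneration is equivalent to every $d_r$ with $r\ge2$ vanishing. The plan is to exhibit a nonzero $d_2$ whose source or target is seen by the layer $U_{-1}$.

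First I will make $\Dom^3(D_4)$ geometric. By Proposition~\ref{prop:MO-devissage}, $\widetilde H^4$ is an extension of $D_4$ by the Hodge--Witt object $HW(\widetilde H^4)$. Its ordinary cohomology is finitely generated over $W$, so it contributes no domino numbers. Hence the dominoes of $H^j(\widetilde H^4)$ are those of $D_4$, and after shifting $\Dom^3(\widetilde H^4)\simeq U_{-1}\oplus(\text{finitely generated})$. The diagonal truncations of $R\Gamma(X,W\Omega^\bullet)$ place $\widetilde H^4[-4]$ so that the layer $\Dom^3$ is the differential $H^{2}(X,W\Omega^2)\to H^2(X,W\Omega^3)$ restricted to its non-finitely-generated part. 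I expect this index bookkeeping to need care: with $\Dom^n=H^{1-n}(M)(n-1)$ and $M=\widetilde H^4$ in total degree $4$, the relevant $E_1^{i,j}$ satisfy $i+j=4$ and $i+j=5$.

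The key step is that $U_{-1}$ forces a higher differential. An ordinary domino $U_j$ with $j<0$ has $\operatorname{coker} d$ of length $-j>0$ and $\ker d$ equal to zero. For $U_{-1}$, the map $d$ is injective with one-dimensional cokernel, and the same holds after passing to $E_2$. This gives an extra class in $E_2^{3,\ast}$ that is not matched by the domino. I will argue as in Illusie--Raynaud: if the spectral sequence degenerated at $E_2$, then each domino would compute its own contribution to the abutment. On the abutment, $\mathbf s(\widetilde H^4)=H^4_{\mathrm{crys}}$ is torsion-free by the Mazur--Ogus property, while $\mathbf s(D_4)$ is concentrated in degree $0$. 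The degree-$1$ contribution $\operatorname{coker} d$ of $U_{-1}$, namely $k$ in $H^1$ of its $\mathbf s$-realization, must therefore be cancelled in the Postnikov tower of $D_4$. That cancellation is exactly the nonzero connecting map $\partial\colon U_0(-2)\to U_1(-1)$ found in the proof of Theorem~\ref{thm:superspecial-fourfold}. Under the Postnikov spectral sequence, this connecting map between non-adjacent layers is a $d_2$ in the slope spectral sequence, from $E_2^{2,3}$ to $E_2^{4,2}$ or the shifted equivalent. It is nonzero because $\partial\ne0$.

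The main obstacle is identifying the connecting morphism $\partial$ between Postnikov layers with a genuine $d_2$ of the slope spectral sequence, rather than merely with an extension in $D^b_c(R)$. I would first try the standard fact that $d_r$ of a filtered complex is computed by the boundary map of the truncation triangle $\tau_{\le-2}\to\tau_{\ge-2}\tau_{\le-1}\to\ldots$ on $E_r$-terms. I would check on the explicit generators $U_0^0/VU_0^0\to U_1^1[F]$ that the class survives to $E_2$, since it lies in $\ker d$ modulo $\operatorname{im} d$. The element also stays nonzero in $E_2$ at the target, because $U_1^1[F]$ is not in the image of $d$ inside $U_1$.
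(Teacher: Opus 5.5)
\textbf{Gap: $\partial$ is not a slope differential.} The gap is exactly the step you flag as the main obstacle, and it cannot be repaired as stated.

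The map $\partial\colon U_0(-2)\to U_1(-1)$ in the proof of Theorem~\ref{thm:superspecial-fourfold} is the boundary map in the long exact sequence of ordinary cohomology for $0\to U_1\{-1\}\to Y\to U_{1/3}\to0$. It goes from $H^{-2}$ of the quotient to $H^{-1}$ of the subobject, raising cohomological degree by one and preserving internal degree. A differential $d_r\colon E_r^{i,j}\to E_r^{i+r,j-r+1}$ of the slope spectral sequence of a single complex $N$, with $E_1^{i,j}=H^j(N)^i$, behaves differently: it lowers cohomological degree by $r-1$ and raises internal degree by $r$. At best $\partial$ is a differential in the spectral sequence of the two-step HN filtration $U_1\{-1\}\subset Y$ computing $H^*(Y)$, which is a different spectral sequence.

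Its nonvanishing is what makes $H^{-2}(Y)=\ker\partial\cong U_{-1}(-2)$. So $\partial\ne0$ is the reason the class in $\operatorname{coker}d$ exists at $E_2$, not the differential that removes it. Your indices confirm the mismatch: $E_2^{2,3}\to E_2^{4,2}$ lands in total degree $6$, whereas the $E_1$-terms of $\widetilde H^4[-4]$ lie in total degrees $4$ and $5$. The class at $E_2^{3,2}$ can only be hit by $d_2$ from $E_2^{1,3}$ or $d_3$ from $E_3^{0,4}$. These depend on how $H^0$, $H^{-1}$, $H^{-2}$ of $\widetilde H^4$ are glued inside $\widetilde H^4$, and you compute neither.

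\textbf{What suffices instead.} You do not need to know which differential is nonzero. The abutment observation in your third paragraph, made precise, is already the whole proof, and it is the paper's argument.

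\begin{enumerate}
\item Put $M:=\widetilde H^4$. By \cite[Theorem~IV.1.2(iv)]{ekedahl3}, $M[-4]$ is a direct summand of $R\Gamma(X,W\Omega_X^\bullet)$. You need this to pass between $M$ and $X$, since the same $E_1$-terms may also receive contributions from other diagonal cohomologies such as $\widetilde H^5[-5]$.
\item Since $H^4_{\mathrm{crys}}(X/W)[1/p]$ is isoclinic of slope $2$, Proposition~\ref{prop:crystal-devissage} gives $HW(M)=L\{-2\}$. Ordinary cohomology of the d\'evissage of Proposition~\ref{prop:MO-devissage} then gives $0\to L(-2)\to H^{-2}(M)\to U_{-1}(-2)\to0$, with $L(-2)$ concentrated in internal degree $2$. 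Your asserted splitting off of a finitely generated summand is unjustified and unnecessary.
\item Hence $E_2^{3,2}(M[-4])=\operatorname{coker}\bigl(d\colon H^{-2}(M)^2\to H^{-2}(M)^3\bigr)=H^1(\mathbf s(U_{-1}))=k$.
\item On the other hand, $E_\infty^{3,2}(M[-4])$ is a subquotient of $H^5(\mathbf s(M[-4]))=0$, because $\mathbf s(M)$ is concentrated in degree $0$ (Lemma~\ref{lem:ekedahl-reconstruction}).
\end{enumerate}

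That single surviving degree-$5$ class at $E_2$ is the nondegeneration. Torsion-freeness of $H^4_{\mathrm{crys}}$ plays no role; what matters is that nothing survives in total degree $5$.
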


\begin{proof}
Put $C:=R\Gamma(X,W\Omega_X^\bullet)$ and $M:=\widetilde H^4(C)$.
By \cite[Theorem~IV.1.2(iv)]{ekedahl3}, $M[-4]$ is a direct summand
of $C$. Since $H^4_{\mathrm{crys}}(X/W)[1/p]$ is isoclinic of slope $2$,
Proposition~\ref{prop:crystal-devissage} gives $HW(M)=L\{-2\}$
for a finite free Dieudonn\'e module $L$.
Taking ordinary cohomology of the d\'evissage in
Proposition~\ref{prop:MO-devissage} and using
Theorem~\ref{thm:superspecial-fourfold} gives
\[
 0\longrightarrow L(-2)\longrightarrow H^{-2}(M)
 \longrightarrow U_{-1}(-2)\longrightarrow0.
\]
Since $L(-2)$ is concentrated in internal degree $2$, we obtain
$E_2^{3,2}(M[-4])\cong H^1(\mathbf s(U_{-1}))=k$
by Example~\ref{ex:Uj-s-torsion}.
By Lemma~\ref{lem:ekedahl-reconstruction},
$\mathbf s(M[-4])$ is concentrated in degree $4$, so
$E_\infty^{3,2}(M[-4])=0$.  Thus the slope spectral sequence of
$M[-4]$ does not degenerate at $E_2$, and neither does that of $C$,
since $M[-4]$ is a direct summand of $C$.
\end{proof}

\clearpage
\appendix

\section{Stable $F$-gauges of small \texorpdfstring{$\rkD$}{rank}}
\label{app:table}

The table lists the $F$-gauges of the Christoffel dominoes $U_{d/q}$ of
slope $d/q$ for $2\le q\le5$, the stable objects of
Theorem~\ref{thm:christoffel-stability}, drawn by the recipe of
Example~\ref{ex:sorted-path}.  Each dot is a copy of $k$.  A left arrow
is $t$, a right arrow is $u$, and a diagonal arrow is the boundary
component $e_a\mapsto e_{a-1}$.  Both tails are invertible beyond the
displayed levels, and the gluing is the identity on the basis.

\begin{longtable}{ccccc}
  $\muD$ & $\muFG$ & $\epsilon(d/q)$ & level & $S(U_{d/q})$ \\[0.3em]
  \hline
  \noalign{\vskip 0.6em}
  \endhead
  $\tfrac{1}{2}$ & $-2$ & $01$ & $[0,3]$ &
  \begin{tikzcd}[column sep=1.05em, row sep=0.55em, cells={nodes={inner sep=1.4pt}}, ampersand replacement=\&]
   \cdots \& \bullet\arrow[l] \& 0 \& 0 \& \bullet\arrow[r] \& \cdots
  \end{tikzcd} \\
  \noalign{\vskip 0.8em}
  \noalign{\global\arrayrulewidth=1pt}\hline\noalign{\global\arrayrulewidth=0.4pt}
  \noalign{\vskip 0.8em}
  $\tfrac{1}{3}$ & $-3$ & $001$ & $[0,4]$ &
  \begin{tikzcd}[column sep=1.05em, row sep=0.55em, cells={nodes={inner sep=1.4pt}}, ampersand replacement=\&]
   \cdots \& \bullet\arrow[l] \& 0 \& 0 \& 0 \& \bullet\arrow[r] \& \cdots
  \end{tikzcd} \\
  \noalign{\vskip 0.6em}
  \hline
  \noalign{\vskip 0.6em}
  $\tfrac{2}{3}$ & $-\tfrac{3}{2}$ & $011$ & $[0,4]$ &
  \begin{tikzcd}[column sep=1.05em, row sep=0.55em, cells={nodes={inner sep=1.4pt}}, ampersand replacement=\&]
   \cdots \& \bullet\arrow[l] \& 0 \& 0 \& \bullet\arrow[r] \& \bullet\arrow[r] \& \cdots \\
   \cdots \& \bullet\arrow[l] \& \bullet\arrow[l] \& \bullet\arrow[l]\arrow[ur] \& 0 \& \bullet\arrow[r] \& \cdots
  \end{tikzcd} \\
  \noalign{\vskip 0.8em}
  \noalign{\global\arrayrulewidth=1pt}\hline\noalign{\global\arrayrulewidth=0.4pt}
  \noalign{\vskip 0.8em}
  $\tfrac{1}{4}$ & $-4$ & $0001$ & $[0,5]$ &
  \begin{tikzcd}[column sep=1.05em, row sep=0.55em, cells={nodes={inner sep=1.4pt}}, ampersand replacement=\&]
   \cdots \& \bullet\arrow[l] \& 0 \& 0 \& 0 \& 0 \& \bullet\arrow[r] \& \cdots
  \end{tikzcd} \\
  \noalign{\vskip 0.6em}
  \hline
  \noalign{\vskip 0.6em}
  $\tfrac{3}{4}$ & $-\tfrac{4}{3}$ & $0111$ & $[0,5]$ &
  \begin{tikzcd}[column sep=1.05em, row sep=0.55em, cells={nodes={inner sep=1.4pt}}, ampersand replacement=\&]
   \cdots \& \bullet\arrow[l] \& 0 \& 0 \& \bullet\arrow[r] \& \bullet\arrow[r] \& \bullet\arrow[r] \& \cdots \\
   \cdots \& \bullet\arrow[l] \& \bullet\arrow[l] \& \bullet\arrow[l]\arrow[ur] \& 0 \& \bullet\arrow[r] \& \bullet\arrow[r] \& \cdots \\
   \cdots \& \bullet\arrow[l] \& \bullet\arrow[l] \& \bullet\arrow[l] \& \bullet\arrow[l]\arrow[ur] \& 0 \& \bullet\arrow[r] \& \cdots
  \end{tikzcd} \\
  \noalign{\vskip 0.8em}
  \noalign{\global\arrayrulewidth=1pt}\hline\noalign{\global\arrayrulewidth=0.4pt}
  \noalign{\vskip 0.8em}
  $\tfrac{1}{5}$ & $-5$ & $00001$ & $[0,6]$ &
  \begin{tikzcd}[column sep=1.05em, row sep=0.55em, cells={nodes={inner sep=1.4pt}}, ampersand replacement=\&]
   \cdots \& \bullet\arrow[l] \& 0 \& 0 \& 0 \& 0 \& 0 \& \bullet\arrow[r] \& \cdots
  \end{tikzcd} \\
  \noalign{\vskip 0.6em}
  \hline
  \noalign{\vskip 0.6em}
  $\tfrac{2}{5}$ & $-\tfrac{5}{2}$ & $00101$ & $[0,6]$ &
  \begin{tikzcd}[column sep=1.05em, row sep=0.55em, cells={nodes={inner sep=1.4pt}}, ampersand replacement=\&]
   \cdots \& \bullet\arrow[l] \& 0 \& 0 \& 0 \& \bullet\arrow[r] \& \bullet\arrow[r] \& \bullet\arrow[r] \& \cdots \\
   \cdots \& \bullet\arrow[l] \& \bullet\arrow[l] \& \bullet\arrow[l] \& \bullet\arrow[l]\arrow[ur] \& 0 \& 0 \& \bullet\arrow[r] \& \cdots
  \end{tikzcd} \\
  \noalign{\vskip 0.6em}
  \hline
  \noalign{\vskip 0.6em}
  $\tfrac{3}{5}$ & $-\tfrac{5}{3}$ & $01011$ & $[0,6]$ &
  \begin{tikzcd}[column sep=1.05em, row sep=0.55em, cells={nodes={inner sep=1.4pt}}, ampersand replacement=\&]
   \cdots \& \bullet\arrow[l] \& 0 \& 0 \& \bullet\arrow[r] \& \bullet\arrow[r] \& \bullet\arrow[r] \& \bullet\arrow[r] \& \cdots \\
   \cdots \& \bullet\arrow[l] \& \bullet\arrow[l] \& \bullet\arrow[l]\arrow[ur] \& 0 \& 0 \& \bullet\arrow[r] \& \bullet\arrow[r] \& \cdots \\
   \cdots \& \bullet\arrow[l] \& \bullet\arrow[l] \& \bullet\arrow[l] \& \bullet\arrow[l] \& \bullet\arrow[l]\arrow[ur] \& 0 \& \bullet\arrow[r] \& \cdots
  \end{tikzcd} \\
  \noalign{\vskip 0.6em}
  \hline
  \noalign{\vskip 0.6em}
  $\tfrac{4}{5}$ & $-\tfrac{5}{4}$ & $01111$ & $[0,6]$ &
  \begin{tikzcd}[column sep=1.05em, row sep=0.55em, cells={nodes={inner sep=1.4pt}}, ampersand replacement=\&]
   \cdots \& \bullet\arrow[l] \& 0 \& 0 \& \bullet\arrow[r] \& \bullet\arrow[r] \& \bullet\arrow[r] \& \bullet\arrow[r] \& \cdots \\
   \cdots \& \bullet\arrow[l] \& \bullet\arrow[l] \& \bullet\arrow[l]\arrow[ur] \& 0 \& \bullet\arrow[r] \& \bullet\arrow[r] \& \bullet\arrow[r] \& \cdots \\
   \cdots \& \bullet\arrow[l] \& \bullet\arrow[l] \& \bullet\arrow[l] \& \bullet\arrow[l]\arrow[ur] \& 0 \& \bullet\arrow[r] \& \bullet\arrow[r] \& \cdots \\
   \cdots \& \bullet\arrow[l] \& \bullet\arrow[l] \& \bullet\arrow[l] \& \bullet\arrow[l] \& \bullet\arrow[l]\arrow[ur] \& 0 \& \bullet\arrow[r] \& \cdots
  \end{tikzcd} \\
  \noalign{\vskip 0.3em}
\end{longtable}

\bibliographystyle{alpha}
\bibliography{refs}

@article{ekedahl1,
  author  = {Torsten Ekedahl},
  title   = {On the multiplicative properties of the de {R}ham--{W}itt complex. {I}},
  journal = {Arkiv f{\"o}r Matematik},
  volume  = {22},
  number  = {1--2},
  pages   = {185--239},
  year    = {1984}
}

@article{ekedahl2,
  author  = {Torsten Ekedahl},
  title   = {On the multiplicative properties of the de {R}ham--{W}itt complex. {II}},
  journal = {Arkiv f{\"o}r Matematik},
  volume  = {23},
  number  = {1--2},
  pages   = {53--102},
  year    = {1985}
}

@book{ekedahl3,
  author    = {Torsten Ekedahl},
  title     = {Diagonal complexes and {$F$}-gauge structures},
  publisher = {Hermann},
  address   = {Paris},
  year      = {1986}
}

@article{Illusie79,
  author  = {Luc Illusie},
  title   = {Complexe de de {R}ham--{W}itt et cohomologie cristalline},
  journal = {Annales scientifiques de l'{\'E}cole Normale Sup{\'e}rieure, S{\'e}r. 4},
  volume  = {12},
  number  = {4},
  pages   = {501--661},
  year    = {1979},
  doi     = {10.24033/asens.1374},
  url     = {https://www.numdam.org/articles/10.24033/asens.1374/}
}

@article{IR83,
  author  = {Luc Illusie and Michel Raynaud},
  title   = {Les suites spectrales associ\'ees au complexe de de {R}ham--{Witt}},
  journal = {Publications Math\'ematiques de l'IH\'ES},
  volume  = {57},
  pages   = {73--212},
  year    = {1983},
  publisher = {Institut des Hautes \'Etudes Scientifiques},
  doi     = {10.1007/BF02698774},
  url     = {https://www.numdam.org/item/PMIHES_1983__57__73_0/}
}

@incollection{Illusie83,
  author    = {Luc Illusie},
  title     = {Finiteness, duality, and {K}{\"u}nneth theorems in the cohomology of the de {R}ham--{W}itt complex},
  booktitle = {Algebraic Geometry},
  series    = {Lecture Notes in Mathematics},
  volume    = {1016},
  pages     = {20--72},
  year      = {1983},
  publisher = {Springer},
  address   = {Berlin, Heidelberg},
  doi       = {10.1007/BFb0099957}
}

@article{Manin63,
  author  = {Manin, Yu. I.},
  title   = {The theory of commutative formal groups over fields of finite characteristic},
  journal = {Russian Mathematical Surveys},
  volume  = {18},
  number  = {6},
  pages   = {1--83},
  year    = {1963},
  doi     = {10.1070/RM1963v018n06ABEH001143}
}

@misc{BhattFgauges,
  author       = {Bhatt, Bhargav},
  title        = {Prismatic {$F$}-gauges},
  year         = {2022},
  howpublished = {Lecture notes for {MAT} 549, Princeton University},
  note         = {Available at \url{https://www.math.ias.edu/~bhatt/teaching/mat549f22/lectures.pdf}}
}

@article{Bridgeland07,
  author  = {Bridgeland, Tom},
  title   = {Stability conditions on triangulated categories},
  journal = {Annals of Mathematics},
  series  = {2},
  volume  = {166},
  number  = {2},
  year    = {2007},
  pages   = {317--345}
}

@book{HRS96,
  author    = {Happel, Dieter and Reiten, Idun and Smal\o, Sverre O.},
  title     = {Tilting in abelian categories and quasitilted algebras},
  series    = {Memoirs of the American Mathematical Society},
  volume    = {120},
  note      = {No. 575},
  publisher = {American Mathematical Society},
  address   = {Providence, RI},
  year      = {1996}
}

@book{BLRS09,
  author    = {Berstel, Jean and Lauve, Aaron and Reutenauer, Christophe and Saliola, Franco V.},
  title     = {Combinatorics on Words: Christoffel Words and Repetitions in Words},
  series    = {CRM Monograph Series},
  volume    = {27},
  publisher = {American Mathematical Society},
  address   = {Providence, RI},
  year      = {2009}
}

@article{HarderNarasimhan1975,
  author  = {Harder, G. and Narasimhan, M. S.},
  title   = {On the cohomology groups of moduli spaces of vector bundles on curves},
  journal = {Mathematische Annalen},
  volume  = {212},
  year    = {1975},
  pages   = {215--248},
  doi     = {10.1007/BF01357141}
}

@misc{ZhangDominoes,
  author       = {Zhang, Yuanning},
  title        = {Higher dimensional dominoes in de {R}ham--{W}itt cohomology},
  year         = {2026},
  eprint       = {2607.26323},
  archivePrefix = {arXiv},
  howpublished = {Preprint, arXiv:2607.26323}
}

@article{Bondarenko92,
  author  = {Bondarenko, Vyacheslav M.},
  title   = {Representations of bundles of semi-chains and their
             applications},
  journal = {St. Petersburg Mathematical Journal},
  volume  = {3},
  pages   = {973--996},
  year    = {1992}
}

@article{BurbanDrozd,
  author  = {Burban, Igor and Drozd, Yuriy},
  title   = {Derived categories of nodal algebras},
  journal = {Journal of Algebra},
  volume  = {272},
  number  = {1},
  pages   = {46--94},
  year    = {2004}
}

@article{BTCB,
  author  = {Bennett-Tennenhaus, Raphael and Crawley-Boevey, William},
  title   = {Semilinear clannish algebras},
  journal = {Proceedings of the London Mathematical Society},
  volume  = {129},
  number  = {4},
  pages   = {e12637},
  year    = {2024}
}

@article{FontaineJannsen,
  author  = {Fontaine, Jean-Marc and Jannsen, Uwe},
  title   = {Frobenius gauges and a new theory of $p$-torsion sheaves in
             characteristic $p$},
  journal = {Documenta Mathematica},
  volume  = {26},
  pages   = {65--101},
  year    = {2021}
}

@article{Fargues10,
  author  = {Fargues, Laurent},
  title   = {La filtration de {H}arder--{N}arasimhan des sch\'emas en
             groupes finis et plats},
  journal = {Journal f\"ur die reine und angewandte Mathematik},
  volume  = {645},
  pages   = {1--39},
  year    = {2010}
}

@book{FarguesFontaine18,
  author    = {Fargues, Laurent and Fontaine, Jean-Marc},
  title     = {Courbes et fibr\'es vectoriels en th\'eorie de {H}odge
               $p$-adique},
  series    = {Ast\'erisque},
  volume    = {406},
  publisher = {Soci\'et\'e Math\'ematique de France},
  year      = {2018}
}

@misc{CarmeliFeng25,
  author        = {Carmeli, Shachar and Feng, Tony},
  title         = {Prismatic {S}teenrod operations and arithmetic duality
                   on {B}rauer groups},
  year          = {2025},
  eprint        = {2507.13471},
  archivePrefix = {arXiv},
  howpublished  = {Preprint, arXiv:2507.13471}
}

@article{Artin74,
  author  = {Michael Artin},
  title   = {Supersingular {K3} surfaces},
  journal = {Annales scientifiques de l'{\'E}cole Normale Sup{\'e}rieure, S{\'e}r. 4},
  volume  = {7},
  number  = {4},
  pages   = {543--567},
  year    = {1974},
  doi     = {10.24033/asens.1279},
  url     = {https://www.numdam.org/articles/10.24033/asens.1279/}
}

@incollection{Ogus79,
  author    = {Arthur Ogus},
  title     = {Supersingular {K3} crystals},
  booktitle = {Journ\'ees de G\'eom\'etrie Alg\'ebrique de Rennes (Rennes, 1978), Vol. II},
  series    = {Ast\'erisque},
  volume    = {64},
  pages     = {3--86},
  year      = {1979},
  publisher = {Soci\'et\'e Math\'ematique de France}
}

@article{LevinWangErickson20,
  author  = {Levin, Brandon and Wang-Erickson, Carl},
  title   = {A {H}arder--{N}arasimhan theory for {K}isin modules},
  journal = {Algebraic Geometry},
  volume  = {7},
  number  = {6},
  pages   = {645--695},
  year    = {2020}
}

\end{document}